\documentclass[11pt]{article}
\usepackage[a4paper,margin=27mm]{geometry}
\usepackage[T1]{fontenc}
\usepackage{lmodern,amsmath,amssymb,amsthm,mathtools,microtype}
\usepackage[dvipsnames]{xcolor}
\usepackage[colorlinks=true,linkcolor=MidnightBlue,citecolor=MidnightBlue,urlcolor=MidnightBlue]{hyperref}
\usepackage{booktabs,array}
\usepackage{enumitem}
\setlist{itemsep=3pt,topsep=5pt}
\numberwithin{equation}{section}
\newtheorem{theorem}{Theorem}[section]
\newtheorem{proposition}[theorem]{Proposition}
\newtheorem{lemma}[theorem]{Lemma}
\newtheorem{corollary}[theorem]{Corollary}
\theoremstyle{remark}
\newtheorem{remark}[theorem]{Remark}
\newcommand{\R}{\mathbb R}
\newcommand{\T}{\mathbb T}
\newcommand{\E}{\mathcal E}
\newcommand{\Dcal}{\mathcal F}
\newcommand{\Ccyl}{C_{\mathrm{cyl}}}
\newcommand{\HWP}{H_{\mathrm{WP}}}
\newcommand{\WP}{\mathrm{WP}}
\newcommand{\VK}{\mathrm{VK}}
\newcommand{\CapOne}{\operatorname{Cap}_1}

\newcommand{\Dom}{\operatorname{Dom}}

\newcommand{\id}{\operatorname{id}}
\newcommand{\prX}{\operatorname{pr}_X}
\allowdisplaybreaks[2]
\title{The canonical SLE-reversible\\Weil--Petersson response diffusion}
\author{Chunhao Cai\\
{\small School of Mathematics (Zhuhai), Sun Yat-Sen University}\\
{\small \texttt{caichh9@mail.sysu.edu.cn}}}
\date{}
\begin{document}
\maketitle
\begin{abstract}
The rough response calculus for the rotationally marked SLE welding
law gives a closed Weil--Petersson energy but does not by itself give a
process on rough weldings.  For $0<\kappa\leq4$, we prove that this form
is quasi-regular in the natural topology of circle homeomorphisms and
therefore generates a canonical diffusion reversible with respect to
the SLE welding probability.  The proof constructs a compact form nest
from inverse arc masses.  We identify the generator as $-D^*D$ and prove
absolute $L^2$ convergence of its Fourier expansion on the response
cylinder core, together with the Fukushima decomposition.  A common
Fourier noise and planar estimates preventing boundary collapse yield
a second quasi-regular realization by normalized Jordan welding
triples.  The diffusion is irreducible and converges strongly in $L^2$
to equilibrium.  For $0<\kappa<4$, the complement of the conformally
removable welding locus has zero capacity.
\end{abstract}

\medskip
\noindent\textit{2020 Mathematics Subject Classification.} Primary 60J67; Secondary 60J46, 30C62, 30F60.

\noindent\textit{Keywords.} Schramm--Loewner evolution, conformal welding, Dirichlet forms,
Weil--Petersson geometry.

\section{Introduction}
\label{sec:introduction}

We seek a continuous evolution of complete SLE weldings that is
reversible with respect to their law and governed by Weil--Petersson
response directions. In \cite{Response}, the finite-mode integration-by-parts
identity is established directly in the actual right-composition convention,
and the resulting Weil--Petersson response differential is closed. This yields a
symmetric Markovian form on the rough welding probability space, but not yet a
process on the welding space. Appendix~A of \cite{Response} compares this
convention with the analytic Lie derivatives of Baverez--Jego \cite{BJwelding};
related loop-space integration-by-parts and Virasoro structures appear in
\cite{BJloops}.

The missing step is quasi-regularity on the intended state space.  A
closed Dirichlet form on an abstract $L^2$-space need not produce a
sample-continuous process on the configuration space one wants to
study.  For SLE weldings, compactness in the natural topology requires
simultaneous control of a circle homeomorphism and its inverse, whereas
the response energy controls only Weil--Petersson directions.  We
therefore begin by constructing a compact form nest in the welding
space itself.

Canonical Brownian motions associated with the $H^{3/2}$ Fourier
geometry of circle homeomorphisms, and stochastic differential
equations obtained by adding a prescribed drift, were studied in
\cite{Fang02,Fang04}.  Their transport through Beltrami equations and
stochastic conformal welding to univalent functions and Jordan curves
was developed in \cite{AMT04,AMT}.  Related static SLE inputs include
quasi-invariance of welding laws \cite{FS} and infinitesimal conformal
restriction and Virasoro structures for SLE loop measures \cite{GQW}.
Here the prescribed reversible measure is the SLE welding probability
itself: its closed response differential supplies the score drift, and
the resulting symmetric form is realized as a diffusion on the rough
welding space.

\subsection{From rough response to a diffusion}

Fix $0<\kappa\leq4$. Let $S^1$ be the unit circle and
$\T=\R/(2\pi\mathbb Z)$ the angle circle, identified with $S^1$ by
$\theta\mapsto e^{i\theta}$. Let $X$ be the space of orientation-preserving
homeomorphisms of $S^1$, equipped with
\[
 d(h_1,h_2)
 =\sup_{z\in S^1}|h_1(z)-h_2(z)|
  +\sup_{z\in S^1}|h_1^{-1}(z)-h_2^{-1}(z)|.
\]
Thus convergence in $X$ means uniform convergence of both the welding
and its inverse.  Let $\nu$ be the rotationally marked SLE welding
probability with parameter $\kappa$ on $X$, and let $\HWP$ be the real
Weil--Petersson response space. The $L^2$ spaces used for the forms are real;
complex-valued observables are treated by complexification.
The construction in \cite{Response} gives a closed gradient
\begin{equation}\label{eq:input-form}
 D:\Dcal\subset L^2(\nu)\longrightarrow L^2(\nu;\HWP),
 \qquad
 \E(F,G)=\int_X(DF,DG)_{\WP}\,d\nu,
 \qquad \Dcal=\Dom(D).
\end{equation}
For $v\in\HWP$ write $D_vF=(DF,v)_{\WP}$.  The form
$(\E,\Dcal)$ is closed, symmetric and Markovian.  We use the standard
Dirichlet-form terminology of \cite{FOT}.  Write $\CapOne$ for the
$1$-capacity of $(\E,\Dcal)$; quasi-everywhere means outside a set of
zero $\CapOne$-capacity.  We write $\mathbf 1$ for the constant-one
function and $\mathbf 1_A$ for the indicator of a set $A$.  The capacity
and nest conventions used below are recalled in Section~\ref{sec:nest}.

\begin{theorem}\label{thm:main}
There is a compact $\E$-nest $(K_m)$ in $X$ such that
\begin{equation}\label{eq:intro-compact-nest}
 \CapOne(X\setminus K_m)\longrightarrow0.
\end{equation}
Hence $(\E,\Dcal)$ is quasi-regular and determines a $\nu$-symmetric
diffusion $(X_t)_{t\geq0}$ with continuous $X$-valued paths for
quasi-every initial state. Its semigroup $(T_t)_{t\geq0}$ satisfies,
for $F,G\in L^2(\nu)$ and $t\geq0$,
\begin{equation}\label{eq:intro-stationary-reversible}
 T_t\mathbf 1=\mathbf 1,
 \qquad
 \langle T_tF,G\rangle_{L^2(\nu)}
 =\langle F,T_tG\rangle_{L^2(\nu)}.
\end{equation}
\end{theorem}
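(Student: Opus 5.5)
The plan is to construct the compact nest explicitly from inverse arc masses, as the abstract indicates, and then invoke the Ma--Röckner quasi-regularity criterion. Three ingredients are needed: a dense countable family in $\Dcal$ that separates points of $X$ and has $\nu$-versions continuous on each $K_m$; an $\E$-nest of compact sets; and the standard consequences of quasi-regularity.

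\textbf{Arc-mass functionals and compactness.} For a dyadic arc $I_{n,k}\subset S^1$ of length $2\pi2^{-n}$, set $\ell_{n,k}(h)=|h(I_{n,k})|$ and $\ell^*_{n,k}(h)=|h^{-1}(I_{n,k})|$. A uniform positive lower bound on these arc masses at every dyadic scale gives equicontinuity of both $h$ and $h^{-1}$. Concretely, choose a decreasing sequence $\delta_{n}(m)\downarrow0$ and let
\[
K_m=\{h\in X:\ \ell_{n,k}(h)\geq\delta_n(m),\ \ell^*_{n,k}(h)\geq\delta_n(m)\ \forall n,k\}.
\]
On $K_m$ the inverse $h^{-1}$ maps arcs of length $\delta_n(m)$ into arcs of length $\lesssim 2^{-n}$, and symmetrically for $h$. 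This yields moduli of continuity for both maps, so Arzelà--Ascoli gives compactness in $d$. Closedness holds because limits of homeomorphisms with two-sided moduli of continuity are again homeomorphisms.

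\textbf{Capacity estimate.} Let $\Phi_{m}=\sum_{n,k}w_n\,\phi(\delta_n(m)^{-1}\ell_{n,k})$ with a cutoff $\phi$, or more simply use the functionals $A=\sum_n w_n\sum_k(\ell_{n,k}^{-p}+\ell^{*-p}_{n,k})$. By the usual estimate $\CapOne(\{u>1\})\le \|u\|_{\E_1}^2$ for quasi-continuous $u\ge0$, it suffices to show two things. First, $\ell_{n,k},\ell^*_{n,k}\in\Dcal$, computed via the response differential of \cite{Response}: $D_v\ell_{n,k}$ is the boundary integral of the vector field $v\circ h$ over $h(I_{n,k})$, bounded by $\|v\|_{\WP}$ times the $L^\infty$ norm of WP fields, which is controlled through the $H^{1/2}$–$H^{3/2}$ embedding. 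Second, one needs negative moments $\int \ell^{-q}\,d\nu<\infty$ for SLE welding arc masses with the right scaling in $n$. Then truncations $u_m=(\varepsilon_m A)\wedge 1$ satisfy $X\setminus K_m\subset\{u_m\geq1\}$ and $\|u_m\|_{\E_1}\to0$. Each $u_m$ is continuous on the compact sets, since arc masses are continuous in $d$, so quasi-continuity is automatic.

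\textbf{Main obstacle and conclusion.} The main obstacle is the negative-moment bound on arc masses $|h^{\pm1}(I)|$ under $\nu$, uniformly enough in scale to make $\sum_n w_n2^{n}\,\mathbb E[\ell^{-p}]$ converge together with the gradients. I would attack it first through the multifractal/Hölder regularity of SLE weldings for $\kappa\le4$, namely the two-sided Hölder continuity of the welding via the Gaussian multiplicative chaos (quantum length) description. This gives $\nu(\ell_{n,k}<2^{-\alpha n})\le C2^{-\beta n}$, which can be summed against the weights. For separation, take the Fourier coefficients of $h$ and $h^{-1}$; these lie in $\Dcal$ by \cite{Response} and are $d$-continuous. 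Quasi-regularity then follows from the nest. Ma--Röckner then yields a $\nu$-symmetric special standard process; locality of $\E$, as a gradient form, makes it a diffusion. Finally, $T_t\mathbf 1=\mathbf 1$ follows because $\mathbf 1\in\Dcal$ with $D\mathbf 1=0$, so $\E(\mathbf 1,\cdot)=0$ and conservativeness holds. Symmetry of $T_t$ is immediate from the symmetry of $\E$.
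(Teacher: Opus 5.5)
There is a genuine gap, and it sits in the capacity step. Your compactness criterion uses lower bounds on the forward masses $\ell_{n,k}(h)=|h(I_{n,k})|$, and these are not in $\Dcal$. The response gradient differentiates along right composition $h\mapsto h\circ\psi_t^v$, and $|h\circ\psi_t^v(I)|=|h(\psi_t^v(I))|$ feels the endpoint behaviour of $h$ itself. Take $I=(a,b)$ and $v(a)=0\neq v(b)$. Then $t\mapsto|h(\psi_t^v(I))|$ is, up to a smooth time change, $s\mapsto|h([b,s])|$, which is not absolutely continuous for a singular welding. Every $F\in\Dcal$ lies in the domain of the closed generator $K_v$ of the quasi-invariant flow; this is the first step in the proof of Lemma~\ref{lem:zero-energy-coefficient-ibp}. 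Hence every $F\in\Dcal$ is absolutely continuous along almost every flow line, so cutoffs built from $\ell_{n,k}$ are not available.

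Only the inverse masses $p_I=\frac1{2\pi}|h^{-1}(I)|$ have a clean gradient, $D_vp_I=(v(x)-v(y))/(2\pi)$, where $x,y$ are the endpoints of $h^{-1}(I)$. The repair is as follows. Get equicontinuity of $h^{-1}$ from \emph{upper} bounds on inverse masses, enforced through $S_N=\sum_jp_{N,j}^2$. Use lower bounds on inverse masses, arc by arc, only to make the Arzel\`a--Ascoli limits of the inverse lifts strictly increasing.

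Even restricted to inverse masses, your estimate rests on negative moments $\int\ell^{-q}\,d\nu<\infty$, which you leave open. Deriving them from a quantum-length description of $\nu$ would be deep external input that is not available here. The missing idea is that no distributional information about $\nu$ is needed at all.

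The deterministic bound $\Gamma(p_I)\le Cp_I^2\log(e/p_I)$ of Lemma~\ref{lem:critical-kernel} says that $\sqrt{\log(e/p_I)}$ has energy density at most $C/4$. So the cutoff $g_{I,R}=\eta(R^{-1}\sqrt{\log(e/p_I)})$ satisfies $\Gamma(g_{I,R})\le C\|\eta'\|_\infty^2/(4R^2)$ everywhere. Meanwhile $\|g_{I,R}\|_2\to0$ by dominated convergence, because $p_I>0$ at every point of $X$. Likewise $S_N\to0$ pointwise with $\Gamma(S_N)\le4CM_N^2\log(e/M_N)$, which is bounded and tends to zero. Power cutoffs cannot do this, since $\Gamma(p_I^{-s})$ is of order $s^2p_I^{-2s}\log(e/p_I)$ as $p_I\to0$.

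Choosing $N(m,k)$ and $R(m,k)$ so that the $\E_1$-norms are summable yields $w_m$ with $\|w_m\|_{\E_1}\le2^{-m-1}$ and $w_m=1$ off $K_m^0$. This gives both the nest property, via $F(1-w_m)$ for $F\in\Ccyl$, and $\CapOne(X\setminus K_m)\to0$. The rest of your outline (separation, strong locality, conservativeness, symmetry) is fine.
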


\subsection{Generator and stochastic calculus}

Let $L$ be the nonpositive self-adjoint generator of $(T_t)_{t\geq0}$
on $L^2(\nu)$, called the \emph{response generator}. Equivalently,
\begin{equation}\label{eq:gen-definition}
 \E(F,G)=-\langle LF,G\rangle_{L^2(\nu)},
 \qquad F\in\Dom(L),\quad G\in\Dcal.
\end{equation}
Let $D^*$ be the Hilbert-space adjoint of $D$. We identify $v\in\HWP$
with the constant function $h\mapsto v$ in $L^2(\nu;\HWP)$.
For the explicit formula, use the normalized real Fourier basis of $\HWP$,
\begin{align}
 e_{n,c}(\theta)&=\alpha_n\cos(n\theta),\qquad
 e_{n,s}(\theta)=\alpha_n\sin(n\theta),\qquad
 \alpha_n=\sqrt{\frac2{n^3-n}},\quad n\geq2,
 \label{eq:onb}\\
 \shortintertext{and write}
 D_{n,\tau}&=D_{e_{n,\tau}},\qquad
 \rho_{n,\tau}=D^*e_{n,\tau},\qquad \tau\in\{c,s\}.\notag
\end{align}
By \cite[Theorem~1.2]{Response}, $e_{n,\tau}\in\Dom(D^*)$.  Let $\Ccyl$ denote the response cylinder
core, defined in Section~\ref{sec:nest}.

\begin{theorem}\label{thm:generator-main}
For the diffusion of Theorem~\ref{thm:main},
\begin{equation}\label{eq:gen-DstarD}
 L=-D^*D,
 \qquad
 \Dom(L)=\{F\in\Dcal:DF\in\Dom(D^*)\}.
\end{equation}
For every $F\in\Ccyl$,
\begin{align}
 LF&=\sum_{n=2}^{\infty}\sum_{\tau\in\{c,s\}}
        \bigl(D_{n,\tau}^2F-\rho_{n,\tau}D_{n,\tau}F\bigr),
 \label{eq:gen-infinite-Fourier}\\
 \shortintertext{with}
 \sum_{n,\tau}\|D_{n,\tau}^2F\|_\infty&<\infty,
 \qquad
 \sum_{n,\tau}\|\rho_{n,\tau}D_{n,\tau}F\|_{L^2(\nu)}<\infty.
 \label{eq:main-generator-summability}
\end{align}
Hence \eqref{eq:gen-infinite-Fourier} converges absolutely in $L^2(\nu)$,
the second-order part converges uniformly absolutely, and $LF\in
L^\infty(\nu)$.
\end{theorem}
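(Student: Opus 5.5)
The identity $L=-D^*D$ is abstract and comes first. Since $\E(F,G)=\langle DF,DG\rangle_{L^2(\nu;\HWP)}$ with $D$ closed and densely defined, the self-adjoint operator associated with $(\E,\Dcal)$ is exactly $D^*D$ (von Neumann's theorem). By \eqref{eq:gen-definition}, $F\in\Dom(L)$ iff $F\in\Dcal$ and $G\mapsto\langle DF,DG\rangle$ is $L^2(\nu)$-continuous on $\Dcal$. That is precisely $DF\in\Dom(D^*)$, and then $-LF=D^*DF$. Because the semigroup of Theorem~\ref{thm:main} is the one generated by $(\E,\Dcal)$, this gives \eqref{eq:gen-DstarD}.

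For the explicit formula, fix $F\in\Ccyl$. I expect $F=\varphi(\ell_1,\dots,\ell_k)$ with $\varphi$ smooth and bounded with bounded derivatives, and $\ell_j$ smooth response observables, for instance Fourier/evaluation functionals of $h$. For these, $D_vF=\sum_j\partial_j\varphi\,D_v\ell_j$ is given by the explicit right-composition variation. Expanding $DF=\sum_{n,\tau}(D_{n,\tau}F)\,e_{n,\tau}$ in the orthonormal basis \eqref{eq:onb}, I would use the product rule for the adjoint: for a bounded $g\in\Dcal$ with $Dg$ bounded and $v\in\Dom(D^*)$, one has $g v\in\Dom(D^*)$ and
\[
 D^*(g v)=g\,D^*v-D_vg=g\rho_{n,\tau}-D_vg .
\]
This follows by testing against $G\in\Ccyl$ and applying the Leibniz rule to $D_v(gG)$, using that $\Ccyl$ is a core. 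With $g=D_{n,\tau}F$ and $v=e_{n,\tau}$, each summand of $D^*DF$ equals $\rho_{n,\tau}D_{n,\tau}F-D^2_{n,\tau}F$. Summing requires $D^*$ applied to an $L^2$-convergent series $\sum_{n,\tau} g_{n,\tau}e_{n,\tau}$. Since $D^*$ is closed, it suffices that the partial sums of $\sum_{n,\tau}D^*(g_{n,\tau}e_{n,\tau})$ converge in $L^2(\nu)$. That is exactly what \eqref{eq:main-generator-summability} gives, and it yields \eqref{eq:gen-infinite-Fourier}.

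The summability is the main obstacle.

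\textbf{Second-order term.} The bound on $\sum_{n,\tau}\|D^2_{n,\tau}F\|_\infty$ should be elementary. $D^2_{n,\tau}F$ involves $\partial^2\varphi\,(D_{n,\tau}\ell)^2$ plus $\partial\varphi\,D^2_{n,\tau}\ell$. For a Fourier-mode observable the first variation along $e_{n,\tau}$ is bounded by $C\alpha_n\cdot\sup|h'|$-type quantities, but $h$ is rough. So the cylinder functionals must be chosen, presumably smeared against smooth test functions, so that $|D_{e}\ell|\lesssim\|e\|_{C^0}$ or $\|e'\|_{C^0}$ uniformly in $h$. Since $\|e_{n,\tau}\|_{C^1}\lesssim n\alpha_n\sim n^{-1/2}$, squares give $n^{-1}$, which is not summable. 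I would therefore aim for $|D_e\ell|\lesssim\|e\|_{C^0}\sim n^{-3/2}$, using that $\ell$ is paired with a smooth test function after an integration by parts. This gives $\sum_n n^{-3}<\infty$ for the squared term and $\sum_n n^{-3/2}<\infty$ for the second variation.

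\textbf{Drift term.} For $\sum_{n,\tau}\|\rho_{n,\tau}D_{n,\tau}F\|_{L^2}$ I would bound it by $\sum_{n,\tau}\|D_{n,\tau}F\|_\infty\|\rho_{n,\tau}\|_{L^2(\nu)}$, which needs a growth estimate on $\|\rho_{n,\tau}\|_{L^2(\nu)}=\|D^*e_{n,\tau}\|$. I would first try to extract a polynomial bound, such as $O(n^{a})$ with $a<1/2$, from the explicit score formula in \cite[Theorem~1.2]{Response}. If the decay of $D_{n,\tau}F$ is insufficient, I would sharpen the cylinder test functions so that $|D_{n,\tau}\ell|$ decays faster, since smooth pairing gives arbitrary polynomial decay $O(n^{-N})$. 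That makes any polynomial growth of $\rho_{n,\tau}$ harmless. Finally, $LF\in L^\infty$ is asserted: it follows once the drift partial sums are shown bounded, or it is read as the second-order part in $L^\infty$ with the drift in $L^2$. I would check which reading the score bounds support.
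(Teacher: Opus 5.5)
Your identification $L=-D^*D$ and your summation step (the adjoint product rule $D^*(gv)=g\rho_v-D_vg$ with $g=D_{n,\tau}F$, then closedness of $D^*$ along $P_NDF\to DF$) are correct. They are equivalent to the paper's argument, which proves $-\langle L_NF,G\rangle_2=\int_X(P_NDF,DG)_{\WP}\,d\nu$ for $G\in\Ccyl$ and lets $N\to\infty$.

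The summability also closes, but the inputs are sharper than you state. On the actual core $F=\Phi(J_{a_1,b_1},\ldots,J_{a_m,b_m})$:
\begin{itemize}
\item The first variation $D_vJ_{a,b}=\frac1{2\pi}\int_\T av\,db_h$ gives $\|D_{n,\tau}F\|_\infty\le C_F\alpha_n$.
\item The second variation $-\frac1{2\pi}\int_\T v(av)'\,db_h$ unavoidably contains $v'$. It gives $C_F(1+n)\alpha_n^2=O(n^{-2})$, not a pure $C^0$ bound in $v$.
\item The score covariance identity \eqref{eq:input-score} gives $\|\rho_{n,\tau}\|_2^2=a_\kappa+2/(n^2-1)$, so there is no growth to absorb.
\end{itemize}
Your fallback of re-choosing test functions to get $O(n^{-N})$ decay is unavailable anyway. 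The core $\Ccyl$ is fixed, and $D_{n,\tau}J_{a,b}(h)$ is $\alpha_n$ times a Fourier coefficient of the measure $a\,db_h$, which need not decay when $h$ is rough.

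The genuine gap is the conclusion $LF\in L^\infty(\nu)$. The theorem asserts this for the full generator, drift included, not only for the second-order part. It does not follow from \eqref{eq:main-generator-summability}, which controls the drift only in $L^2(\nu)$. A termwise $L^\infty$ bound does not close either. The only uniform control of an individual score is through $\rho_{n,c}=\alpha_n\operatorname{Im}t_n$ and $\rho_{n,s}=\alpha_n\operatorname{Re}t_n$, where Cauchy estimates give merely $|t_n|\lesssim C_\kappa n^2$. Then $\sum_{n,\tau}\|\rho_{n,\tau}\|_\infty\|D_{n,\tau}F\|_\infty$ is bounded only by a harmonic series.

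The missing idea is to resum the drift into one field $B(h,\theta)=\sum_{n,\tau}\rho_{n,\tau}(h)e_{n,\tau}(\theta)$. This field is identified with $2\operatorname{Im}A^*_{f_h^\circ}(e^{i\theta})$, the boundary value of the primitive $A_f=\sum_{n\geq2}t_n(f)z^n/(n^3-n)$ of the regular stress $Q_f$.
\begin{itemize}
\item The univalent Schwarzian bound and Koebe's estimate give $\sup_{\mathbb D}(1-|z|^2)^2|Q_f|\le C_\kappa$, uniformly over normalized univalent $f$.
\item Radial integration of $A_f''=z^{-3}\int_0^zw^2Q_f(w)\,dw$ then gives $\|B(h,\cdot)\|_\infty\le C_\kappa$ for every $h$ (Lemma~\ref{lem:stress-primitive} and Proposition~\ref{prop:uniform-drift}).
\item Consequently $\sum_{n,\tau}\rho_{n,\tau}D_{n,\tau}F=\sum_r\partial_r\Phi(J)\,B_{a_r,b_r}$, with $B_{a,b}=\frac1{2\pi}\int_\T a\,B(h,\cdot)\,db_h$ and $|B_{a,b}|\le C_\kappa\|a\|_\infty V_b$.
\end{itemize}
This is what yields $LF\in L^\infty(\nu)$. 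Without this conformal input, your argument proves only $LF\in L^2(\nu)$ with a bounded second-order part.
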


Writing $L_{\rm cyl}:=L|_{\Ccyl}$, the nonnegative operator $-L$ is the
Friedrichs extension of $-L_{\rm cyl}$.

Put $\Gamma(F,G)=(DF,DG)_{\WP}$ and $\Gamma(F)=\Gamma(F,F)$. The same
closed gradient also determines the martingale covariance of the
sample-path dynamics. Domain functions evaluated along paths use
quasi-continuous representatives.

\begin{corollary}\label{cor:fukushima-main}
For every $F\in\Dom(L)$ there is a continuous martingale additive
functional $M^F$ of finite energy such that, for quasi-every initial
state,
\begin{equation}\label{eq:main-Fukushima}
 F(X_t)-F(X_0)
  =M_t^F+\int_0^t LF(X_s)\,ds,
 \qquad
 \langle M^F\rangle_t=2\int_0^t\Gamma(F)(X_s)\,ds,
\end{equation}
for all $t\geq0$, almost surely.
\end{corollary}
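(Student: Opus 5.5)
The corollary follows from Fukushima's decomposition theorem for quasi-regular Dirichlet forms, applied to the diffusion of Theorem~\ref{thm:main}. Quasi-regularity lets us pass, via the quasi-homeomorphism/transfer method of \cite{FOT} (as in Ma--R\"ockner), to a regular Dirichlet form on a locally compact space. Every Fukushima-type statement proved there for regular forms then transfers back to $X$, modulo sets of zero $\CapOne$-capacity.

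\textbf{Step 1: decomposition for $F\in\Dcal$.} For $F\in\Dcal$ with quasi-continuous version $\tilde F$, Fukushima's theorem gives a unique decomposition
\[
 \tilde F(X_t)-\tilde F(X_0)=M^F_t+N^F_t,
\]
where $M^F$ is a martingale additive functional of finite energy and $N^F$ is a continuous additive functional of zero energy.

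\textbf{Step 2: identify $N^F$ for $F\in\Dom(L)$.} By \eqref{eq:gen-definition}, $\E(F,G)=-\langle LF,G\rangle$ for all $G\in\Dcal$, with $LF\in L^2(\nu)$. The standard criterion (\cite{FOT}, Theorem~5.2.4 in the regular setting, transferred) then gives $N^F_t=\int_0^t LF(X_s)\,ds$ for q.e.\ starting point. Here $\int_0^t|LF|(X_s)\,ds<\infty$ a.s.\ because $\nu$ is the symmetrizing measure and $LF\in L^2(\nu)\subset L^1(\nu)$, since $\nu$ is a probability measure.

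\textbf{Step 3: continuity of $M^F$.} The process $X_t$ is a diffusion, so the form has no jump or killing part. Killing is excluded because $T_t\mathbf 1=\mathbf 1$ by \eqref{eq:intro-stationary-reversible}. Hence the Beurling--Deny decomposition reduces to the strongly local part, and martingale additive functionals of finite energy are continuous.

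\textbf{Step 4: the bracket.} By \cite{FOT}, the Revuz measure of $\langle M^F\rangle$ is the energy measure $\mu_{\langle F\rangle}$, characterised by
\[
 \int G\,d\mu_{\langle F\rangle}=2\E(FG,F)-\E(F^2,G)
\]
for bounded $G$. This step is the main obstacle. It requires a chain/Leibniz rule $D(FG)=F\,DG+G\,DF$ for bounded $F,G\in\Dcal$. I would prove it first on $\Ccyl$, where $D$ acts as a derivation on cylinder functions by the construction in \cite{Response}. I would then extend it by closedness of $D$ together with density of $\Ccyl$ in $\Dcal$ (quasi-regularity is built on this core), truncating via normal contractions to keep $F,G$ bounded.

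Granting the Leibniz rule, the right-hand side equals $2\int G\,\Gamma(F)\,d\nu$. So $\mu_{\langle F\rangle}=2\Gamma(F)\,\nu$, which is absolutely continuous. The Revuz correspondence then yields $\langle M^F\rangle_t=2\int_0^t\Gamma(F)(X_s)\,ds$. For unbounded $F\in\Dom(L)$, I would approximate by $F_k=(-k)\vee F\wedge k$. Energy convergence of $M^{F_k}\to M^F$, together with $\Gamma(F_k)=\mathbf 1_{\{|F|<k\}}\Gamma(F)$ (locality), passes the identity to the limit.
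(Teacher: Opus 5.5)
Your outline follows the paper's route. You use the Fukushima decomposition for the quasi-regular form, then continuity of $M^F$ from strong locality and conservativeness on a regular realization; the paper builds that realization explicitly in Section~\ref{sec:compactification} by embedding $X$ into $[-1,1]^{\mathbb N}$ through the separating family. The remaining steps also match: the energy-measure identity $\mu_F=2\Gamma(F)\nu$ from the product rule, truncation for unbounded $F$, and the zero-energy characterization \cite[Theorem~5.2.4]{FOT} for $N^F$. The Leibniz rule is not an obstacle here: it is already derived in Section~\ref{sec:nest} from the closed chain rule quoted from \cite{Response}.

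The step that does not deliver the statement as written is the integrability claim in Step~2.

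\begin{itemize}
\item Stationarity gives $\mathbb E_\nu\int_0^t|LF|(X_s)\,ds=t\|LF\|_{L^1(\nu)}$. So $\int_0^t|LF|(X_s)\,ds<\infty$ holds $\mathbb P_h$-a.s.\ only for $\nu$-a.e.\ $h$.
\item A $\nu$-null set need not be $\CapOne$-null. So this neither makes $\int_0^tLF(X_s)\,ds$ a continuous additive functional defined outside a properly exceptional set, as the characterization theorem requires of its candidate, nor yields ``quasi-every initial state''.
\item The paper closes this with the $1$-potential $u=(1-L)^{-1}|LF|\in\Dcal$. Via the bounded truncations $|LF|\wedge k$, its quasi-continuous version $\widetilde u$ is finite q.e.\ and agrees q.e.\ with $h\mapsto\mathbb E_h\int_0^\infty e^{-s}|LF|(X_s)\,ds$.
\item Hence $\mathbb E_h\int_0^T|LF|(X_s)\,ds\le e^T\widetilde u(h)<\infty$ for q.e.\ $h$. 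Only after this are the zero-energy bound and condition (iii) checked.
\end{itemize}

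The same point is hidden in Step~4, where you write ``the Revuz correspondence then yields''. Before identifying $\int_0^t2\Gamma(F)(X_s)\,ds$ with $\langle M^F\rangle_t$, you must know it is finite for q.e.\ starting point. The paper gets this from $(2\Gamma(F)\wedge k)\nu\le\mu_F$. By additivity and uniqueness of the Revuz correspondence this gives $\int_0^t(2\Gamma(F)\wedge k)(X_s)\,ds\le\langle M^F\rangle_t$. Letting $k\to\infty$ then gives finiteness, and stationarity identifies the Revuz measure of the limit.
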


\subsection{Geometric realization}

The process $X_t$ records the boundary welding, but its continuity
does not by itself give a continuous planar curve. A common Fourier
noise recovered from the boundary martingales lets us control smooth
welding deformations and pass to their rough limit. To retain the
Jordan curve itself, let $Y$ denote the normalized welding triples
$(h,f,g)$ for which $h\in X$, the maps $f$ and $g$ uniformize the two
complementary components,
\[
 f=g\circ h\quad\text{on }S^1,
 \qquad
 g(z)=z+O(z^{-1})\quad(z\to\infty).
\]
Section~\ref{sec:welding-support} gives the precise function-space
realization and topology of $Y$.  Choose the measurable normalized
welding lift $\iota$ supplied by \cite{Response}, extending it
measurably across its $\nu$-null exceptional set, and put
\begin{equation}\label{eq:intro-Jordan-transport}
 \begin{gathered}
 \prX(h,f,g)=h,
 \qquad UF=F\circ\prX,
 \qquad \nu^Y=\iota_*\nu,\\
 \Dcal^Y=U\Dcal,
 \qquad
 \E^Y(UF,UG)=\E(F,G).
 \end{gathered}
\end{equation}

\begin{theorem}\label{thm:full-main}
The response diffusion has a continuous Jordan lift
$(Y_t)_{t\geq0}$ associated with the transported form
$(\E^Y,\Dcal^Y)$.  Outside a properly exceptional set in $Y$,
\begin{equation}\label{eq:intro-Jordan-projection}
 Y_\cdot\in C([0,\infty),Y),
 \qquad
 \bigl(\prX(Y_t)\bigr)_{t\geq0}
 \stackrel{\mathrm{law}}{=}
 (X_t)_{t\geq0}
 \quad\text{started from }\prX(Y_0).
\end{equation}
If $(T_t^Y)_{t\geq0}$ denotes the semigroup of $Y_t$, then for $t\geq0$
and $F,G\in L^2(\nu^Y)$,
\begin{equation}\label{eq:intro-Jordan-reversibility}
 T_t^Y\mathbf 1=\mathbf 1,
 \qquad
 \langle T_t^YF,G\rangle_{L^2(\nu^Y)}
 =\langle F,T_t^YG\rangle_{L^2(\nu^Y)}.
\end{equation}
\end{theorem}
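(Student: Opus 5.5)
The plan is to build the lift through quasi-regularity of $(\E^Y,\Dcal^Y)$ on $Y$ itself, and then identify the projected process with $X_t$ by uniqueness of the associated diffusion.

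First, $U$ is unitary from $L^2(\nu)$ onto $L^2(\nu^Y)$. Indeed, $\prX\circ\iota=\id$ $\nu$-a.e., so $\nu^Y$ is carried by the graph of $\iota$ and every $\nu^Y$-class is of the form $F\circ\prX$. Consequently $(\E^Y,\Dcal^Y)$ is a closed, symmetric, Markovian form unitarily equivalent to $(\E,\Dcal)$. Its semigroup is $T^Y_t=UT_tU^{-1}$, and \eqref{eq:intro-Jordan-reversibility} follows at once from \eqref{eq:intro-stationary-reversible}. Everything therefore reduces to quasi-regularity on the topological space $Y$. For that I need three ingredients:
\begin{itemize}
\item a compact $\E^Y$-nest in $Y$;
\item a family of $\E^Y$-quasi-continuous functions in $\Dcal^Y$ separating points of $Y$;
\item a dense set of quasi-continuous elements.
\end{itemize}

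Second, I handle the separating family and density. The functions $UF$ with $F\in\Ccyl$ are continuous on $Y$ because $\prX$ is continuous, and they are dense in $\Dcal^Y$. They do not separate points off the graph of $\iota$, however. So I would adjoin coordinate functions of $f,g$, namely Taylor coefficients at $0$ and at $\infty$ composed with $\iota$, transported as functions on $Y$. To place these in $\Dcal^Y$, I need that $F_k:=c_k(g_h)$ (the coefficient of the normalized welding map attached to $h$) lies in $\Dcal$ and is quasi-continuous on $X$. This is where the "common Fourier noise" enters. The plan is to approximate $h$ by smooth weldings obtained from finitely many Fourier modes, write the response derivative $D_{n,\tau}F_k$ via the variational (Schiffer/Beltrami) formula for conformal welding, and bound $\sum_{n,\tau}\|D_{n,\tau}F_k\|^2$ using the $\HWP$ normalization $\alpha_n$. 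One then passes to the rough limit by closedness of $D$.

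Third, the compact nest, which I expect to be the main obstacle. I take the nest $(K_m)$ of Theorem~\ref{thm:main} and intersect its lift with sets $\widetilde K_m\subset Y$ on which $f$ and $g$ satisfy uniform planar estimates preventing boundary collapse. Concretely, I impose a lower bound on the conformal radius $|f'(0)|\ge 1/m$, together with uniform moduli of continuity of $f,g$ on the closed disc and its exterior, controlled by the harmonic-measure/arc-mass bounds already encoded in $K_m$. The goal is that $\widetilde K_m$ is compact in $Y$ by Arzel\`a--Ascoli, with Carath\'eodory kernel convergence keeping the limit Jordan and still satisfying $f=g\circ h$.

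The capacity estimate $\operatorname{Cap}_1^Y(Y\setminus\widetilde K_m)\to0$ would come from a dominating function. I would build $\Phi_m=\phi_m(\Psi)$ with $\Psi$ a functional of $h$ in $\Dcal$, for instance a smoothed inverse modulus of continuity of $g_h$ computed from the Fourier-noise approximations. The bounds $\E(\Phi_m)\to0$ and $\Phi_m\ge1$ quasi-everywhere off $\widetilde K_m$ should follow as in Section~\ref{sec:nest} for the inverse arc masses. The hardest part will be proving that the modulus-of-continuity functional has a finite-energy response gradient for rough $h$, since $g_h$ depends nonlinearly on $h$ and $\kappa\le4$ only gives H\"older control. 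I would attempt it via distortion estimates for quasiconformal extensions of the smooth approximants, uniform in the mode truncation.

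With quasi-regularity, \cite{FOT} (via the Ma--R\"ockner transfer to a regular form) gives a $\nu^Y$-symmetric diffusion $Y_t$ with continuous paths off a properly exceptional set. The projection $\prX(Y_t)$ is then a continuous process whose transition semigroup equals $T_t$ quasi-everywhere, because $U$ intertwines the semigroups and $\prX$ pushes $\E^Y$-exceptional sets to $\E$-exceptional sets. Uniqueness of the diffusion associated with a quasi-regular form up to equivalence then yields \eqref{eq:intro-Jordan-projection}.
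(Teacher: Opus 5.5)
Your outer architecture matches the paper. Unitarity of $U$, $T_t^Y=UT_tU^{-1}$ and the reversibility identities are exactly as there, and so is the final step of quasi-regularity plus an intertwining argument. For that last step, the direction you need is $\operatorname{Cap}_1^Y(\prX^{-1}A)\leq\CapOne(A)$, so that quasi-every $y$ projects to an admissible starting point in $X$. You also need preservation of quasi-continuity under pullback and the kernel identity $(\prX)_*P_t^Y(y,\cdot)=P_t^X(\prX(y),\cdot)$. Uniqueness of the associated diffusion cannot be applied to $\prX(Y_t)$ before you know it is Markov with these kernels.

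\textbf{The gap.} Both substantive ingredients are left to a static energy argument that you do not supply and that has no evident mechanism. These are the compact $\E^Y$-nest, and membership in $\Dcal$ of functionals of the conformal pair.
\begin{enumerate}
\item Functions such as $c_k(g_h)$ or a smoothed inverse modulus of $g_h$ are defined only $\nu$-a.e., through the measurable lift $\iota$. They are not continuous on $X$, because conformal welding is discontinuous in the uniform topology.
\item Computing derivatives along finite-mode source flows would not by itself place such a function in the $\|\cdot\|_{\E_1}$-closure of $\Ccyl$. That is a Markov-uniqueness ($H=W$) type statement, and it is not available here. Nor is there a family of smooth approximants of $h$ along which $g_h$ converges in a controlled way.
\item The common Fourier noise is a process-level object, reconstructed from the martingale parts of $X_t$. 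It cannot be used to define a static functional $\Psi(h)$.
\item The arc-mass constraints defining $K_m$ control only $h$ and $h^{-1}$. They give no information about the planar moduli of $f_h,g_h$.
\end{enumerate}
So your third step restates what must be proved rather than proving it, as you partly acknowledge.

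\textbf{The missing idea.} Construct the geometric process first, and read both ingredients off it.
\begin{enumerate}
\item The paper drives the smooth flows $\Psi^r$ with the common noise. It proves drift and bracket bounds for $g_t^r$ that are uniform in $r$ and in the initial pair.
\item From these it derives two-point separation tails, bi-H\"older control and tightness. This yields a continuous lift $Y_t=(X_t,f_t,g_t)$ with $Y_t=\iota(X_t)$ almost surely at each fixed time.
\item The compact nest then comes directly from tightness. Choose compact $A_m\subset C([0,m],Y)$ with path-probability at least $1-2^{-m}$, and use $\CapOne(O)=\mathbb E e^{-\sigma_O}$. This gives $\CapOne(X\setminus\prX(C_m))\leq2^{-m}+e^{-m}$ with no energy computation at all.
\item The separating functions $\arctan(\operatorname{Re}g(\zeta_j))$ and $\arctan(\operatorname{Im}g(\zeta_j))$ enter $\Dcal^Y$ through the spectral criterion. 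The small-time bound $\mathbb E|g_t(\zeta)-g_0(\zeta)|^2\leq C(t+t^2)$ gives $\sup_{t\leq T_*}t^{-1}\langle H-T_tH,H\rangle_{L^2(\nu)}<\infty$, hence $H\in\Dcal$.
\end{enumerate}
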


\subsection{Removability}

Let $W_{\rm rem}\subset X$ be the weldings of conformally removable
Jordan curves.  For the static SLE welding law,
$\nu(W_{\rm rem})=1$ for $0<\kappa\leq4$
\cite{RS,JS,KMS}.  The pathwise question is stronger: is the complement
exceptional for the Dirichlet form?  In the subcritical range it is.

\begin{theorem}\label{thm:removability-main}
If $0<\kappa<4$, then
\begin{equation}\label{eq:subcritical-removable-capacity}
 \CapOne(X\setminus W_{\rm rem})=0.
\end{equation}
\end{theorem}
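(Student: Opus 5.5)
The plan is to show that the non-removable locus is contained in a set where some response-cylinder function, or a countable family of them, is forced to equal $+\infty$ quasi-everywhere by a capacity estimate. The subcritical input is a quantitative, integrable form of the static removability results \cite{RS,JS,KMS}. For $0<\kappa<4$, SLE$_\kappa$ curves are boundaries of H\"older domains. Quantitatively, the conformal maps $f,g$ of the welding lift satisfy a H\"older modulus with random constant $C(h)$, and $C$ has moments of all orders under $\nu$. This follows from the moment estimates for the SLE derivative and the strict inequality $\kappa<4$. By Jones--Smirnov, every Jordan curve bounding a H\"older domain on at least one side is conformally removable. Hence
\[
 X\setminus W_{\rm rem}\subset \{h\in X: \Phi(h)=\infty\},
\]
where $\Phi(h)$ is a suitable H\"older-type functional built from the welding $h$ and its inverse alone. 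We would take $\Phi$ to be a supremum over dyadic arcs $I$ of $\log(|I|/|h(I)|)/\log(1/|I|)$, suitably modified. This is possible because a quasi-symmetric-type distortion bound on $h^{\pm1}$ on all scales, combined with the planar estimates from Section~\ref{sec:welding-support} that prevent boundary collapse, controls the H\"older exponent of the uniformizing maps.

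The key steps, in order, are as follows.

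First, we would write $\Phi=\sup_m \Phi_m$, where each $\Phi_m$ is a finite maximum of functions of inverse arc masses $|h(I)|^{-1}$ and $|h^{-1}(I)|^{-1}$ at scale $2^{-m}$. These are the same building blocks used for the compact nest in Theorem~\ref{thm:main}, so each truncation $\Phi_m$ is continuous on $X$ and lies in $\Dcal$, with an explicit bound on $\Gamma(\Phi_m)$.

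Second, we would bound $\E(\Phi_m)$ uniformly in $m$ after a concave change of variables, for example by using $\psi(\Phi)$ with $\psi$ growing slowly. The response derivative of an arc mass $|h(I)|$ in direction $e_{n,\tau}$ is the integral of $e_{n,\tau}'\circ h$ against $h$ over $I$, which is bounded by $\|e_{n,\tau}'\|_\infty |h(I)|$ up to the $\HWP$ normalization. Summing the squares over $n$ with weights $\alpha_n^2 n^2$ diverges only logarithmically, and this loss is absorbed by the scale factor $1/\log(1/|I|)$. Consequently $\Gamma(\log|h(I)|)$ is bounded by $O(\log(1/|I|))$ times a local quantity, and the lattice maximum over arcs is handled via $\Gamma(\max_i u_i)\le\max_i\Gamma(u_i)$.

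Third, we would apply the standard capacity bound $\CapOne(\{\tilde u>\lambda\})\le \lambda^{-2}\|u\|_{\E_1}^2$ for quasi-continuous $u\in\Dcal$, with $u=\psi(\Phi)$. This uses closedness of the form and monotone convergence $\psi(\Phi_m)\uparrow\psi(\Phi)$ with bounded $\E_1$-norms, so that $\psi(\Phi)\in\Dcal$. Since $u<\infty$ quasi-everywhere, the set $\{\Phi=\infty\}$ has zero capacity, and so does $X\setminus W_{\rm rem}$.

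The main obstacle is the second step. We need the $L^2(\nu)$ integrability of $\Gamma(\psi(\Phi))$, and this requires \emph{moment} bounds on the multiscale distortion of the SLE welding, not merely almost-sure finiteness. This is where $\kappa<4$ enters. At $\kappa=4$ the welding is only log-H\"older and the moments degenerate, consistent with the theorem excluding that case. As a first attempt we would import the known integrability of the H\"older constant of SLE$_\kappa$ uniformizing maps for $\kappa<4$, and then transfer it to $h^{\pm1}$ by composing through the lift $\iota$.
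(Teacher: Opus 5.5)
Your argument breaks at its first step, the inclusion $X\setminus W_{\rm rem}\subset\{\Phi=\infty\}$ for a functional $\Phi$ of $h$ and $h^{-1}$ alone. The goal is a capacity statement, and $\nu$-null sets need not be exceptional. So this inclusion must hold for individual weldings, not merely $\nu$-almost everywhere. What you need is therefore a deterministic welding theorem.

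A uniform quasisymmetric distortion bound on $h^{\pm1}$ would give a quasicircle and hence removability. But it fails $\nu$-almost surely, since SLE loops are not quasicircles. The scale-normalized H\"older-exponent functional you actually write down is $\nu$-a.s.\ finite. However, I know of no result that turns bi-H\"older control of a circle homeomorphism into existence of a welding curve, let alone one bounding a H\"older domain. If such a result existed, existence and uniqueness for the bi-H\"older chaos weldings of Astala--Jones--Kupiainen--Saksman would be immediate; there it required a probabilistic Lehto-type estimate.

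The estimates of Section~\ref{sec:welding-support} cannot fill this gap. They are two-point bounds, conditional on $\mathcal G_0$ and with random constants $H_r$, for the regularized motion of an \emph{already given} removable pair $(f_0,g_0)$. They describe how a known curve moves; they do not recover a curve from $h$.

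Two smaller points. First, in the actual-right convention \eqref{eq:actual-right-derivative}, the forward length of a source arc $(a,b)$ has derivative $\tilde h'(b)v(b)-\tilde h'(a)v(a)$ for an increasing lift $\tilde h$. This does not exist for rough weldings. Your formula $\int_I e_{n,\tau}'\circ h\,dh$ is the left-composition derivative, and only the target inverse masses $p_J$ carry the gradient of Lemma~\ref{lem:arc-gradient}. This is repairable, since upper and lower power bounds on $p_J$ encode H\"older continuity of $h^{-1}$ and $h$ respectively. Second, moments are not the obstacle. Lemma~\ref{lem:critical-kernel} gives $\Gamma(\log p_J)\leq C\log(e/p_J)$, so H\"older-exponent suprema $\Phi$ of inverse masses satisfy $\Gamma(\Phi)\leq C(1+\Phi)^2$. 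A slowly growing $\psi$ then needs only $\nu$-a.s.\ finiteness of $\Phi$. That finiteness does follow from H\"older continuity of $f$ and $g$ together with Beurling's estimate. Even repaired, your plan would show only that non-bi-H\"older weldings are exceptional, and it stalls at the welding implication.

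The paper avoids the welding problem by keeping the curve in the state space. It starts the Jordan-lifted diffusion from $\nu^Y$ and combines three facts:
\begin{enumerate}
\item Static regularity: $[g]_{\beta_\kappa}<\infty$ for $\nu^Y$-a.e.\ triple (Lemma~\ref{lem:subcritical-static-Holder}). This is where $\kappa<4$ enters, through the positive Rohde--Schramm exponent.
\item Propagation along the lift: $[g_0]_\beta<\infty\Rightarrow\sup_{t\leq T_*}[g_t]_{\beta/2}<\infty$ (Corollary~\ref{cor:geometric-lift-estimates}).
\item Stationarity on time blocks, which applies the first two facts on every block.
\end{enumerate}
Together these keep the stationary path in $W_H=\prX(Y_H)$. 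This set is Borel and lies in $W_{\rm rem}$ by Jones--Smirnov (Lemma~\ref{lem:Holder-locus}). Hence $\mathbb P(\sigma_K<\infty)=0$ for every compact $K\subset X\setminus W_H$, so $\CapOne(K)=0$ by \eqref{eq:stationary-capacity-compact}. Choquet capacitability then gives $\CapOne(X\setminus W_{\rm rem})\leq\CapOne(X\setminus W_H)=0$.
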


At $\kappa=4$ the Jordan realization of Theorem~\ref{thm:full-main}
still holds, but the capacity conclusion of
Theorem~\ref{thm:removability-main} is not proved here.

\subsection{Irreducibility and equilibrium}

Returning to the response form, we ask whether the diffusion has
distinct invariant components. The argument below applies throughout
$0<\kappa\leq4$, independently of the preceding subcritical
removability result.

\begin{theorem}\label{thm:ergodicity}
The zero-energy space and the kernel of the generator satisfy
\begin{align}
 \{F\in\Dcal:DF=0\}
 &=\ker L
 =\{F\in L^2(\nu):F=c\ \nu\text{-a.e. for some }c\in\mathbb R\},
 \label{eq:intro-zero-energy-kernel}\\
 \shortintertext{and, for every $F\in L^2(\nu)$,}
 \lim_{t\to\infty}
 \left\|T_tF-\int_X F\,d\nu\right\|_{L^2(\nu)}&=0.
 \label{eq:ergodic-L2}
\end{align}
Hence the response form is irreducible and the stationary diffusion is
ergodic.
\end{theorem}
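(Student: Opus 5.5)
The three sets in \eqref{eq:intro-zero-energy-kernel} are related by abstract facts, and the only real content is that a zero-energy function must be constant. First, since $L=-D^*D$ by Theorem~\ref{thm:generator-main}, $F\in\ker L$ gives $\E(F,F)=-\langle LF,F\rangle=0$, hence $DF=0$. Conversely, $DF=0$ puts $DF$ trivially in $\Dom(D^*)$, so $LF=-D^*0=0$. Constants lie in $\Dcal$ with $D\mathbf 1=0$, because $T_t\mathbf 1=\mathbf 1$ in Theorem~\ref{thm:main}. It therefore remains to show that $DF=0$ forces $F$ to be $\nu$-a.e.\ constant.

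\textbf{Step 1 (vanishing of all directional derivatives).} From $DF=0$ we get $D_{n,\tau}F=0$ for every Fourier mode. I would reinterpret this through the integration-by-parts identity of \cite{Response}. For every $G\in\Ccyl$ and every finite combination $v$ of the $e_{n,\tau}$,
\[
 0=\int (DF,v)_{\WP}\,G\,d\nu=\int F\,\bigl(-D_vG+G\,D^*v\bigr)\,d\nu .
\]
In other words, the signed measure $F\,d\nu$ is annihilated by the first-order operators $G\mapsto D_vG-(D^*v)G$. These operators are the generators of the quasi-invariant flows $h\mapsto h\circ\phi^v_s$, where $\phi^v_s$ is the flow of the smooth vector field $v$ acting by right composition; that quasi-invariance is the content of the response calculus and of \cite{FS}. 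Integrating in $s$ shows that $F\circ(\cdot\circ\phi^v_s)=F$ $\nu$-a.e.\ for every $s$ and every trigonometric $v$. The integration is justified by differentiating $s\mapsto\int F(h\circ\phi_s^v)G(h)\,d\nu$, which uses the Radon--Nikodym cocycle with density controlled by $\rho_v$. Hence $F$ is invariant, modulo null sets, under the group $\mathcal G$ generated by these flows. Together with rotations, this group is dense in $\mathrm{Diff}^\infty(S^1)$ acting on the right.

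\textbf{Step 2 (ergodicity of the right action).} I need that $\nu$ is ergodic under right composition by $\mathcal G$. I expect this to be the main obstacle. The plan is as follows. Take an invariant set $A$ and approximate $\mathbf 1_A$ in $L^1(\nu)$ by a cylinder function $G$ depending on $h$ only through finitely many values $h(z_1),\dots,h(z_k)$ together with the inverse data. Right composition by diffeomorphisms $\phi$ moves the points $z_j$ arbitrarily, and the push-forward densities remain in $L^p$, uniformly for $\phi$ in a neighbourhood of a fixed diffeomorphism. Choose $\phi$ that compresses a tiny arc containing all the $z_j$ into a complementary region. Then invariance gives $\nu(A)\approx\int G\,(G\circ R_\phi)\,d\nu$. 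By the short-range decorrelation of the SLE welding law across disjoint arcs, which follows from the conformal welding of independent quantum disks and Markov properties used in \cite{Response}, this integral is close to $(\int G)^2$. The conclusion is $\nu(A)=\nu(A)^2$.

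If this mixing estimate proves too delicate, I would fall back on a 0--1 law. The marked welding law is the push-forward of a Liouville quantum gravity pair whose invariant $\sigma$-field under the full diffeomorphism action is trivial. One route is the tail triviality of the underlying free field, obtained by transporting $A$ to an event measurable with respect to the field restricted to arbitrarily small boundary neighbourhoods.

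\textbf{Step 3 (convergence to equilibrium).} Once $\ker L$ consists of the constants, I would apply the spectral theorem to $-L\ge0$: $T_tF=\int_{[0,\infty)}e^{-\lambda t}\,dE_\lambda F\to E_{\{0\}}F$ in $L^2(\nu)$ by dominated convergence. The projection $E_{\{0\}}$ onto $\ker L$ is $F\mapsto\int F\,d\nu$, which proves \eqref{eq:ergodic-L2}. Irreducibility follows because an invariant set $A$ has $\mathbf 1_A\in\ker L$, hence $\nu(A)\in\{0,1\}$. Ergodicity of the stationary diffusion is the standard consequence of this $L^2$ convergence.
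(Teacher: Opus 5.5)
Your Step~1 and Step~3 are sound and agree with the paper. Closedness of the flow generator $K_v$ turns $DF=0$ into $U_t^vF=F$, and the spectral theorem turns $\ker L=\mathbb R\mathbf 1$ into \eqref{eq:ergodic-L2}. (One small reversal: $D\mathbf 1=0$ holds because $\mathbf 1\in\Ccyl$; $T_t\mathbf 1=\mathbf 1$ is a consequence of this, not a reason for it.)

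\textbf{The gap is Step~2.} It carries the whole content of the theorem, and neither route you sketch is an argument.
\begin{itemize}
\item No decorrelation estimate for the welding law across disjoint arcs is available in \cite{Response}. The one you need is actually false for part of the class you propose to use. Take an inverse-data cylinder $G(h)=\Phi(h^{-1}(w))$ with $\Phi\geq0$ a bump. Right composition does not move target labels, so $(G\circ R_\phi)(h)=\Phi(\phi^{-1}(h^{-1}(w)))$. Hence $G\,(G\circ R_\phi)\equiv0$ whenever $\phi^{-1}$ moves $\operatorname{supp}\Phi$ off itself. On the other hand, $(\int G\,d\nu)^2>0$ by the uniform mark.
\item For forward values $h(z_j)$, the common rotational mark correlates $h$ on any two arcs, so near-independence is not to be expected there either.
\item The transfer step is circular. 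The points $z_j$ of a good approximant $G$ are spread over the circle. To place them in a tiny arc you must first compose with a strongly compressing $\psi$ and push $\|\mathbf 1_A-G\|_{L^1(\nu)}<\varepsilon$ through $R_\psi$. That needs a bound on $d(R_\psi)_*\nu/d\nu$. The likelihood formula only gives $\exp(|t|\,\|\rho_v\|_\infty)$, which degenerates as the compression grows, and $\psi$ depends on $G$, hence on $\varepsilon$.
\item The free-field tail-triviality fallback has the same defect. Nothing shows that an event invariant under source reparametrizations is measurable with respect to the field near the boundary.
\end{itemize}

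\textbf{What the paper does instead.} It avoids any mixing statement and argues algebraically.
\begin{itemize}
\item It tests the flow invariance against the coefficient observables $H=a_\infty^{-j}p\,\overline{a_\infty^{-\ell}q}$, $p,q\in\mathcal P$, of the intrinsic conformal pair (Lemma~\ref{lem:zero-energy-coefficient-ibp}).
\item These have explicit responses $D_n^+=i\ell_n$ and $D_n^-=i(\ell_{-n}+jb_n)$, and the score contributes $t_n=b_n+\frac{c_{\mathrm L}}{12}s_n$. Invariance therefore becomes the intertwining relation $B^F_{j,\ell}(L^{(j)}_np,q)=B^F_{j,\ell}(p,L^{(\ell)}_{-n}q)$ for $|n|\geq2$. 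Commutators extend it to all $n$.
\item Cyclicity of $\mathcal P$, that is, irreducibility of the Verma module $V(c_{\mathrm L},j+1)$ for $c_{\mathrm L}\geq25$, then forces $B^F_{j,\ell}=0$ for $j\ne\ell$ and $B^F_{0,0}=c_FB^{\mathbf 1}_{0,0}$ (Lemma~\ref{lem:coefficient-pairing-uniqueness}).
\item Fourier analysis in the mark $\chi=e^{i\alpha}$ and Stone--Weierstrass on the compact schlicht class then give $F=c_F$.
\end{itemize}
The inverse powers $a_\infty^{-j}$ are exactly what detect the Fourier modes of the mark. That is the global rotational degree of freedom your local decorrelation picture cannot handle.
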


The rest of the paper is organized as follows.
Section~\ref{sec:nest} constructs the compact nest and proves the
quasi-regular realization. Section~\ref{sec:generator} identifies the
generator and establishes the Fukushima decomposition.
Section~\ref{sec:boundary-dynamics} derives the common-noise boundary
dynamics. Section~\ref{sec:welding-support} establishes the planar
characteristics and two-point estimates, and
Section~\ref{sec:Jordan-realization} constructs the continuous Jordan
realization. Section~\ref{sec:removable-support} proves the subcritical
capacity statement. Section~\ref{sec:ergodicity} proves irreducibility
and convergence to equilibrium.

\section{The compact nest}
\label{sec:nest}

This section proves Theorem~\ref{thm:main}.  We first identify a countable
continuous response core, then control inverse arc masses, construct two
vanishing-energy cutoffs, and assemble them into a compact form nest.

A finite real response field satisfies
\begin{align}
 v(\theta)&=\sum_{|n|\geq2}v_ne^{in\theta},
 \qquad v_{-n}=\overline{v_n},\notag\\
 \shortintertext{and}
 \|v\|_{\WP}^2&=2\sum_{n\geq2}(n^3-n)|v_n|^2.
 \label{eq:wp-norm}
\end{align}
For scalar or $\HWP$-valued functions, $\|\cdot\|_2$ denotes the
corresponding $L^2(\nu)$ norm.  For the capacity estimates below, set
\[
 \E_1(F,G)=\E(F,G)+\int_XFG\,d\nu,
 \qquad \|F\|_{\E_1}^2=\E_1(F,F).
\]
For open $O\subset X$ define
\[
 \CapOne(O)=\inf\{\|F\|_{\E_1}^2:F\in\Dcal,
                         \ F\geq1\ \nu\text{-a.e. on }O\},
\]
and use the outer extension for arbitrary sets.  An increasing
sequence of closed sets $(K_m)$ is an $\E$-nest when
\[
 \overline{\bigcup_m\{F\in\Dcal:
            F=0\ \nu\text{-a.e. on }X\setminus K_m\}}
 ^{\|\cdot\|_{\E_1}}=\Dcal.
\]
A statement holds quasi-everywhere if its exceptional set has zero
$\CapOne$-capacity.

\subsection{The response rules and separating coordinates}

For $a\in C^\infty(\T;\R)$ and $b\in C^\infty(S^1;\R)$, set
\begin{equation}\label{eq:J}
 J_{a,b}(h)=\frac1{2\pi}\int_0^{2\pi}
                   a(\theta)b(h(e^{i\theta}))\,d\theta.
\end{equation}
Let $\Ccyl$ be the algebra of functions
$\Phi(J_{a_1,b_1},\ldots,J_{a_m,b_m})$ with
$\Phi\in C_b^\infty(\mathbb R^m)$.  If $\psi_t^v$ is the circle flow
generated by a finite field $v$, then
\begin{equation}\label{eq:actual-right-derivative}
 \partial_v^{\rm act}F(h)
 =\left.\frac{d}{dt}\right|_{t=0}F(h\circ\psi_t^v),
 \qquad F\in\Ccyl,
\end{equation}
and \cite[Theorem~1.1]{Response} gives
$D_vF=\partial_v^{\rm act}F$ on $\Ccyl$.

The actual-right response formula \cite[(3.4)]{Response} is
\begin{equation}\label{eq:right-response}
 D_vJ_{a,b}=-J_{(av)',b}.
\end{equation}
For smooth $\phi$ with bounded derivative and a normal contraction
$\vartheta$ satisfying $\vartheta(0)=0$ and
$|\vartheta(s)-\vartheta(t)|\leq|s-t|$, the closed chain and
contraction rules \cite[(5.18), (5.26)]{Response} give
\begin{equation}\label{eq:closed-calculus-rules}
 D[\phi(F)]=\phi'(F)\,DF,\qquad
 \|\vartheta(F)\|_{\E_1}\leq\|F\|_{\E_1}
 \quad(F\in\Dcal).
\end{equation}
For bounded $F,G\in\Dcal$, choose a smooth function $\chi_{\rm sq}$ with
bounded derivative agreeing with $t^2$ on the ranges of $F+G$ and
$F-G$. The same chain rule then gives the product rule:
\begin{align*}
 FG&=\tfrac14\{\chi_{\rm sq}(F+G)-\chi_{\rm sq}(F-G)\}\in\Dcal,\\
 \shortintertext{and}
 D(FG)&=\tfrac12\{(F+G)(DF+DG)-(F-G)(DF-DG)\}
       =F\,DG+G\,DF.
\end{align*}

\begin{lemma}\label{lem:state-space-core}
The metric $d$ is complete and $X$ contains a countable dense subset.  Moreover,
\begin{align}
 \Ccyl&\subset\Dcal\cap C_b(X),
 \qquad
 \overline{\Ccyl}^{\|\cdot\|_{\E_1}}=\Dcal,
 \label{eq:continuous-core}\\
 \shortintertext{and there exists a sequence $(F_j)_{j\geq1}\subset\Ccyl$ such that}
 \overline{\{F_j:j\geq1\}}^{\|\cdot\|_{\E_1}}
 &=\Dcal,
 \qquad
 h\ne g\Longrightarrow F_j(h)\ne F_j(g)\text{ for some }j.
 \label{eq:countable-separation}
\end{align}
\end{lemma}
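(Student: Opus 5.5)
We treat the three assertions separately: the topology of $(X,d)$, the continuous core property of $\Ccyl$, and the countable separating core.

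\emph{Completeness and separability.} Let $(h_k)$ be $d$-Cauchy. Then $h_k$ and $h_k^{-1}$ are both uniformly Cauchy in $C(S^1,S^1)$, so they converge uniformly to continuous maps $h$ and $g$. Passing to the limit in $h_k\circ h_k^{-1}=\id=h_k^{-1}\circ h_k$, using uniform convergence and equicontinuity of the uniformly convergent sequence, gives $h\circ g=g\circ h=\id$. Hence $h$ is a homeomorphism with inverse $g$. It preserves orientation because degree is stable under uniform limits. Thus $h\in X$ and $d(h_k,h)\to0$.

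For separability, note that $h\mapsto(h,h^{-1})$ embeds $X$ isometrically, up to the sum metric, into $C(S^1,S^1)^2$. This space is separable metric, and subspaces of separable metric spaces are separable. (Alternatively, one can use piecewise-linear homeomorphisms in angle with rational nodes.)

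\emph{The continuous core.} Continuity of $J_{a,b}$ on $X$ is immediate: $|J_{a,b}(h_1)-J_{a,b}(h_2)|\le \|a\|_\infty\|b'\|_\infty\sup|h_1-h_2|$. Hence $\Ccyl\subset C_b(X)$. Membership $\Ccyl\subset\Dcal$ follows from \cite[Theorem~1.1]{Response}, which identifies $D$ with $\partial^{\rm act}$ on $\Ccyl$; this presupposes that $\Ccyl$ lies in the domain.

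Density of $\Ccyl$ in $\Dcal$ should also be taken from \cite{Response}, where $D$ is constructed as the closure of its restriction to the cylinder algebra. If $\Dom(D)$ there is defined as the closure of a smaller class, for instance polynomials in the $J_{a,b}$ or smooth functions of finitely many Fourier coordinates, then one approximates those elements in $\E_1$. On bounded ranges, $\Phi\in C_b^\infty$ can replace a polynomial $P$ by agreeing with it there: the $J_{a,b}$ are bounded, so choose $\Phi=P$ on the range of the vector $(J_{a_i,b_i})$. Then equality, not mere approximation, holds.

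\emph{Countable separating subfamily.} We use that $\Dcal$ is a separable Hilbert space under $\E_1$. It is a subspace of the separable space $L^2(\nu)\times L^2(\nu;\HWP)$ via $F\mapsto(F,DF)$, because $\nu$ is a Borel probability on a separable metric space and $\HWP$ is separable. Hence the dense set $\Ccyl$ contains a countable $\E_1$-dense subset $(G_j)$.

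To separate points, adjoin the countable family
\[
 H_{p,q}=\arctan J_{a_p,b_q},
\]
where $a_p$ runs over trigonometric polynomials with rational coefficients and $b_q$ over $\cos k\phi$ and $\sin k\phi$. Each $H_{p,q}$ lies in $\Ccyl$, with $\Phi(x)=\arctan x$ bounded smooth. If $h\ne g$ in $X$, then $b\circ h\ne b\circ g$ as continuous functions for some trigonometric $b$, since trigonometric polynomials separate points of $S^1$. The Fourier coefficients of the continuous function $b\circ h-b\circ g$ then do not all vanish, so some $a_p=\cos n\theta$ or $\sin n\theta$ gives $J_{a_p,b}(h)\ne J_{a_p,b}(g)$. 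Since $\arctan$ is injective, $H_{p,q}(h)\ne H_{p,q}(g)$.

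Enumerating $\{G_j\}\cup\{H_{p,q}\}$ as $(F_j)$ gives both requirements of \eqref{eq:countable-separation}. Adding elements of $\Dcal$ preserves density, so the closure is still $\Dcal$.

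The main obstacle is the density $\overline{\Ccyl}^{\E_1}=\Dcal$, which relies entirely on how $\Dom(D)$ was defined in \cite{Response}. If $D$ there was closed from a different core, one needs a bounded-convergence argument. Concretely, approximate each core element, with its gradient, by $\Ccyl$ elements, using the closed chain rule \eqref{eq:closed-calculus-rules} and the response rule \eqref{eq:right-response} to control $D$ of the approximants in $L^2(\nu;\HWP)$.
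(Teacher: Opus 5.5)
Your proposal is correct and follows essentially the same route as the paper. In both, the embedding $h\mapsto(h,h^{-1})$ into $C(S^1,S^1)^2$ gives completeness and separability, \cite[Theorem~1.1]{Response} gives $\Ccyl\subset\Dcal$ and its form density, separability of $L^2(\nu)\oplus L^2(\nu;\HWP)$ yields a countable dense subset of $\Ccyl$, and Fourier coefficients of $b\circ h-b\circ g$ separate points.

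The differences are cosmetic. You prove completeness directly with Cauchy sequences rather than via closedness of the image. You use $\Phi=\arctan\in C_b^\infty(\mathbb R)$ instead of a cutoff $\chi_A$ equal to the identity on the bounded range. Your Lipschitz bound for $J_{a,b}$ is missing a harmless factor $\pi/2$ from converting arc length to chord distance; alternatively, use the modulus of continuity of $b$ as the paper does.
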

\begin{proof}
Since $S^1$ is compact metric, the uniform metric makes
$C(S^1,S^1)$ complete and separable.  Embed $X$ into
$C(S^1,S^1)^2$ by $h\mapsto(h,h^{-1})$.  If
$(h_j,q_j)\to(h,q)$ uniformly and
$h_j\circ q_j=q_j\circ h_j=\id$, then, with $\omega_f$ denoting the
modulus of continuity of $f$,
\begin{align*}
 \|h\circ q-\id\|_\infty
 &\leq\omega_h(\|q-q_j\|_\infty)+\|h-h_j\|_\infty
 \longrightarrow0,\\
 \shortintertext{and symmetrically}
 \|q\circ h-\id\|_\infty&\longrightarrow0.
\end{align*}
For all sufficiently large $j$,
$\sup_{z\in S^1}|h_j(z)-h(z)|<2$, so
\begin{align*}
 H_j(s,z)&=
 \frac{(1-s)h_j(z)+s h(z)}{|(1-s)h_j(z)+s h(z)|},
 \qquad 0\leq s\leq1,\\
 \shortintertext{is a homotopy from $h_j$ to $h$, and hence}
 \deg h&=\deg h_j=1.
\end{align*}
Thus the image of $X$ is closed in the complete separable product
$C(S^1,S^1)^2$.  The induced metric $d$ is complete.  Since a separable
metric space has a countable base and subspaces inherit such a base,
$X$ contains a countable dense subset.

For the averaged coordinates,
\begin{align*}
 |J_{a,b}(h)|&\leq\|a\|_\infty\|b\|_\infty,\\
 \shortintertext{and}
 |J_{a,b}(h)-J_{a,b}(g)|
 &\leq\|a\|_\infty\|b\circ h-b\circ g\|_\infty\\
 &\leq\|a\|_\infty\,
       \omega_b(\|h-g\|_\infty)
 \longrightarrow0.
\end{align*}
Hence $J_{a,b}\in C_b(X)$.  Set
\[
 b_1(z)=\operatorname{Re}z,
 \qquad b_2(z)=\operatorname{Im}z,
 \qquad
 a\in\{1,\cos n\theta,\sin n\theta:n\geq1\}.
\]
If all corresponding coordinates agree for $h$ and $g$, then
\begin{align*}
 \int_0^{2\pi}e^{in\theta}
 (b_\ell\circ h-b_\ell\circ g)(e^{i\theta})\,d\theta&=0
 \qquad(n\in\mathbb Z,\ \ell\in\{1,2\}),\\
 \shortintertext{hence, by Fourier completeness and continuity,}
 b_\ell\circ h&=b_\ell\circ g\quad(\ell=1,2),
 \qquad h=g.
\end{align*}
These coordinates form a countable point-separating family.  Each lies
in $\Ccyl$: for
$A=\|a\|_\infty\|b\|_\infty$, choose
$\chi_A\in C_b^\infty(\mathbb R)$ with
$\chi_A(t)=t$ on $[-A,A]$.

By construction of the closed response gradient in \cite[Theorem~1.1]{Response},
$D$ is the closure of its restriction to $\Ccyl$.  Therefore
\begin{equation}\label{eq:core-graph-norm}
 \|F-G\|_{\E_1}^2
 =\|F-G\|_2^2+\|DF-DG\|_2^2,
 \qquad
 \overline{\Ccyl}^{\|\cdot\|_{\E_1}}=\Dcal.
\end{equation}
Because the Borel $\sigma$-algebra of the separable metric space $X$
is countably generated and $\nu$ is finite, $L^2(\nu)$ is separable;
the completion of $\nu$ does not change this $L^2$ space.  The Fourier
basis \eqref{eq:onb} makes $\HWP$ separable, hence
$L^2(\nu;\HWP)$ is separable.  Thus
\[
 F\longmapsto(F,DF)
\]
embeds $(\Dcal,\|\cdot\|_{\E_1})$ isometrically into the separable
space $L^2(\nu)\oplus L^2(\nu;\HWP)$.  Choose a countable
$\|\cdot\|_{\E_1}$-dense subset of $\Ccyl$, adjoin the countable
separating coordinates above, and enumerate the union as $(F_j)$.  This gives \eqref{eq:countable-separation}.
\end{proof}

Enlarging the sequence $(F_j)$ within $\Ccyl$ if necessary, we henceforth
assume that it contains $\mathbf 1$ and enumerates the countable algebra over
$\mathbb Q$ generated by the preceding dense separating family. It
remains form dense and point separating. Since $X$ is a complete
separable metric space, the sigma-algebra generated by $(F_j)$ is its
Borel sigma-algebra; the functional monotone-class theorem therefore
shows that $(F_j)$ determines Borel probability measures on $X$.

\subsection{Inverse arc masses and the critical kernel}

Inverse arc masses control both equicontinuity and strict increase of
subsequential limits.

For a proper positively oriented arc $I=(e^{i\alpha},e^{i\beta})$, define
\begin{equation}\label{eq:arc-mass}
 p_I(h)=\frac1{2\pi}\operatorname{Leb}(h^{-1}(I)).
\end{equation}
Choose compatible inverse lifts $x=x(h)=h^{-1}(\alpha)$ and
$y=y(h)=h^{-1}(\beta)$ with $0<y-x<2\pi$. Then
$p_I=(y-x)/(2\pi)$.  In particular $0<p_I<1$.  If $h_j\to h$ in
$d$, then $h_j^{-1}\to h^{-1}$ uniformly, so the two inverse endpoint
images converge and
\[
 p_I(h_j)\longrightarrow p_I(h).
\]

\begin{lemma}\label{lem:arc-gradient}
For every proper oriented arc $I$,
\[
 p_I\in\Dcal\cap C_b(X).
\]
There is a continuous map $G_I:X\to\HWP$ such that
$Dp_I=G_I$ in $L^2(\nu;\HWP)$ and, for every finite response field $v$,
\begin{align}
 (G_I(h),v)_{\WP}&=\frac{v(x)-v(y)}{2\pi},
 \label{eq:arc-derivative}\\
 \shortintertext{and}
 \Gamma(p_I)(h)&=\frac1{\pi^2}
 \sum_{n\geq2}\frac{1-\cos(n(y-x))}{n^3-n}
 \quad\text{for $\nu$-a.e. }h.
 \label{eq:arc-gamma}
\end{align}
\end{lemma}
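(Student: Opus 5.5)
We approximate $p_I$ by averaged coordinates and pass to the limit using closedness of $D$. Write $p_I(h)=\frac1{2\pi}\int_0^{2\pi}\mathbf 1_I(h(e^{i\theta}))\,d\theta$. Choose smooth $b_k:S^1\to[0,1]$ increasing to $\mathbf 1_I$ pointwise on $I$ and vanishing off $I$; we can take $b_k$ equal to $1$ on $I$ minus $1/k$-neighbourhoods of the endpoints. Set $a\equiv1$, so $J_{1,b_k}(h)=\frac1{2\pi}\int b_k(h(e^{i\theta}))\,d\theta$. Composing with a cutoff $\chi$ equal to the identity on $[0,1]$ puts $J_{1,b_k}$ in $\Ccyl$. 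By dominated convergence $J_{1,b_k}(h)\to p_I(h)$ for every $h$, boundedly, hence in $L^2(\nu)$. Continuity of $p_I$ in $d$ was already observed before the lemma, so $p_I\in C_b(X)$.

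For the gradient we use \eqref{eq:right-response} with $a=1$: $D_vJ_{1,b_k}=-J_{v',b_k}=-\frac1{2\pi}\int v'(\theta)b_k(h(e^{i\theta}))\,d\theta$. As $k\to\infty$ this tends to $-\frac1{2\pi}\int_x^y v'(\theta)\,d\theta=\frac{v(x)-v(y)}{2\pi}$, because $h^{-1}(I)$ is the arc $(x,y)$ in the inverse lifts. This identifies the candidate $G_I(h)$ as the Riesz representative in $\HWP$ of the functional $v\mapsto (v(x)-v(y))/(2\pi)$. In the basis \eqref{eq:onb}, its coordinates are $\alpha_n(\cos nx-\cos ny)/(2\pi)$ and $\alpha_n(\sin nx-\sin ny)/(2\pi)$. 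Since $\alpha_n^2\sim 2n^{-3}$ is summable, point evaluations are bounded on $\HWP$ and $G_I(h)\in\HWP$. Summing squares gives
\[
\|G_I(h)\|_{\WP}^2=\frac1{4\pi^2}\sum_{n\geq2}\alpha_n^2\,\bigl|e^{inx}-e^{iny}\bigr|^2=\frac1{\pi^2}\sum_{n\ge2}\frac{1-\cos n(y-x)}{n^3-n},
\]
which is \eqref{eq:arc-gamma}. Continuity of $h\mapsto G_I(h)$ follows from continuity of $x(h)$ and $y(h)$ under $d$ together with uniform convergence of the coefficient series, using the Weierstrass test with $\sum\alpha_n^2<\infty$.

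The main obstacle is upgrading the coordinatewise convergence to convergence in $L^2(\nu;\HWP)$, which is what closedness of $D$ requires. We handle it by writing $DJ_{1,b_k}(h)$ explicitly as the representative of $v\mapsto -\frac1{2\pi}\int v'\,(b_k\circ h)\,d\theta$. Its $e_{n,\tau}$-coefficients are $-\frac{1}{2\pi}\int e_{n,\tau}'(\theta)\,b_k(h(e^{i\theta}))\,d\theta$, each bounded in modulus by $\alpha_n n\cdot\frac1{2\pi}\int|{\cos}|\le \alpha_n n$. That bound is not square-summable, since $\alpha_n^2n^2\sim 2/n$. The fix is to integrate by parts in $\theta$, viewing $b_k\circ h$ as a function of bounded variation in $\theta$. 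Because $b_k$ is monotone on each half of $I$, the function $\theta\mapsto b_k(h(e^{i\theta}))$ has total variation at most $2$, uniformly in $k$ and $h$. Each coefficient is then $\alpha_n$ times an integral of $\cos n\theta$ or $\sin n\theta$ against a measure of mass at most $2$, so it is bounded by $C\alpha_n$ uniformly. Since $\sum\alpha_n^2<\infty$, the uniform bound combined with pointwise coefficient convergence gives $\|DJ_{1,b_k}(h)-G_I(h)\|_{\WP}\to0$ for every $h$, with a uniform bound. Dominated convergence then yields convergence in $L^2(\nu;\HWP)$. Closedness of $D$ from \eqref{eq:input-form} gives $p_I\in\Dcal$ and $Dp_I=G_I$, and the formula $\Gamma(p_I)=\|G_I\|_{\WP}^2$ holds $\nu$-a.e.
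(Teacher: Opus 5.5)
Your proposal is correct. Its skeleton matches the paper's: smooth approximants $J_{1,b_k}\in\Ccyl$, closedness of $D$, the Weierstrass test for continuity of $G_I$, and Parseval for $\Gamma(p_I)$. The difference is the mechanism that upgrades coordinatewise convergence to convergence in $L^2(\nu;\HWP)$.

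The paper never uses a coefficientwise sup bound. It pairs $Dp_\varepsilon(h)-Dp_\delta(h)$ with $v$, applies Cauchy--Schwarz in $L^2(d\theta/(2\pi))$, and uses $\|v'\|_{L^2(d\theta/(2\pi))}^2\le\frac23\|v\|_{\WP}^2$. This gives $\|Dp_\varepsilon(h)-Dp_\delta(h)\|_{\WP}\le\sqrt{2/3}\,\|(b_\varepsilon-b_\delta)\circ h\|_{L^2}$. Equivalently, the coefficients are $n\alpha_n$ times Fourier coefficients of $b_\varepsilon\circ h$, $n\alpha_n$ is bounded, and Bessel's inequality controls the $\ell^2$ sum. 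So the $\alpha_n n$ obstacle you identify disappears without any bounded-variation input. That argument works for any smooth $0\le b_\varepsilon\le1$ converging to $\mathbf 1_I$ off the endpoints, with no monotonicity required. It yields a Cauchy sequence, and closedness produces an abstract limit $V_I$, which is then identified with $G_I$ coordinatewise.

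Your Stieltjes integration by parts needs uniform total variation of $b_k\circ h$. You should therefore build monotonicity on each half of $I$ into the choice of $b_k$ from the start, rather than asserting it later. In return you get the sharper uniform bound $|D_{n,\tau}J_{1,b_k}|\le\alpha_n/\pi$, and you identify the limit directly as $G_I$ by dominated convergence in the coefficient sum. This is exactly the bounded-variation mechanism the paper develops later in Lemma~\ref{lem:gen-BV-estimate} and its mollification argument, so your route extends verbatim to general bounded-variation targets.
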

\begin{proof}
Choose smooth functions $0\leq b_\varepsilon\leq1$ on $S^1$ such
that $b_\varepsilon\to\mathbf 1_I$ away from the two endpoints of
$I$, and put $p_\varepsilon=J_{1,b_\varepsilon}$.  The preimages under $h$ of the two endpoints of $I$ are singletons and therefore have zero Lebesgue measure.  Hence
\begin{align*}
 b_\varepsilon\circ h&\longrightarrow\mathbf 1_I\circ h
 \quad\text{in }L^2(\T,d\theta/(2\pi))
 \qquad(h\in X),\\
 \shortintertext{while, for every finite response field $v$,}
 \|v'\|_{L^2(d\theta/(2\pi))}^2
 &=2\sum_{n\geq2}n^2|v_n|^2
 \leq\frac23\|v\|_{\WP}^2.
\end{align*}
Using \eqref{eq:right-response} and duality in $\HWP$ therefore gives,
for every $h\in X$,
\begin{equation}\label{eq:arc-gradient-Cauchy}
 \|Dp_\varepsilon(h)-Dp_\delta(h)\|_{\WP}
 \leq\sqrt{\frac23}
 \|(b_\varepsilon-b_\delta)\circ h
       \|_{L^2(d\theta/(2\pi))}.
\end{equation}
Bounded convergence first on $\T$ and then on $(X,\nu)$ yields
\begin{align*}
 p_\varepsilon&\longrightarrow p_I\quad\text{in }L^2(\nu),\\
 \shortintertext{and, for some $V_I$,}
 Dp_\varepsilon&\longrightarrow V_I
 \quad\text{in }L^2(\nu;\HWP).
\end{align*}  Closedness of $D$ gives
$p_I\in\Dcal$ and $Dp_I=V_I$.

Let $e_{n,\tau}$ be the basis from \eqref{eq:onb}.  Define
\begin{equation}\label{eq:arc-gradient-field}
 G_I(h)=\sum_{n\geq2}\sum_{\tau\in\{c,s\}}
 \frac{e_{n,\tau}(x)-e_{n,\tau}(y)}{2\pi}
 e_{n,\tau}.
\end{equation}
The summands depend continuously on $h$: they are periodic evaluations
at the two inverse endpoint images, so the choice of angular lifts is
irrelevant.  Moreover,
\[
 \sup_{h\in X}
 \sum_{n>N}\sum_{\tau\in\{c,s\}}
 \left|\frac{e_{n,\tau}(x)-e_{n,\tau}(y)}{2\pi}\right|^2
 \leq\frac1{\pi^2}\sum_{n>N}\alpha_n^2
 \longrightarrow0.
\]
Hence the series in \eqref{eq:arc-gradient-field} converges uniformly
in $h$ with respect to the $\HWP$ norm, and $G_I:X\to\HWP$ is
continuous.

For every finite response field $v$, integration by parts in the
angular variable gives
\begin{align*}
 (Dp_I,v)_{\WP}
 &=\lim_{\varepsilon\downarrow0}D_vp_\varepsilon\\
 &=-\frac1{2\pi}\int_x^y v'(\theta)\,d\theta
 =\frac{v(x)-v(y)}{2\pi}
 =(G_I,v)_{\WP}
 \quad\text{in }L^2(\nu).
\end{align*}
Applying this identity to the countable basis gives
$Dp_I=G_I$ in $L^2(\nu;\HWP)$.  Finally,
\[
 \sum_{\tau\in\{c,s\}}
 |e_{n,\tau}(x)-e_{n,\tau}(y)|^2
 =2\alpha_n^2\{1-\cos(n(y-x))\}.
\]
Parseval's identity and
$\alpha_n^2=2/(n^3-n)$ give \eqref{eq:arc-gamma}.
\end{proof}

\begin{lemma}\label{lem:critical-kernel}
For a universal constant $C$,
\begin{equation}\label{eq:critical-bound}
 \Gamma(p_I)\leq C p_I^2\log(e/p_I)\qquad\nu\text{-almost everywhere}.
\end{equation}
\end{lemma}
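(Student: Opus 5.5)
The plan is to estimate the explicit series \eqref{eq:arc-gamma} from Lemma~\ref{lem:arc-gradient} directly. Set $u=y-x\in(0,2\pi)$, so that $p_I=u/(2\pi)$. It then suffices to show
\[
 S(u):=\sum_{n\geq2}\frac{1-\cos(nu)}{n^3-n}\leq C'\,\min(u,2\pi-u)^2\log\frac{e\,2\pi}{\min(u,2\pi-u)}.
\]
Since $1-\cos(nu)$ is invariant under $u\mapsto2\pi-u$, we have $S(u)=S(2\pi-u)$. Let $\delta=\min(u,2\pi-u)\in(0,\pi]$. The bound is then needed in terms of $p_I$ rather than $\delta/(2\pi)$.

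\textbf{Case $p_I\leq1/2$.} Here $\delta=u=2\pi p_I$, and the bound becomes $S(u)\leq C''p_I^2\log(e/p_I)$ after adjusting constants. Constants are harmless because $\log(e\cdot 2\pi/u)=\log(e/p_I)$ exactly.

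\textbf{Case $p_I>1/2$.} Here $p_I^2\log(e/p_I)\geq\frac14$, because $\log(e/p_I)\geq1$ for $p_I<1$. Meanwhile $S(u)\leq\sum_{n\geq2}2/(n^3-n)<\infty$ is uniformly bounded. So this case is trivial with a universal constant, and only small $u$ matters.

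For $0<u\leq\pi$, split the sum at $N=\lfloor1/u\rfloor$.
\begin{itemize}
\item For $2\leq n\leq N$, use $1-\cos(nu)\leq n^2u^2/2$ and $n^3-n\geq n^3/2$ (valid for $n\geq2$, since $n^3-n\geq \tfrac34 n^3$). These terms contribute at most $u^2\sum_{2\leq n\leq 1/u}1/n\leq u^2(1+\log(1/u))$.
\item For $n>N$, use $1-\cos(nu)\leq2$ and $n^3-n\geq\tfrac34n^3$. The tail $\sum_{n>1/u}n^{-3}\lesssim u^2$.
\end{itemize}
If $u>1/2$, the first range is empty and the tail alone gives $S(u)\leq C\leq C u^2$. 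Altogether $S(u)\leq C u^2\log(e\pi/u)$, which is comparable to $u^2\log(e\cdot2\pi/u)$.

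Combining with the prefactor $1/\pi^2$ in \eqref{eq:arc-gamma} yields \eqref{eq:critical-bound} $\nu$-a.e. The a.e.\ qualifier is inherited from Lemma~\ref{lem:arc-gradient}, since $\Gamma(p_I)$ is only defined up to null sets, while the right side is a genuine continuous function of $h$.

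The only mildly delicate step is bookkeeping the logarithm near $u\approx1$ so that $\log(e/p_I)$ stays bounded below; taking $e$ inside the log makes this automatic. I expect no real obstacle. The point worth noting is that the logarithm is genuinely needed, because the harmonic sum up to $1/u$ is sharp. This is why the kernel is ``critical.''
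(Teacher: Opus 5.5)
Your proposal is correct and follows the paper's proof essentially step for step. The paper also reduces to $q=\min(p_I,1-p_I)$ via the symmetry $u\mapsto 2\pi-u$ and splits the series at $n\approx r^{-1}$ using $1-\cos u\le\min(2,u^2/2)$ to get $Cr^2\log(e/r)$ for small separations. It uses boundedness of the series for separations of order one, and handles $p_I>1/2$ by the same observation that $p_I^2\log(e/p_I)\ge 1/4$.
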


\begin{proof}
Put
\[
 r=\min(y-x,2\pi-y+x),
 \qquad q=\min(p_I,1-p_I)=\frac r{2\pi}.
\]
For $0<r\leq1$, the bound
$1-\cos u\leq\min(2,u^2/2)$ gives
\begin{align*}
 \sum_{n\geq2}\frac{1-\cos(nr)}{n^3-n}
 &\leq C r^2\sum_{2\leq n\leq r^{-1}}\frac1n
           +C\sum_{n>r^{-1}}\frac1{n^3}\\
 &\leq C r^2\log(e/r).
\end{align*}
For $1\leq r\leq\pi$ the series is bounded, while
$q\geq(2\pi)^{-1}$. Hence in both cases
\[
 \Gamma(p_I)\leq Cq^2\log(e/q).
\]
If $p_I\leq1/2$, then $q=p_I$. If $p_I>1/2$, then
\[
 \Gamma(p_I)\leq C
 \leq4C p_I^2\log(e/p_I),
\]
which proves \eqref{eq:critical-bound}.
\end{proof}

\subsection{Two vanishing-energy cutoffs}

We use \eqref{eq:critical-bound} for an upper-cell cutoff and for a
lower-mass cutoff on each fixed target arc.

Partition the target circle into $2^N$ dyadic cells $I_{N,j}$, with
$N\geq1$. Endpoints do not affect their masses. Write
\[
 p_{N,j}=p_{I_{N,j}},\qquad
 S_N=\sum_{j=0}^{2^N-1}p_{N,j}^2,\qquad
 M_N=\max_jp_{N,j}.
\]
\begin{lemma}\label{lem:upper-cutoff}
For every $N\geq1$, $S_N\in\Dcal\cap C_b(X)$ and $M_N^2\leq S_N$.
Moreover,
\begin{equation}\label{eq:SN-zero}
 \|S_N\|_{\E_1}\longrightarrow0\qquad(N\to\infty).
\end{equation}
\end{lemma}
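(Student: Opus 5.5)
Membership and continuity come first. By Lemma~\ref{lem:arc-gradient}, each $p_{N,j}$ lies in $\Dcal\cap C_b(X)$ and takes values in $[0,1]$. The product rule for bounded elements of $\Dcal$ then gives $p_{N,j}^2\in\Dcal$ with $D(p_{N,j}^2)=2p_{N,j}Dp_{N,j}$. Since $S_N$ is a finite sum of such squares, $S_N\in\Dcal\cap C_b(X)$. The inequality $M_N^2\leq S_N$ holds because every term of $S_N$ is nonnegative and one of them equals $M_N^2$.

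For \eqref{eq:SN-zero} I would bound the $L^2$ part and the energy part separately. For the $L^2$ part, the cells $I_{N,j}$ partition the target circle, so $\sum_jp_{N,j}=1$ and $S_N\leq M_N$. It therefore suffices to show $M_N\to0$ pointwise: $\int S_N^2\,d\nu\leq\int M_N^2\,d\nu\to0$ will then follow by dominated convergence, since $S_N\leq1$. Pointwise convergence holds because $h^{-1}$ is uniformly continuous: the preimage of an arc of length $2\pi2^{-N}$ has length at most the modulus of continuity $\omega_{h^{-1}}(2\pi2^{-N})$, which tends to $0$.

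For the energy part, I would use $DS_N=2\sum_jp_{N,j}Dp_{N,j}$ and Cauchy--Schwarz with weights $p_{N,j}$:
\[
 \Gamma(S_N)\leq4\Bigl(\sum_jp_{N,j}\Gamma(p_{N,j})^{1/2}\Bigr)^2
 \leq4\Bigl(\sum_jp_{N,j}\Bigr)\sum_jp_{N,j}\Gamma(p_{N,j})
 =4\sum_jp_{N,j}\Gamma(p_{N,j}).
\]
Lemma~\ref{lem:critical-kernel} then gives
\[
 \Gamma(S_N)\leq4C\sum_jp_{N,j}^3\log(e/p_{N,j})
 \leq4C\,\max_j\bigl(p_{N,j}^2\log(e/p_{N,j})\bigr)\sum_jp_{N,j}.
\]
The function $u\mapsto u^2\log(e/u)$ is increasing on $(0,1]$, since its derivative $u(2\log(e/u)-1)$ is positive there. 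Hence $\Gamma(S_N)\leq4CM_N^2\log(e/M_N)$. This tends to $0$ pointwise because $M_N\to0$, and it is uniformly bounded by $4C$. Dominated convergence therefore yields $\E(S_N,S_N)=\int\Gamma(S_N)\,d\nu\to0$.

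The main point needing care is that Lemma~\ref{lem:critical-kernel} is only a $\nu$-a.e.\ bound on the chosen representative of $\Gamma(p_I)$. Since there are countably many cells, the union of the exceptional null sets is still null, so this causes no difficulty. Apart from that, the argument relies only on $\sum_jp_{N,j}=1$, which lets the logarithmic factor be absorbed by the small maximal mass $M_N$, and on the pointwise smallness of $M_N$ coming from the continuity of $h^{-1}$.
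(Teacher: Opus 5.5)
Your proof is correct and follows the paper's argument essentially step for step: membership via the bounded product rule, $M_N\to0$ pointwise from uniform continuity of $h^{-1}$, $\sum_j p_{N,j}=1$, and dominated convergence for both parts of the form norm. The only difference is cosmetic. You apply Cauchy--Schwarz with weights $p_{N,j}$ and then use monotonicity of $u^2\log(e/u)$. The paper instead uses the unweighted bound $\Gamma(S_N)\leq4S_N\sum_j\Gamma(p_{N,j})$, then monotonicity of $u\log(e/u)$ and $S_N\leq M_N$. Both routes reach the same bound $4CM_N^2\log(e/M_N)$.
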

\begin{proof}
Lemma~\ref{lem:arc-gradient} and the bounded product rule give
$S_N\in\Dcal\cap C_b(X)$.  For every $N$,
\begin{align*}
 \sum_jp_{N,j}&=1,
 \qquad M_N^2\leq S_N\leq M_N,\\
 \shortintertext{and}
 DS_N&=2\sum_jp_{N,j}Dp_{N,j}.
\end{align*}
If $q_h$ is any increasing lift of $h^{-1}$, uniform continuity gives
\begin{align*}
 M_N(h)&\leq\frac1{2\pi}
 \sup_{|s-t|\leq2\pi2^{-N}}|q_h(s)-q_h(t)|
 \longrightarrow0,\\
 \shortintertext{and hence}
 0\leq M_N^2&\leq S_N\leq M_N\longrightarrow0
 \quad\text{pointwise on }X.
\end{align*}
The product rule and Cauchy--Schwarz yield
\begin{align*}
 \Gamma(S_N)&=4\Big\|\sum_jp_{N,j}Dp_{N,j}\Big\|_{\WP}^2\\
 &\leq4S_N\sum_j\Gamma(p_{N,j})\\
 &\leq4CS_N\sum_jp_{N,j}^2\log(e/p_{N,j})\\
 &\leq4CS_NM_N\log(e/M_N)\sum_jp_{N,j}\\
 &\leq4CM_N^2\log(e/M_N)\longrightarrow0.
\end{align*}
Here $u\mapsto u\log(e/u)$ is increasing on $(0,1]$.  We have
\begin{align*}
 S_N^2&\longrightarrow0,
 \qquad
 \Gamma(S_N)\longrightarrow0
 \quad\text{pointwise on }X,\\
 \shortintertext{while, uniformly in $N$,}
 0\leq S_N^2&\leq1,
 \qquad
 0\leq\Gamma(S_N)
 \leq4C\sup_{0<u\leq1}u^2\log(e/u)<\infty.
\end{align*}
Dominated convergence therefore gives
\[
 \|S_N\|_2^2+\E(S_N,S_N)\longrightarrow0,
\]
which is \eqref{eq:SN-zero}.
\end{proof}

Fix a proper arc $I$ and a nondecreasing
$\eta\in C^\infty([0,\infty);[0,1])$ such that
$\eta=0$ on $[0,1/2]$ and $\eta=1$ on $[1,\infty)$. For $R\geq2$ set
\begin{equation}\label{eq:lower-cutoff}
 \chi_R(p)=
 \begin{cases}
  \eta\!\left(R^{-1}\sqrt{\log(e/p)}\right),&0<p\leq1,\\
  1,&p=0,
 \end{cases}
 \qquad
 g_{I,R}=\chi_R(p_I),
 \qquad \delta_R=\exp(1-R^2).
\end{equation}

\begin{lemma}\label{lem:lower-cutoff}
For every $R\geq2$,
\begin{align}
 &g_{I,R}\in\Dcal\cap C_b(X),\qquad
 0\leq g_{I,R}\leq1,\qquad
 g_{I,R}=1\ \text{on }\{p_I\leq\delta_R\},\notag\\
 \shortintertext{and}
 &\Gamma(g_{I,R})\leq\frac{C\|\eta'\|_\infty^2}{4R^2},
 \qquad \|g_{I,R}\|_{\E_1}\longrightarrow0\quad(R\to\infty).
 \label{eq:g-zero}
\end{align}
\end{lemma}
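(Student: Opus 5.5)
The plan is to apply the chain rule \eqref{eq:closed-calculus-rules} to $\chi_R$ composed with $p_I$, and then to use the critical kernel bound \eqref{eq:critical-bound} to show that the gradient is uniformly small.

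\emph{Regularity of $\chi_R$.} Since $0<p_I<1$ everywhere, only the behaviour of $\chi_R$ on $(0,1)$ matters. There, $\chi_R$ is smooth and equals $1$ on $(0,\delta_R]$: if $p\le\delta_R$ then $\log(e/p)\ge R^2$, so the argument of $\eta$ is at least $1$. It equals $0$ near $p=1$, because $\sqrt{\log(e/p)}\to1<R/2$ as $p\to1$ (here $R\ge2$ is used). Hence $\chi_R$ extends to a function in $C_b^\infty(\mathbb R)$ with bounded derivative; on $(-\infty,\delta_R]$ we take it constant equal to $1$. The chain rule then gives $g_{I,R}\in\Dcal$ and $Dg_{I,R}=\chi_R'(p_I)Dp_I$. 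Continuity and the bounds $0\le g_{I,R}\le1$ follow from $p_I\in C_b(X)$ (Lemma~\ref{lem:arc-gradient}) and from $0\le\eta\le1$. The identity $g_{I,R}=1$ on $\{p_I\le\delta_R\}$ is the observation above.

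\emph{Energy bound.} For $0<p<1$ put $u=\log(e/p)$. Then
\[
 \chi_R'(p)=\eta'\!\left(R^{-1}u^{1/2}\right)\cdot R^{-1}\cdot\tfrac12u^{-1/2}\cdot(-p^{-1}),
\]
so $|\chi_R'(p)|^2\le\|\eta'\|_\infty^2/(4R^2p^2\log(e/p))$. Multiplying by \eqref{eq:critical-bound} gives $\Gamma(g_{I,R})=\chi_R'(p_I)^2\Gamma(p_I)\le C\|\eta'\|_\infty^2/(4R^2)$ $\nu$-a.e. This is where the logarithmic factor in the critical kernel is exactly cancelled, which is the point of the $\sqrt{\log}$ scaling. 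Consequently $\E(g_{I,R},g_{I,R})\le C\|\eta'\|_\infty^2/(4R^2)\to0$.

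\emph{$L^2$ part.} We have $0\le g_{I,R}\le\mathbf 1_{\{p_I<\delta'_R\}}$, where $\delta'_R=\exp(1-R^2/4)$: on $\{p_I\ge\delta'_R\}$ the argument of $\eta$ is at most $1/2$. Since $\delta'_R\downarrow0$, we get $\|g_{I,R}\|_2^2\le\nu(p_I<\delta'_R)\to\nu(p_I=0)=0$ by continuity of measure, because $p_I>0$ everywhere on $X$. Combining the two parts yields $\|g_{I,R}\|_{\E_1}\to0$.

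The only delicate step is the smooth extension of $\chi_R$ needed to invoke the chain rule in the form stated. The chain rule requires a globally smooth $\phi$ with bounded derivative, so one must check that $\chi_R'$ stays bounded on $(0,1)$. This holds because $\chi_R'$ vanishes near both ends, namely on $(0,\delta_R]$ and near $1$.
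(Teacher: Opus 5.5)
Your proof follows the paper's route: the chain rule for $\chi_R\circ p_I$, exact cancellation of $p_I^2\log(e/p_I)$ against the critical kernel bound \eqref{eq:critical-bound}, and decay of the $L^2$ part because $p_I>0$ everywhere. Your indicator bound $g_{I,R}\le\mathbf 1_{\{p_I<\exp(1-R^2/4)\}}$ is a quantitative form of the paper's pointwise convergence plus bounded convergence.

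There is one incorrect step, in the endpoint case $R=2$ of your regularity paragraph. There $\sqrt{\log(e/p)}\to1=R/2$, not $1<R/2$. For $p<1$ the argument $\tfrac12\sqrt{\log(e/p)}$ exceeds $1/2$, so $\chi_2$ need not vanish on any left neighbourhood of $p=1$. Your closing claim that $\chi_R'$ vanishes near $1$ is likewise false for $R=2$.

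The conclusion survives with a different justification. The paper notes that $\eta$ is flat at $1/2$, so all one-sided derivatives of $\chi_2$ at $p=1$ vanish. Equivalently, for every $R\geq2$ the formula $\eta(R^{-1}\sqrt{\log(e/p)})$ is smooth on $(0,e)$ and vanishes identically on $[1,e)$, since there the argument is at most $1/R\le1/2$. It therefore extends by $0$ to the right and by $1$ to the left. Boundedness of the derivative comes from the estimate $|\chi_R'(p)|\le\|\eta'\|_\infty/(2R\delta_R)$ on $(\delta_R,1)$, not from vanishing near $1$. This estimate uses $p>\delta_R$ and $\log(e/p)\ge1$. With that correction your proof is complete.
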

\begin{proof}
The choice of $\eta$ makes $\chi_R$ constant near $p=0$.
At $p=1$, it vanishes in a neighborhood when $R>2$; when $R=2$,
all one-sided derivatives vanish because $\eta$ is flat at $1/2$.
Thus $\chi_R\in C^\infty([0,1])$ and extends to a function in
$C_b^\infty(\mathbb R)$. Moreover,
\[
 \chi_R'(p)
 =-\frac{\eta'\!\left(R^{-1}\sqrt{\log(e/p)}\right)}
        {2Rp\sqrt{\log(e/p)}}
 \qquad(0<p\leq1).
\]
The chain rule gives $g_{I,R}\in\Dcal\cap C_b(X)$.  Since
$0\leq\eta\leq1$, also $0\leq g_{I,R}\leq1$; and if
$p_I\leq\delta_R$, then $\log(e/p_I)\geq R^2$, so
$g_{I,R}=1$.  Moreover, the chain rule and \eqref{eq:critical-bound}
give
\[
 \Gamma(g_{I,R})
 =\frac{\left|\eta'\!\left(R^{-1}\sqrt{\log(e/p_I)}\right)\right|^2}
        {4R^2p_I^2\log(e/p_I)}\Gamma(p_I)
 \leq\frac{C\|\eta'\|_\infty^2}{4R^2}.
\]
Since $p_I(h)>0$ for every $h\in X$, for all sufficiently large $R$,
\[
 R^{-1}\sqrt{\log(e/p_I(h))}\leq\frac12,
 \qquad g_{I,R}(h)=0.
\]
Hence $g_{I,R}(h)\to0$ pointwise.
Bounded convergence and the preceding energy bound now give
\begin{align*}
 \|g_{I,R}\|_{\E_1}^2
 &=\int g_{I,R}^2\,d\nu+\int\Gamma(g_{I,R})\,d\nu\\
 &\leq\int g_{I,R}^2\,d\nu
       +\frac{C\|\eta'\|_\infty^2}{4R^2}
 \longrightarrow0,
\end{align*}
which proves \eqref{eq:g-zero}.
\end{proof}

\subsection{Assembly of the compact nest}

We now choose the two cutoff scales so that their form norms are summable.
Enumerate all proper dyadic arcs as $(A_k)_{k\geq1}$.  For fixed
$m,k\geq1$, \eqref{eq:SN-zero} and the contraction estimate give
\[
 \|1\wedge(2^{2k}S_N)\|_{\E_1}
 \leq2^{2k}\|S_N\|_{\E_1}\longrightarrow0.
\]
Hence choose $N(m,k)\geq k$ such that
\begin{equation}\label{eq:upper-scale-choice}
 u_{m,k}=1\wedge\bigl(2^{2k}S_{N(m,k)}\bigr),
 \qquad
 \|u_{m,k}\|_{\E_1}\leq2^{-m-k-2}.
\end{equation}
By \eqref{eq:g-zero}, choose $R(m,k)\geq2$ such that
\begin{equation}\label{eq:lower-scale-choice}
 \|g_{A_k,R(m,k)}\|_{\E_1}\leq2^{-m-k-2},
 \qquad
 \delta_{m,k}=\exp(1-R(m,k)^2).
\end{equation}
Define
\begin{equation}\label{eq:compact-sets}
 K_m^0=
 \bigcap_{k\geq1}\{S_{N(m,k)}\leq2^{-2k}\}
 \cap
 \bigcap_{k\geq1}\{p_{A_k}\geq\delta_{m,k}\},
 \qquad
 K_m=\bigcup_{\ell=1}^mK_\ell^0.
\end{equation}

\begin{lemma}\label{lem:compactness}
Each set $K_m^0$ is compact; hence each $K_m$ is compact.
\end{lemma}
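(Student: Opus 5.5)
The plan is to show that $K_m^0$ is closed and sequentially compact in $(X,d)$. Since $d$ is complete (Lemma~\ref{lem:state-space-core}), it is enough to show closedness plus total boundedness; concretely, I would use Arzelà--Ascoli on both $h$ and $h^{-1}$. Closedness is immediate. Each $S_N$ and each $p_{A_k}$ is continuous on $X$ (Lemmas~\ref{lem:arc-gradient} and \ref{lem:upper-cutoff}), so $K_m^0$ is an intersection of closed sets. Finite unions of compact sets are compact, which gives the statement for $K_m$.

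\textbf{Equicontinuity of inverses.} On $K_m^0$ we have $M_{N(m,k)}\leq S_{N(m,k)}^{1/2}\leq 2^{-k}$. So every dyadic cell of generation $N(m,k)\geq k$ has inverse mass at most $2^{-k}$. Any target arc of length $\leq 2\pi 2^{-N(m,k)}$ is covered by two adjacent such cells, so its preimage has normalized length $\leq 2^{1-k}$. This bound is uniform over $h\in K_m^0$, so the family $\{h^{-1}:h\in K_m^0\}$ is uniformly equicontinuous.

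\textbf{Equicontinuity of $h$.} Fix $k_0$. The target circle is covered by finitely many dyadic arcs $A_k$ of generation $n$ with $2\pi 2^{-n}<\varepsilon$, and the index set of these arcs is finite. On $K_m^0$ each such arc has $p_{A_k}\geq\delta_{m,k}$. Put $\delta=\min_k\delta_{m,k}>0$ over this finite set. Any source arc $J$ of normalized length $<\delta/2$ must have image of length $<2\cdot 2\pi2^{-n}$. Otherwise $h(J)$ would contain a whole dyadic cell of generation $n$, whose preimage lies in $J$ and has mass $\geq\delta$, a contradiction. This gives a uniform modulus for $h$.

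\textbf{Passing to the limit.} Arzelà--Ascoli gives a subsequence with $h_j\to h$ and $h_j^{-1}\to q$ uniformly. In the limit, $h\circ q=q\circ h=\id$, exactly as in the proof of Lemma~\ref{lem:state-space-core}. Hence $h$ is a homeomorphism, and the degree argument there shows it is orientation preserving, so $h\in X$. Closedness then gives $h\in K_m^0$, and $d(h_j,h)\to0$.

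The main obstacle is the lower-mass step. It converts lower bounds on inverse masses of the countably many dyadic target arcs into a uniform modulus of continuity for $h$ itself, and the covering/containment argument needs care with arcs straddling dyadic boundaries. I would handle this by using the generation-$n$ cells contained in $h(J)$, which works once $h(J)$ has length at least two cell lengths.
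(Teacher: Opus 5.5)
Your proof is correct, but it uses the lower-mass condition differently from the paper.

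The paper applies Arzel\`a--Ascoli only to the increasing inverse lifts $q_h$, which are equicontinuous by the $S_{N(m,k)}$ bound exactly as in your second paragraph. It uses the bounds $p_{A_k}\geq\delta_{m,k}$ only qualitatively, on the limit $q$: every interval $(a,b)$ contains some dyadic arc $A_k$, so $q(b)-q(a)\geq2\pi\delta_{m,k}>0$. Hence $q$ is strictly increasing and defines a homeomorphism $\bar q$. Convergence $h_\ell\to\bar q^{-1}$ in $d$ then follows from the uniform continuity of the single map $\bar q^{-1}$.

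You instead make the lower bound quantitative. Taking the minimum of $\delta_{m,k}$ over the $2^n$ cells of one generation gives a modulus of continuity for $h$ that is uniform over $K_m^0$. So $K_m^0$ is equicontinuous in both coordinates of the embedding $h\mapsto(h,h^{-1})$ into $C(S^1,S^1)^2$. Compactness then follows from Arzel\`a--Ascoli together with the closed-image argument of Lemma~\ref{lem:state-space-core}, without monotone lifts or a separate inverse-convergence estimate.

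The paper's route needs slightly less, since it never uses uniformity across a generation. Yours yields an explicit uniform modulus for the forward maps on $K_m^0$, which is a somewhat stronger statement and fits the product-space description of $X$ directly.
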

\begin{proof}
All functions appearing in \eqref{eq:compact-sets} are continuous, so
$K_m^0$ is closed.  For $h\in K_m^0$, let $q_h:\mathbb R\to\mathbb R$
be the increasing lift of $h^{-1}$ normalized by $q_h(0)\in[0,2\pi]$.
At level $N(m,k)$,
\[
 p_{N(m,k),j}^2\leq S_{N(m,k)}\leq2^{-2k}
 \qquad(0\leq j<2^{N(m,k)}).
\]
An interval of length at most $2\pi2^{-N(m,k)}$ meets at most two
level-$N(m,k)$ dyadic cells.  Therefore
\begin{equation}\label{eq:inverse-equicontinuity}
 0\leq q_h(\theta+u)-q_h(\theta)
 \leq4\pi2^{-k}
 \quad
 \begin{matrix}
 h\in K_m^0,\ \theta\in\mathbb R,\
 0\leq u\leq2\pi2^{-N(m,k)}.
 \end{matrix}
\end{equation}
Because $N(m,k)\geq k$, the right-hand side and the admissible mesh
both tend to zero as $k\to\infty$.  Thus the family
$\{q_h:h\in K_m^0\}$ is uniformly bounded and equicontinuous on
$[0,2\pi]$.

Given $(h_\ell)\subset K_m^0$, Arzel\`a--Ascoli yields a subsequence
and a continuous function $q$ such that
\[
 q_{h_\ell}\longrightarrow q
 \quad\text{uniformly on }[0,2\pi].
\]
The identities
$q_{h_\ell}(2\pi)=q_{h_\ell}(0)+2\pi$ pass to the limit, so extend
$q$ by $q(\theta+2\pi)=q(\theta)+2\pi$.  The function $q$ is
nondecreasing.  If $0\leq a<b\leq2\pi$, choose an enumerated dyadic
arc $A_k=(\alpha_k,\beta_k)\subset(a,b)$.  Then
\begin{align*}
 q(b)-q(a)
 &\geq q(\beta_k)-q(\alpha_k)\\
 &=\lim_{\ell\to\infty}
   \{q_{h_\ell}(\beta_k)-q_{h_\ell}(\alpha_k)\}\\
 &=\lim_{\ell\to\infty}2\pi p_{A_k}(h_\ell)
 \geq2\pi\delta_{m,k}>0.
\end{align*}
Hence $q$ is strictly increasing and induces a degree-one circle
homeomorphism $\bar q$.  Uniform convergence of the inverse lifts and
uniform continuity of $\bar q^{-1}$ give
\begin{equation}\label{eq:inverse-homeomorphism-convergence}
 d(h_\ell,\bar q^{-1})
 \leq
 \omega_{\bar q^{-1}}
   (\|\bar q_{h_\ell}-\bar q\|_\infty)
 +\|\bar q_{h_\ell}-\bar q\|_\infty
 \longrightarrow0.
\end{equation}
Continuity of the functions in \eqref{eq:compact-sets} places
$\bar q^{-1}$ in $K_m^0$.  Thus $K_m^0$ is sequentially compact and,
since $d$ is a metric, compact.  The set $K_m$ is a finite union of
such compact sets.
\end{proof}

\begin{lemma}\label{lem:nest-cutoffs}
There are functions $w_m\in\Dcal$ such that
\begin{align}
 0\leq w_m&\leq1,
 \qquad
 \|w_m\|_{\E_1}\leq2^{-m-1},\notag\\
 \shortintertext{and}
 w_m&=1\quad\nu\text{-a.e. on }X\setminus K_m^0.
 \label{eq:nest-cutoff}
\end{align}
For every $F\in\Ccyl$,
\begin{equation}\label{eq:product-cutoff}
 \|Fw_m\|_{\E_1}^2
 \leq C_F2^{-2m-2}\longrightarrow0.
\end{equation}
\end{lemma}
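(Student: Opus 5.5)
Define $w_m$ as the supremum of the countably many cutoffs attached to level $m$:
\[
 w_m=\sup_{k\geq1}\bigl(u_{m,k}\vee g_{A_k,R(m,k)}\bigr),
\]
realized as the increasing limit of the finite maxima $W_{m,n}=\max_{k\leq n}(u_{m,k}\vee g_{A_k,R(m,k)})$.

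\emph{Bounds and support.} Each $u_{m,k}$ and $g_{A_k,R(m,k)}$ lies in $\Dcal$ with values in $[0,1]$ (Lemma~\ref{lem:lower-cutoff} and \eqref{eq:upper-scale-choice}). Hence $0\leq w_m\leq1$. If $h\notin K_m^0$, then some $k$ has either $S_{N(m,k)}(h)>2^{-2k}$, giving $u_{m,k}(h)=1$, or $p_{A_k}(h)<\delta_{m,k}$, giving $g_{A_k,R(m,k)}(h)=1$ by Lemma~\ref{lem:lower-cutoff}. So $w_m=1$ everywhere, in particular $\nu$-a.e., on $X\setminus K_m^0$.

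\emph{Energy bound.} For nonnegative $F,G\in\Dcal$ we have $F\vee G\leq F+G$ pointwise. By locality of the gradient (it comes from the chain and contraction rules), $\Gamma(F\vee G)\leq\Gamma(F)+\Gamma(G)$ a.e. Squaring then gives the sub-additivity
\[
 \|F\vee G\|_{\E_1}\leq\|F\|_{\E_1}+\|G\|_{\E_1}.
\]
To justify the pointwise inequality for $\Gamma$, I would write $F\vee G=\tfrac12(F+G+|F-G|)$ and use $D|H|=\operatorname{sgn}(H)DH$ a.e. If that identity is not directly available, the fallback is a crude bound: since $|F-G|$ is a normal contraction of $F-G$, \eqref{eq:closed-calculus-rules} gives
\[
 \|F\vee G\|_{\E_1}\leq\tfrac12\bigl(\|F+G\|_{\E_1}+\|F-G\|_{\E_1}\bigr)\leq\|F\|_{\E_1}+\|G\|_{\E_1},
\]
which suffices. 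Iterating,
\[
 \|W_{m,n}\|_{\E_1}\leq\sum_k\bigl(\|u_{m,k}\|_{\E_1}+\|g_{A_k,R(m,k)}\|_{\E_1}\bigr)\leq2\sum_k2^{-m-k-2}=2^{-m-1}.
\]
The $W_{m,n}$ increase to $w_m$ in $L^2(\nu)$ by monotone convergence and are bounded in $\E_1$. Banach--Saks, or weak compactness together with closedness of the form, then gives $w_m\in\Dcal$ with $\|w_m\|_{\E_1}\leq\liminf_n\|W_{m,n}\|_{\E_1}\leq2^{-m-1}$.

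\emph{Product estimate.} For $F\in\Ccyl$, $F$ is bounded and, by \eqref{eq:right-response} and the chain rule, $\|DF\|_{\WP}$ is bounded uniformly on $X$. The bounded product rule gives $D(Fw_m)=F\,Dw_m+w_m\,DF$. Using $0\leq w_m\leq1$ and $w_m^2\leq w_m$,
\[
 \|Fw_m\|_{\E_1}^2\leq2\|F\|_\infty^2\|w_m\|_{\E_1}^2+\bigl(\|F\|_\infty^2+2\sup\|DF\|_{\WP}^2\bigr)\int w_m^2\,d\nu\leq C_F2^{-2m-2}.
\]

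\emph{Main obstacle.} Two steps need care:
\begin{itemize}
\item the uniform bound on $\|DF\|_{\WP}$ for cylinder functions. For $J_{a,b}$ we have $(DJ_{a,b},v)_{\WP}=-J_{(av)',b}$, which is bounded by $C\|v\|_{\WP}$ using $\|v'\|_\infty\lesssim\|v\|_{\WP}$. This needs $\sum n|v_n|\leq(\sum n^{-1})^{1/2}\ldots$, and that sum diverges, so I would instead integrate by parts onto $b\circ h$. Failing that, I would keep $\|DF\|_\infty$ only as an $L^2$ quantity and absorb it via Cauchy--Schwarz on $\int w_m\Gamma(F)$.
\item passing the countable supremum into $\Dcal$ via lower semicontinuity of the closed form.
\end{itemize}
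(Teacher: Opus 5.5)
Your proposal is correct, but you build $w_m$ differently from the paper. The paper sums rather than maximizes. Because $\sum_k(\|u_{m,k}\|_{\E_1}+\|g_{A_k,R(m,k)}\|_{\E_1})\le2^{-m-1}$, the series $z_m=\sum_k(u_{m,k}+g_{A_k,R(m,k)})$ converges absolutely in the Hilbert space $(\Dcal,\|\cdot\|_{\E_1})$, so completeness alone gives $z_m\in\Dcal$. A Fatou/dominated-convergence step identifies $z_m$ $\nu$-a.e.\ with the pointwise sum, which is $\ge1$ off $K_m^0$. The single normal contraction $w_m=1\wedge z_m$ then delivers all three properties. Your route replaces this with two other standard ingredients. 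The first is the lattice bound $\|F\vee G\|_{\E_1}\le\|F\|_{\E_1}+\|G\|_{\E_1}$; your fallback through $F\vee G=\tfrac12(F+G+|F-G|)$ and the contraction rule already suffices, so you never need $D|H|=\operatorname{sgn}(H)\,DH$. The second is lower semicontinuity of the closed form along the $\E_1$-bounded, $L^2$-convergent sequence $W_{m,n}$. You gain that the natural representative of $w_m$ equals $1$ at every point of $X\setminus K_m^0$, with no a.e.\ identification. The paper gains that no weak-compactness argument is needed. Your product estimate is the same as the paper's.

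The first ``obstacle'' you flag is not real. Since $b\circ h$ is bounded,
\[
 |J_{(av)',b}(h)|\le\|b\|_\infty\|(av)'\|_{L^1(d\theta/2\pi)}\le\|b\|_\infty\|(av)'\|_{L^2(d\theta/2\pi)}.
\]
The last norm is at most $C_a\|v\|_{\WP}$ by the embedding $\HWP\hookrightarrow H^1(\T)$, e.g.\ $\|v'\|_{L^2}^2\le\frac23\|v\|_{\WP}^2$. No $L^\infty$ bound on $v'$ is required, and this is how the paper obtains \eqref{eq:cylinder-energy-bound}. Your Stieltjes alternative also works, since $\operatorname{Var}(b\circ h)=\operatorname{Var}(b)$ and $\|v\|_\infty\lesssim\|v\|_{\WP}$. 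Drop the Cauchy--Schwarz fallback: without a uniform bound on $\Gamma(F)$ it would not give the stated rate $C_F2^{-2m-2}$.
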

\begin{proof}
Let
\begin{align*}
 z_m^{(N)}&=\sum_{k=1}^{N}
 \bigl(u_{m,k}+g_{A_k,R(m,k)}\bigr),\\
 \shortintertext{so that \eqref{eq:upper-scale-choice} and
 \eqref{eq:lower-scale-choice} give}
 \sum_{k\geq1}
 \bigl(\|u_{m,k}\|_{\E_1}
       +\|g_{A_k,R(m,k)}\|_{\E_1}\bigr)
 &\leq2^{-m-1}.
\end{align*}
Hence $(z_m^{(N)})_N$ is Cauchy in the form norm; denote its limit by
$z_m\in\Dcal$.  Since $z_m^{(N)}$ is nonnegative and increasing in
$N$, the pointwise limit
\[
 z_m^{(\infty)}=\lim_{N\to\infty}z_m^{(N)}
\]
exists in $[0,\infty]$.  Moreover,
\[
 \sup_N\|z_m^{(N)}\|_2\leq2^{-m-1}.
\]
Fatou's lemma gives $z_m^{(\infty)}\in L^2(\nu)$, and
$0\leq z_m^{(\infty)}-z_m^{(N)}\leq z_m^{(\infty)}$.  Dominated
convergence therefore yields
\[
 z_m^{(N)}\longrightarrow z_m^{(\infty)}
 \quad\text{in }L^2(\nu).
\]
By uniqueness of the $L^2$ limit,
$z_m=z_m^{(\infty)}$ $\nu$-almost everywhere.

Put $w_m=1\wedge z_m$.  The contraction property gives
$\|w_m\|_{\E_1}\leq2^{-m-1}$.  Outside a $\nu$-null set,
\[
 h\notin K_m^0
 \Longrightarrow
 \begin{cases}
 S_{N(m,k)}(h)>2^{-2k},&\text{for some }k,\quad\text{or}\\
 p_{A_k}(h)<\delta_{m,k},&\text{for some }k,
 \end{cases}
 \Longrightarrow w_m(h)=1,
\]
which proves \eqref{eq:nest-cutoff}.

Let
$F=\Phi(J_{a_1,b_1},\ldots,J_{a_r,b_r})\in\Ccyl$.  The response
identity and the continuous embedding $\HWP\hookrightarrow H^1(\T)$
give
\[
 |D_vJ_{a,b}(h)|
 \leq\frac{\|b\|_\infty}{2\pi}
       \int_0^{2\pi}|(av)'(\theta)|\,d\theta
 \leq C_a\|b\|_\infty\|v\|_{\WP},
\]
whence
\begin{equation}\label{eq:cylinder-energy-bound}
 \|\Gamma(F)\|_\infty^{1/2}
 \leq\sum_{j=1}^r
 \|\partial_j\Phi\|_\infty C_{a_j}\|b_j\|_\infty<\infty.
\end{equation}
The product rule now gives
\begin{align*}
 \|Fw_m\|_{\E_1}^2
 &\leq
 \bigl(\|F\|_\infty^2+2\|\Gamma(F)\|_\infty\bigr)
       \|w_m\|_2^2
 +2\|F\|_\infty^2\E(w_m,w_m)\\
 &\leq C_F2^{-2m-2}\longrightarrow0,
\end{align*}
which is \eqref{eq:product-cutoff}.
\end{proof}

\begin{lemma}\label{lem:strong-locality}
For every $F\in\Dcal$ and $c\in\mathbb R$,
\begin{equation}\label{eq:levelset-locality}
 \mathbf 1_{\{F=c\}}DF=0
 \quad\text{in }L^2(\nu;\HWP).
\end{equation}
Consequently, if $O\subset X$ is open and $F,G\in\Dcal$ satisfy
\begin{equation}\label{eq:strong-locality-condition}
 F=c\quad\nu\text{-a.e. on }O,
 \qquad
 G=0\quad\nu\text{-a.e. on }X\setminus O,
\end{equation}
then
\begin{equation}\label{eq:strong-locality-zero}
 \E(F,G)=0.
\end{equation}
\end{lemma}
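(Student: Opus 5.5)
We first prove the level-set identity \eqref{eq:levelset-locality} using only the closed chain rule \eqref{eq:closed-calculus-rules} and closedness of $D$. Then we derive \eqref{eq:strong-locality-zero} from it. Replacing $F$ by $F-c\mathbf 1$ (note $\mathbf 1\in\Ccyl$ and $D\mathbf 1=0$ by \eqref{eq:actual-right-derivative}), it suffices to treat $c=0$.

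\textbf{Step 1: the level-set identity.} Choose smooth functions $\phi_\varepsilon:\mathbb R\to\mathbb R$ with $|\phi_\varepsilon'|\leq1$, $\phi_\varepsilon'(0)=1$, $|\phi_\varepsilon(t)|\leq\min(|t|,\varepsilon)$, and $\phi_\varepsilon'(t)\to0$ for every $t\neq0$ as $\varepsilon\downarrow0$. An example is $\phi_\varepsilon(t)=\varepsilon\arctan(t/\varepsilon)$; here $|\phi_\varepsilon|\leq\pi\varepsilon/2$, and the constant is irrelevant.

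By \eqref{eq:closed-calculus-rules}, $\phi_\varepsilon(F)\in\Dcal$ with $D\phi_\varepsilon(F)=\phi_\varepsilon'(F)DF$. Since $|\phi_\varepsilon(F)|\leq C\varepsilon$, we get $\phi_\varepsilon(F)\to0$ in $L^2(\nu)$. Dominated convergence, with dominating function $\|DF\|_{\WP}^2$, gives
\[
\phi_\varepsilon'(F)DF\longrightarrow\mathbf 1_{\{F=0\}}DF\quad\text{in }L^2(\nu;\HWP).
\]
Closedness of $D$ then forces the limit to equal $D0=0$, which is \eqref{eq:levelset-locality}. No continuity of $F$ is used, so this step is clean.

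\textbf{Step 2: the locality statement.} Assume \eqref{eq:strong-locality-condition}, and again take $c=0$. Then
\[
\E(F,G)=\int_O(DF,DG)_{\WP}\,d\nu+\int_{X\setminus O}(DF,DG)_{\WP}\,d\nu .
\]
On $O$ we have $F=0$ $\nu$-a.e., so $\mathbf 1_O DF=\mathbf 1_O\mathbf 1_{\{F=0\}}DF=0$ by Step 1, and the first integral vanishes. On $X\setminus O$ we have $G=0$ $\nu$-a.e., so Step 1 applied to $G$ gives $\mathbf 1_{X\setminus O}DG=0$, and the second integral vanishes.

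This argument does not even use that $O$ is open; openness is only relevant for how the lemma is applied later to nests. The subtraction of the constant requires $\mathbf 1\in\Dcal$ with $D\mathbf 1=0$, which holds since $\nu$ is a probability measure and $\mathbf 1\in\Ccyl$.

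\textbf{Main obstacle.} The only delicate point is Step 1: we must ensure the chain rule \eqref{eq:closed-calculus-rules} applies to $\phi_\varepsilon$. It requires $\phi$ smooth with bounded derivative, which our choice satisfies. We also need $\phi_\varepsilon(0)=0$, so that no constant term appears, and our choice satisfies that too.
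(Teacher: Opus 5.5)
Your proof is correct and follows the paper's argument. You approximate by $\phi_\varepsilon(F-c)$ using the closed chain rule, invoke closedness of $D$ to force the limit $\mathbf 1_{\{F=c\}}DF$ to vanish, and then split $\E(F,G)$ over $O$ and $X\setminus O$. The differences are cosmetic: you use $\varepsilon\arctan(t/\varepsilon)$ where the paper uses the primitive $\int_0^t\xi(s/\varepsilon)\,ds$ of a compactly supported bump, and you subtract the constant first. Your remark that openness of $O$ is never used is also accurate for the paper's proof.
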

\begin{proof}
Choose $\xi\in C_c^\infty(\mathbb R)$ with $\xi=1$ on a
neighborhood of zero, and define
\[
 \varphi_\varepsilon(t)
 =\int_0^t\xi(s/\varepsilon)\,ds.
\]
Then
\begin{align*}
 |\varphi_\varepsilon(t)|&\leq\varepsilon\|\xi\|_{L^1(\mathbb R)},\\
 \shortintertext{and}
 \varphi_\varepsilon'(t)&=\xi(t/\varepsilon).
\end{align*}
The closed chain rule and dominated convergence give
\begin{align*}
 \|\varphi_\varepsilon(F-c)\|_2&\longrightarrow0,\\
 D[\varphi_\varepsilon(F-c)]
 &=\xi((F-c)/\varepsilon)DF
 \longrightarrow\mathbf 1_{\{F=c\}}DF
 \quad\text{in }L^2(\nu;\HWP).
\end{align*}
Since $D$ is closed, the limit gradient must be the gradient of the
zero function, proving \eqref{eq:levelset-locality}.

Under \eqref{eq:strong-locality-condition},
\[
 \mathbf 1_ODF=0,
 \qquad
 \mathbf 1_{X\setminus O}DG=0
 \quad\nu\text{-a.e.}
\]
by \eqref{eq:levelset-locality}.  Therefore
\[
 \E(F,G)
 =\int_O(DF,DG)_{\WP}\,d\nu
  +\int_{X\setminus O}(DF,DG)_{\WP}\,d\nu
 =0.
\]
\end{proof}

\subsection{Final proof of Theorem~\ref{thm:main}}
\label{sec:proof-ambient}

Lemma~\ref{lem:compactness} gives compact sets $K_m$.  For
$F\in\Ccyl$, let
\[
 F_m=F(1-w_m),
\]
where $w_m$ is supplied by Lemma~\ref{lem:nest-cutoffs}.  Since
$K_m^0\subset K_m$ and $w_m=1$ $\nu$-almost everywhere on
$X\setminus K_m^0$,
\begin{align*}
 F_m&=0\quad\nu\text{-a.e. on }X\setminus K_m,\\
 \shortintertext{and}
 \|F_m-F\|_{\E_1}&=\|Fw_m\|_{\E_1}\longrightarrow0.
\end{align*}
Because $\Ccyl$ is form dense by Lemma~\ref{lem:state-space-core},
\begin{equation}\label{eq:final-nest-density}
 \overline{\bigcup_m
 \{G\in\Dcal:G=0\ \nu\text{-a.e. on }X\setminus K_m\}}
 ^{\|\cdot\|_{\E_1}}
 =\Dcal.
\end{equation}
Thus $(K_m)$ is a compact $\E$-nest.

Moreover, $X\setminus K_m$ is open and
$w_m=1$ $\nu$-almost everywhere there.  Hence $w_m$ is admissible for
its $1$-capacity, and
\begin{equation}\label{eq:final-capacity-nest}
 \CapOne(X\setminus K_m)
 \leq\|w_m\|_{\E_1}^2
 \leq2^{-2m-2}\longrightarrow0.
\end{equation}
This proves \eqref{eq:intro-compact-nest}.

Lemma~\ref{lem:state-space-core} also gives a countable form-dense
sequence in $\Dcal\cap C_b(X)$ that separates points.  Together with
\eqref{eq:final-nest-density}, these are the compact-nest,
quasi-continuity, and separation conditions in the quasi-regularity
criterion of \cite[Section~2]{AMR}.  The state-space hypothesis follows
from completeness of $d$ and the existence of a countable dense subset
of $X$.  Hence $(\E,\Dcal)$ is quasi-regular.

Lemma~\ref{lem:strong-locality} gives \eqref{eq:strong-locality-zero},
which is the strong-locality condition for the form.  Since $\mathbf 1\in\Ccyl$ and $D\mathbf 1=0$,
\begin{equation}\label{eq:conservative-generator}
 \E(\mathbf 1,G)=0\quad(G\in\Dcal),
 \qquad
 \mathbf 1\in\Dom(L),
 \qquad
 L\mathbf 1=0,
 \qquad
 T_t\mathbf 1=\mathbf 1.
\end{equation}
The quasi-regular strongly local form--process correspondence
\cite[Section~2]{AMR} therefore yields a properly associated process
with continuous paths up to its lifetime $\zeta$.  For its law
$\mathbb P_h$ started from $h$, the properly associated version of the
semigroup satisfies
\[
 \mathbb P_h(\zeta>n)=T_n\mathbf 1(h)=1
 \quad\text{for quasi-every }h,
 \qquad n\in\mathbb N.
\]
Removing the countable union of these exceptional sets gives, for
quasi-every $h$,
\[
 \mathbb P_h(\zeta<\infty)
 =\lim_{n\to\infty}\mathbb P_h(\zeta\leq n)=0.
\]
Thus the paths are continuous in $X$ for all $t\geq0$.

Finally, self-adjointness of $T_t$ gives, for $F,G\in L^2(\nu)$,
\begin{equation}\label{eq:final-reversibility}
 \langle T_tF,G\rangle_{L^2(\nu)}
 =\langle F,T_tG\rangle_{L^2(\nu)}.
\end{equation}
Together with $T_t\mathbf 1=\mathbf 1$ from \eqref{eq:conservative-generator}, this is
\eqref{eq:intro-stationary-reversible}; it also gives
$\int_XT_tF\,d\nu=\int_XF\,d\nu$.  This completes the proof.

\section{The generator and its martingales}
\label{sec:generator}

We first control the Fourier response derivatives and the score drift.
These estimates identify the generator in
Section~\ref{sec:proof-generator}.  We then construct a compact regular
realization to compute the energy measures and prove
Corollary~\ref{cor:fukushima-main}.  Measurability refers to the
sigma-algebras generated by the indicated topologies.

\subsection{Response derivatives and Fourier bounds}

For the score covariance, set
\[
 c_{\mathrm L}=1+6\left(\frac{\sqrt\kappa}{2}
                      +\frac2{\sqrt\kappa}\right)^2,\qquad
 a_\kappa=\frac{c_{\mathrm L}}{12},\qquad
 (v,w)_{\VK}=\operatorname{Re}\sum_{n\geq2}n v_n\overline{w_n}.
\]
The adjoint and score covariance identities of
\cite[Theorem~1.2]{Response} are
\begin{align}
 v&\in\Dom(D^*),\qquad D^*v=\rho_v,\notag\\
 \shortintertext{with}
 \int_X\rho_v\rho_w\,d\nu
 &=a_\kappa(v,w)_{\WP}+4(v,w)_{\VK}
 \qquad(v,w\text{ finite}).
 \label{eq:input-score}
\end{align}
In particular, the basis in \eqref{eq:onb} satisfies
\begin{equation}\label{eq:gen-score-basis-norm}
 \|\rho_{n,\tau}\|_2^2
 =a_\kappa+\frac{2}{n^2-1}\leq a_\kappa+\frac23.
\end{equation}
Choose measurable representatives of these countably many scores.

The later boundary calculations require target functions with jumps.
For $a\in C^\infty(\T;\R)$ and a bounded real function $b$ of bounded
variation on $S^1$, retain the definition
\begin{align*}
 J_{a,b}(h)&=\frac1{2\pi}\int_0^{2\pi}
             a(\theta)b(h(e^{i\theta}))\,d\theta,\\
 \shortintertext{and, for the right-continuous periodic representative of $b$, write}
 b_h(\theta)&=b(h(e^{i\theta})),\qquad
 V_b=\frac{\operatorname{Var}(b)}{2\pi}.
\end{align*}
Here $db_h$ is the signed Stieltjes measure, equivalently the
distributional derivative of $b_h$.  Values at the countably many
jumps do not affect $J_{a,b}$.  All uniform bounds below hold over $h\in X$.

\begin{lemma}\label{lem:gen-BV-estimate}
For every $h\in X$ and finite response field $v$,
\begin{align}
 \partial_v^{\rm act}J_{a,b}(h)
 &=\frac1{2\pi}\int_\T av\,db_h,
 \label{eq:gen-first-Stieltjes}\\
 \shortintertext{and}
 (\partial_v^{\rm act})^2J_{a,b}(h)
 &=-\frac1{2\pi}\int_\T v(av)'\,db_h.
 \label{eq:gen-second-Stieltjes}
\end{align}
Consequently,
\begin{align}
 \|\partial_{e_{n,\tau}}^{\rm act}J_{a,b}\|_\infty
 &\leq V_b\|a\|_\infty\alpha_n,
 \label{eq:gen-first-bound}\\
 \shortintertext{and}
 \|(\partial_{e_{n,\tau}}^{\rm act})^2J_{a,b}\|_\infty
 &\leq V_b\alpha_n^2(\|a'\|_\infty+n\|a\|_\infty).
 \label{eq:gen-second-bound}
\end{align}
\end{lemma}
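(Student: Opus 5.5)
The plan is to compute along the flow directly. The formula $D_vJ_{a,b}=-J_{(av)',b}$ from \eqref{eq:right-response} is known only for smooth $b$, so for bounded-variation $b$ we work with $\partial_v^{\rm act}$ itself. Write $\psi_t=\psi_t^v$ for the flow of the finite smooth field $v$, so that $\partial_t\psi_t(\theta)=v(\psi_t(\theta))$. Substituting $\phi=\psi_t(\theta)$ gives
\[
 J_{a,b}(h\circ\psi_t)=\frac1{2\pi}\int_\T a(\psi_{-t}(\phi))\,\psi_{-t}'(\phi)\,b_h(\phi)\,d\phi .
\]
The whole $t$-dependence now sits in the smooth density $A_t(\phi)=a(\psi_{-t}(\phi))\psi_{-t}'(\phi)$, which is the pullback of the density $a\,d\theta$. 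Its $t$-derivatives are Lie derivatives:
\[
 \partial_tA_t=-(A_tv)',\qquad \partial_t^2A_t=\bigl((A_tv)'v\bigr)'.
\]
These hold uniformly in $\phi$, because $v$ is a trigonometric polynomial and the flow is smooth in $(t,\phi)$. Since $b_h$ is bounded, we may differentiate under the integral.

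At $t=0$ this gives
\[
 \partial_v^{\rm act}J_{a,b}(h)=-\frac1{2\pi}\int_\T(av)'\,b_h\,d\phi
 \quad\text{and}\quad
 (\partial_v^{\rm act})^2J_{a,b}(h)=\frac1{2\pi}\int_\T\bigl((av)'v\bigr)'\,b_h\,d\phi .
\]
For the second derivative, note that $(\partial_v^{\rm act})^2$ means $\frac{d^2}{dt^2}\big|_{t=0}F(h\circ\psi_t)$, which is consistent because $\psi_{t+s}=\psi_t\circ\psi_s$. We then integrate by parts in the Stieltjes sense on the circle, with no boundary terms by periodicity. Here $b_h=b\circ h$ is of bounded variation, since $h$ is a homeomorphism; its total variation equals $\operatorname{Var}(b)$ and its jumps are countable. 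This step converts the two formulas into \eqref{eq:gen-first-Stieltjes} and \eqref{eq:gen-second-Stieltjes}. The main point needing care is this integration by parts with a right-continuous BV integrand. It holds because $av$ and $v(av)'$ are $C^1$ periodic functions, so $\int g'b_h\,d\phi=-\int g\,db_h$ by Fubini applied to $b_h(\phi)=b_h(0)+\int_{(0,\phi]}db_h$.

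For the bounds, we have $|\int g\,db_h|\leq\|g\|_\infty\operatorname{Var}(b)$, so both bounds reduce to sup-norm estimates with $v=e_{n,\tau}$. First, $\|e_{n,\tau}\|_\infty=\alpha_n$, which gives \eqref{eq:gen-first-bound}. Second,
\[
 v(av)'=a'v^2+a\,v\,v' ,
\]
with $\|v^2\|_\infty\leq\alpha_n^2$ and $\|vv'\|_\infty\leq n\alpha_n^2$. This gives
\[
 \|v(av)'\|_\infty\leq\alpha_n^2(\|a'\|_\infty+n\|a\|_\infty),
\]
which is \eqref{eq:gen-second-bound}. All estimates are uniform in $h\in X$.
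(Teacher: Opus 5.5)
Your proposal is correct and follows the paper's proof: change variables under the flow so that all $t$-dependence sits in the smooth density $(a\circ\psi_{-t}^v)(\psi_{-t}^v)'$, differentiate under the integral using its first two derivatives $-(av)'$ and $((av)'v)'$, integrate by parts against $db_h$ (whose total variation equals $\operatorname{Var}(b)$), and bound the integrands $av$ and $v(av)'$ in sup norm. Your Fubini justification of the periodic Stieltjes integration by parts, and your remark that $(\partial_v^{\rm act})^2$ is the second $t$-derivative by the flow property, supply details the paper leaves implicit.
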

\begin{proof}
An increasing lift of $h$ transports cyclic partitions bijectively, so
$\operatorname{Var}(b_h)=\operatorname{Var}(b)$.  Changing variables
under the smooth circle flow gives
\begin{align*}
 J_{a,b}(h\circ\psi_t^v)
 &=\frac1{2\pi}\int_\T
       (a\circ\psi_{-t}^v)(\psi_{-t}^v)'b_h\,d\theta,\\
 \shortintertext{where the first two derivatives of the smooth factor at $t=0$, taken uniformly in $\theta$, are}
 \left.\frac{d}{dt}\right|_{t=0}
       \{(a\circ\psi_{-t}^v)(\psi_{-t}^v)'\}&=-(av)',\\
 \shortintertext{and}
 \left.\frac{d^2}{dt^2}\right|_{t=0}
       \{(a\circ\psi_{-t}^v)(\psi_{-t}^v)'\}&=((av)'v)'.
\end{align*}
Differentiating under the integral and integrating by parts on the
circle therefore yields
\begin{align*}
 \partial_v^{\rm act}J_{a,b}
 &=-J_{(av)',b}=\frac1{2\pi}\int_\T av\,db_h,\\
 (\partial_v^{\rm act})^2J_{a,b}
 &=J_{((av)'v)',b}=-\frac1{2\pi}\int_\T v(av)'\,db_h.
\end{align*}
The two integrals satisfy
\begin{align*}
 \left|\frac1{2\pi}\int_\T av\,db_h\right|
 &\leq V_b\|a\|_\infty\|v\|_\infty,\\
 \shortintertext{and}
 \left|\frac1{2\pi}\int_\T v(av)'\,db_h\right|
 &\leq V_b\|v\|_\infty
       (\|a'\|_\infty\|v\|_\infty+\|a\|_\infty\|v'\|_\infty).
\end{align*}
Substituting $\|e_{n,\tau}\|_\infty=\alpha_n$ and
$\|e_{n,\tau}'\|_\infty=n\alpha_n$ proves
\eqref{eq:gen-first-bound}--\eqref{eq:gen-second-bound}.
\end{proof}

For $m\geq1$, a BV cylinder will mean a function
\[
 F=\Phi(J_1,\ldots,J_m),\qquad
 J_r=J_{a_r,b_r},\qquad \Phi\in C_b^\infty(\mathbb R^m),
\]
with smooth $a_r$ and bounded-variation $b_r$ as above.
Smooth target functions give the core $\Ccyl$.
Write $J=(J_1,\ldots,J_m)$, $\Phi_r=\partial_r\Phi$, and
$\Phi_{rs}=\partial_r\partial_s\Phi$.  Constants $C_F$ depend only on
this cylinder data; $C_{F,\kappa}$ may also depend on $\kappa$.
Their values may increase between estimates.

\begin{lemma}\label{lem:BV-directional-calculus}
Every BV cylinder $F$ belongs to $\Dcal\cap C_b(X)$.  For every finite
response field $v$, $D_vF\in\Dcal$ and
\begin{align}
 D_vF&=-\sum_r\Phi_r(J)J_{(a_rv)',b_r},
 \label{eq:BV-first-derivative}\\
 \shortintertext{and}
 D_v^2F&=\sum_{r,s}\Phi_{rs}(J)
       J_{(a_rv)',b_r}J_{(a_sv)',b_s}
       +\sum_r\Phi_r(J)J_{((a_rv)'v)',b_r}.
 \label{eq:BV-second-derivative}
\end{align}
These closed derivatives agree with the actual-right derivatives.
\end{lemma}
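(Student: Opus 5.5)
The plan is to approximate the bounded-variation targets $b_r$ by smooth targets, exactly as in the proof of Lemma~\ref{lem:arc-gradient}, and to pass to the limit using closedness of $D$. Continuity comes first. For $h_j\to h$ in $d$, change variables $\theta=q(\phi)$ with $q$ the increasing lift of $h^{-1}$. This rewrites $J_{a,b}(h)=\frac1{2\pi}\int a(q_h(\phi))\,b(e^{i\phi})\,dq_h(\phi)$, a Stieltjes integral against the measures $dq_{h}$. These measures converge weakly because the inverses converge uniformly. Alternatively, one can use that $b$ has countably many jumps and that $b\circ h_j\to b\circ h$ at every $\theta$ with $h(e^{i\theta})$ a continuity point of $b$. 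Since $h^{-1}$ of a countable set is countable, hence Lebesgue-null, bounded convergence gives $J_{a,b}(h_j)\to J_{a,b}(h)$. Thus $J_{a,b}\in C_b(X)$, and so is $F$.

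For membership in $\Dcal$, choose smooth $b^\varepsilon$ with $\|b^\varepsilon\|_\infty\le\|b\|_\infty$, $\operatorname{Var}(b^\varepsilon)\le\operatorname{Var}(b)$, and $b^\varepsilon\to b$ off the jump set. A mollification of the right-continuous representative will do. The Lebesgue-null argument above then gives $b^\varepsilon\circ h\to b\circ h$ in $L^2(\T)$ for every $h$. Set $F^\varepsilon=\Phi(J^\varepsilon)\in\Ccyl$. By \eqref{eq:right-response} and the chain rule, $(DF^\varepsilon,v)_{\WP}=-\sum_r\Phi_r(J^\varepsilon)J_{(a_rv)',b_r^\varepsilon}$. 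Apply the duality estimate \eqref{eq:arc-gradient-Cauchy} with weight $a_r$, namely $|J_{(a v)',c}|\le C_a\|c\circ h\|_{L^2}\|v\|_{\WP}$ via $\HWP\hookrightarrow H^1$. This shows that $DF^\varepsilon(h)$ is Cauchy in $\HWP$ pointwise and uniformly bounded. Bounded convergence in $L^2(\nu;\HWP)$ and closedness of $D$ then give $F\in\Dcal$ together with \eqref{eq:BV-first-derivative}. Agreement with $\partial_v^{\rm act}$ follows by comparing with \eqref{eq:gen-first-Stieltjes} of Lemma~\ref{lem:gen-BV-estimate}, after integrating by parts $-J_{(av)',b}=\frac1{2\pi}\int av\,db_h$.

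For the second derivative, note that $D_vF$ is again a finite sum of products of bounded functions. Each product has the form $\Phi_r(J)\cdot J_{(a_rv)',b_r}$, where $\Phi_r\in C_b^\infty$ and $(a_rv)'$ is smooth. Hence each term is itself a BV cylinder: take the cylinder on $m+1$ coordinates with the smooth function $(t,s)\mapsto\Phi_r(t)\chi(s)$, where $\chi(s)=s$ on the bounded range. The first step therefore applies and gives $D_vF\in\Dcal$. Applying \eqref{eq:BV-first-derivative} to this cylinder, or equivalently the product rule, yields \eqref{eq:BV-second-derivative}. Agreement with $(\partial^{\rm act}_v)^2$ follows from the pointwise formula \eqref{eq:gen-second-Stieltjes} and the chain rule along the flow $t\mapsto h\circ\psi^v_t$, using $h\circ\psi^v_{t+s}=(h\circ\psi^v_t)\circ\psi^v_s$.

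The main obstacle is the approximation step. It requires pointwise-in-$h$ $L^2(\T)$ convergence $b^\varepsilon\circ h\to b\circ h$ that holds uniformly enough to dominate the gradients, and this rests on the jump set of $b$ pulling back to a Lebesgue-null set under every homeomorphism $h$. Once that is secured, the rest is the closedness argument of Lemma~\ref{lem:arc-gradient}, repeated.
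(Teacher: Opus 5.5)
Your proof is correct, and it shares the paper's overall structure: mollify the bounded-variation targets, pass to the limit by closedness of $D$, and get the second derivative by treating $D_vF$ as a BV cylinder after a smooth cutoff. The gradient-convergence estimate, however, is genuinely different.

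The paper first uses the Stieltjes formula of Lemma~\ref{lem:gen-BV-estimate} to bound each Fourier mode. This gives $|D_{n,\tau}F_\varepsilon|\leq C_F\alpha_n$, with $C_F$ uniform in $\varepsilon$ because $\operatorname{Var}(b_r^\varepsilon)\leq\operatorname{Var}(b_r)$. It then sums the modewise pointwise convergence by Parseval and dominated convergence against $\sum\alpha_n^2<\infty$.

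You instead reuse the $L^2(\T)$ duality bound behind \eqref{eq:arc-gradient-Cauchy}. It controls the whole norm $\|D(F^\varepsilon-F^\delta)(h)\|_{\WP}$ at once through $\|(b_r^\varepsilon-b_r^\delta)\circ h\|_{L^2}$. You combine it with pointwise convergence of $\Phi_r(J^\varepsilon(h))$ and the uniform bound $C_{a_r}\|b_r\|_\infty$ on the individual gradients; domination comes from $\|b_r^\varepsilon\|_\infty\leq\|b_r\|_\infty$, not from any uniformity of convergence. Your route is shorter. It also shows that, for first-order closedness, the variation of $b_r$ matters only in making its discontinuity set countable, so that the pullback under every $h$ is Lebesgue-null.

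The paper's route costs slightly more here but yields the modewise bounds \eqref{eq:BV-proof-directional-bounds}, uniformly over the mollifications, as a by-product. These decay rates are far stronger than the $\ell^2$ control your duality bound gives. They are exactly what the paper needs next: the absolute summability of the score series $\sum_{n,\tau}\|\rho_{n,\tau}D_{n,\tau}F\|_2$ requires $\sum\alpha_n<\infty$, and the BV extension \eqref{eq:BV-generator} requires constants uniform in $\varepsilon$. With your proof, those bounds would have to be derived separately from Lemma~\ref{lem:gen-BV-estimate}.
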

\begin{proof}
If $h_\ell\to h$ in $X$, then
$b_r\circ h_\ell\to b_r\circ h$ outside the countable inverse image
of the discontinuities of $b_r$.  Bounded convergence in the angular
integral gives $J_r(h_\ell)\to J_r(h)$ and hence $F\in C_b(X)$.
For actual derivatives, the ordinary chain rule gives
\begin{align*}
 \partial_v^{\rm act}F
 &=\sum_r\Phi_r(J)\partial_v^{\rm act}J_r,\\
 \shortintertext{and}
 (\partial_v^{\rm act})^2F
 &=\sum_r\Phi_r(J)(\partial_v^{\rm act})^2J_r
   +\sum_{r,s}\Phi_{rs}(J)
          (\partial_v^{\rm act}J_r)(\partial_v^{\rm act}J_s).
\end{align*}
\begin{samepage}
Substitution of Lemma~\ref{lem:gen-BV-estimate} gives
\eqref{eq:BV-first-derivative}--\eqref{eq:BV-second-derivative} for these
actual derivatives and the bounds
\begin{align}
 \|\partial_{e_{n,\tau}}^{\rm act}F\|_\infty
 &\leq\sum_r\|\Phi_r\|_\infty V_{b_r}\|a_r\|_\infty\alpha_n
 \leq C_F\alpha_n,\notag\\
 \shortintertext{and}
 \|(\partial_{e_{n,\tau}}^{\rm act})^2F\|_\infty
 &\leq\alpha_n^2\sum_r\|\Phi_r\|_\infty V_{b_r}
                      (\|a_r'\|_\infty+n\|a_r\|_\infty)\notag\\
 &\quad+\alpha_n^2\sum_{r,s}\|\Phi_{rs}\|_\infty
             V_{b_r}V_{b_s}\|a_r\|_\infty\|a_s\|_\infty\notag\\
 &\leq C_F(1+n)\alpha_n^2.
 \label{eq:BV-proof-directional-bounds}
\end{align}
\end{samepage}

Convolve each $b_r$ with a nonnegative smooth circular approximate
identity, obtaining $b_r^\varepsilon$, and put
$F_\varepsilon=\Phi(J_{a_1,b_1^\varepsilon},\ldots,J_{a_m,b_m^\varepsilon})$.
Then
\begin{align}
 \|b_r^\varepsilon\|_\infty&\leq\|b_r\|_\infty,
 \qquad \operatorname{Var}(b_r^\varepsilon)\leq\operatorname{Var}(b_r),
 \label{eq:BV-mollifier-uniform}\\
 \shortintertext{and bounded convergence gives}
 F_\varepsilon&\longrightarrow F\quad\text{in }L^2(\nu)
 \quad(\varepsilon\downarrow0).
 \label{eq:BV-mollifier-convergence}
\end{align}
For each basis direction, let $G_{n,\tau}$ denote the right side of
\eqref{eq:BV-first-derivative} with $v=e_{n,\tau}$.  The same angular
convergence gives
\begin{align*}
 D_{n,\tau}F_\varepsilon&\longrightarrow G_{n,\tau}
            \quad\text{pointwise and in }L^2(\nu),\\
 \shortintertext{with}
 |D_{n,\tau}F_\varepsilon-G_{n,\tau}|
 &\leq2C_F\alpha_n,\qquad
 \|D_{n,\tau}F_\varepsilon-G_{n,\tau}\|_2^2
 \leq4C_F^2\alpha_n^2,\qquad
 \sum_{n,\tau}4C_F^2\alpha_n^2<\infty.
\end{align*}
Define $G_F=\sum_{n,\tau}G_{n,\tau}e_{n,\tau}$ in
$L^2(\nu;\HWP)$.  Parseval's identity and dominated convergence give
\[
 \|DF_\varepsilon-G_F\|_2^2
 =\sum_{n,\tau}\|D_{n,\tau}F_\varepsilon-G_{n,\tau}\|_2^2
 \longrightarrow0\qquad(\varepsilon\downarrow0).
\]
Closedness of $D$ proves $F\in\Dcal$, $DF=G_F$, and
\eqref{eq:BV-first-derivative}.

For fixed $v$, the function
$G_v=-\sum_r\Phi_r(J)J_{(a_rv)',b_r}$ is again a BV cylinder after a
smooth cutoff outside its bounded coordinate ranges.  Applying the
first-order result to $G_v$ gives $G_v\in\Dcal$ and
\[
 D_vG_v=\sum_{r,s}\Phi_{rs}(J)
          J_{(a_rv)',b_r}J_{(a_sv)',b_s}
          +\sum_r\Phi_r(J)J_{((a_rv)'v)',b_r},
\]
which proves \eqref{eq:BV-second-derivative} as a closed derivative.
\end{proof}

For a BV cylinder define the finite-mode expression
\[
 L_NF=\sum_{n=2}^N\sum_{\tau\in\{c,s\}}
             (D_{n,\tau}^2F-\rho_{n,\tau}D_{n,\tau}F),\qquad N\geq2.
\]
The closed derivatives just identified satisfy
\begin{align}
 \|D_{n,\tau}F\|_\infty&\leq C_F\alpha_n,
 \qquad \|D_{n,\tau}^2F\|_\infty\leq C_F(1+n)\alpha_n^2,
 \label{eq:gen-cylinder-bounds}\\
 \shortintertext{and the score bound \eqref{eq:gen-score-basis-norm} gives}
 \|\rho_{n,\tau}D_{n,\tau}F\|_2
 &\leq\left(a_\kappa+\frac23\right)^{1/2}C_F\alpha_n.
 \notag
\end{align}
\begin{samepage}
Since $\alpha_n=O(n^{-3/2})$ and $(1+n)\alpha_n^2=O(n^{-2})$,
\begin{align}
 \sum_{n>N,\tau}\|D_{n,\tau}^2F\|_\infty
 &\leq2C_F\sum_{n>N}(1+n)\alpha_n^2
 \leq C_F\sum_{n>N}n^{-2}\leq C_FN^{-1},\notag\\
 \shortintertext{and}
 \sum_{n>N,\tau}\|\rho_{n,\tau}D_{n,\tau}F\|_2
 &\leq2C_F\left(a_\kappa+\frac23\right)^{1/2}
                    \sum_{n>N}\alpha_n\notag\\
 &\leq C_{F,\kappa}\sum_{n>N}n^{-3/2}
 \leq C_{F,\kappa}N^{-1/2}.
 \label{eq:generator-tail}
\end{align}
\end{samepage}
The same estimates with all $n\geq2$ prove absolute summability.
Their constants depend only on the bounds displayed in
\eqref{eq:BV-proof-directional-bounds} and the fixed value of $\kappa$.
Thus \eqref{eq:BV-mollifier-uniform} makes them uniform for the
mollifications $F_\varepsilon$.

The summable expression and its tail are therefore
\begin{align}
 L_0F&:=\lim_{N\to\infty}L_NF
       =\sum_{n\geq2,\tau}(D_{n,\tau}^2F-\rho_{n,\tau}D_{n,\tau}F)
                          \quad\text{in }L^2(\nu),\notag\\
 \shortintertext{with}
 \|L_NF-L_0F\|_2&\leq C_{F,\kappa}N^{-1/2}.
 \label{eq:gen-absolute-summability}
\end{align}
Its identification with $L$ is deferred to
Section~\ref{sec:proof-generator}.

\subsection{The trace and covariance kernel}

Pair the cosine and sine modes at each frequency and set
\begin{equation}\label{eq:gen-noise-kernel}
 K(\theta)=\sum_{n\geq2}\alpha_n^2\cos(n\theta),\qquad
 B_N(h,\theta)=\sum_{n=2}^N\sum_{\tau\in\{c,s\}}
                  \rho_{n,\tau}(h)e_{n,\tau}(\theta).
\end{equation}
When a single index is convenient, write
$(e_1,e_2,e_3,e_4,\ldots)=(e_{2,c},e_{2,s},e_{3,c},e_{3,s},\ldots)$,
$D_j=D_{e_j}$, and $\rho_j=\rho_{e_j}$.
The kernel series converges uniformly absolutely.  The scores satisfy
\begin{align}
 \sum_j\|\rho_je_j\|_{L^2(\nu;C(\T))}
 &\leq2\left(a_\kappa+\frac23\right)^{1/2}
                         \sum_{n\geq2}\alpha_n<\infty,
 \label{eq:gen-drift-Bochner-bound}\\
 \shortintertext{so the drift is well defined by}
 B&=\lim_{N\to\infty}B_N=\sum_j\rho_je_j
       \quad\text{in }L^2(\nu;C(\T)).
 \label{eq:gen-drift-field}
\end{align}
Tonelli's theorem also gives
\[
 \int_X\sum_j|\rho_j(h)|\|e_j\|_\infty\,d\nu
 \leq\sum_j\|\rho_j\|_2\|e_j\|_\infty<\infty.
\]
Use the uniform sum wherever the series of norms is finite and zero
elsewhere.  Its partial sums and their limit are measurable maps into
$C(\T)$, so evaluation gives a jointly measurable $B(h,\theta)$.

The drift contribution to $J_{a,b}$ is
\begin{align*}
 B_{a,b}(h)&=\frac1{2\pi}\int_\T a(\theta)B(h,\theta)\,db_h(\theta),\\
 \shortintertext{with}
 \left\|B_{a,b}-\frac1{2\pi}\int_\T aB_N\,db_h\right\|_2
 &\leq\|a\|_\infty V_b\|B-B_N\|_{L^2(\nu;C(\T))}
          \longrightarrow0.
\end{align*}
The finite sums are measurable; this convergence identifies
$B_{a,b}\in L^2(\nu)$.
For BV cylinders use the pointwise energy representative
\begin{equation}\label{eq:pointwise-cylinder-Gamma}
 \Gamma(F,G)(h)=\sum_{n,\tau}D_{n,\tau}F(h)D_{n,\tau}G(h).
\end{equation}
The series converges uniformly absolutely by
\eqref{eq:gen-cylinder-bounds}.

\begin{proposition}\label{prop:trace-drift}
For smooth source functions and bounded-variation target functions,
\begin{align}
 L_0J_{a,b}&=\frac12J_{a'',b}-B_{a,b},
 \label{eq:gen-J-explicit}\\
 \shortintertext{and}
 \Gamma(J_{a,b},J_{\widetilde a,\widetilde b})(h)
 &=\frac1{(2\pi)^2}\int_{\T^2}
 a(\theta)\widetilde a(\varphi)K(\theta-\varphi)
 \,db_h(\theta)\,d\widetilde b_h(\varphi).
 \label{eq:gen-Gamma-Stieltjes}
\end{align}
For $F=\Phi(J_1,\ldots,J_m)$,
\begin{equation}\label{eq:gen-cylinder-explicit}
 L_0F=\sum_r\Phi_r(J)
       \left(\frac12J_{a_r'',b_r}-B_{a_r,b_r}\right)
       +\sum_{r,s}\Phi_{rs}(J)\Gamma(J_r,J_s).
\end{equation}
\end{proposition}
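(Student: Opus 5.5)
The plan is to compute each Fourier summand of $L_0$ explicitly with Lemma~\ref{lem:gen-BV-estimate} and Lemma~\ref{lem:BV-directional-calculus}, and then sum over modes using two trigonometric identities. These are $\sum_\tau e_{n,\tau}(\theta)e_{n,\tau}(\varphi)=\alpha_n^2\cos(n(\theta-\varphi))$ and its diagonal case $\sum_\tau e_{n,\tau}^2=\alpha_n^2$, so that $\sum_\tau e_{n,\tau}e_{n,\tau}'=0$.

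\emph{Step 1: the energy identity.} By \eqref{eq:gen-first-Stieltjes},
\[
 D_{n,\tau}J_{a,b}(h)=\frac1{2\pi}\int_\T a e_{n,\tau}\,db_h .
\]
Multiplying the two such expressions and summing over $\tau$ gives
\[
 \frac1{(2\pi)^2}\int_{\T^2}a(\theta)\widetilde a(\varphi)\,\alpha_n^2\cos(n(\theta-\varphi))\,db_h(\theta)\,d\widetilde b_h(\varphi).
\]
We then sum over $n\geq2$. The kernel series converges uniformly absolutely and $db_h$, $d\widetilde b_h$ are finite measures, so the sum may be exchanged with the double Stieltjes integral. This yields \eqref{eq:gen-Gamma-Stieltjes} in the pointwise representative \eqref{eq:pointwise-cylinder-Gamma}.

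\emph{Step 2: second-order part for $J_{a,b}$.} By \eqref{eq:gen-second-Stieltjes},
\[
 \sum_\tau D_{n,\tau}^2J_{a,b}=-\frac1{2\pi}\int_\T\sum_\tau e_{n,\tau}(ae_{n,\tau})'\,db_h .
\]
Expanding, the integrand is $a'\sum_\tau e_{n,\tau}^2+a\sum_\tau e_{n,\tau}e_{n,\tau}'=\alpha_n^2a'$. Summing over $n$ requires $\sum_{n\geq2}\alpha_n^2=K(0)$, and this constant has to match the prefactor $\tfrac12$ in $\tfrac12J_{a'',b}$. I expect this to be the main obstacle: as written, the second-order part is $-K(0)\frac1{2\pi}\int a'\,db_h=K(0)J_{a'',b}$ after integrating by parts on the circle. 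I would first compute $K(0)=\sum_{n\geq2}2/(n^3-n)$ by partial fractions, $\frac{2}{n^3-n}=\frac1{n-1}-\frac2n+\frac1{n+1}$, which telescopes to $\tfrac12$. This confirms the coefficient $\tfrac12J_{a'',b}$, with uniform convergence again justifying the exchange of sum and integral.

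\emph{Step 3: drift part.} We have
\[
 \sum_{n\leq N,\tau}\rho_{n,\tau}D_{n,\tau}J_{a,b}=\frac1{2\pi}\int_\T aB_N\,db_h .
\]
This tends to $B_{a,b}$ in $L^2(\nu)$ by the estimate displayed before the proposition. Combined with the $L^2$ limit defining $L_0$ in \eqref{eq:gen-absolute-summability}, this proves \eqref{eq:gen-J-explicit}.

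\emph{Step 4: general cylinders.} For $F=\Phi(J)$, Lemma~\ref{lem:BV-directional-calculus} gives
\[
 D_{n,\tau}^2F=\sum_r\Phi_r(J)D_{n,\tau}^2J_r+\sum_{r,s}\Phi_{rs}(J)D_{n,\tau}J_rD_{n,\tau}J_s,
 \qquad
 \rho_{n,\tau}D_{n,\tau}F=\sum_r\Phi_r(J)\rho_{n,\tau}D_{n,\tau}J_r .
\]
The factors $\Phi_r(J)$ and $\Phi_{rs}(J)$ are bounded and independent of $(n,\tau)$, so they pass through the $L^2$ and uniform limits. Steps 1--3 then give \eqref{eq:gen-cylinder-explicit}.
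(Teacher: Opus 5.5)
Your proposal is correct and follows essentially the same route as the paper: the Stieltjes formulas of Lemma~\ref{lem:gen-BV-estimate}, the frequency identities $\sum_\tau e_{n,\tau}^2=\alpha_n^2$ and $\sum_\tau e_{n,\tau}e_{n,\tau}'=0$ with the telescoping value $K(0)=\tfrac12$ to collapse the trace, the $L^2(\nu;C(\T))$ drift approximation by $B_N$, and the chain rule to collect the cylinder terms. The only difference is cosmetic: you pair the cosine and sine modes at each frequency before summing, whereas the paper bounds $\sum_{n,\tau}\|e_{n,\tau}(ae_{n,\tau})'\|_\infty$ term by term; either justifies exchanging the sum with the Stieltjes integral.
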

\begin{proof}
We use the identities
\begin{align*}
 \sum_{n=2}^N\frac2{n^3-n}&=\frac12-\frac1{N(N+1)},\qquad
 e_{n,c}^2+e_{n,s}^2=\alpha_n^2,\qquad
 e_{n,c}e_{n,c}'+e_{n,s}e_{n,s}'=0,\\
 \shortintertext{so $K(0)=1/2$, while the trace integrands satisfy}
 \sum_{n,\tau}\|e_{n,\tau}(ae_{n,\tau})'\|_\infty
 &\leq2\sum_{n\geq2}\alpha_n^2
       (\|a'\|_\infty+n\|a\|_\infty)<\infty.
\end{align*}
This bound permits termwise Stieltjes integration, while
\eqref{eq:gen-drift-Bochner-bound} controls the drift sum.  The two
response formulas therefore give
\begin{align}
 \sum_jD_j^2J_{a,b}
 &=-\frac1{2\pi}\int_\T a'\sum_{n\geq2}\alpha_n^2\,db_h
 =-\frac1{4\pi}\int_\T a'\,db_h
 =\frac12J_{a'',b},
 \label{eq:gen-trace-collapse}\\
 \shortintertext{and}
 \sum_j\rho_jD_jJ_{a,b}
 &=\frac1{2\pi}\int_\T a\sum_j\rho_je_j\,db_h=B_{a,b}
       \quad\text{in }L^2(\nu).
 \label{eq:gen-drift-observable}
\end{align}
Subtracting the drift from the trace proves \eqref{eq:gen-J-explicit}.

Since $\sum_je_j(\theta)e_j(\varphi)=K(\theta-\varphi)$ uniformly
absolutely, the finite total variations of the two Stieltjes measures give
\begin{align*}
 \Gamma(J_{a,b},J_{\widetilde a,\widetilde b})
 &=\frac1{(2\pi)^2}\int_{\T^2}a(\theta)\widetilde a(\varphi)
       \sum_je_j(\theta)e_j(\varphi)\,db_h(\theta)\,d\widetilde b_h(\varphi)\\
 &=\frac1{(2\pi)^2}\int_{\T^2}a(\theta)\widetilde a(\varphi)
       K(\theta-\varphi)\,db_h(\theta)\,d\widetilde b_h(\varphi).
\end{align*}
Finally, absolute summability allows us to collect the cylinder terms:
\begin{align*}
 L_0F
 &=\sum_r\Phi_r(J)\sum_j(D_j^2J_r-\rho_jD_jJ_r)
   +\sum_{r,s}\Phi_{rs}(J)\sum_jD_jJ_rD_jJ_s\\
 &=\sum_r\Phi_r(J)L_0J_r
   +\sum_{r,s}\Phi_{rs}(J)\Gamma(J_r,J_s).
\end{align*}
Substitution of \eqref{eq:gen-J-explicit} proves
\eqref{eq:gen-cylinder-explicit}.
\end{proof}

\subsection{The stress drift}
\label{sec:stress-drift}

The stress representation strengthens the $L^2(\nu;C(\T))$ drift
bound to a uniform bound and a circle modulus of continuity.
Write $\mathbb D=\{z\in\mathbb C:|z|<1\}$.  The normalized-frame
construction in \cite{Response} supplies a measurable set
$X_{\rm nf}\subset W_{\rm rem}$ with $\nu(X_{\rm nf})=1$ and a measurable choice
$h\mapsto f_h^\circ$ of normalized interior maps, with the topology
of locally uniform convergence and
\[
 f_h^\circ(0)=0,\qquad (f_h^\circ)'(0)=1.
\]
We use this same set $X_{\rm nf}$ in the geometric realization.

For a normalized univalent function $f(z)=z+c_2z^2+\cdots$ on
$\mathbb D$, write
$\mathcal Sf=f'''/f'-\tfrac32(f''/f')^2$ and define
\begin{align}
 Q_f(z)&=a_\kappa\mathcal Sf(z)
       +\left(\frac{f'(z)}{f(z)}\right)^2-\frac1{z^2}-\frac{2c_2}{z},
 \label{eq:regular-stress}\\
 \shortintertext{with coefficients and primitive}
 Q_f(z)&=\sum_{n\geq2}t_n(f)z^{n-2},\qquad
 A_f(z)=\sum_{n\geq2}\frac{t_n(f)}{n^3-n}z^n.
 \label{eq:analytic-drift-primitive}
\end{align}
The cancellation at zero is justified below.  Let $A_f^*$ denote
the uniform radial boundary value when it exists, and put
$\delta=d_\T(\theta,\varphi)=
\min_{\ell\in\mathbb Z}|\theta-\varphi+2\pi\ell|$.
In this subsection $C_\kappa$ depends only on $\kappa$ and may
increase from one estimate to the next.

\begin{lemma}\label{lem:stress-primitive}
For every normalized univalent $f$, $A_f^*$ exists in $C(\T)$, and
\begin{align}
 \sup_{z\in\mathbb D}(1-|z|^2)^2|Q_f(z)|&\leq C_\kappa,
 \label{eq:uniform-Bers-stress}\\
 \shortintertext{and its primitive satisfies}
 \|A_f^*\|_\infty&\leq C_\kappa,
 \label{eq:stress-boundary-uniform}\\
 \|A_f^*(e^{i(\cdot+s)})+A_f^*(e^{i(\cdot-s)})
                     -2A_f^*(e^{i\cdot})\|_\infty
 &\leq C_\kappa|s|\qquad(|s|\leq1),
 \label{eq:stress-boundary-Zygmund}\\
 |A_f^*(e^{i\theta})-A_f^*(e^{i\varphi})|
 &\leq C_\kappa\delta\log(e/\delta)\qquad(0<\delta\leq1).
 \label{eq:stress-boundary-Osgood}
\end{align}
\end{lemma}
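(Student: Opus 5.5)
We first check that $Q_f$ is holomorphic in $\mathbb D$, and then derive everything from the Bers-type bound \eqref{eq:uniform-Bers-stress} by standard estimates for Zygmund-class primitives.

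\emph{Holomorphy.} For $f(z)=z+c_2z^2+\cdots$ we have $f'/f=1/z+c_2+O(z)$. Hence $(f'/f)^2=1/z^2+2c_2/z+O(1)$. The Schwarzian $\mathcal Sf$ is holomorphic in $\mathbb D$ because $f'\neq0$, and $f'/f$ has no other poles because $f$ vanishes only at $0$. So $Q_f$ is holomorphic and the expansion \eqref{eq:analytic-drift-primitive} makes sense.

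\emph{The bound \eqref{eq:uniform-Bers-stress}.} Kraus--Nehari gives $(1-|z|^2)^2|\mathcal Sf|\le6$. For the remainder we write
\[
 (f'/f)^2-1/z^2-2c_2/z=\bigl(f'/f-1/z\bigr)\bigl(f'/f+1/z\bigr)-2c_2/z .
\]
On $|z|\ge1/2$ we use Koebe distortion and growth, $|zf'/f|\le(1+|z|)/(1-|z|)$, together with $|c_2|\le2$. This gives $(1-|z|^2)^2|(f'/f)^2-z^{-2}|\le C$. On $|z|\le1/2$ we use instead that the normalized univalent class is compact, so the holomorphic remainder is uniformly bounded there by the maximum principle on $|z|=1/2$. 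Together these give $\sup(1-|z|^2)^2|Q_f|\le C_\kappa$ with $C_\kappa=6a_\kappa+C$.

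\emph{Boundary values of the primitive.} Set $P(z)=\sum_{n\ge2}t_n z^n/(n^3-n)$. Then $(z^2\cdot)$ applied to $Q_f$ equals a third-order differential operator in $z\partial_z$ applied to $P$, since $n^3-n=(n-1)n(n+1)$. Concretely, $z^{-1}\partial_z^{3}(z\,\cdot)$-type identities give $\partial_z^3\bigl(zA_f(z)\bigr)\cdot$const $=$ the power series of $Q_f$ up to index shifts. The cleanest route is to work with $\theta$-derivatives $D_\theta=iz\partial_z$. There $D_\theta(D_\theta^2+1)A_f$ corresponds to multiplying coefficients by $n^3-n$ up to a unit, so it equals $-i z^2Q_f$.

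From $|z^2Q_f|\le C_\kappa(1-r)^{-2}$ we integrate radially three times, as in the Hardy--Littlewood lemma. The series for $A_f$ converges absolutely on $\overline{\mathbb D}$, because Cauchy estimates from \eqref{eq:uniform-Bers-stress} give $|t_n|\le C_\kappa n^2$ only, which is not summable against $n^{-3}$. So we avoid this and argue with growth instead. A function $g$ whose third derivative is $O((1-r)^{-2})$ has a second derivative of size $O((1-r)^{-1})$ and a first derivative of size $O(\log\frac{e}{1-r})$. By Zygmund's theorem, the condition $g''=O((1-r)^{-1})$ means exactly that $g$ extends continuously and belongs to the Zygmund class $\Lambda_*$ on the boundary, with seminorm $\le C_\kappa$.

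We apply this to $g=A_f$, absorbing the lower-order operator $D_\theta^2+1$ by a bounded inverse on the modes $n\ge2$. Radial integration of $A_f''$ twice from $0$, with $A_f(0)=A_f'(0)=0$, gives the uniform bound \eqref{eq:stress-boundary-uniform} and existence of the uniform radial limit. The second-difference estimate \eqref{eq:stress-boundary-Zygmund} follows from the standard argument: split at radius $r=1-|s|$, use $|A_f''|\le C_\kappa/(1-r)$ inside, and use the radial bound on $A_f'$ for the boundary pieces. Finally, \eqref{eq:stress-boundary-Osgood} is the classical fact that Zygmund functions have modulus $\delta\log(e/\delta)$. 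Equivalently, it follows from $|A_f'|\le C_\kappa\log\frac e{1-r}$ by integrating along the path that goes radially in to $r=1-\delta$, across the arc, and back out.

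\emph{Main obstacle.} The delicate step is the bookkeeping converting $(1-|z|)^{-2}$ growth of $Q_f$ into $(1-|z|)^{-1}$ growth of $A_f''$. The only real input is that $1/(n^3-n)$ acts as a third-order smoothing operator. Near $z=0$ we must treat the $z^{n-2}$ versus $z^n$ shift separately, but there everything is bounded by compactness of the normalized univalent class.
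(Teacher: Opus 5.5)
Your overall route matches the paper's: the Bers-type bound, then Hardy--Littlewood radial integration to $A_f''=O((1-r)^{-1})$, then the boundary estimates. The gap is in the step that is supposed to give the sharp Zygmund bound \eqref{eq:stress-boundary-Zygmund}, which fails as you describe it. With $r=1-|s|$, the second difference of $A_f(r\,\cdot)$ is indeed at most $s^2\sup|\partial_\theta^2A_f(r\,\cdot)|\le C|s|$. But if you control the boundary pieces by the radial bound $|A_f'|\le C\log\frac e{1-r}$, you only get $\|A_f^*-A_f(r\,\cdot)\|_\infty\le C\int_r^1\log\frac e{1-t}\,dt\asymp|s|\log(e/|s|)$. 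That is the log-Zygmund (Osgood) modulus, not $C|s|$.

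The paper removes exactly this logarithm by subtracting the first-order radial Taylor term. Set $G_r(\zeta)=A_f(r\zeta)+(1-r)\zeta A_f'(r\zeta)$. Then $A_f^*(\zeta)-G_r(\zeta)=\zeta^2\int_r^1(1-t)A_f''(t\zeta)\,dt$, so $\|A_f^*-G_r\|_\infty\le C(1-r)$. Cauchy's estimate $|A_f'''|\le C(1-r)^{-2}$ on $|z|=r$ gives $\|\partial_\theta^2G_r\|_\infty\le C(1-r)^{-1}$. The second difference is then at most $4\|A_f^*-G_r\|_\infty+s^2\|\partial_\theta^2G_r\|_\infty\le C|s|$. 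Citing Zygmund's theorem as a black box, with constants tracked, would also be legitimate, but the argument you wrote for it does not prove it. Your Osgood bound, where the logarithm is allowed, is fine as stated.

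The passage from $Q_f$ to $A_f''$ is also muddled. ``Absorbing $D_\theta^2+1$ by a bounded inverse on the modes $n\ge2$'' is not what is needed. You would have to know that the order-zero multiplier $n^2/(n^2-1)$, taking $(D_\theta^2+1)A_f$ to $D_\theta^2A_f$, preserves the growth class $O((1-r)^{-1})$. Boundedness of a coefficient multiplier does not give that in general.

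The identity you half-state is in fact exact, with no constant or index shift: $\partial_z^3(zA_f)=Q_f$, and $zA_f=O(z^3)$. Three radial integrations of $|Q_f|\le C_\kappa(1-|z|)^{-2}$ therefore give $\partial_z^2(zA_f)=O((1-r)^{-1})$, $\partial_z(zA_f)=O(\log\frac e{1-r})$ and $zA_f=O(1)$. Dividing by $z$ on $|z|\ge1/2$ transfers these bounds to $A_f$. This is equivalent to the paper's formula $A_f''(z)=z^{-3}\int_0^zw^2Q_f(w)\,dw$.

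Your sentence asserting absolute convergence of the series for $A_f$ on $\overline{\mathbb D}$ contradicts its own justification: $|t_n|\le Cn^2$ only gives coefficients $O(1/n)$. It should be deleted.

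Finally, your proof of \eqref{eq:uniform-Bers-stress} is fine and differs from the paper's, which applies Nehari's bound to $\log f$ on $B(z,(1-|z|)/2)$ via $\mathcal S(\log f)=\mathcal Sf+\frac12(f'/f)^2$. Be careful, though: multiplying the separate distortion and growth bounds only gives $|zf'/f|\le((1+r)/(1-r))^3$, which is too weak against $(1-|z|^2)^2$. The sharp bound $(1+r)/(1-r)$ comes from applying the growth theorem to the Koebe transform of $f$ at $z$, evaluated at $-z$.
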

\begin{proof}
The expansion
\[
 \left(\frac{f'}f\right)^2-\frac1{z^2}
 =\frac{2c_2}{z}+O(1)\qquad(z\to0)
\]
shows that $Q_f$ is holomorphic.  For $|z|\geq1/2$ set
$R=(1-|z|)/2$.  The disk $B(z,R)$ avoids the zero of $f$, and a
branch of $\log f$ is univalent there.  The Schwarzian chain rule
and the univalent Schwarzian bound \cite{Nehari49} give
\[
 \mathcal S(\log f)=\mathcal Sf+\tfrac12(f'/f)^2,\qquad
 |\mathcal S(\log f)(z)|+|\mathcal Sf(z)|\leq12R^{-2},\qquad
 |f'/f|^2\leq24R^{-2}.
\]
Together with $|c_2|\leq2$ and
$(1-|z|^2)^2|\mathcal Sf(z)|\leq6$, this proves
\eqref{eq:uniform-Bers-stress} for $|z|\geq1/2$.
On the remaining disk, the maximum principle gives
\[
 \sup_{|z|\leq1/2}|Q_f(z)|
 \leq\sup_{|z|=1/2}|Q_f(z)|\leq\frac{16}{9}C_\kappa.
\]

Termwise differentiation gives
\begin{equation}\label{eq:drift-primitive-derivative}
 A_f''(z)=\sum_{n\geq2}\frac{t_n(f)}{n+1}z^{n-2}
         =z^{-3}\int_0^z w^2Q_f(w)\,dw\qquad(z\ne0).
\end{equation}
Since $A_f(0)=A_f'(0)=0$, radial integration for $z=r\zeta$,
$|\zeta|=1$, yields
\begin{align}
 |A_f''(r\zeta)|
 &\leq C_\kappa r^{-3}\int_0^r\frac{s^2}{(1-s^2)^2}\,ds
 \leq\frac{C_\kappa}{1-r},
 \label{eq:stress-second-bound}\\
 \shortintertext{and consequently}
 \sup_{|z|=r}|A_f'(z)|&\leq C_\kappa\log\frac e{1-r},\qquad
 \sup_{z\in\mathbb D}|A_f(z)|\leq C_\kappa.
 \label{eq:stress-first-zero-bounds}
\end{align}
For $0<r<s<1$, the first-derivative bound gives
\[
 \|A_f(s\,\cdot)-A_f(r\,\cdot)\|_\infty
 \leq C_\kappa\int_r^s\log\frac e{1-t}\,dt
 \leq C_\kappa(1-r)\log\frac e{1-r}.
\]
Thus $A_f(r\,\cdot)$ converges uniformly to $A_f^*\in C(\T)$, with
\begin{equation}\label{eq:stress-radial-error}
 \|A_f^*-A_f(r\,\cdot)\|_\infty
 \leq C_\kappa(1-r)\log\frac e{1-r}\longrightarrow0
 \qquad(r\uparrow1).
\end{equation}

To obtain the second-difference estimate without this logarithmic
loss, set
$G_r(\zeta)=A_f(r\zeta)+(1-r)\zeta A_f'(r\zeta)$.
Taylor's integral formula gives
\[
 A_f^*(\zeta)-G_r(\zeta)
 =\zeta^2\int_r^1(1-t)A_f''(t\zeta)\,dt,\qquad
 \|A_f^*-G_r\|_\infty\leq C_\kappa(1-r).
\]
Cauchy's estimate gives $\sup_{|z|=r}|A_f'''(z)|\leq
C_\kappa(1-r)^{-2}$.  With $\zeta=e^{i\theta}$ and all derivatives
of $A_f$ evaluated at $r\zeta$,
\begin{align*}
 \partial_\theta^2G_r
 &=-\zeta A_f'-(r^2+3r(1-r))\zeta^2A_f''
                    -r^2(1-r)\zeta^3A_f''',\\
 \shortintertext{hence}
 \|\partial_\theta^2G_r\|_\infty&\leq C_\kappa(1-r)^{-1}
                    \qquad(r\geq1/2).
\end{align*}
For $0<|s|\leq1/2$, choose $r=1-|s|$.  Then
\begin{align*}
 &\|A_f^*(e^{i(\cdot+s)})+A_f^*(e^{i(\cdot-s)})
                         -2A_f^*(e^{i\cdot})\|_\infty\\
 &\qquad\leq4\|A_f^*-G_r\|_\infty
            +s^2\|\partial_\theta^2G_r\|_\infty
 \leq C_\kappa|s|.
\end{align*}
For $0<\delta\leq1/2$, instead take $r=1-\delta$ and use
\begin{align*}
 |A_f^*(e^{i\theta})-A_f^*(e^{i\varphi})|
 &\leq2\|A_f^*-A_f(r\,\cdot)\|_\infty
      +\delta\sup_{|z|=r}|A_f'(z)|\\
 &\leq C_\kappa\delta\log(e/\delta).
\end{align*}
The uniform bound on $A_f^*$ treats $1/2<|s|\leq1$ and
$1/2<\delta\leq1$.
\end{proof}

Put $t_n(h)=t_n(f_h^\circ)$ on $X_{\rm nf}$.  The real score convention
in \cite{Response} reads
\begin{equation}\label{eq:stress-coefficients}
 \rho_{n,c}=\alpha_n\operatorname{Im}t_n,\qquad
 \rho_{n,s}=\alpha_n\operatorname{Re}t_n
       \quad\nu\text{-a.e. on }X_{\rm nf}.
\end{equation}
For a circle function $v$, let
$P_rv(\theta)=\sum_{n\in\mathbb Z}r^{|n|}v_ne^{in\theta}$
be its Poisson regularization, $0<r<1$.

\begin{proposition}\label{prop:uniform-drift}
The drift $B$ has a jointly measurable version satisfying, for all
$h\in X$,
\begin{align}
 \|B(h,\cdot)\|_\infty&\leq C_\kappa,
 \label{eq:uniform-drift-bound}\\
 |B(h,\theta+s)+B(h,\theta-s)-2B(h,\theta)|
 &\leq C_\kappa|s|\qquad(|s|\leq1),
 \label{eq:uniform-drift-Zygmund}\\
 |B(h,\theta)-B(h,\varphi)|
 &\leq C_\kappa\delta\log(e/\delta)\qquad(0<\delta\leq1).
 \label{eq:uniform-drift-Osgood}
\end{align}
For every BV cylinder $F$, $L_0F\in L^\infty(\nu)$.
\end{proposition}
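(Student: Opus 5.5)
The plan is to identify $B(h,\cdot)$ with a fixed real linear image of the boundary primitive $A^*_{f_h^\circ}$ from Lemma~\ref{lem:stress-primitive}, and then read the three bounds off that lemma.

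\emph{Step 1: the drift in terms of the stress coefficients.} On $X_{\rm nf}$, insert \eqref{eq:stress-coefficients} into $B_N(h,\theta)=\sum_{n\le N}\sum_\tau\rho_{n,\tau}e_{n,\tau}(\theta)$. This gives
\[
 B_N(h,\theta)=\sum_{n=2}^N\alpha_n^2\bigl(\operatorname{Im}t_n\cos n\theta+\operatorname{Re}t_n\sin n\theta\bigr)
 =\operatorname{Im}\sum_{n=2}^N\frac{2t_n(h)}{n^3-n}e^{in\theta}.
\]
The right-hand side is the $N$-th partial sum of $2\operatorname{Im}A_f(e^{i\theta})$ with $f=f_h^\circ$.

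\emph{Step 2: pass to the limit.} The partial sums need not converge uniformly, so I will use Poisson regularization instead. For fixed $r<1$, the series $P_rB_N\to P_rB$ converges in $L^2(\nu;C(\T))$ by \eqref{eq:gen-drift-field}, since $P_r$ is a contraction on $C(\T)$. It also converges pointwise on $X_{\rm nf}$ to $2\operatorname{Im}A_f(re^{i\theta})$, because $|t_n|\le C n^{2}$ by Cauchy's estimate applied to \eqref{eq:uniform-Bers-stress}, so the regularized series converges absolutely. Hence $P_rB(h,\cdot)=2\operatorname{Im}A_f(r\,\cdot)$ for $\nu$-a.e.\ $h$.

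Letting $r\uparrow1$, the radial error \eqref{eq:stress-radial-error} gives uniform convergence of the right side to $2\operatorname{Im}A^*_f$. On the left, $P_rB\to B$ in $L^2(\nu;C(\T))$ by dominated convergence along a sequence $r_k\uparrow1$. Therefore $B(h,\cdot)=2\operatorname{Im}A^*_{f_h^\circ}$ for $\nu$-a.e.\ $h$.

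This identification is the main obstacle. The difficulty is matching the paper's measurable choices, namely the measurable representatives of the $\rho_j$ and the a.e.\ version of $B$, with the pointwise holomorphic object, and handling the exceptional null sets.

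\emph{Step 3: choose the version.} Define the new version $\widetilde B(h,\theta)=2\operatorname{Im}A^*_{f_h^\circ}(e^{i\theta})$ for $h\in X_{\rm nf}$, and $\widetilde B\equiv0$ elsewhere. Joint measurability holds for two reasons. First, $h\mapsto f_h^\circ$ is measurable for the locally uniform topology, so each $t_n$ is measurable. Second, $A^*_f$ is the uniform limit of the continuous maps $f\mapsto A_f(r_k\,\cdot)\in C(\T)$. Evaluation on $C(\T)\times\T$ is jointly continuous, which then gives a jointly measurable function of $(h,\theta)$. By Step 2, $\widetilde B=B$ in $L^2(\nu;C(\T))$, so $\widetilde B$ is a legitimate version.

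\emph{Step 4: the three bounds.} Taking imaginary parts of \eqref{eq:stress-boundary-uniform}, \eqref{eq:stress-boundary-Zygmund} and \eqref{eq:stress-boundary-Osgood} yields \eqref{eq:uniform-drift-bound}, \eqref{eq:uniform-drift-Zygmund} and \eqref{eq:uniform-drift-Osgood} with $2C_\kappa$. These hold for every $h$, trivially off $X_{\rm nf}$, since the constants in Lemma~\ref{lem:stress-primitive} are uniform over normalized univalent $f$.

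\emph{Step 5: boundedness of $L_0F$.} With this version, $|B_{a,b}(h)|\le\|a\|_\infty V_b\,\|B(h,\cdot)\|_\infty\le C_\kappa\|a\|_\infty V_b$. The remaining terms are bounded as well: $|J_{a'',b}|\le\|a''\|_\infty\|b\|_\infty$, and $\Gamma(J_r,J_s)$ is bounded by \eqref{eq:gen-Gamma-Stieltjes} because $K$ is bounded. Formula \eqref{eq:gen-cylinder-explicit} then shows $L_0F\in L^\infty(\nu)$ for every BV cylinder $F$.
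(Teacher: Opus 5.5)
Your proposal is correct and takes essentially the paper's route. You identify $P_rB(h,\cdot)$ with $2\operatorname{Im}A_{f_h^\circ}(r\,\cdot)$ via \eqref{eq:stress-coefficients}, let $r\uparrow1$ to adopt $2\operatorname{Im}A^*_{f_h^\circ}$ (and $0$ off $X_{\rm nf}$) as the jointly measurable version, read the three bounds off Lemma~\ref{lem:stress-primitive}, and bound $L_0F$ through \eqref{eq:gen-cylinder-explicit}. The only difference is cosmetic: you justify the identification by $L^2(\nu;C(\T))$ limits and the bound $|t_n|\leq Cn^2$, whereas the paper argues pointwise on the full-measure set $E_0$ where $\sum_j|\rho_j(h)|\,\|e_j\|_\infty<\infty$ and all the identities \eqref{eq:stress-coefficients} hold.
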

\begin{proof}
Take the common measurable set
\[
 E_0=\left\{h\in X_{\rm nf}:
 \begin{array}{l}
 \sum_j|\rho_j(h)|\|e_j\|_\infty<\infty,\\
 \rho_{n,c}(h)=\alpha_n\operatorname{Im}t_n(h),\quad
 \rho_{n,s}(h)=\alpha_n\operatorname{Re}t_n(h)\quad(n\geq2)
 \end{array}\right\}.
\]
The summability above and the countably many equalities in
\eqref{eq:stress-coefficients} give $\nu(E_0)=1$.
For $h\in E_0$, uniform absolute convergence implies
\begin{align}
 (P_rB)(h,\theta)
 &=\sum_{n\geq2}\frac{2r^n}{n^3-n}
   \{(\operatorname{Im}t_n(h))\cos n\theta
           +(\operatorname{Re}t_n(h))\sin n\theta\}\notag\\
 &=2\operatorname{Im}A_{f_h^\circ}(re^{i\theta}).
 \label{eq:drift-Abel-identification}
\end{align}
Both sides converge in $C(\T)$ as $r\uparrow1$.  We may therefore
choose the version
\[
 B(h,\theta)=
 \begin{cases}
 2\operatorname{Im}A_{f_h^\circ}^*(e^{i\theta}),&h\in X_{\rm nf},\\
 0,&h\notin X_{\rm nf}.
 \end{cases}
\]
For each fixed $r<1$, the coefficients of the holomorphic map give
a measurable $C(\T)$-valued function
$h\mapsto A_{f_h^\circ}(r\,\cdot)$.  Its limit along
$r_k=1-2^{-k}$ has the same measurability.  Thus the displayed version
is jointly measurable, agrees with \eqref{eq:gen-drift-field} almost
everywhere, and satisfies the three bounds by
Lemma~\ref{lem:stress-primitive}.

The Stieltjes representation and $|K(\theta)|\leq K(0)=1/2$ give
\begin{align*}
 \|B_{a,b}\|_\infty&\leq C_\kappa\|a\|_\infty V_b,\\
 \shortintertext{and}
 \|\Gamma(J_r,J_s)\|_\infty
 &\leq\tfrac12\|a_r\|_\infty\|a_s\|_\infty V_{b_r}V_{b_s}.
\end{align*}
Consequently, \eqref{eq:gen-cylinder-explicit} yields
\begin{align*}
 \|L_0F\|_\infty
 &\leq\sum_r\|\Phi_r\|_\infty
       \left(\tfrac12\|a_r''\|_\infty\|b_r\|_\infty
                       +C_\kappa\|a_r\|_\infty V_{b_r}\right)\\
 &\quad+\tfrac12\sum_{r,s}\|\Phi_{rs}\|_\infty
          \|a_r\|_\infty\|a_s\|_\infty V_{b_r}V_{b_s}<\infty.\qedhere
\end{align*}
\end{proof}

\subsection{Final proof of Theorem~\ref{thm:generator-main}}
\label{sec:proof-generator}

The form representation identifies the coordinate-free operator:
for $F\in\Dcal$ and $H\in L^2(\nu)$,
\begin{align*}
 F\in\Dom(L),\ LF=H
 &\ \Longleftrightarrow\ 
 \E(F,G)=-\langle H,G\rangle_2\quad(G\in\Dcal)\\
 &\ \Longleftrightarrow\ 
 DF\in\Dom(D^*),\ D^*DF=-H.
\end{align*}
Thus \eqref{eq:gen-DstarD} holds.  To identify its Fourier realization,
fix $F,G\in\Ccyl$.  For each finite field $v$, the response formulas
express $D_vF$ as a smooth cylinder after a cutoff outside bounded
coordinate ranges; hence $(D_vF)G\in\Ccyl$.  The adjoint identity and
product rule give
\[
 \int_X\rho_v(D_vF)G\,d\nu
 =\int_XD_v\{(D_vF)G\}\,d\nu
 =\int_X\{(D_v^2F)G+(D_vF)(D_vG)\}\,d\nu.
\]
Let $P_N$ be the orthogonal projection onto the modes $2\leq n\leq N$.
Summing the preceding identity gives
\[
 -\langle L_NF,G\rangle_2
 =\int_X(P_NDF,DG)_{\WP}\,d\nu.
\]
By \eqref{eq:gen-absolute-summability} and Parseval's identity,
\begin{align*}
 \|L_NF-L_0F\|_2&\longrightarrow0,\qquad
 \|(1-P_N)DF\|_2\longrightarrow0,\\
 \shortintertext{and therefore}
 |\E(F,G)+\langle L_0F,G\rangle_2|
 &\leq\|(1-P_N)DF\|_2\|DG\|_2
       +\|L_NF-L_0F\|_2\|G\|_2\longrightarrow0.
\end{align*}
Thus $\E(F,G)=-\langle L_0F,G\rangle_2$ for $G\in\Ccyl$.
For an arbitrary $G\in\Dcal$, choose $G_j\in\Ccyl$ converging in
form norm.  Then
\[
 |\E(F,G-G_j)+\langle L_0F,G-G_j\rangle_2|
 \leq(\|DF\|_2+\|L_0F\|_2)\|G-G_j\|_{\E_1}\longrightarrow0.
\]
It follows that $F\in\Dom(L)$ and $LF=L_0F$.
The summability and tail estimates
\eqref{eq:gen-cylinder-bounds}--\eqref{eq:gen-absolute-summability} prove
\eqref{eq:gen-infinite-Fourier}--\eqref{eq:main-generator-summability},
and Proposition~\ref{prop:uniform-drift} gives $LF\in L^\infty(\nu)$.

We also record the BV extension needed in Section~\ref{sec:boundary-dynamics}.
For a BV cylinder $F$, use $F_\varepsilon$ from
Lemma~\ref{lem:BV-directional-calculus}.  For each fixed $N$,
\begin{equation}\label{eq:BV-proof-fixed-N}
 \|F_\varepsilon-F\|_2\longrightarrow0,\qquad
 \|L_NF_\varepsilon-L_NF\|_2\longrightarrow0
                   \quad(\varepsilon\downarrow0).
\end{equation}
Indeed, the first- and second-derivative coordinate formulas converge
pointwise with uniform bounds; multiplication by the finitely many
$\rho_{n,\tau}\in L^2(\nu)$ is handled by dominated convergence.
The uniform tail estimate then yields
\begin{align*}
 \|LF_\varepsilon-L_0F\|_2
 &\leq2C_{F,\kappa}N^{-1/2}
            +\|L_NF_\varepsilon-L_NF\|_2,\\
 \shortintertext{so, first letting $\varepsilon\downarrow0$ at fixed $N$,}
 \limsup_{\varepsilon\downarrow0}\|LF_\varepsilon-L_0F\|_2
 &\leq2C_{F,\kappa}N^{-1/2}.\\
 \shortintertext{Letting $N\to\infty$ then gives}
 \lim_{\varepsilon\downarrow0}\|LF_\varepsilon-L_0F\|_2&=0.
\end{align*}
Closedness of $L$ proves
\begin{equation}\label{eq:BV-generator}
 F\in\Dom(L),\qquad
 LF=L_0F=\sum_{n\geq2,\tau}
              (D_{n,\tau}^2F-\rho_{n,\tau}D_{n,\tau}F)
                 \quad\text{in }L^2(\nu).
\end{equation}
The two series of norms are finite by \eqref{eq:generator-tail}.

The operator realization is also determined.  For
$L_{\rm cyl}=L|_{\Ccyl}$,
\[
 \langle-L_{\rm cyl}F,G\rangle_2=\E(F,G)\quad(F,G\in\Ccyl),
 \qquad \overline{\Ccyl}^{\|\cdot\|_{\E_1}}=\Dcal.
\]
Thus the closed form of the Friedrichs extension of $-L_{\rm cyl}$
is $\E$, and that extension is $-L$.  For $u\in\Dom(L)$, choose
$f_j\in\Ccyl$ with $f_j\to(1-L)u$ in $L^2(\nu)$ and set
$u_j=(1-L)^{-1}f_j$.  The spectral bounds and the resulting graph-norm approximation are
\begin{align*}
 \|(1-L)^{-1}\|&\leq1,\qquad \|L(1-L)^{-1}\|\leq1,\\
 \shortintertext{and hence}
 \|u_j-u\|_2^2+\|Lu_j-Lu\|_2^2
 &\leq2\|f_j-(1-L)u\|_2^2\longrightarrow0.
\end{align*}
Hence $(1-L)^{-1}\Ccyl$ is an operator core for $L$.

\subsection{A compact regular realization}
\label{sec:compactification}

To compute energy measures, we transfer the form to a compact space
without changing the diffusion outside exceptional sets.
Take the form-dense, point-separating sequence
$(F_j)\subset\Ccyl$ from Lemma~\ref{lem:state-space-core}, include the
constant function $\mathbf 1$, and set
\[
 \Xi(h)=\left(\frac{F_j(h)}{1+\sup_{g\in X}|F_j(g)|}\right)_{j\geq1},
 \qquad
 \bar\nu=\Xi_*\nu,\qquad
 \widehat X=\operatorname{supp}\bar\nu
       \subset[-1,1]^{\mathbb N}.
\]
The map $\Xi$ is continuous and injective.  Let
$X_\nu=\Xi^{-1}(\widehat X)$ and $O=X\setminus X_\nu$.
The set $X_\nu$ is closed and $\nu(X_\nu)=1$.
The Lusin--Souslin theorem makes $\Xi(X_\nu)$ measurable for the
sigma-algebra generated by open subsets of $\widehat X$ and gives a
measurable inverse on that image.  Pullback and its inverse are
\begin{align*}
 \Xi^*f&=f\circ\Xi\text{ on }X_\nu,\qquad
 (\Xi^*)^{-1}F=F\circ\Xi^{-1}\text{ on }\Xi(X_\nu),\\
 \shortintertext{with zero extensions off the stated domains, and}
 \|\Xi^*f\|_2^2&=\int_{\widehat X}|f|^2\,d\bar\nu,
 \qquad \bar\nu(\Xi(X_\nu))=1.
\end{align*}
These maps give a unitary correspondence between
$L^2(\widehat X,\bar\nu)$ and $L^2(X,\nu)$.
Define
\begin{align*}
 \widehat{\Dcal}&=(\Xi^*)^{-1}\Dcal,\qquad
 \widehat{\E}(f,g)=\E(\Xi^*f,\Xi^*g),\\
 \shortintertext{and use the transported form norm}
 \|f\|_{\widehat{\E}_1}&=\|\Xi^*f\|_{\E_1}.
\end{align*}
Let $C_{\rm pol}$ be the real algebra of coordinate polynomials on
$\widehat X$.

\begin{proposition}\label{prop:compactification}
The coordinate algebra satisfies
\begin{align}
 C_{\rm pol}&\subset\widehat{\Dcal}\cap C(\widehat X),\notag\\
 \shortintertext{with density in both norms,}
 \overline{C_{\rm pol}}^{\|\cdot\|_{\widehat{\E}_1}}
 &=\widehat{\Dcal},\qquad
 \overline{C_{\rm pol}}^{\|\cdot\|_\infty}=C(\widehat X).
 \label{eq:compact-regular-cores}
\end{align}
There are compact form nests $(K'_m)$ for $\E$ and
$(\widehat K_m)$ for $\widehat\E$ such that
\[
 \Xi:K'_m\longrightarrow\widehat K_m
 \quad\text{is a homeomorphism for every }m,\qquad
 \widehat X\setminus\Xi(X)
 \subset\widehat X\setminus\bigcup_m\widehat K_m.
\]
The transported form is strongly local.
\end{proposition}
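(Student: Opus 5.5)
The argument has four parts: coordinate polynomials, density in both norms, transfer of the nest, and strong locality.

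\emph{Coordinate polynomials.} Write $\xi_j$ for the $j$-th coordinate on $\widehat X$. Then $\Xi^*\xi_j=c_jF_j$ on $X_\nu$, where $c_j=(1+\sup|F_j|)^{-1}$. Since $\nu(O)=0$, this says $\Xi^*\xi_j=c_jF_j$ in $L^2(\nu)$. The sequence $(F_j)$ enumerates a $\mathbb Q$-algebra, so $\Xi^*$ of any rational coordinate polynomial is again some $F_j\in\Ccyl$. A real polynomial is a limit of rational ones. Its pullback is a bounded polynomial in finitely many $F_j$, and this is a smooth cylinder after composing with a $C_b^\infty$ cutoff equal to the polynomial on the bounded range. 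Hence $C_{\rm pol}\subset\widehat\Dcal$. Coordinate polynomials are continuous on the product topology, so $C_{\rm pol}\subset C(\widehat X)$.

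\emph{Density.} Form density is immediate from \eqref{eq:countable-separation}: the pullbacks contain every $F_j$, and the transported norm is isometric. For uniform density, $\widehat X$ is compact as a closed subset of $[-1,1]^{\mathbb N}$. $C_{\rm pol}$ contains constants and separates the points of $\widehat X$ by coordinates, so Stone--Weierstrass gives density in $C(\widehat X)$.

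\emph{Transfer of the nest.} Take $K'_m=K_m\cap X_\nu$, with $K_m$ from Theorem~\ref{thm:main}. This is compact because $X_\nu$ is closed. It is still an $\E$-nest: functions vanishing off $K_m$ also vanish $\nu$-a.e. off $K'_m$, since $\nu(O)=0$. Set $\widehat K_m=\Xi(K'_m)$. A continuous injection of a compact set into a Hausdorff space is a homeomorphism onto its compact image. This image lies in $\Xi(X)$, which gives the inclusion $\widehat X\setminus\Xi(X)\subset\widehat X\setminus\bigcup_m\widehat K_m$. It remains to show that $(\widehat K_m)$ is an $\widehat\E$-nest. If $G$ vanishes $\nu$-a.e. off $K'_m$, then $(\Xi^*)^{-1}G$ vanishes $\bar\nu$-a.e. off $\widehat K_m$. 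The measure correspondence uses $\bar\nu(\Xi(X_\nu))=1$ and measurability of $\Xi(X_\nu)$ from Lusin--Souslin. The isometry then carries the density \eqref{eq:final-nest-density} to $\widehat X$.

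\emph{Strong locality.} I expect this to be the main obstacle, because the locality condition must be checked on open sets of $\widehat X$, and their preimages must be compared to open sets in $X$. Suppose $f=c$ on an open $U\subset\widehat X$ and $g=0$ off $U$. Then $\Xi^{-1}(U)$ is open in $X$ by continuity of $\Xi$. Moreover $\Xi^*f=c$ $\nu$-a.e. on $\Xi^{-1}(U)$ and $\Xi^*g=0$ $\nu$-a.e. off it, up to the null set $O$. Lemma~\ref{lem:strong-locality} then gives $\widehat\E(f,g)=0$.

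If the intended notion is instead the support version (disjoint compact supports with $f$ constant near $\operatorname{supp}g$), the same pullback applies: it yields open sets in $X$ satisfying \eqref{eq:strong-locality-condition}. The one point to verify carefully is that "$\bar\nu$-a.e. on $U$" pulls back to "$\nu$-a.e. on $\Xi^{-1}(U)$", which holds because $\Xi_*\nu=\bar\nu$.
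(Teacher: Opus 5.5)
Your proposal is correct and follows the paper's proof essentially step for step: coordinates pull back to nonzero multiples of the $F_j$, Stone--Weierstrass gives uniform density, $K'_m=K_m\cap X_\nu$ and $\widehat K_m=\Xi(K'_m)$ give the nests and homeomorphisms, and locality comes from pulling open sets back to Lemma~\ref{lem:strong-locality}. There are two harmless slips. First, for relatively open $U=W\cap\widehat X$ the set $\Xi^{-1}(U)=\Xi^{-1}(W)\cap X_\nu$ need not be open in $X$, since $\Xi(X)$ need not lie in $\widehat X$; use $\Xi^{-1}(W)$ as the paper does, which differs only by part of the null set $O$. Second, a rational coordinate polynomial need not pull back to some $F_j$, because of the normalizing constants $(1+\sup|F_j|)^{-1}$, but your direct cylinder argument makes this irrelevant.
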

\begin{proof}
Pullbacks of the coordinate functions are nonzero multiples of the
$F_j$, modulo null sets.  The bounded product rule and
Stone--Weierstrass theorem prove the form-norm and uniform densities
in \eqref{eq:compact-regular-cores}, respectively.  These identities
establish regularity on the compact metric space $\widehat X$.

For the compact nest from Theorem~\ref{thm:main}, set
\[
 K'_m=K_m\cap X_\nu,\qquad \widehat K_m=\Xi(K'_m).
\]
Both sets are compact.  Since $O$ is open and $\nu(O)=0$, zero is
admissible in its capacity definition, so $\CapOne(O)=0$.
Moreover,
\[
 \{F\in\Dcal:F=0\text{ a.e. on }X\setminus K_m\}
 =\{F\in\Dcal:F=0\text{ a.e. on }X\setminus K'_m\}.
\]
Thus $(K'_m)$ is a form nest.  For each measurable
$C\subset\widehat X$, put
$\widehat{\Dcal}_C=\{f\in\widehat{\Dcal}:
 f=0\ \bar\nu\text{-a.e. on }\widehat X\setminus C\}$.
The pullback isometry gives
\begin{align*}
 \Xi^*\widehat{\Dcal}_{\widehat K_m}
 &=\{F\in\Dcal:F=0\ \nu\text{-a.e. on }X\setminus K'_m\},\\
 \shortintertext{and consequently}
 \overline{\bigcup_m\widehat{\Dcal}_{\widehat K_m}}
       ^{\|\cdot\|_{\widehat{\E}_1}}
 &=\widehat{\Dcal}.
\end{align*}
The continuous bijection $\Xi:K'_m\to\widehat K_m$ has a continuous
inverse by compactness.  These restrictions give the
quasi-homeomorphism.  Since $\widehat K_m\subset\Xi(X)$, the displayed
inclusion in the statement places the added boundary in an
exceptional set.

Finally, suppose $f,g\in\widehat{\Dcal}$, $f=c$ almost everywhere on
an open subset $V\subset\widehat X$, and $g=0$ almost everywhere off
$V$.  Write $V=\widehat X\cap W$ with $W$ open in the cube, and set
$V_X=\Xi^{-1}(W)$.  Then $V_X$ is open in $X$, and
\[
 \Xi^*f=c\quad\nu\text{-a.e. on }V_X,\qquad
 \Xi^*g=0\quad\nu\text{-a.e. on }X\setminus V_X.
\]
Lemma~\ref{lem:strong-locality} gives
$\widehat{\E}(f,g)=\E(\Xi^*f,\Xi^*g)=0$, proving the required
locality on the regular realization.
\end{proof}

\subsection{Energy measures}

Let $\mathbb P_h$ and $\mathbb E_h$ denote the Markov law started
from $h$ and its expectation.  Write
$\mathbb P=\int_X\mathbb P_h\,\nu(dh)$ and $\mathbb E$ for the
stationary law and expectation.  Under $\mathbb P$, $(\mathcal G_t)$
denotes the completed right-continuous natural filtration.
Statements for individual initial states use the corresponding
usual augmentation under $\mathbb P_h$.

For $F\in\Dcal$, the quasi-regular Fukushima
theorem \cite[Theorem~2.1]{AMR} gives
\begin{equation}\label{eq:general-Fukushima}
 F(X_t)-F(X_0)=M_t^F+N_t^F,
\end{equation}
where $M^F$ is a martingale additive functional of finite energy and
$N^F$ is a continuous additive functional of zero energy.
Strong locality on the regular
realization eliminates the jump and killing terms; hence $M^F$ is
continuous by \cite[Theorem~5.2.2 and Lemma~5.5.1(ii)]{FOT}.
These assertions transfer back through $\Xi$.
Write $\mu_F$ for the Revuz measure of $\langle M^F\rangle$.
For each fixed finite or countable family of functions, the associated
identities are taken outside one properly exceptional set; they then
hold almost surely simultaneously for all $t\geq0$.

\begin{lemma}\label{lem:gen-energy-measures}
For every $F\in\Dcal$,
\begin{equation}\label{eq:energy-measure-density}
 \mu_F=2\Gamma(F)\nu.
\end{equation}
\end{lemma}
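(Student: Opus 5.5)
We prove \eqref{eq:energy-measure-density} on the compact regular realization of Proposition~\ref{prop:compactification} and then transfer it back through the quasi-homeomorphism $\Xi$. For a regular strongly local Dirichlet form, the Revuz measure of $\langle M^F\rangle$ equals the energy measure $\mu_{\langle F\rangle}$ (\cite[Theorem~5.2.3]{FOT}). The energy measure is characterized, for bounded $F\in\widehat{\Dcal}$ and $\phi\in\widehat{\Dcal}\cap C(\widehat X)$, by
\[
 \int\phi\,d\mu_{\langle F\rangle}
 =2\widehat\E(F\phi,F)-\widehat\E(F^2,\phi).
\]
Our normalization $\langle M^F\rangle=2\int\Gamma$ matches this, because $\E$ here carries no factor $\tfrac12$. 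So the task reduces to computing the right-hand side in terms of $\Gamma$.

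\emph{Step 1 (cylinder functions).} Take $F,\phi\in\Ccyl$, transported by $\Xi$. The bounded product rule from Section~\ref{sec:nest} gives $D(F\phi)=F\,D\phi+\phi\,DF$ and $D(F^2)=2F\,DF$. Hence
\[
 2\E(F\phi,F)-\E(F^2,\phi)
 =2\int\bigl(F(D\phi,DF)_{\WP}+\phi\,\Gamma(F)\bigr)d\nu-2\int F(DF,D\phi)_{\WP}\,d\nu
 =\int\phi\,2\Gamma(F)\,d\nu.
\]
The polynomials $C_{\rm pol}$ are uniformly dense in $C(\widehat X)$, and their pullbacks lie in $\Ccyl$ up to null sets. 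Therefore $\mu_{\langle F\rangle}$ and $2\Gamma(F)\bar\nu$ agree as finite measures on $\widehat X$ for every $F$ whose pullback is in $\Ccyl$. The density $\Gamma(F)$ lies in $L^\infty$ by \eqref{eq:cylinder-energy-bound}.

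\emph{Step 2 (extension to $\Dcal$).} Both sides are continuous in the form norm in total variation. On one side, $\sqrt{\mu_{\langle F\rangle}(\widehat X)}\le\sqrt{2\E(F,F)}$ together with the triangle inequality $|\mu_{\langle F\rangle}^{1/2}(A)-\mu_{\langle G\rangle}^{1/2}(A)|\le\mu_{\langle F-G\rangle}^{1/2}(A)$ gives $\|\mu_{\langle F\rangle}-\mu_{\langle G\rangle}\|_{TV}\le C\|F-G\|_{\E_1}(\|F\|_{\E_1}+\|G\|_{\E_1})$. On the other side, $\|\Gamma(F)-\Gamma(G)\|_{L^1(\nu)}\le\|DF-DG\|_2\|DF+DG\|_2$. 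By Lemma~\ref{lem:state-space-core}, $\Ccyl$ is form dense, so the identity extends to all $F\in\Dcal$. For unbounded $F$ this also avoids the bounded-function formula, since the energy measure is defined by this same limit.

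\emph{Step 3 (transfer).} The Revuz correspondence is preserved under the quasi-homeomorphism. $\Xi$ is a homeomorphism on each nest set $K'_m$, and $\Xi(X)^c$ lies outside $\bigcup_m\widehat K_m$, which is exceptional. The smooth measure $\mu_F$ on $X$ is therefore the pullback of $\mu_{\langle F\rangle}$ restricted to $\bigcup_m\widehat K_m$. Smooth measures charge no exceptional set, so this pullback is $2\Gamma(F)\nu$.

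The main obstacle is Step 3, specifically the bookkeeping that the additive functional $M^F$ on $X$ corresponds to the one on $\widehat X$ under $\Xi$ and that quasi-continuous versions match. I will rely on the standard quasi-homeomorphism invariance of quasi-regular forms \cite{AMR}, using the nest $(K'_m)$.
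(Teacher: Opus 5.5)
Your proposal is correct and follows the paper's argument. The product rule gives $2\E(F\varphi,F)-\E(F^2,\varphi)=2\int\varphi\,\Gamma(F)\,d\nu$, and the uniformly dense continuous domain functions on the compact realization then identify the measures; the mass bound $\mu_H(X)\leq2\E(H,H)$ with its Cauchy--Schwarz inequality supplies the total-variation continuity for the extension, and the transfer through $\Xi$ rests on the same quasi-homeomorphism invariance the paper invokes. The only difference is the dense class: the paper first proves the identity for all bounded $F\in\Dcal$ and reaches general $F$ through the smooth truncations $\chi_k(F)$, whereas you start from $\Ccyl$ and use its form density, and both routes use the same continuity estimate.
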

\begin{proof}
First let $F\in\Dcal\cap L^\infty(\nu)$.  For a bounded domain
function $\varphi$, denote its quasi-continuous representative by
$\widetilde\varphi$.  The energy-measure identity
\cite[Theorem~5.2.3]{FOT} and the closed product rule give
\begin{align*}
 \int_X\widetilde\varphi\,d\mu_F
 &=2\E(F,F\varphi)-\E(F^2,\varphi)\\
 &=2\int_X\varphi\|DF\|_{\WP}^2\,d\nu.
\end{align*}
On the compact realization we may test on
$\widehat{\Dcal}\cap C(\widehat X)$, which is uniformly dense in
$C(\widehat X)$ by \eqref{eq:compact-regular-cores}.
The two finite measures therefore agree, proving
\eqref{eq:energy-measure-density} for bounded $F$.

For general $F\in\Dcal$, choose smooth bounded $\chi_k$ with
$\chi_k(0)=0$, $|\chi_k'|\leq1$, and $\chi_k(x)=x$ for $|x|\leq k$.
Set $F_k=\chi_k(F)$.  The chain rule and dominated convergence give
\begin{align*}
 DF_k&=\chi_k'(F)DF\longrightarrow DF
              \quad\text{in }L^2(\nu;\HWP),\\
 \shortintertext{and}
 \|F_k-F\|_{\E_1}^2
 &=\|\chi_k(F)-F\|_2^2
       +\|(\chi_k'(F)-1)DF\|_2^2\longrightarrow0.
\end{align*}
The general energy-measure mass bound
$\mu_H(X)\leq2\E(H,H)$ and its Cauchy--Schwarz inequality imply
\begin{align*}
 \|\mu_{F_k}-\mu_F\|_{\rm TV}
 &\leq2\|D(F_k-F)\|_2(\|DF_k\|_2+\|DF\|_2),\\
 \shortintertext{while the densities satisfy}
 \|2\Gamma(F_k)-2\Gamma(F)\|_{L^1(\nu)}
 &\leq2\|D(F_k-F)\|_2(\|DF_k\|_2+\|DF\|_2).
\end{align*}
Both bounds tend to zero.  Passing to the limit in
$\mu_{F_k}=2\Gamma(F_k)\nu$ proves the lemma.
\end{proof}

\subsection{Proof of Corollary~\ref{cor:fukushima-main}}
\label{sec:proof-fukushima}

We first identify the bracket for any fixed $F\in\Dcal$.
Choose a finite nonnegative measurable representative
$a=2\Gamma(F)$ and put
$A_t^{(k)}=\int_0^t(a\wedge k)(X_s)\,ds$.
Its Revuz measure is $(a\wedge k)\nu$.
By \eqref{eq:energy-measure-density},
$\mu_F-(a\wedge k)\nu$ is a positive smooth measure.
Additivity and uniqueness of the Revuz correspondence
\cite[Theorems~5.1.3--5.1.4]{FOT} give
\[
 A_t^{(k)}\leq\langle M^F\rangle_t
 \quad\text{for all }t\geq0
\]
outside a properly exceptional set.  Taking one exceptional set for
the countably many $k$, monotone convergence gives
\[
 A_t:=\int_0^t a(X_s)\,ds
 =\lim_{k\to\infty}A_t^{(k)}
 \leq\langle M^F\rangle_t<\infty.
\]
For each such path this finite integral is absolutely continuous on
every bounded time interval.  For bounded nonnegative measurable $b$,
stationarity and Tonelli's theorem give
\[
 \frac1t\mathbb E\int_0^t b(X_s)\,dA_s
 =\int_X ba\,d\nu.
\]
Thus $A$ has Revuz measure $a\nu$, and uniqueness yields
\begin{align}
 \langle M^F\rangle_t&=2\int_0^t\Gamma(F)(X_s)\,ds,
 \label{eq:gen-Fukushima-bracket}\\
 \shortintertext{and polarization gives, for $F,G\in\Dcal$,}
 \langle M^F,M^G\rangle_t&=2\int_0^t\Gamma(F,G)(X_s)\,ds.
 \label{eq:gen-cross-bracket}
\end{align}
The cross integral is finite by Cauchy--Schwarz and the two diagonal
identities.

Now suppose $F\in\Dom(L)$ and choose a finite real measurable
representative $H=LF$.  Let $u=(1-L)^{-1}|H|$ and choose its quasi-continuous representative
$\widetilde u$.  To pass from bounded to $L^2$ resolvent data, put
$u_k=(1-L)^{-1}(|H|\wedge k)$.  The resolvent bound gives
\[
 \|u_k-u\|_{\E_1}
 \leq\||H|\wedge k-|H|\|_2\longrightarrow0.
\]
A subsequence of the quasi-continuous representatives converges to
$\widetilde u$ quasi-everywhere.  On a common exceptional-set complement,
the bounded-data resolvent formula and monotone convergence therefore give
\[
 \widetilde u(h)=\mathbb E_h\int_0^\infty e^{-s}|H(X_s)|\,ds.
\]
The weak resolvent identity and this formula imply
\begin{align*}
 \|u\|_{\E_1}^2&=\langle|H|,u\rangle_2
                         \leq\|H\|_2\|u\|_{\E_1},\\
 \mathbb E_h\int_0^T|H(X_s)|\,ds
 &\leq e^T\mathbb E_h\int_0^\infty e^{-s}|H(X_s)|\,ds
 =e^T\widetilde u(h)<\infty
\end{align*}
for quasi-every $h$.  Using positive integers $T$ gives finite
integrals on all bounded time intervals outside one properly
exceptional set.  Hence
$A_t^H=\int_0^tH(X_s)\,ds$ is a continuous additive functional and
$\mathbb E_h|A_t^H|<\infty$ there.
Cauchy--Schwarz and stationarity give its zero-energy estimate:
\[
 \frac1t\mathbb E|A_t^H|^2
 \leq\int_0^t\mathbb E|H(X_s)|^2\,ds
 =t\|H\|_2^2\longrightarrow0\qquad(t\downarrow0).
\]
For every $\varphi\in\Dcal$,
\begin{align*}
 \int_0^t\langle|\varphi|,T_s|H|\rangle_2\,ds
 &\leq\int_0^t\|\varphi\|_2\|T_s|H|\|_2\,ds
 \leq t\|\varphi\|_2\|H\|_2,\\
 \shortintertext{so Fubini and strong continuity give}
 \frac1t\int_X\varphi(h)\mathbb E_h[A_t^H]\,\nu(dh)
 &=\frac1t\int_0^t\langle\varphi,T_sH\rangle_2\,ds\\
 &\longrightarrow\langle\varphi,H\rangle_2=-\E(F,\varphi).
\end{align*}
This verifies condition (iii) in the zero-energy characterization
\cite[Theorem~5.2.4]{FOT}, applied on the regular realization.
Therefore
\begin{equation}\label{eq:gen-Fukushima-generator}
 N_t^F=\int_0^tLF(X_s)\,ds.
\end{equation}
Combining \eqref{eq:general-Fukushima},
\eqref{eq:gen-Fukushima-bracket}, and
\eqref{eq:gen-Fukushima-generator} proves \eqref{eq:main-Fukushima}.
Changing the chosen densities on a $\nu$-null set does not change the
corresponding time integrals: the occupation functional of that set
has zero Revuz measure.  All equalities consequently hold almost
surely for every $t\geq0$, for quasi-every initial state, with the
version convention stated above.

\section{Boundary dynamics}
\label{sec:boundary-dynamics}

This section supplies the boundary dynamics used in the geometric realization of
Theorem~\ref{thm:full-main}.  We first identify the inverse boundary
evaluations, then reconstruct one common Fourier noise, and finally approximate
the resulting rough circle flow by smooth flows.

\subsection{Inverse boundary evaluations}
\label{sec:inverse-points}

For $z\in\T$, set $q_z(h)=h^{-1}(z)\in\T$.
For $a\in C^\infty(\T;\R)$, define
\[
 F_{z,a}(h)=a(q_z(h)).
\]
Periodic functions of $q_z$ are independent of any choice of angular lift.
To place $F_{z,a}$ in the form domain, we use averages over shrinking
target arcs. For a proper oriented target arc $I$ and $h\in X$, choose a lift
$(x,y)$ of $h^{-1}(I)$ with $0<l:=y-x<2\pi$, and define
\begin{equation}\label{eq:inverse-arc-average}
 p_I=\frac l{2\pi},\qquad
 r_I=\frac1{2\pi}\int_x^ya(\theta)\,d\theta,
 \qquad A_I=\frac{r_I}{p_I}=\frac1l\int_x^ya(\theta)\,d\theta.
\end{equation}

Since
\[
 r_I=J_{a,\mathbf 1_I},\qquad p_I=J_{1,\mathbf 1_I},
 \qquad d(\mathbf 1_I\circ h)=\delta_x-\delta_y,
\]
Lemma~\ref{lem:BV-directional-calculus} gives
\begin{equation}\label{eq:inverse-arc-endpoint-gradients}
 D_vp_I=\frac{v(x)-v(y)}{2\pi},\qquad
 D_vr_I=\frac{a(x)v(x)-a(y)v(y)}{2\pi}.
\end{equation}
Define $H_I$ by its coordinates in the orthonormal WP basis,
\begin{equation}\label{eq:inverse-arc-HI-coordinates}
 (H_I,e_{n,\tau})_{\WP}
 =\frac{(a(x)-A_I)e_{n,\tau}(x)
       -(a(y)-A_I)e_{n,\tau}(y)}l.
\end{equation}
The endpoint estimates give
\begin{align*}
 |(H_I,e_{n,\tau})_{\WP}|
 &\leq2\|a'\|_\infty\alpha_n,\\
 \shortintertext{and, since $\sum_{n,\tau}\alpha_n^2=1$,}
 \sum_{n,\tau}|(H_I,e_{n,\tau})_{\WP}|^2
 &\leq4\|a'\|_\infty^2<\infty.
\end{align*}
Moreover,
\[
 \sup_{h\in X}\sum_{n>N,\tau}|(H_I,e_{n,\tau})_{\WP}|^2
 \leq4\|a'\|_\infty^2\sum_{n>N,\tau}\alpha_n^2
 \longrightarrow0.
\]
The inverse endpoints and $A_I$ depend continuously on $h$, so the
finite partial sums are continuous.  Hence their uniform $\HWP$ limit
defines a bounded continuous map
$H_I:X\to\HWP$.  For every finite field $v$,
\[
 (H_I,v)_{\WP}
 =\frac{(a(x)-A_I)v(x)-(a(y)-A_I)v(y)}l.
\]
Write $1-g_{I,R}=\eta_R(p_I)$, with $g_{I,R}$ from
\eqref{eq:lower-cutoff}, and set $q_R(p)=\eta_R(p)/p$.  Since $q_R$
has a bounded smooth extension,
\begin{align*}
 A_{I,R}&=q_R(p_I)r_I\in\Dcal,\\
 \shortintertext{with}
 DA_{I,R}&=q_R(p_I)Dr_I+r_Iq_R'(p_I)Dp_I.
\end{align*}
The endpoint formulas give
\[
 Dr_I-A_IDp_I=p_IH_I,
 \qquad r_I=p_IA_I,
 \qquad D\eta_R(p_I)=-Dg_{I,R},
\]
and therefore
\begin{equation}\label{eq:inverse-arc-truncated-gradient}
 DA_{I,R}=\eta_R(p_I)H_I-A_IDg_{I,R}.
\end{equation}
Using $|A_I|\leq\|a\|_\infty$ and \eqref{eq:g-zero},
\begin{align*}
 \|A_{I,R}-A_I\|_2
 &\leq\|a\|_\infty\|g_{I,R}\|_2\longrightarrow0,\\
 \shortintertext{and}
 \|DA_{I,R}-H_I\|_2
 &\leq\|H_I\|_\infty\|g_{I,R}\|_2
       +\|a\|_\infty\|Dg_{I,R}\|_2\longrightarrow0
 \qquad(R\to\infty).
\end{align*}
Closedness gives $A_I\in\Dcal$ and $DA_I=H_I$.  Therefore, for every
finite real field $v$,
\begin{align}
 D_vA_I
 &=\frac{(a(x)-A_I)v(x)-(a(y)-A_I)v(y)}l,
 \label{eq:inverse-average-gradient}\\
 \shortintertext{and}
 |D_{n,\tau}A_I|&\leq2\|a'\|_\infty\alpha_n.
 \label{eq:inverse-average-gradient-bound}
\end{align}

\begin{lemma}\label{lem:inverse-evaluation-domain}
The function $F_{z,a}$ belongs to $\Dcal\cap C_b(X)$. For every finite
real field $e$,
\begin{equation}\label{eq:inverse-evaluation-gradient}
 D_eF_{z,a}(h)=-a'(q_z(h))e(q_z(h)).
\end{equation}
The derivative in this formula is also the derivative of
$t\mapsto F_{z,a}(h\circ\psi_t^e)$ at zero.
\end{lemma}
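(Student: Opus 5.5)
The plan is to realize $F_{z,a}$ as a limit of the inverse arc averages $A_I$ from \eqref{eq:inverse-arc-average} along shrinking target arcs, and then to pass to the limit using closedness of $D$. Fix $z\in\T$ and let $I_k=(z,z+2^{-k})$, a proper oriented arc with left endpoint $z$. Write $(x_k,y_k)$ for the lifted inverse endpoints and $l_k=y_k-x_k$. Then $x_k=q_z(h)$, and $l_k=2\pi p_{I_k}(h)\to0$ for every $h\in X$, because $h^{-1}$ is continuous. Continuity of $F_{z,a}$ on $X$ is immediate, since $h\mapsto h^{-1}(z)$ is continuous in $d$ and $a$ is smooth.

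The first step is convergence of the functions themselves. We have $A_{I_k}(h)=l_k^{-1}\int_{x_k}^{y_k}a\to a(x_k)=F_{z,a}(h)$ pointwise, and $|A_{I_k}|\le\|a\|_\infty$, so bounded convergence gives $A_{I_k}\to F_{z,a}$ in $L^2(\nu)$.

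The second step is convergence of the gradients. By \eqref{eq:inverse-average-gradient},
\[
 D_vA_{I_k}=\frac{(a(x_k)-A_{I_k})v(x_k)-(a(y_k)-A_{I_k})v(y_k)}{l_k}.
\]
I rewrite the numerator as
\[
 \bigl(a(x_k)-a(y_k)\bigr)v(x_k)+\bigl(a(y_k)-A_{I_k}\bigr)\bigl(v(x_k)-v(y_k)\bigr).
\]
For each fixed basis field $v=e_{n,\tau}$, the first term divided by $l_k$ tends to $-a'(x)e_{n,\tau}(x)$. The second term is bounded by $\|a'\|_\infty l_k\cdot\|e'_{n,\tau}\|_\infty l_k$, so after dividing by $l_k$ it tends to zero. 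Hence each coordinate converges pointwise to $-a'(q_z)e_{n,\tau}(q_z)$. The candidate limit gradient is therefore
\[
 G(h)=-a'(q_z(h))\sum_{n,\tau}e_{n,\tau}(q_z(h))\,e_{n,\tau}.
\]
This lies in $\HWP$ with norm squared $a'(q_z)^2\sum\alpha_n^2\le\|a'\|_\infty^2$, and it is continuous in $h$ by uniform tail control, as in Lemma~\ref{lem:arc-gradient}. Here one should note that point evaluation is a bounded functional on $\HWP$.

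The step I expect to be the main obstacle is upgrading this coordinatewise convergence to convergence in $L^2(\nu;\HWP)$. For that I will use the uniform coordinate bound \eqref{eq:inverse-average-gradient-bound}, which gives $|D_{n,\tau}A_{I_k}-G_{n,\tau}|\le 4\|a'\|_\infty\alpha_n$ uniformly in $k$ and $h$. Since $\sum\alpha_n^2=1$, Parseval's identity together with dominated convergence over the pair $(n,\tau)$ and $h$ gives $\|DA_{I_k}-G\|_2\to0$. Closedness of $D$ then yields $F_{z,a}\in\Dcal$ and $DF_{z,a}=G$. This is \eqref{eq:inverse-evaluation-gradient}, because $(G,e)_{\WP}=-a'(q_z)e(q_z)$ for every finite field $e$.

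Finally, the claim about actual derivatives is checked directly. Since $(h\circ\psi_t^e)^{-1}(z)=\psi_{-t}^e(q_z(h))$, differentiating at $t=0$ gives
\[
 \frac{d}{dt}\Big|_{t=0}\,a\bigl(\psi^e_{-t}(q_z(h))\bigr)=-a'(q_z(h))\,e(q_z(h)),
\]
because the flow $\psi_t^e$ is generated by $e$. This agrees with the closed derivative.
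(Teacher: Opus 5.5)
Your proposal is correct and follows the paper's proof essentially step for step. You approximate $F_{z,a}$ by the inverse arc averages $A_{I_k}$ over target arcs shrinking to $z$ from the positive side, obtain pointwise convergence of the functions and of each gradient coordinate, upgrade to $L^2(\nu;\HWP)$ through the uniform $O(\alpha_n)$ coordinate bound and dominated convergence over $(n,\tau)$ and $h$, conclude by closedness of $D$, and check the actual-right derivative from $(h\circ\psi_t^e)^{-1}(z)=\psi_{-t}^e(q_z(h))$. The remaining differences are cosmetic: you pin the left endpoint so that $x_k=q_z(h)$ exactly, and you split the numerator algebraically instead of using the paper's integral Taylor formulas for $(a(x_m)-A_{I_m})/l_m$ and $(a(y_m)-A_{I_m})/l_m$.
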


\begin{proof}
Choose proper oriented target arcs $I_m\downarrow z$ from the positive
direction, and define $H_z$ by
\[
 (H_z,e_{n,\tau})_{\WP}
   =-a'(q_z)e_{n,\tau}(q_z).
\]
Let $\omega_a$ be the modulus of continuity of $a$. Continuity of
inversion gives, whenever $h_k\to h$ in $X$,
\[
 |F_{z,a}(h_k)-F_{z,a}(h)|
 \leq\omega_a\!\left(d_\T(h_k^{-1}(z),h^{-1}(z))\right)
 \longrightarrow0.
\]
Thus $F_{z,a}\in C_b(X)$. Write the inverse lift of $I_m$ as
$(x_m,y_m)$, put $l_m=y_m-x_m$, and choose the lifts so that
$x_m,y_m\to q_z$ and $l_m\downarrow0$. Then
\begin{align*}
 A_{I_m}&=\int_0^1a(x_m+ul_m)\,du,\\
 \frac{a(x_m)-A_{I_m}}{l_m}
 &=-\int_0^1(1-s)a'(x_m+sl_m)\,ds,\\
 \frac{a(y_m)-A_{I_m}}{l_m}
 &=\int_0^1s a'(x_m+sl_m)\,ds.
\end{align*}
The average formula and \eqref{eq:inverse-average-gradient} give,
for every $h\in X$ and finite field $e$,
\begin{align*}
 A_{I_m}(h)&\longrightarrow a(q_z(h))=F_{z,a}(h),\\
 \shortintertext{and}
 D_eA_{I_m}&\longrightarrow-a'(q_z)e(q_z)=(H_z,e)_{\WP}.
\end{align*}
Moreover,
\begin{align*}
 \|H_z\|_{\WP}^2
 &=|a'(q_z)|^2\sum_{n,\tau}e_{n,\tau}(q_z)^2
 =\frac12|a'(q_z)|^2,\\
 \shortintertext{and}
 |D_{n,\tau}A_{I_m}|
   +|(H_z,e_{n,\tau})_{\WP}|
 &\leq3\|a'\|_\infty\alpha_n.
\end{align*}
The coefficient functions of $H_z$ are continuous and their squared
tails are bounded uniformly by
\[
 \|a'\|_\infty^2\sum_{n>N,\tau}\alpha_n^2\longrightarrow0.
\]
Thus $H_z:X\to\HWP$ is bounded and continuous.
Since $\sum_{n,\tau}\alpha_n^2<\infty$, dominated convergence in
the basis sum and then over $(X,\nu)$ gives
\[
 \|A_{I_m}-F_{z,a}\|_2^2+\|DA_{I_m}-H_z\|_2^2
 \longrightarrow0,\qquad m\to\infty.
\]
Closedness proves \eqref{eq:inverse-evaluation-gradient}. Its
actual-right interpretation follows from
\[
 (h\circ\psi_t^e)^{-1}(z)=\psi_{-t}^e(h^{-1}(z)).
\]
\end{proof}

The gradient formula makes the generator accessible by Fourier summation.
\begin{proposition}\label{prop:inverse-evaluation-generator}
For $z,w\in\T$ and smooth real periodic functions $a,b$, the function
$F_{z,a}$ belongs to $\Dom(L)$, with
\begin{align}
 LF_{z,a}(h)&=\frac12a''(q_z(h))+B(h,q_z(h))a'(q_z(h)),
 \label{eq:inverse-evaluation-generator}\\
 \shortintertext{and its cross energy with $F_{w,b}$ is}
 \Gamma(F_{z,a},F_{w,b})(h)
 &=a'(q_z(h))b'(q_w(h))K(q_z(h)-q_w(h)).
 \label{eq:inverse-evaluation-cross-energy}
\end{align}
\end{proposition}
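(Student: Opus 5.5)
The cross energy is a direct computation from Lemma~\ref{lem:inverse-evaluation-domain}. Since $D_{n,\tau}F_{z,a}=-a'(q_z)e_{n,\tau}(q_z)$ and the analogous formula holds for $F_{w,b}$, Parseval in $\HWP$ gives
\[
 \Gamma(F_{z,a},F_{w,b})=a'(q_z)b'(q_w)\sum_{n,\tau}e_{n,\tau}(q_z)e_{n,\tau}(q_w).
\]
The pairing of cosine and sine modes gives $\sum_\tau e_{n,\tau}(\theta)e_{n,\tau}(\varphi)=\alpha_n^2\cos(n(\theta-\varphi))$. The sum is therefore $K(q_z-q_w)$, uniformly absolutely convergent, which is \eqref{eq:inverse-evaluation-cross-energy}.

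For the generator, I would not work with $F_{z,a}$ directly. Instead I would use the approximating arc averages $A_{I_m}$ with $I_m\downarrow z$, which are quotients of the BV cylinders $r_{I_m}=J_{a,\mathbf 1_{I_m}}$ and $p_{I_m}=J_{1,\mathbf 1_{I_m}}$. One cannot take the quotient directly, because $1/p$ is unbounded near $0$. So, as in the section above, I would first use the truncated versions $A_{I,R}=q_R(p_I)r_I$. Each of these is a BV cylinder after a smooth cutoff on the bounded ranges, so \eqref{eq:BV-generator} and Proposition~\ref{prop:trace-drift} give $A_{I,R}\in\Dom(L)$ with an explicit formula, equation \eqref{eq:gen-cylinder-explicit}. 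I would compute the ingredients using $d(\mathbf 1_I\circ h)=\delta_x-\delta_y$:
\begin{align*}
 L r_I&=\tfrac1{2\pi}\bigl(a'(y)-a'(x)\bigr)\cdot\tfrac12\cdot(-1)\ \text{(sign to be fixed)}-\tfrac1{2\pi}\bigl(a(x)B(x)-a(y)B(y)\bigr),\\
 L p_I&=-\tfrac1{2\pi}\bigl(B(x)-B(y)\bigr).
\end{align*}
The Gamma terms are $\Gamma(p_I,p_I)$, $\Gamma(p_I,r_I)$ and $\Gamma(r_I,r_I)$, each expressed through $K(0)$ and $K(y-x)$.

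Next I would let $R\to\infty$ at fixed $I$, which removes the truncation exactly as in the gradient argument. This shows $A_I\in\Dom(L)$, with $LA_I$ given by the formal quotient formula
\[
 LA_I=\frac{Lr_I-A_ILp_I}{p_I}-\frac{2\Gamma(p_I,r_I)}{p_I^2}+\frac{2r_I\Gamma(p_I)}{p_I^3}.
\]
The right-hand side is bounded for fixed $I$ because $p_I>0$. The terms coming from $Dg_{I,R}$ are controlled by \eqref{eq:g-zero} together with the bound $\Gamma(p_I)\leq Cp_I^2\log(e/p_I)$.

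Then I would let $I=I_m\downarrow z$, so that $l=y-x\to0$, and use closedness of $L$. I expect this to be the main obstacle, since it requires identifying the pointwise limit of $LA_{I_m}$ and obtaining a uniform $L^2$ bound. I would Taylor-expand in $l$.
\begin{itemize}
\item The drift part $\bigl(a(x)-A_I\bigr)B(x)-\bigl(a(y)-A_I\bigr)B(y)$, divided by $l$, converges to $a'(q_z)B(q_z)$. This uses the continuity of $B(h,\cdot)$ and the bound \eqref{eq:uniform-drift-Osgood}, and it is uniformly bounded by $C_\kappa\|a'\|_\infty$.
\item The Gamma terms use $K(0)-K(l)=\tfrac12\sum\alpha_n^2(1-\cos nl)=O(l^2\log(e/l))$, which is the same kernel computation as in Lemma~\ref{lem:critical-kernel}. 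Combined with second-order Taylor expansions of $a$ at $x$ and $y$, these terms should produce $\tfrac12a''$ in the limit, together with a remainder of order $l\log(1/l)\to0$.
\end{itemize}
The delicate point is the cancellation between the second-order trace part $\tfrac12(a'(y)-a'(x))/l$ and the Gamma contributions. Individually these are only of order $\log(1/l)$, so the expansion must be carried to second order consistently.

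Once I have pointwise convergence to $\tfrac12a''(q_z)+B(h,q_z)a'(q_z)$ with a uniform bound, dominated convergence gives $LA_{I_m}\to$ that function in $L^2(\nu)$. I would also have $A_{I_m}\to F_{z,a}$ in $L^2(\nu)$ from the proof of Lemma~\ref{lem:inverse-evaluation-domain}. Closedness of $L$ then yields $F_{z,a}\in\Dom(L)$ and \eqref{eq:inverse-evaluation-generator}.
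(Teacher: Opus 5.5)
Your cross-energy computation is the paper's. For the generator, your route is genuinely different from the paper's and viable.

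\textbf{Comparison with the paper.} The paper never returns to arc averages. Since $D_eF_{z,a}=F_{z,-a'e}$ is again an inverse evaluation, Lemma~\ref{lem:inverse-evaluation-domain} applied twice gives $D_{n,\tau}^2F_{z,a}=a''(q_z)e_{n,\tau}(q_z)^2+a'(q_z)e_{n,\tau}'(q_z)e_{n,\tau}(q_z)=O(n\alpha_n^2)$. Hence $\sum_{n,\tau}(D_{n,\tau}^2F_{z,a}-\rho_{n,\tau}D_{n,\tau}F_{z,a})$ converges absolutely in $L^2(\nu)$. One-direction integration by parts against $\varphi\in\Ccyl$, together with form density, identifies the sum with $LF_{z,a}$. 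The identities $e_{n,c}^2+e_{n,s}^2=\alpha_n^2$, $e_{n,c}e_{n,c}'+e_{n,s}e_{n,s}'=0$, $\sum_n\alpha_n^2=\frac12$ and $\sum_j\rho_je_j=B$ then collapse it to the stated formula. That proof is short, needs no truncation and only one identification step, and exhibits $LF_{z,a}$ as an absolutely convergent Fourier series, as in Theorem~\ref{thm:generator-main}.

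Your route instead reuses the identification $L=L_0$ already proved for BV cylinders. It needs no second derivatives of $F_{z,a}$ and no new integration by parts, and it yields an explicit, uniformly bounded formula for $LA_I$ on every arc. The cost is two closedness limits and some bookkeeping, which the proposal gets wrong in several places.

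\textbf{Corrections to the bookkeeping.} The trace term has a plus sign, $\frac12J_{a'',\mathbf 1_I}=\frac1{4\pi}(a'(y)-a'(x))$. Also, $\Gamma(r_I,r_I)$ never enters, because $\partial_r^2(r/p)=0$. With $\Gamma(p_I,r_I)=(a(x)+a(y))(K(0)-K(l))/(4\pi^2)$ and $\Gamma(p_I)=2(K(0)-K(l))/(4\pi^2)$, your quotient expression is
\[
 \Lambda_I=\frac{a'(y)-a'(x)}{2l}
 -\frac{(a(x)-A_I)B(h,x)-(a(y)-A_I)B(h,y)}{l}
 -\frac{2\bigl(a(x)+a(y)-2A_I\bigr)\bigl(K(0)-K(l)\bigr)}{l^2}.
\]
\begin{itemize}
\item The first term is bounded by $\frac12\|a''\|_\infty$ and converges to $\frac12a''(q_z)$ on its own.
\item The two Gamma terms are each of order $\log(1/l)$, but they cancel against each other, not against the trace. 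The trapezoid bound $|a(x)+a(y)-2A_I|\le l^2\|a''\|_\infty/6$ gives this. So do $0\le K(0)-K(l)\le1$ and $K(0)-K(l)\le Cl^2\log(e/l)$ for $l\le1$. Together they make the sum $O(l^2\log(e/l))$, so it vanishes in the limit instead of producing $\frac12a''$.
\item Boundedness at fixed $I$ does not come from $p_I>0$: the coefficients $p_I^{-1},p_I^{-2},p_I^{-3}$ are unbounded on $X$. It comes from these cancellations together with $|a(x)-A_I|,|a(y)-A_I|\le\frac l2\|a'\|_\infty$. The resulting bound $\frac56\|a''\|_\infty+C_\kappa\|a'\|_\infty$ is uniform in $h$ and $I$, and it is exactly the dominating function you need at the end.
\end{itemize}

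\textbf{Removing the truncation.} This step is not exactly the gradient argument. One has
\[
 LA_{I,R}=\eta_R(p_I)\Lambda_I
 +\eta_R'(p_I)\Bigl[A_I\,Lp_I+\frac2{p_I}\bigl(\Gamma(p_I,r_I)-A_I\Gamma(p_I)\bigr)\Bigr]
 +\eta_R''(p_I)A_I\Gamma(p_I).
\]
The Gamma terms carrying $\eta_R'$ and $\eta_R''$ are handled by $\Gamma(p_I)\le Cp_I^2\log(e/p_I)$. The term $\eta_R'(p_I)A_I\,Lp_I$ is another matter. It stays bounded only because \eqref{eq:uniform-drift-Osgood} gives $|Lp_I|\le C_\kappa l\log(e/l)$ for small $l$, while $|\eta_R'(p)|\le\|\eta'\|_\infty/(2Rp\sqrt{\log(e/p)})$ on its support $\{\sqrt{\log(e/p)}\le R\}$. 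The bound $\|B\|_\infty\le C_\kappa$ alone gives no $L^2$ control there.

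With this in place, $LA_{I,R}-\Lambda_I$ is uniformly bounded and supported in $\{p_I\le e^{1-R^2/4}\}$, whose measure tends to zero. Closedness of $L$ then gives $A_I\in\Dom(L)$ with $LA_I=\Lambda_I$, and your final limit $I_m\downarrow z$ goes through as planned.
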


\begin{proof}
For a fixed $e$, the function $D_eF_{z,a}$ is another inverse evaluation,
with smooth periodic test function $-a'e$.  Thus Lemma~\ref{lem:inverse-evaluation-domain} gives
\[
 D_eF_{z,a}\in\Dcal,\qquad
 D_e^2F_{z,a}=a''(q_z)e(q_z)^2+a'(q_z)e'(q_z)e(q_z).
\]
Hence
\begin{align*}
 \|D_{n,\tau}F_{z,a}\|_\infty
 &\leq\|a'\|_\infty\alpha_n,\\
 \|D_{n,\tau}^2F_{z,a}\|_\infty
 &\leq\|a''\|_\infty\alpha_n^2
       +\|a'\|_\infty n\alpha_n^2.
\end{align*}
Using \eqref{eq:gen-score-basis-norm},
\[
 \begin{aligned}
 &\sum_{n\geq2,\,\tau}
 \|D_{n,\tau}^2F_{z,a}
          -\rho_{n,\tau}D_{n,\tau}F_{z,a}\|_2\\
 &\quad\leq2\sum_{n\geq2}
 \left\{\|a''\|_\infty\alpha_n^2
       +\|a'\|_\infty n\alpha_n^2
       +\left(a_\kappa+\frac23\right)^{1/2}
                         \|a'\|_\infty\alpha_n\right\}<\infty.
 \end{aligned}
\]
Hence
\begin{equation}\label{eq:inverse-absolute-generator-series}
 \ell_{z,a}:=
 \sum_{n\geq2,\,\tau}
 \{D_{n,\tau}^2F_{z,a}
              -\rho_{n,\tau}D_{n,\tau}F_{z,a}\}
 \quad\text{in }L^2(\nu).
\end{equation}
Let $\ell_{z,a}^{(N)}$ be the partial sum through frequency $N$.
For $\varphi\in\Ccyl$, the bounded domain function
$\varphi D_{n,\tau}F_{z,a}$ is admissible in the one-direction IBP.
Thus
\[
 \begin{aligned}
 -\int\varphi\,\ell_{z,a}^{(N)}\,d\nu
 &=\sum_{\substack{2\leq n\leq N\\\tau}}
       \int D_{n,\tau}\varphi\,D_{n,\tau}F_{z,a}\,d\nu\\
 &\longrightarrow\E(\varphi,F_{z,a}),
 \qquad N\to\infty.
 \end{aligned}
\]
Since $\ell_{z,a}^{(N)}\to\ell_{z,a}$ in $L^2(\nu)$ and
$\Ccyl$ is a form core,
\[
 \E(F_{z,a},\varphi)=-\langle\ell_{z,a},\varphi\rangle_2,
 \qquad\varphi\in\Dcal.
\]
Hence $F_{z,a}\in\Dom(L)$ and $LF_{z,a}=\ell_{z,a}$.

For every frequency,
\[
 e_{n,c}^2+e_{n,s}^2=\alpha_n^2,\qquad
 e_{n,c}e'_{n,c}+e_{n,s}e'_{n,s}=0,
 \qquad \sum_{n\ge2}\alpha_n^2=\frac12.
\]
Therefore
\begin{align*}
 \sum_{n,\tau}D_{n,\tau}^2F_{z,a}&=\frac12a''(q_z),\\
 \shortintertext{and}
 -\sum_{n,\tau}\rho_{n,\tau}D_{n,\tau}F_{z,a}
 &=B(\,\cdot\,,q_z)a'(q_z).
\end{align*}
This gives \eqref{eq:inverse-evaluation-generator}, including its
positive drift sign.
The uniform drift estimate gives
\[
 \|LF_{z,a}\|_\infty
 \leq\tfrac12\|a''\|_\infty+C_\kappa\|a'\|_\infty.
\]
Summing the products of the first derivatives gives
\[
 \sum_jD_jF_{z,a}D_jF_{w,b}
 =a'(q_z)b'(q_w)\sum_je_j(q_z)e_j(q_w)
 =a'(q_z)b'(q_w)K(q_z-q_w),
\]
which proves \eqref{eq:inverse-evaluation-cross-energy}.
\end{proof}

From this point, complex martingales are obtained by complexifying
their real and imaginary parts. Their brackets are complex bilinear;
conjugation is written explicitly in the second argument when used.

For each target label $z\in\T$, let $\theta^z$ be the continuous
angular lift of $q_z(X_t)$ with a chosen value at time zero. For the
boundary equations, write
\[
 b_t(\theta)=B(X_t,\theta),\qquad \|b_t\|_\infty\leq C_\kappa.
\]
This random field is jointly progressively measurable:
$(X_t)_{t\geq0}$ is continuous and adapted, while $(h,\theta)\mapsto B(h,\theta)$ is
measurable for the product sigma-algebra and has the common spatial modulus
from Proposition~\ref{prop:uniform-drift}.
\begin{corollary}\label{cor:inverse-particle-martingales}
Fix a finite or countable family of target labels
$z_1,z_2,\ldots\in\T$. Outside one properly exceptional set of starting
states, the corresponding lifts satisfy, simultaneously for all labels
$z,w$ in the family and all $t\geq0$,
\begin{align}
 \theta_t^z-\theta_0^z
 &=M_t^z+\int_0^t b_s(\theta_s^z)\,ds,
 \label{eq:inverse-particle-decomposition}\\
 \shortintertext{where $M^z$ is a continuous local martingale with}
 \langle M^z\rangle_t&=t,
 \qquad
 \langle M^z,M^w\rangle_t
 =2\int_0^t K(\theta_s^z-\theta_s^w)\,ds.
 \label{eq:inverse-particle-brackets}
\end{align}
Thus each $M^z$ is a standard Brownian motion in the filtration of the
associated process.
\end{corollary}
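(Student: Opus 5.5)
The plan is to apply Corollary~\ref{cor:fukushima-main} to the smooth periodic observables $F_{z,a}$ with $a\in\{\cos,\sin\}$, and then reassemble the angular lift $\theta^z$ from $\cos\theta^z$ and $\sin\theta^z$ by It\^o's formula.

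By Proposition~\ref{prop:inverse-evaluation-generator}, $F_{z,\cos},F_{z,\sin}\in\Dom(L)$. Their generators are explicit bounded functions of $q_z$ and $B$, and their cross energies are given by \eqref{eq:inverse-evaluation-cross-energy}. Corollary~\ref{cor:fukushima-main} and the polarized bracket \eqref{eq:gen-cross-bracket} then give, outside one properly exceptional set for the countably many functions $F_{z_i,\cos},F_{z_i,\sin}$, continuous martingales $N^{z,c},N^{z,s}$ with
\[
 \cos\theta^z_t-\cos\theta^z_0=N^{z,c}_t+\int_0^t\Bigl(-\tfrac12\cos\theta^z_s-b_s(\theta^z_s)\sin\theta^z_s\Bigr)ds,
\]
an analogous identity for the sine, and brackets $2\int_0^t a'(\theta^z_s)b'(\theta^w_s)K(\theta^z_s-\theta^w_s)\,ds$. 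Here I use that $F_{z,a}(X_t)=a(\theta^z_t)$ pathwise, by continuity of $F_{z,a}$ (Lemma~\ref{lem:inverse-evaluation-domain}); this means the continuous representative is quasi-continuous, and the identities hold for every $t$ simultaneously. The lift $\theta^z$ itself is continuous because $q_z$ is continuous on $X$ and the paths are continuous.

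Next I pass to the lift. Set $Z_t=e^{i\theta^z_t}$; it is a continuous complex semimartingale of modulus one. Define
\[
 M^z_t=\int_0^t\bigl(-\sin\theta^z_s\,dN^{z,c}_s+\cos\theta^z_s\,dN^{z,s}_s\bigr),
\]
which is a continuous local martingale. I then verify $\theta^z_t-\theta^z_0=M^z_t+\int_0^tb_s(\theta^z_s)\,ds$. Locally in time, while $\theta^z$ stays in an interval of length less than $\pi$, $\theta^z$ is a smooth function of $(\cos\theta^z,\sin\theta^z)$, for instance $\operatorname{arg}$ of a fixed branch. It\^o's formula then applies on random intervals between stopping times, and the pieces are patched together by continuity.

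The second-order It\^o correction uses the diagonal brackets. Since $K(0)=1/2$, one has $d\langle N^{z,c}\rangle=\sin^2\theta\,dt$, $d\langle N^{z,s}\rangle=\cos^2\theta\,dt$ and $d\langle N^{z,c},N^{z,s}\rangle=-\sin\theta\cos\theta\,dt$. With these, the $-\tfrac12$ terms in the drifts cancel exactly against the Hessian of $\arg$, leaving the drift $b_s(\theta^z_s)$.

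For the brackets of $M^z$ and $M^w$ I compute
\[
 d\langle M^z,M^w\rangle=\sum\text{(coefficients)}\cdot2a'(\theta^z)b'(\theta^w)K\,dt
\]
with $a,b\in\{\cos,\sin\}$. The resulting combination is $2(\sin^2\theta^z\sin^2\theta^w+\dots)$; it collapses to $2K(\theta^z-\theta^w)\,dt$, because $(-\sin\theta,\cos\theta)\cdot(a'(\theta))_{a=\cos,\sin}=\sin^2\theta+\cos^2\theta=1$. In particular $\langle M^z\rangle_t=2K(0)t=t$, and L\'evy's characterization makes $M^z$ a standard Brownian motion.

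The main obstacle is handling the multivalued angle rigorously. Rather than stopping-time patching, a cleaner route is to define $M^z$ by the stochastic integral above. I would then apply It\^o's formula to $e^{-i\theta}$-type products to show that $\theta^z_t-\theta^z_0-M^z_t-\int_0^tb\,ds$ is a continuous process of finite variation with zero derivative. Equivalently, one can show directly that $d(Z_t)=iZ_t\,d(M^z_t+\int b\,ds)-\tfrac12Z_t\,dt$ and invoke uniqueness of the continuous lift. A second technical point is securing a single properly exceptional set for all labels and both trigonometric observables; this follows because the family of labels is countable.
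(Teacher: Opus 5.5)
Your proposal is correct and is essentially the paper's proof. Your $M^z$ is the real (tangential) part of the paper's $-i\int_0^t\overline{Z^z_s}\,dC^z_s$ with $C^z=N^{z,c}+iN^{z,s}$, the bracket computation is the same factorization through $2K$, and your ``cleaner route'' is the paper's identity $d(e^{-i\Theta^z}Z^z)=0$ followed by uniqueness of the continuous lift.

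Two small corrections. First, that route also needs the radial martingale $\int(\cos\theta^z\,dN^{z,c}+\sin\theta^z\,dN^{z,s})$ to vanish; this holds because its bracket is zero, and it is the paper's step $\operatorname{Im}M^z\equiv0$. Second, in your $\arg$-chart version the $-\tfrac12$ drifts do not cancel against the Hessian: being radial they are orthogonal to $\nabla\arg$, and the Hessian of $\arg$ contracted with the tangential covariance is zero, so the two contributions vanish separately.
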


\begin{proof}
For each $j,k$, let $E_{j,k}$ be a properly exceptional set outside
which the Fukushima decompositions and their cross-bracket identities
hold for every pair in
\[
 \{F_{z_j,\cos},F_{z_j,\sin},F_{z_k,\cos},F_{z_k,\sin}\}.
\]
Then
\[
 N_{\rm eval}:=\bigcup_{j,k}E_{j,k},\qquad
 \CapOne(N_{\rm eval})\leq\sum_{j,k}\CapOne(E_{j,k})=0,
\]
and $N_{\rm eval}$ is properly exceptional. Fix an initial state outside $N_{\rm eval}$.

Set
\[
 Z_t^z=e^{i\theta_t^z},\qquad
 C^z=M^{F_{z,\cos}}+iM^{F_{z,\sin}}.
\]
Proposition~\ref{prop:inverse-evaluation-generator},
Corollary~\ref{cor:fukushima-main}, and
\eqref{eq:gen-cross-bracket} give
\begin{equation}\label{eq:circle-complex-semimartingale}
 dZ_t^z=dC_t^z+
 \left(-\frac12+ib_t(\theta_t^z)\right)Z_t^z\,dt
\end{equation}
and, with bilinear complex brackets,
\begin{align}
 d\langle C^z,C^w\rangle_t
 &=-2Z_t^zZ_t^wK(\theta_t^z-\theta_t^w)\,dt,\notag\\
 d\langle C^z,\overline{C^w}\rangle_t
 &=2Z_t^z\overline{Z_t^w}K(\theta_t^z-\theta_t^w)\,dt.
 \label{eq:circle-complex-brackets}
\end{align}

Define
\[
 M_t^z=-i\int_0^t\overline{Z_s^z}\,dC_s^z.
\]
Since $2K(0)=1$,
\[
 d\langle M^z,M^z\rangle_t
 =d\langle M^z,\overline{M^z}\rangle_t=dt,
\qquad
 d\langle\operatorname{Im}M^z\rangle_t=0.
\]
Since $\operatorname{Im}M_0^z=0$, it follows that
$\operatorname{Im}M^z\equiv0$. Thus $M^z$ is real, and
\[
 d\langle M^z,M^w\rangle_t
 =(-i)^2\overline{Z_t^zZ_t^w}\,
       d\langle C^z,C^w\rangle_t
 =2K(\theta_t^z-\theta_t^w)\,dt.
\]
In particular, $\langle M^z\rangle_t=t$, so $M^z$ is a standard
Brownian motion.

Put
\[
 \Theta_t^z=\theta_0^z+M_t^z+
                 \int_0^t b_s(\theta_s^z)\,ds.
\]
The definition of $M^z$ gives $dC_t^z=iZ_t^z\,dM_t^z$. Hence
\[
 \begin{aligned}
 d(e^{-i\Theta_t^z})
 &=-ie^{-i\Theta_t^z}\,dM_t^z
   +\left(-\tfrac12-ib_t(\theta_t^z)\right)
                         e^{-i\Theta_t^z}\,dt,\\
 d\langle e^{-i\Theta^z},Z^z\rangle_t
 &=e^{-i\Theta_t^z}Z_t^z\,dt.
 \end{aligned}
\]
Together with \eqref{eq:circle-complex-semimartingale}, the product
rule gives
\[
 d(e^{-i\Theta_t^z}Z_t^z)=0,\qquad
 e^{-i\Theta_0^z}Z_0^z=1.
\]
Thus $e^{i\Theta_t^z}=e^{i\theta_t^z}$. The continuous difference
$\Theta^z-\theta^z$ is $2\pi\mathbb Z$-valued and starts from zero;
hence $\Theta^z=\theta^z$. This proves
\eqref{eq:inverse-particle-decomposition} and
\eqref{eq:inverse-particle-brackets}.
\end{proof}

\subsection{Common Fourier noise}

The joint brackets determine a single cylindrical noise for all inverse
evaluations. We construct it in the diffusion filtration, so the same
noise can be used in the regularizations below.

Retain the notation $\theta^z$ for a continuous angular lift of
$X_t^{-1}(z)$. For any prescribed countable family,
Corollary~\ref{cor:inverse-particle-martingales} supplies the corresponding
decompositions outside one properly exceptional set. All statements below
use the stationary path law and the filtration $(\mathcal G_t)$.
Define the relative circle map
\[
 \Psi_t=X_t^{-1}\circ X_0,\qquad \Psi_0=\id.
\]
For simultaneous identities in the label, we use the jointly continuous
modification of the stochastic integral field constructed below.

\begin{lemma}\label{lem:dense-noise-reconstruction}
Fix distinct deterministic labels $(z_j)_{j\geq1}$ dense in $\T$.
There are independent standard $(\mathcal G_t)$-Brownian motions
$W^{n,c},W^{n,s}$ such that, for every $j$ and $T<\infty$,
\[
 M_t^{z_j}
 =\sqrt2\sum_{n\geq2,\,\tau\in\{c,s\}}
   \int_0^t e_{n,\tau}(\theta_s^{z_j})\,dW_s^{n,\tau},
\]
with convergence in $L^2(\mathbb P;C([0,T]))$.
\end{lemma}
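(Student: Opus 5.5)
The plan is to realize all the martingales $M^{z_j}$ as stochastic integrals against a single $\ell^2$-valued noise, using only the bracket identities of Corollary~\ref{cor:inverse-particle-martingales}. Fix one properly exceptional set outside which those identities hold for the whole countable family $(z_j)$, and work under the stationary law $\mathbb P$ with filtration $(\mathcal G_t)$. Index the basis by $j'\leftrightarrow(n,\tau)$ and define the feature vector
\[
 \phi(\theta)=\bigl(\sqrt2\,e_{n,\tau}(\theta)\bigr)_{n\geq2,\tau}\in\ell^2,
 \qquad
 \|\phi(\theta)\|_{\ell^2}^2=2\sum_n\alpha_n^2=1 .
\]
It satisfies $(\phi(\theta),\phi(\varphi))_{\ell^2}=2K(\theta-\varphi)$, and $\theta\mapsto\phi(\theta)$ is continuous into $\ell^2$ because $\sum_n\alpha_n^2<\infty$. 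Hence \eqref{eq:inverse-particle-brackets} reads $d\langle M^{z_j},M^{z_k}\rangle_s=(\phi(\theta^{z_j}_s),\phi(\theta^{z_k}_s))_{\ell^2}\,ds$.

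\textbf{Step 1 (isometry).} For finitely many bounded predictable $h_1,\dots,h_k$, consider the martingale $\sum_j\int h_j\,dM^{z_j}$. Its second moment at time $t$ is
\[
 \mathbb E\int_0^t\Bigl\|\sum_{j\leq k}h_j(s)\phi(\theta^{z_j}_s)\Bigr\|_{\ell^2}^2ds .
\]
So the map $\sum_jh_j\phi(\theta^{z_j}_\cdot)\mapsto\sum_j\int h_j\,dM^{z_j}$ is an isometry from predictable $\ell^2$-valued integrands of this form into continuous $L^2$-martingales. It therefore extends to the $L^2(\Omega\times[0,t];\ell^2)$-closure of such integrands.

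\textbf{Step 2 (density of the span).} For every $s$ and $\omega$, the vectors $\phi(\theta^{z_j}_s)$, $j\geq1$, span a dense subspace of $\ell^2$. The points $\theta^{z_j}_s$ are the images of the dense set $(z_j)$ under the homeomorphism $X_s^{-1}$, so they are dense in $\T$. By continuity of $\phi$, the closed span contains every $\phi(\theta)$. A vector $c\in\ell^2$ orthogonal to all $\phi(\theta)$ satisfies $\sum c_{n,\tau}e_{n,\tau}(\theta)\equiv0$, which forces $c=0$ by Fourier uniqueness.

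\textbf{Step 3 (predictable approximation and the noise).} Let $\Pi^k_s$ be the orthogonal projection onto $\operatorname{span}\{\phi(\theta^{z_j}_s):j\leq k\}$. It is given by the Moore--Penrose pseudoinverse of the $k\times k$ Gram matrix $(2K(\theta^{z_i}_s-\theta^{z_j}_s))$. This matrix is a continuous adapted process, and the pseudoinverse is a Borel function of the matrix. Therefore $\Pi^k_s\epsilon_{n,\tau}=\sum_{j\leq k}h^{k,n,\tau}_j(s)\phi(\theta^{z_j}_s)$ with predictable coefficients $h^{k,n,\tau}_j$. Truncate these coefficients to be bounded, at a cost vanishing in the limit. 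By Step 2, $\Pi^k_s\epsilon_{n,\tau}\to\epsilon_{n,\tau}$ in $\ell^2$ for every $(s,\omega)$, with norms at most $1$. Dominated convergence and Step 1 then make
\[
 \sum_{j\leq k}\int_0^\cdot h^{k,n,\tau}_j\,dM^{z_j}
\]
Cauchy in $L^2(\mathbb P;C([0,T]))$ by Doob's inequality. Call the limit $W^{n,\tau}$. Polarizing the isometry gives $\langle W^{n,\tau},W^{n',\tau'}\rangle_t=t\,\delta$. By L\'evy's characterization, the $W^{n,\tau}$ are independent standard $(\mathcal G_t)$-Brownian motions.

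\textbf{Step 4 (representation).} Polarization also gives
\[
 \langle M^{z_j},W^{n,\tau}\rangle_t=\int_0^t\sqrt2\,e_{n,\tau}(\theta^{z_j}_s)\,ds .
\]
Set $N^{(N)}_t=\sqrt2\sum_{n\leq N,\tau}\int_0^te_{n,\tau}(\theta^{z_j}_s)\,dW^{n,\tau}_s$. Then
\[
 \mathbb E\sup_{t\leq T}|M^{z_j}_t-N^{(N)}_t|^2
 \leq4\,\mathbb E\langle M^{z_j}-N^{(N)}\rangle_T
 =8T\sum_{n>N}\alpha_n^2\longrightarrow0 .
\]
The middle bracket expands as $T-2\cdot2T\sum_{n\leq N}\alpha_n^2+2T\sum_{n\leq N}\alpha_n^2$, using $\sum_\tau e_{n,\tau}^2=\alpha_n^2$. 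This proves the stated $L^2(\mathbb P;C([0,T]))$ convergence.

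The main obstacle is Step 3: the predictable, measurable choice of approximating coefficients when Gram matrices may degenerate, for instance when labels coalesce or the finite span is not of full dimension. I would handle it with the pseudoinverse formula above, since the projection itself is well defined even where the coefficients are not unique. I would also check that the truncation of coefficients only costs a vanishing $L^2$ error under dominated convergence.
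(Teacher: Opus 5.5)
Your argument is correct, but it reaches the noise by a different route than the paper.

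\textbf{The paper's route.} It first proves that every finite family of rows $s_j(t)=\phi(\theta_t^{z_j})$ is linearly independent: a nonzero atomic measure cannot have all Fourier coefficients with $|n|\geq2$ equal to zero. From continuity it then gets $\inf_{t\leq T}\det G_m(t)>0$. Next it runs a predictable Gram--Schmidt orthogonalization on the rows and, in parallel, on the martingales. This produces independent Brownian motions $\beta_j$ with the triangular representation $dM^{z_j}=\sum_{i\leq j}\langle s_j,u_i\rangle\,d\beta_i$. The noise is $W^k=\sum_j\int u_j^k\,d\beta_j$, and its brackets follow from completeness of the orthonormal frame $(u_j(t))$.

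\textbf{Your route.} You build the isometric stochastic integral $I$ on the closed span of the row integrands and set $W^{n,\tau}=I(\epsilon_{n,\tau})$. You reach $I(\epsilon_{n,\tau})$ through the pseudoinverse projections $\Pi^k_s\epsilon_{n,\tau}$. The two constructions in fact give the same process: inductively $\beta_j=I(u_j)$, so the paper's $W^k$ equals $I\bigl(\sum_ju_j^ku_j\bigr)=I(\epsilon_k)$.

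\textbf{Comparison.} Your approach needs only density of the span at each time. It therefore dispenses with finite linear independence and with the lower bound on Gram determinants. The degeneracy you flag as the main obstacle never actually occurs here, because of the paper's independence argument; your truncation simply copes with near-singular Gram matrices without needing that argument. In exchange, the paper's approach yields the explicit orthonormal Brownian motions $\beta_j$, and it avoids both the measurable pseudoinverse and the truncation step.

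\textbf{One point to make explicit.} You should derive the identities $\langle W^{n,\tau},W^{n',\tau'}\rangle_t=t\,\delta$ and $\langle M^{z_j},W^{n,\tau}\rangle_t=\int_0^t\sqrt2\,e_{n,\tau}(\theta^{z_j}_s)\,ds$ in two steps. First use the pathwise bracket formula for finite combinations, $\langle I(\Phi),I(\Psi)\rangle_t=\int_0^t(\Phi_s,\Psi_s)_{\ell^2}\,ds$, which comes directly from the brackets in Corollary~\ref{cor:inverse-particle-martingales}. Then pass to the limit using the Kunita--Watanabe inequality. Polarizing the expectation isometry alone does not give these identities.
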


\begin{proof}
At each time set
\[
 s_j(t)=\bigl(\sqrt2 e_{n,\tau}(\theta_t^{z_j})\bigr)_{n,\tau}
          \in\ell^2.
\]
In this proof inner products of the vectors $s_j$ are those of $\ell^2$.
Corollary~\ref{cor:inverse-particle-martingales} gives their Gram identity
\begin{equation}\label{eq:noise-Gram}
 d\langle M^{z_i},M^{z_j}\rangle_t=\langle s_i(t),s_j(t)\rangle\,dt.
\end{equation}

Because $X_t^{-1}$ is a homeomorphism, the points
$\theta_t^{z_1},\ldots,\theta_t^{z_m}$ are distinct modulo $2\pi$ for
every $t$, and the full set $(\theta_t^{z_j})_{j\geq1}$ remains dense
in the circle. Every finite set of rows is therefore linearly independent.
Indeed, if real coefficients $c_1,\ldots,c_m$ satisfy
$\sum_{j=1}^m c_js_j(t)=0$, set
$\mu=\sum_{j=1}^m c_j\delta_{\theta_t^{z_j}}$ and
$\widehat\mu(n)=\int e^{-in\theta}\,d\mu(\theta)$. Then
$\widehat\mu(n)=0$ for $|n|\geq2$. Define the finite signed measure
\[
 d\mu_{\rm ac}(\theta)=
 \left(\sum_{|n|\leq1}\widehat\mu(n)e^{in\theta}\right)
                                      \frac{d\theta}{2\pi}.
\]
Its Fourier coefficients agree with those of $\mu$ for every
$n\in\mathbb Z$. Hence uniqueness of Fourier coefficients for finite
signed measures gives
\[
 \mu=\mu_{\rm ac}.
\]
Here $\mu$ is atomic whereas $\mu_{\rm ac}$ is absolutely continuous, so
$\mu=0$. The distinctness of the points then gives
$c_1=\cdots=c_m=0$.
The span of all rows is dense in $\ell^2$ as well. If
$\xi=(\xi_{n,\tau})\in\ell^2$ is orthogonal to every row,
the WP field
\[
 v_\xi=\sum_{n\geq2,\,\tau\in\{c,s\}}
               \xi_{n,\tau}e_{n,\tau}
\]
satisfies
\[
 \|v_\xi\|_\infty\leq K(0)^{1/2}\|\xi\|_{\ell^2},\qquad
 v_\xi(\theta_t^{z_j})=0\quad(j\geq1).
\]
Its continuous version vanishes on a dense set, so
$v_\xi=0$ and $\xi=0$.

The row processes are continuous in $\ell^2$, since
\[
 \|s_j(t)-s_j(u)\|_{\ell^2}^2
 =4\{K(0)-K(\theta_t^{z_j}-\theta_u^{z_j})\}
 \longrightarrow0\qquad(u\to t).
\]
They are adapted and hence predictable. For $m\geq1$, let $G_m(t)$
denote the Gram matrix of the first $m$ rows,
\[
 G_m(t)=\bigl(\langle s_i(t),s_j(t)\rangle_{\ell^2}\bigr)_{i,j\leq m}.
\]
Finite independence and continuity give
\[
 \det G_m(t)>0,
 \qquad
 \inf_{0\leq t\leq T}\det G_m(t)>0
 \quad\text{almost surely}.
\]
Consequently, the predictable Gram--Schmidt processes
\begin{equation}\label{eq:predictable-Gram-Schmidt}
 r_j=s_j-\sum_{i<j}\langle s_j,u_i\rangle u_i,
 \qquad u_j=\frac{r_j}{\|r_j\|_{\ell^2}}
\end{equation}
satisfy
\[
 \det G_m(t)=\prod_{j=1}^m\|r_j(t)\|_{\ell^2}^2,
 \qquad
 \inf_{\substack{0\leq t\leq T\\j\leq m}}
 \|r_j(t)\|_{\ell^2}>0
 \quad\text{almost surely}.
\]
Orthogonalize the martingales in the same order as the rows. Inductively
define the residual martingale $N_j$ and its normalization $\beta_j$ by
\[
 dN_j=dM^{z_j}-\sum_{i<j}\langle s_j,u_i\rangle\,d\beta_i,
 \qquad
 d\beta_j=\frac{dN_j}{\|r_j\|_{\ell^2}}.
\]
Assume the construction has been completed up to $j-1$.  For $i<j$,
\begin{align*}
 d\langle M^{z_j},N_i\rangle_t
 &=\left(\langle s_j,s_i\rangle
   -\sum_{k<i}\langle s_i,u_k\rangle\langle s_j,u_k\rangle\right)dt\\
 &=\langle s_j,r_i\rangle\,dt
 =\|r_i\|_{\ell^2}\langle s_j,u_i\rangle\,dt,\\
 \shortintertext{and therefore}
 d\langle M^{z_j},\beta_i\rangle_t
 &=\langle s_j,u_i\rangle\,dt.
\end{align*}
Consequently,
\begin{align*}
 d\langle N_j,\beta_i\rangle_t&=0\qquad(i<j),\\
 \shortintertext{and}
 d\langle N_j\rangle_t
 &=\left(\|s_j\|_{\ell^2}^2
       -\sum_{i<j}|\langle s_j,u_i\rangle|^2\right)dt
 =\|r_j(t)\|_{\ell^2}^2\,dt.
\end{align*}
Thus the stochastic integrals are defined on every finite interval, and
\begin{equation}\label{eq:beta-brackets-representation}
 d\langle\beta_i,\beta_j\rangle_t=\delta_{ij}\,dt,
 \qquad
 dM^{z_j}=\sum_{i\leq j}\langle s_j,u_i\rangle\,d\beta_i.
\end{equation}
The finite-dimensional L\'evy characterization makes the $\beta_i$
independent Brownian motions in $(\mathcal G_t)$. Write $u_j^k$ for the
$k$th coordinate of $u_j$, where $k=(n,\tau)$. Since the rows are dense
in $\ell^2$ at every time, Gram--Schmidt also gives
\begin{equation}\label{eq:noise-ONB-completeness}
 \overline{\operatorname{span}}\{u_j(t):j\geq1\}=\ell^2,
 \qquad
 \sum_{j\geq1}u_j^k(t)u_j^l(t)=\delta_{kl}.
\end{equation}
Define
\[
 W_t^k=\sum_{j\geq1}\int_0^t u_j^k(s)\,d\beta_j(s).
\]
Completeness, Doob's inequality, and the martingale isometry give
\begin{align*}
 \mathbb E\sup_{t\leq T}
 \left|\sum_{j>J}\int_0^t u_j^k(s)\,d\beta_j(s)\right|^2
 &\leq4\mathbb E\int_0^T
       \left(1-\sum_{j\leq J}|u_j^k(s)|^2\right)ds
       \longrightarrow0\quad(J\to\infty),\\
 \shortintertext{while \eqref{eq:noise-ONB-completeness} gives}
 d\langle W^k,W^l\rangle_t
 &=\sum_j u_j^k(t)u_j^l(t)\,dt=\delta_{kl}\,dt.
\end{align*}
Thus the $W^k$ are independent standard Brownian motions on every
finite interval; uniqueness of the $L^2(\mathbb P;C([0,T]))$ limits on
overlaps gives consistent processes on $[0,\infty)$.  Let $\pi_N$
be the coordinate projection in $\ell^2$ onto $2\leq n\leq N$.
For a fixed row $s_j$, the definition of $W$ and stochastic isometry give
\[
 \sqrt2\sum_{\substack{2\leq n\leq N\\\tau}}
       \int_0^t e_{n,\tau}(\theta_s^{z_j})\,dW_s^{n,\tau}
 =\sum_{i\geq1}\int_0^t
       \langle\pi_Ns_j(s),u_i(s)\rangle\,d\beta_i(s).
\]
The series on the right converges in $L^2(\mathbb P;C([0,T]))$.
Since $s_j$ lies in the span of $u_1,\ldots,u_j$, Parseval's identity gives
\[
 \begin{aligned}
 &\mathbb E\sup_{t\leq T}
 \left|M_t^{z_j}
   -\sqrt2\sum_{\substack{2\leq n\leq N\\\tau}}
      \int_0^t e_{n,\tau}(\theta_s^{z_j})\,dW_s^{n,\tau}\right|^2\\
 &\qquad\leq4\mathbb E\int_0^T
                      \|(1-\pi_N)s_j(s)\|_{\ell^2}^2\,ds
 \leq8T\sum_{n>N}\alpha_n^2\longrightarrow0
       \quad(N\to\infty).
 \end{aligned}
\]
Together with the angular decomposition, this proves
\[
 M_t^{z_j}
 =\sqrt2\sum_{n\geq2,\,\tau\in\{c,s\}}
   \int_0^t e_{n,\tau}(\theta_s^{z_j})\,dW_s^{n,\tau}
\]
for every dense label.
\end{proof}

\begin{lemma}\label{lem:all-label-noise}
Let the Brownian motions be those of
Lemma~\ref{lem:dense-noise-reconstruction}. For each deterministic
label $z\in\T$, the partial sums
\[
 I_N^z(t)=\sqrt2\sum_{\substack{2\leq n\leq N\\\tau\in\{c,s\}}}
       \int_0^t e_{n,\tau}(\theta_s^z)\,dW_s^{n,\tau}
\]
converge in $L^2(\mathbb P;C([0,T]))$ for every $T<\infty$.
The limits have a version jointly continuous in $(t,z)$ and,
simultaneously for all deterministic labels,
\begin{equation}\label{eq:all-label-noise-identity}
 I_t^z=\theta_t^z-\theta_0^z
             -\int_0^t b_s(\theta_s^z)\,ds.
\end{equation}
If $Z$ is a fixed $\mathcal G_0$-measurable $\T$-valued label, define
$I_N^Z$ by the same formula, with the periodic integrands
$e_{n,\tau}(\theta_s^Z)=e_{n,\tau}(X_s^{-1}(Z))$. Then $I_N^Z$
converges in $L^2(\mathbb P;C([0,T]))$, and the jointly continuous
version evaluated at $Z$ agrees almost surely with this limit and
satisfies the same identity for any continuous angular lift of
$X_t^{-1}(Z)$.
\end{lemma}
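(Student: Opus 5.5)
We prove the three assertions in order: convergence of $I_N^z$ for a fixed label, a Kolmogorov-type joint continuity estimate in $(t,z)$, and identification of the limit through the dense labels.

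\emph{Convergence for a fixed label.} The integrands $e_{n,\tau}(\theta_s^z)$ are continuous and adapted, hence predictable, and bounded by $\alpha_n$. Doob's inequality and the isometry for the independent Brownian motions $W^{n,\tau}$ give
\[
 \mathbb E\sup_{t\leq T}|I_N^z(t)-I_{N'}^z(t)|^2
 \leq 8T\sum_{N<n\leq N'}\alpha_n^2,
\]
uniformly in $z$. So the partial sums are Cauchy in $L^2(\mathbb P;C([0,T]))$, with uniform tail $8T\sum_{n>N}\alpha_n^2$. The same bound holds for a $\mathcal G_0$-measurable label $Z$, because the integrands $e_{n,\tau}(X_s^{-1}(Z))$ are still continuous and adapted.

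\emph{Joint continuity.} Consider $I^z-I^w$. Its bracket density is
\[
 2\sum_{n,\tau}\bigl(e_{n,\tau}(\theta^z_s)-e_{n,\tau}(\theta^w_s)\bigr)^2
 =4\bigl(K(0)-K(\theta_s^z-\theta_s^w)\bigr).
\]
The kernel satisfies $K(0)-K(u)\leq Cu^2\log(e/|u|)$, by the computation of Lemma~\ref{lem:critical-kernel}. So this density is controlled by the spatial separation $d_\T(\theta_s^z,\theta_s^w)$. This separation is not small in $|z-w|$ uniformly: it is the inverse arc mass $2\pi p_{(w,z)}(X_s)$, which is bounded by $1$ but small only through the inverse modulus of continuity of $X_s^{-1}$.

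This is the main obstacle. To handle it, we would localize with the compact nest. On $K_m^0$ the inverse lifts are equicontinuous by \eqref{eq:inverse-equicontinuity}, so $q$ has a deterministic modulus $\omega_m$. Let $\tau_m$ be the exit time of $K_m$. By BDG, up to $\tau_m$ we get
\[
 \mathbb E\sup_{t\leq T\wedge\tau_m}|I^z_t-I^w_t|^{p}\leq C_{p,T}\,\omega_m(|z-w|)^{p}.
\]
We then run a Kolmogorov--Garsia--Rodemich--Rumsey chaining argument along a dyadic net of labels, using that $\omega_m$ can be chosen to be a power-type modulus. The upper cutoff gives $\omega_m(2\pi2^{-N(m,k)})\leq4\pi2^{-k}$; if needed, we reparametrize the dyadic levels $N(m,k)$ so that this modulus is summable in the chaining. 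Increments in $t$ are handled directly, since the bracket density is at most $1$.

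The exit times satisfy $\tau_m\to\infty$ almost surely under quasi-every start, because the nest has capacity tending to zero (Theorem~\ref{thm:main} and standard nest/exit-time facts). Patching the continuous versions over $m$ then gives a version jointly continuous on $[0,T]\times\T$. If the power-type modulus cannot be arranged, we would fall back on continuity in $z$ only at each fixed rational $t$, and then use the identity below, whose right side is itself jointly continuous.

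\emph{Identification.} For the dense labels $z_j$, Lemma~\ref{lem:dense-noise-reconstruction} together with Corollary~\ref{cor:inverse-particle-martingales} gives \eqref{eq:all-label-noise-identity}. For general $z$, the right side of \eqref{eq:all-label-noise-identity} is jointly continuous in $(t,z)$. This follows from continuity of $X_t^{-1}$, uniform continuity on $[0,T]$, and the Osgood modulus \eqref{eq:uniform-drift-Osgood} of $b_s$ together with dominated convergence in the $ds$ integral, once angular lifts are chosen continuously in $z$ from the lift of the homeomorphism $X_t^{-1}$.

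Two continuous fields that agree on the dense set of labels agree everywhere, simultaneously for all $z$. At each deterministic $z$ this version coincides almost surely with the $L^2$ limit. For a random label $Z$, approximate $Z$ by $\mathcal G_0$-measurable labels with values in the dense set. The uniform isometry bound above, applied to
\[
 \sum_{n,\tau}\bigl(e_{n,\tau}(\theta^{Z}_s)-e_{n,\tau}(\theta^{Z_k}_s)\bigr)^2,
\]
combined with dominated convergence, shows that the $L^2$ limit for $Z$ equals the continuous field evaluated at $Z$. The identity for $Z$ then follows, for any continuous angular lift, since changing the lift changes $\theta^Z$ only by a constant in $2\pi\mathbb Z$, which cancels in $\theta^Z_t-\theta^Z_0$.
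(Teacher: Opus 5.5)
Your fixed-label convergence, your check that the right-hand side of \eqref{eq:all-label-noise-identity} is jointly continuous (via a continuous lift of $X_t^{-1}$ and the Osgood modulus of $b_s$), and your random-label approximation all match the paper. The gap is in your primary route to joint continuity of the $I$-field.

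\textbf{Why the chaining fails.} A Kolmogorov/Garsia--Rodemich--Rumsey chaining over labels needs a quantitative modulus for $h^{-1}$ on $K_m$: power-type, or at least one with integrable metric entropy. The nest supplies none.
\begin{itemize}
\item The levels $N(m,k)$ come only from $\|S_N\|_{\E_1}\to0$, which was proved by dominated convergence with no rate.
\item You may enlarge $N(m,k)$ but not shrink it. Enlarging only weakens the bound $4\pi2^{-k}$, which holds only at scales $2\pi2^{-N(m,k)}$.
\item A power modulus would need $N(m,k)=O(k)$, i.e.\ exponential decay of $\|S_N\|_{\E_1}$ in $N$. Nothing in the paper provides such decay, so ``reparametrize the dyadic levels'' cannot be carried out.
\end{itemize}
Consequently your sentence ``two continuous fields that agree on the dense set of labels agree everywhere'' presupposes exactly the continuity of $z\mapsto I^z$ that was not established.

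\textbf{The missing idea: no chaining is needed.} Take the version to be the field
\[
 \widehat M_t^z=\widetilde q_t(u)-\widetilde q_0(u)-\int_0^tb_s(\widetilde q_s(u))\,ds,\qquad z=e^{iu}.
\]
Here $\widetilde q$ is a jointly continuous degree-one lift of $X_t^{-1}$; you already observed that $\widehat M$ is almost surely jointly continuous.

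To identify $\widehat M^z$ with $I^z$ at a deterministic $z$, use continuity of $z\mapsto I^z$ into $L^2(\mathbb P;C([0,T]))$. This needs no modulus at all. Whenever $z_\ell\to z$,
\[
 \mathbb E\sup_{t\leq T}|I_t^{z_\ell}-I_t^z|^2
 \leq16\,\mathbb E\int_0^T\bigl\{K(0)-K(\theta_s^{z_\ell}-\theta_s^z)\bigr\}\,ds\longrightarrow0 .
\]
The limit follows by bounded convergence, since $0\leq K(0)-K\leq1$ and $X_s^{-1}$ and $K$ are continuous.

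Now choose dense labels $z_{j_\ell}\to z$.
\begin{itemize}
\item $I^{z_{j_\ell}}=\widehat M^{z_{j_\ell}}$ almost surely, by Lemma~\ref{lem:dense-noise-reconstruction} and Corollary~\ref{cor:inverse-particle-martingales}.
\item The left side converges to $I^z$ in $L^2(\mathbb P;C([0,T]))$.
\item The right side converges to $\widehat M^z$ almost surely in $C([0,T])$.
\end{itemize}
Hence $I^z=\widehat M^z$ almost surely.

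This is exactly your own random-label argument specialized to deterministic $Z$. It is what your fallback should say: continuity in $L^2(\mathbb P;C([0,T]))$, not pathwise continuity at fixed rational times. It makes the nest localization, the exit times $\tau_m$ and the chaining unnecessary. With this replacement your proposal becomes the paper's proof.
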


\begin{proof}
For a deterministic label $z$ and $M>N$, Doob's inequality gives
\[
 \mathbb E\sup_{t\leq T}|I_M^z(t)-I_N^z(t)|^2
 \leq8T\sum_{n>N}\alpha_n^2\longrightarrow0
       \quad(N\to\infty).
\]
Thus $I_N^z\to I^z$ in $L^2(\mathbb P;C([0,T]))$. If
$z_\ell\to z$, then
\[
 \begin{aligned}
 \mathbb E\sup_{t\leq T}|I^{z_\ell}(t)-I^z(t)|^2
 &\leq8\mathbb E\int_0^T
   \{2K(0)-2K(\theta_s^{z_\ell}-\theta_s^z)\}\,ds\\
 &\longrightarrow0.
 \end{aligned}
\]
Indeed the integrand is bounded and tends pointwise to zero by
continuity of $X_s^{-1}$ and $K$. For the dense labels, the preceding
projection calculation gives
\[
 I^{z_j}=M^{z_j}\quad\text{in }L^2(\mathbb P;C([0,T])).
\]

By path lifting, choose a jointly continuous degree-one lift
\[
 \widetilde q:[0,T]\times\mathbb R\to\mathbb R,
 \qquad e^{i\widetilde q_t(u)}=X_t^{-1}(e^{iu}),
 \qquad \widetilde q_t(u+2\pi)=\widetilde q_t(u)+2\pi.
\]
For $z=e^{iu}$, set
\[
 \widehat M_t^z
 =\widetilde q_t(u)-\widetilde q_0(u)-\int_0^t b_s(\widetilde q_s(u))\,ds.
\]
This is independent of the representative $u$. Define
\[
 \omega_B(\delta)=
 \begin{cases}
  0,&\delta=0,\\
  C_\kappa\delta\log(e/\delta),&0<\delta\leq1,\\
  2C_\kappa,&\delta>1.
 \end{cases}
\]
For $0\leq s\leq t\leq T$,
\begin{align*}
 |\widehat M_t^{e^{iu}}-\widehat M_s^{e^{iv}}|
 &\leq|\widetilde q_t(u)-\widetilde q_s(v)|+|\widetilde q_0(u)-\widetilde q_0(v)|+C_\kappa(t-s)\\
 &\quad+\int_0^s
 \omega_B\!\left(d_\T(\widetilde q_\ell(u),\widetilde q_\ell(v))\right)\,d\ell.
\end{align*}
Hence $(t,z)\mapsto\widehat M_t^z$ is continuous on
$[0,T]\times\T$ almost surely. Put
\[
 \Omega_T^0=
 \{(t,z)\mapsto\widehat M_t^z\text{ is continuous on }[0,T]\times\T\}
 \cap\bigcap_{j\geq1}
 \{I^{z_j}=\widehat M^{z_j}\text{ in }C([0,T])\}.
\]
The angular decompositions and the preceding identities give
$\mathbb P(\Omega_T^0)=1$. If $z_{j_\ell}\to z$, then on
$\Omega_T^0$
\[
 \widehat M^{z_{j_\ell}}\longrightarrow\widehat M^z
       \quad\text{in }C([0,T]),
\]
whereas $I^{z_{j_\ell}}\to I^z$ in probability in the same space.
Thus $I^z=\widehat M^z$ almost surely for each deterministic $z$.
On $\Omega_T^0$ we use $\widehat M$ as the jointly continuous version
of the stochastic integral field; the identity then holds simultaneously
for all deterministic labels.

Let $Z$ be a fixed $\mathcal G_0$-measurable label.  The process
$e_{n,\tau}(\theta_t^Z)$ is continuous and adapted, hence predictable.
For $M>N$,
\[
 \mathbb E\sup_{t\leq T}|I_M^Z(t)-I_N^Z(t)|^2
 \leq8T\sum_{n>N}\alpha_n^2\longrightarrow0
       \qquad(N\to\infty),
\]
so $I_N^Z\to I^Z$ in $L^2(\mathbb P;C([0,T]))$.
Choose a finite $\mathcal G_0$-measurable partition
$(A_\ell)_{\ell\leq q_m}$ and dense labels $z_{j_\ell}$ such that the
finite-valued label $Z_m$ defined by
\[
 Z_m=z_{j_\ell}\quad\text{on }A_\ell,
 \qquad
 d_\T(Z_m,Z)\leq\frac1m
\]
satisfies $Z_m\to Z$ pointwise.  Since the
indicators $\mathbf 1_{A_\ell}$ are predictable,
\begin{equation}\label{eq:random-label-piecewise}
 I_N^{Z_m}=\sum_{\ell=1}^{q_m}\mathbf 1_{A_\ell}I_N^{z_{j_\ell}},
 \qquad
 \widehat M^{Z_m}
 =\sum_{\ell=1}^{q_m}\mathbf 1_{A_\ell}\widehat M^{z_{j_\ell}}.
\end{equation}
Thus $I^{Z_m}=\widehat M^{Z_m}$ almost surely. Moreover,
\[
 \begin{aligned}
 \mathbb E\sup_{t\leq T}|I^{Z_m}(t)-I^Z(t)|^2
 &\leq8\mathbb E\int_0^T
 \{2K(0)-2K(\theta_s^{Z_m}-\theta_s^Z)\}\,ds\\
 &\longrightarrow0\qquad(m\to\infty).
 \end{aligned}
\]
Joint continuity gives
$\widehat M^{Z_m}\to\widehat M^Z$ almost surely in $C([0,T])$.
Consequently,
\[
 I^Z=\widehat M^Z
 \quad\text{in }C([0,T])\quad\text{almost surely}.
\]
Taking a countable intersection over $T\in\mathbb N$ makes the jointly
continuous version and the stated identities consistent on
$[0,\infty)\times\T$.
\end{proof}

\begin{proposition}\label{prop:single-noise}
Let $W^{n,c},W^{n,s}$ and the jointly continuous field $I$ be those of
Lemmas~\ref{lem:dense-noise-reconstruction} and
\ref{lem:all-label-noise}. There is a probability-one event on which the
jointly continuous degree-one lift $(t,u)\mapsto\widetilde\Psi_t(u)$ of
$\Psi_t=X_t^{-1}\circ X_0$, normalized by $\widetilde\Psi_0=\id$,
satisfies, simultaneously for $t\geq0$ and $u\in\mathbb R$,
\begin{equation}\label{eq:relative-common-Fourier-SDE}
 \widetilde\Psi_t(u)
 =u+I_t^{X_0(e^{iu})}
       +\int_0^t b_s(\widetilde\Psi_s(u))\,ds.
\end{equation}
\end{proposition}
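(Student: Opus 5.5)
The plan is to derive \eqref{eq:relative-common-Fourier-SDE} from the all-label identity \eqref{eq:all-label-noise-identity} of Lemma~\ref{lem:all-label-noise}, applied at the random label $Z=X_0(e^{iu})$. The point is that $\Psi_t(e^{iu})=X_t^{-1}(X_0(e^{iu}))$ is exactly the inverse evaluation of $X_t$ at the label $X_0(e^{iu})$. So any continuous angular lift of $t\mapsto X_t^{-1}(X_0(e^{iu}))$ started at $u$ is a lift $\theta^Z$ with $\theta^Z_0=u$.

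First, build the jointly continuous lift $\widetilde\Psi$. Take the jointly continuous degree-one lift $\widetilde q$ of $X_t^{-1}$ from the proof of Lemma~\ref{lem:all-label-noise}, and a continuous increasing lift $\widetilde X_0$ of $X_0$ normalized so that $\widetilde q_0\circ\widetilde X_0=\id$. Then put $\widetilde\Psi_t(u)=\widetilde q_t(\widetilde X_0(u))$. This is jointly continuous, commutes with $u\mapsto u+2\pi$, lifts $\Psi_t$, and satisfies $\widetilde\Psi_0=\id$.

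Second, use the deterministic form of the field. On the event $\Omega^0=\bigcap_T\Omega_T^0$ from that proof, the jointly continuous version is $I^z_t=\widehat M^z_t=\widetilde q_t(v)-\widetilde q_0(v)-\int_0^tb_s(\widetilde q_s(v))\,ds$ for $z=e^{iv}$, independently of the representative $v$. This expression is defined pathwise for every $z$ simultaneously, not just for deterministic labels. Evaluate it at $z=X_0(e^{iu})$ with representative $v=\widetilde X_0(u)$. Then $\widetilde q_0(v)=u$ and $\widetilde q_s(v)=\widetilde\Psi_s(u)$, so rearranging gives \eqref{eq:relative-common-Fourier-SDE} for all $t\geq0$ and all $u\in\R$ at once, on the single event $\Omega^0$.

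Third, reconcile this with the stochastic-integral meaning of $I^{X_0(e^{iu})}$. For each fixed $u$, $Z=X_0(e^{iu})$ is $\mathcal G_0$-measurable. The last part of Lemma~\ref{lem:all-label-noise} therefore shows that the jointly continuous version evaluated at $Z$ equals the $L^2(\mathbb P;C([0,T]))$ limit of the Fourier partial sums $I_N^Z$ almost surely. This gives the identification for countably many $u$ (say rational) on one event. Joint continuity of both sides in $u$ (via $\widetilde X_0$ and the continuity of $(t,z)\mapsto\widehat M^z_t$) extends it to all $u$.

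The main obstacle is this third step. One must make sure the pathwise object is simultaneously the Fourier stochastic integral along the random label for every $u$, not merely almost surely for each fixed $u$. I would handle it exactly by the countable-dense-plus-continuity argument above, using the modulus $\omega_B$ to control the drift term uniformly.
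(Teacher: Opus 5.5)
Your proposal is correct, but its core, your second step, is a more direct argument than the paper's. On $\Omega^0=\bigcap_T\Omega_T^0$ the jointly continuous field of Lemma~\ref{lem:all-label-noise} is, by construction, $\widehat M^z_t=\widetilde q_t(v)-\widetilde q_0(v)-\int_0^tb_s(\widetilde q_s(v))\,ds$ for every $z=e^{iv}$. Evaluating at $v=\widetilde X_0(u)$ with $\widetilde q_0\circ\widetilde X_0=\id$ therefore makes \eqref{eq:relative-common-Fourier-SDE} a pathwise identity for all $(t,u)$ at once, with no density argument.

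The paper instead uses Lemma~\ref{lem:all-label-noise} only through its statement. It applies the random-label conclusion at $Z_u=X_0(e^{iu})$ for rational $u$, with the lift $\theta^{Z_u}$ started at $u$, which is $\widetilde\Psi(u)$. It then extends to all $u$ using continuity of $u\mapsto X_0(e^{iu})$, joint continuity of $I$, and $2\pi$-equivariance. Your route instead relies on the internal form of the version chosen in that proof. This is harmless, and worth saying explicitly: any jointly continuous version agrees almost surely with $\widehat M$ at the countably many dense deterministic labels, and therefore agrees with $\widehat M$ everywhere on one full event.

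Your third step is not needed, and the obstacle you name there is not a real one. The proposition concerns the jointly continuous field. Its agreement with the $L^2(\mathbb P;C([0,T]))$ limit of $I_N^Z$ at each fixed $\mathcal G_0$-measurable label is already part of Lemma~\ref{lem:all-label-noise}. An identification with the Fourier stochastic integrals ``simultaneously for every $u$'' has no meaning beyond countably many $u$, because those limits are defined only up to a null set for each fixed label. The jointly continuous version is the only simultaneous object, and your second step already handles it. If you read your third step instead as extending the identity itself from rational $u$ by continuity, it is exactly the paper's proof.
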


\begin{proof}
Intersect the probability-one event on which $(t,z)\mapsto I_t^z$ is jointly
continuous with the countably many random-label events corresponding
to $T\in\mathbb N$ and rational angles $u\in2\pi\mathbb Q$.
For
\[
 Z_u=X_0(e^{iu}),
\]
choose the angular lift with $\theta_0^{Z_u}=u$.  Then
\begin{align*}
 \theta_t^{Z_u}&=\widetilde\Psi_t(u),\\
 \shortintertext{and Lemma~\ref{lem:all-label-noise} gives}
 \widetilde\Psi_t(u)
 &=u+I_t^{Z_u}+\int_0^t b_s(\widetilde\Psi_s(u))\,ds
\end{align*}
simultaneously for rational $u$ and all $t\geq0$ on this event.
The map $u\mapsto X_0(e^{iu})$ is continuous, the field $I$ is jointly
continuous, and the two sides are $2\pi$-equivariant.  Density of
$2\pi\mathbb Q$ therefore extends the identity simultaneously to every
$u\in\mathbb R$, proving \eqref{eq:relative-common-Fourier-SDE}.
\end{proof}

\subsection{Smooth approximations}

Keep the adapted field $b_t$ fixed, and extend all circle fields
$2\pi$-periodically to $\mathbb R$. In pointwise formulas below,
$\Psi_t(\theta)$ and $\Psi_t^r(\theta)$ denote compatible degree-one
lifts; when the metric $d$ is used, the same symbols denote the induced
circle maps. Equation~\eqref{eq:relative-common-Fourier-SDE} shows that,
for every initial angle $\theta\in\mathbb R$, the lifted trajectory solves
the one-point equation
\begin{equation}\label{eq:fixed-environment-SDE}
 d\theta_t=\sqrt2\sum_{n\geq2,\,\tau\in\{c,s\}}
 e_{n,\tau}(\theta_t)\,dW_t^{n,\tau}+b_t(\theta_t)\,dt.
\end{equation}
Write
$\sigma(x)=(\sqrt2e_{n,\tau}(x))_{n\geq2,\,\tau\in\{c,s\}}$.
Pathwise uniqueness below compares
solutions using the same field $b_t$, the same Brownian motions, and
the same initial value. For $0<r<1$, let
$\Psi^r$ solve
\begin{equation}\label{eq:regularized-circle-flow}
 d\Psi_t^r(\theta)=\sqrt2\sum_{n\geq2,\,\tau\in\{c,s\}}r^n
       e_{n,\tau}(\Psi_t^r(\theta))\,dW_t^{n,\tau}
       +(P_rb_t)(\Psi_t^r(\theta))\,dt,
 \qquad \Psi_0^r=\id.
\end{equation}
Fix the increasing concave Osgood function
\begin{equation}\label{eq:Osgood-function}
 \omega(0)=0,\qquad
 \omega(u)=
 \begin{cases}
  u\log(e^2/u),&0<u\leq1,\\
  u+1,&u\geq1.
 \end{cases}
\end{equation}
Let $\delta=d_\T(x,y)$.  For $0<\delta\leq1$, the Fourier series for
$K$ and $1-\cos u\leq\min(2,u^2/2)$ give
\begin{align*}
 K(0)-K(x-y)
 &\leq C\delta^2\sum_{2\leq n\leq\delta^{-1}}\frac1n
      +C\sum_{n>\delta^{-1}}\frac1{n^3}\\
 &\leq C\delta^2\log(e/\delta),
\end{align*}
while boundedness of $K$ treats the remaining separations.  If
$|x-y|\leq1$, then $\delta=|x-y|$; if $|x-y|>1$, boundedness gives a
constant upper bound.  Thus, with $u=|x-y|^2$,
\begin{equation}\label{eq:noise-Osgood}
 \|\sigma(x)-\sigma(y)\|_{\ell^2}^2
 =4\{K(0)-K(x-y)\}\leq C\omega(u).
\end{equation}
For the drift, Proposition~\ref{prop:uniform-drift} gives, when
$|x-y|\leq1$,
\[
 2|x-y|\,|b_t(x)-b_t(y)|
 \leq C_\kappa |x-y|^2\log(e/|x-y|)
 \leq C_\kappa\omega(u),
\]
while for $|x-y|>1$ the uniform drift bound gives
$2|x-y|\,|b_t(x)-b_t(y)|\leq C_\kappa(u+1)$.  Consequently,
\begin{equation}\label{eq:drift-Osgood}
 2(x-y)\{b_t(x)-b_t(y)\}
 \leq C_\kappa\omega(|x-y|^2)
\end{equation}
uniformly in $t,x,y$.
Moreover,
\[
 \int_{0+}\frac{du}{\omega(u)}=\infty.
\]
If $\theta,\widetilde\theta$ solve
\eqref{eq:fixed-environment-SDE} with the same noise and initial value,
then It\^o's formula and Jensen's inequality give
\[
 m(t):=\mathbb E|\theta_t-\widetilde\theta_t|^2
 \leq C_\kappa\int_0^t\omega(m(s))\,ds.
\]
The Osgood criterion gives $m=0$, hence pathwise uniqueness. For
$r<1$ and $k\geq0$,
\begin{equation}\label{eq:regularized-smooth-coefficients}
 \sum_{n,\tau}\left\|
  \partial_x^k(\sqrt2r^ne_{n,\tau})\right\|_\infty^2
 \leq C_k\sum_{n\geq2}r^{2n}n^{2k}\alpha_n^2<\infty,
 \qquad
 \sup_t\|\partial_x^kP_rb_t\|_\infty\leq C_{r,k}C_\kappa.
\end{equation}
The smooth stochastic-flow theorem \cite{Kunita} therefore gives a
unique strong smooth flow of orientation-preserving circle
diffeomorphisms for \eqref{eq:regularized-circle-flow}.
\begin{lemma}\label{lem:one-point-Poisson-stability}
For $T<\infty$ and every initial angle $\theta\in\mathbb R$,
\begin{equation}\label{eq:one-point-Poisson-stability}
 \mathbb E\sup_{t\leq T}
 |\Psi_t^r(\theta)-\Psi_t(\theta)|^2
 \longrightarrow0\qquad(r\uparrow1).
\end{equation}
\end{lemma}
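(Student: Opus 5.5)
Fix $\theta$ and write $\Delta_t=\Psi_t^r(\theta)-\Psi_t(\theta)$, a difference of lifts. Both processes start at $\theta$ and are driven by the same Brownian motions $W^{n,\tau}$ and the same adapted field $b_t$. By Proposition~\ref{prop:single-noise}, $\Psi_t(\theta)$ solves \eqref{eq:fixed-environment-SDE}, and $\Psi^r_t(\theta)$ solves \eqref{eq:regularized-circle-flow}. I would apply It\^o's formula to $|\Delta_t|^2$ and split each coefficient difference into a Lipschitz-type part evaluated at the two positions and a regularization error evaluated at a single position:
\[
 r^n e_{n,\tau}(\Psi^r)-e_{n,\tau}(\Psi)
 =\{e_{n,\tau}(\Psi^r)-e_{n,\tau}(\Psi)\}-(1-r^n)e_{n,\tau}(\Psi^r),
\]
\[
 (P_rb_t)(\Psi^r)-b_t(\Psi)
 =\{b_t(\Psi^r)-b_t(\Psi)\}+\{(P_rb_t)-b_t\}(\Psi^r).
\]
The stochastic integral is a true martingale because all integrands are bounded, since $\sum\alpha_n^2<\infty$. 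Using $\|x+y\|^2\le 2\|x\|^2+2\|y\|^2$ for the quadratic variation term, then \eqref{eq:noise-Osgood} and \eqref{eq:drift-Osgood}, and Young's inequality for the cross term with the drift error, I expect
\[
 \frac{d}{dt}\mathbb E|\Delta_t|^2
 \le C_\kappa\,\mathbb E\,\omega(|\Delta_t|^2)+\mathbb E|\Delta_t|^2
 +\varepsilon_r,
\]
\[
 \varepsilon_r
 =4\sum_{n\ge2}(1-r^n)^2\alpha_n^2
 +\mathbb E\int_0^T\|P_rb_s-b_s\|_\infty^2\,ds\Big/T .
\]
Concavity of $\omega$ and Jensen then give $m_r(t)\le\varepsilon_rT+C\int_0^t\omega(m_r(s))\,ds$, where $\omega(u)+u\le C\omega(u)$.

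The next step is to show $\varepsilon_r\to0$. The noise term vanishes by dominated convergence in $n$. For the drift, Proposition~\ref{prop:uniform-drift} gives a uniform Osgood modulus for $b_t(\cdot)$, uniformly in $t$ and $h$. The Poisson kernel is a positive approximate identity, so
\[
 \|P_rb_t-b_t\|_\infty\le\int P_r(\phi)\,\omega_B(|\phi|)\,d\phi/(2\pi)=:\eta(r)\to0,
\]
deterministically and uniformly in $t$. Hence $\varepsilon_r\le 4\sum(1-r^n)^2\alpha_n^2+\eta(r)^2\to0$.

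The main obstacle is the Osgood–Bihari step, since there is no Gronwall constant to absorb. I would use the comparison lemma in the following form. Let $G(x)=\int_1^x du/\omega(u)$. Then $m_r(t)\le G^{-1}\bigl(G(\varepsilon_rT)+Ct\bigr)$, and because $G(0+)=-\infty$ this bound tends to $0$ uniformly on $[0,T]$ as $\varepsilon_r\to0$. This is the same mechanism as in the pathwise uniqueness argument above.

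Finally, to upgrade from $\sup_t\mathbb E$ to $\mathbb E\sup_t$, I would return to the It\^o decomposition. Doob/BDG bounds the martingale part by $C\mathbb E\int_0^T(\omega(|\Delta_s|^2)+\varepsilon_r)\,ds$. The drift part is bounded by $\int_0^T(C\omega(|\Delta_s|^2)+|\Delta_s|^2+\eta(r)^2)\,ds$. Concavity and Jensen reduce both to $\int_0^T\omega(m_r(s))\,ds$, which tends to $0$. This yields \eqref{eq:one-point-Poisson-stability}.

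A side point needs care: $\omega$ is only concave, and the drift estimate \eqref{eq:drift-Osgood} holds for lifts at all separations. This is why \eqref{eq:Osgood-function} is defined with linear growth beyond $1$, and Jensen applies in that form.
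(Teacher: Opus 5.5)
Your proposal is correct and follows essentially the same route as the paper: It\^o's formula for $|\Delta_t|^2$, with the coefficient differences split into the Osgood parts \eqref{eq:noise-Osgood}--\eqref{eq:drift-Osgood} plus a deterministic regularization error $\varepsilon_r\to0$ (the Poisson approximate identity against the uniform drift modulus), Jensen via concavity of $\omega$, an Osgood--Bihari comparison (which the paper makes explicit as $\sup_{t\le T}m_r(t)\le C_{\kappa,T}\varepsilon_r^{\exp(-C_\kappa T)}$), and BDG for the time supremum. Two details need tightening. First, $\Delta_s$ itself is not bounded, so the martingale property comes from its finite second moments (or from the localization at $\tau_R$ used in the paper), not from bounded integrands. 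Second, BDG with Young gives $\tfrac12\mathbb E\sup_{t\le T}|\Delta_t|^2$ in addition to your integral, and this term must be absorbed into the left side; alternatively, use $\|\delta\sigma_s\|_{\ell^2}\le C$ to bound the martingale term by $C\bigl(\int_0^Tm_r(s)\,ds\bigr)^{1/2}$.
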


\begin{proof}
For the regularizations put $\sigma_r(x)=(\sqrt2r^ne_{n,\tau}(x))_{n,\tau}$
and define the deterministic error
\[
 \varepsilon_r=
 \sup_{h\in X}\|P_rB(h,\cdot)-B(h,\cdot)\|_\infty^2
       +2\sum_{n\geq2}(1-r^n)^2\alpha_n^2.
\]
Let $p_r$ be the circular Poisson kernel. For every
$0<\delta<1$,
\[
 \begin{aligned}
 \sup_{h\in X}\|P_rB(h,\cdot)-B(h,\cdot)\|_\infty
 &\leq C_\kappa\delta\log(e/\delta)\\
 &\quad+2C_\kappa
 \int_{d_{\T}(s,0)>\delta}p_r(s)\,\frac{ds}{2\pi}.
 \end{aligned}
\]
For each fixed $\delta>0$,
\[
 \int_{d_\T(s,0)>\delta}p_r(s)\,\frac{ds}{2\pi}
 \longrightarrow0\qquad(r\uparrow1),
\]
so
\begin{align*}
 \limsup_{r\uparrow1}
 \sup_{h\in X}\|P_rB(h,\cdot)-B(h,\cdot)\|_\infty
 &\leq C_\kappa\delta\log(e/\delta),\\
 \shortintertext{and then, letting $\delta\downarrow0$,}
 \sup_{h\in X}\|P_rB(h,\cdot)-B(h,\cdot)\|_\infty
 &\longrightarrow0\qquad(r\uparrow1).
\end{align*}
Moreover, dominated convergence gives
\[
 \sum_{n\geq2}(1-r^n)^2\alpha_n^2\longrightarrow0,
 \qquad r\uparrow1.
\]
Thus $\varepsilon_r\to0$ as $r\uparrow1$.
Let $\theta^r$ and $\theta$ start at the same point and use the same Brownian
motions. Put
\[
 \Delta_t=\theta_t^r-\theta_t,\qquad
 \delta\sigma_t=\sigma_r(\theta_t^r)-\sigma(\theta_t),\qquad
 \delta b_t=(P_rb_t)(\theta_t^r)-b_t(\theta_t).
\]
Equations~\eqref{eq:noise-Osgood}--\eqref{eq:drift-Osgood} and
Young's inequality give, for $x,y\in\mathbb R$,
\begin{align}
 \|\sigma_r(x)-\sigma(y)\|_{\ell^2}^2
 &\leq C\omega(|x-y|^2)+C\varepsilon_r,\notag\\
 2(x-y)\{(P_rb_t)(x)-b_t(y)\}
 +\|\sigma_r(x)-\sigma(y)\|_{\ell^2}^2
 &\leq C_\kappa\omega(|x-y|^2)+C_\kappa\varepsilon_r.
 \label{eq:regularized-coefficient-Osgood}
\end{align}
Apply It\^o's formula up to
$\tau_R=\inf\{t:|\Delta_t|\geq R\}$. Boundedness of the coefficients
permits $R\to\infty$ and gives
\begin{equation}\label{eq:regularized-difference-Ito}
 |\Delta_t|^2
 =2\int_0^t\Delta_s
       \langle\delta\sigma_s,dW_s\rangle_{\ell^2}
  +\int_0^t\{2\Delta_s\delta b_s
                 +\|\delta\sigma_s\|_{\ell^2}^2\}\,ds.
\end{equation}
For fixed $T<\infty$, expectation, concavity of $\omega$, and
\eqref{eq:regularized-coefficient-Osgood} yield
\begin{equation}\label{eq:regularized-stability}
 m_r(t):=\mathbb E|\theta_t^r-\theta_t|^2
 \leq C_\kappa\int_0^t\omega(m_r(s))\,ds+C_\kappa T\varepsilon_r,
 \qquad 0\leq t\leq T.
\end{equation}
Put $c_r=C_\kappa T\varepsilon_r$, and let
\[
 y_r(t)=c_r+C_\kappa\int_0^t\omega(y_r(s))\,ds.
\]
Comparison in \eqref{eq:regularized-stability} gives
\[
 m_r(t)\leq y_r(t),\qquad
 \int_{c_r}^{y_r(t)}\frac{du}{\omega(u)}=C_\kappa t.
\]
Since $\omega(u)=u\log(e^2/u)$ on $(0,1]$, for $r$ sufficiently close
to one,
\[
 \sup_{t\leq T}m_r(t)
 \leq C_{\kappa,T}\varepsilon_r^{\exp(-C_\kappa T)}
 \longrightarrow0\qquad(r\uparrow1).
\]

To pass to the time supremum, apply BDG and Young's inequality to the
martingale in \eqref{eq:regularized-difference-Ito}:
\[
 \begin{aligned}
 &\mathbb E\sup_{t\leq T}
   \left|2\int_0^t\Delta_s
                   \langle\delta\sigma_s,dW_s\rangle_{\ell^2}\right|\\
 &\qquad\leq C\mathbb E
       \left(\sup_{t\leq T}|\Delta_t|^2
                    \int_0^T\|\delta\sigma_s\|_{\ell^2}^2\,ds\right)^{1/2}\\
 &\qquad\leq\tfrac12\mathbb E\sup_{t\leq T}|\Delta_t|^2
       +C\int_0^T\{\omega(m_r(s))+\varepsilon_r\}\,ds.
 \end{aligned}
\]
Together with \eqref{eq:regularized-coefficient-Osgood}, this gives
\[
 \mathbb E\sup_{t\leq T}|\theta_t^r-\theta_t|^2
 \leq C_{\kappa,T}\left(\varepsilon_r+
                     \int_0^T\omega(m_r(s))\,ds\right)
 \longrightarrow0\qquad(r\uparrow1).
\]
\end{proof}

\begin{lemma}\label{lem:Poisson-stability}
For every $T<\infty$,
\begin{equation}\label{eq:Poisson-flow-convergence}
 \sup_{0\leq t\leq T}d(\Psi_t^r,\Psi_t)
 \longrightarrow0
 \quad\text{in probability as }r\uparrow1.
\end{equation}
\end{lemma}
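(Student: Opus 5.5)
The plan is to upgrade the one-point convergence of Lemma~\ref{lem:one-point-Poisson-stability} to uniform convergence in $(t,\theta)$, and then to convergence of inverses, using monotonicity of circle maps. Both $\Psi_t^r$ and $\Psi_t$ have increasing degree-one lifts satisfying $\Psi(\theta+2\pi)=\Psi(\theta)+2\pi$. The metric $d$ requires control of both $\sup_\theta|\Psi_t^r(\theta)-\Psi_t(\theta)|$ and the corresponding quantity for the inverses, and I treat these in turn.

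\emph{Forward maps.} Fix $\varepsilon>0$ and a finite grid $\theta_k=2\pi k/M$, $0\le k\le M$. By Lemma~\ref{lem:one-point-Poisson-stability} and a union bound over the finitely many grid points, $\max_k\sup_{t\le T}|\Psi_t^r(\theta_k)-\Psi_t(\theta_k)|\to0$ in probability. For $\theta\in[\theta_k,\theta_{k+1}]$, monotonicity of both lifts gives
\[
 \Psi_t^r(\theta_k)-\Psi_t(\theta_{k+1})\le\Psi_t^r(\theta)-\Psi_t(\theta)\le\Psi_t^r(\theta_{k+1})-\Psi_t(\theta_k).
\]
Hence
\[
 \sup_\theta|\Psi_t^r-\Psi_t|\le\max_k|\Psi_t^r(\theta_k)-\Psi_t(\theta_k)|+\max_k\bigl(\Psi_t(\theta_{k+1})-\Psi_t(\theta_k)\bigr).
\]
The last term is controlled by the modulus of continuity of the jointly continuous lift $(t,u)\mapsto\widetilde\Psi_t(u)$ on the compact set $[0,T]\times[0,2\pi]$ (Proposition~\ref{prop:single-noise}). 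It is therefore almost surely small once $M$ is large, uniformly in $t\le T$. Choosing $M$ first so that this term is below $\varepsilon$ with probability at least $1-\varepsilon$, and then letting $r\uparrow1$, gives the forward half.

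\emph{Inverses.} This is the main obstacle, because monotonicity alone does not convert uniform closeness of maps into closeness of their inverses unless the limit is uniformly strictly increasing. I use the elementary bound
\[
 \|(\Psi^r)^{-1}-\Psi^{-1}\|_\infty\le\omega_{\Psi^{-1}}\bigl(\|\Psi^r-\Psi\|_\infty\bigr),
\]
which is the same estimate as \eqref{eq:inverse-homeomorphism-convergence}. It follows by writing $y=\Psi^r(x)$, so that $(\Psi^r)^{-1}(y)-\Psi^{-1}(y)=\Psi^{-1}(\Psi(x))-\Psi^{-1}(\Psi^r(x))$. It therefore suffices to control the modulus of continuity of $\Psi_t^{-1}$ uniformly in $t\le T$.

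Now $\Psi_t^{-1}=X_0^{-1}\circ X_t$, and $t\mapsto X_t$ is continuous in $(X,d)$, so $t\mapsto\Psi_t^{-1}$ is continuous in the uniform topology. A continuous image of $[0,T]$ is compact in $C(S^1,S^1)$, hence equicontinuous by Arzel\`a--Ascoli. This gives a random modulus $\bar\omega$ with $\omega_{\Psi_t^{-1}}\le\bar\omega$ for all $t\le T$, almost surely.

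\emph{Conclusion.} Combining the two halves,
\[
 \sup_{t\le T}d(\Psi_t^r,\Psi_t)\le C\Bigl(\sup_{t\le T}\|\Psi_t^r-\Psi_t\|_\infty+\bar\omega\bigl(\sup_{t\le T}\|\Psi_t^r-\Psi_t\|_\infty\bigr)\Bigr).
\]
The constant $C$ converts angular distance to chordal distance. Since $\bar\omega$ is an almost surely finite random modulus and the argument tends to zero in probability, the left side tends to zero in probability.
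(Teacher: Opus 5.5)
Your proposal is correct and follows the paper's argument: a finite grid plus monotonicity of the lifts for the forward maps, then the bound $\|(\Psi^r)^{-1}-\Psi^{-1}\|_\infty\leq\omega_{\Psi^{-1}}(\|\Psi^r-\Psi\|_\infty)$ with a modulus for the inverses that is uniform in $t\leq T$, and finally a split of the probability. The only minor difference is how you get that uniform modulus: you use $\Psi_t^{-1}=X_0^{-1}\circ X_t$ and the $d$-continuity of $t\mapsto X_t$ to obtain a compact family in $C(S^1,S^1)$, whereas the paper deduces joint continuity of $(t,y)\mapsto\Psi_t^{-1}(y)$ from that of $\Psi$ by a subsequence argument.
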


\begin{proof}
For a grid of mesh $2\pi/m$, monotonicity of compatible lifts gives
\[
 \sup_{t\leq T,\theta}|\Psi_t^r(\theta)-\Psi_t(\theta)|
 \leq\max_{0\leq j<m}\sup_{t\leq T}
 |\Psi_t^r(2\pi j/m)-\Psi_t(2\pi j/m)|
       +\omega_{\Psi,T}(2\pi/m),
\]
where, for the compatible real lift used in this estimate,
\[
 \omega_{\Psi,T}(\delta)
 =\sup_{0\leq t\leq T}
   \sup_{\substack{x,y\in\mathbb R\\|x-y|\leq\delta}}
      |\Psi_t(x)-\Psi_t(y)|.
\]
Degree-one equivariance and joint continuity give
$\omega_{\Psi,T}(\delta)\to0$ almost surely as $\delta\downarrow0$:
for $\delta\leq1$ the supremum may be restricted to
$x\in[0,2\pi]$ and $y\in[-1,2\pi+1]$.
For fixed $m$, Lemma~\ref{lem:one-point-Poisson-stability} makes the
finite maximum tend to zero in probability as $r\uparrow1$. Hence, for
every $\varepsilon>0$,
\[
 \limsup_{r\uparrow1}\mathbb P\left(
      \sup_{t\leq T,\theta}|\Psi_t^r(\theta)-\Psi_t(\theta)|>\varepsilon\right)
 \leq\mathbb P\left(\omega_{\Psi,T}(2\pi/m)>\varepsilon/2\right)
 \longrightarrow0\quad(m\to\infty).
\]
Put
\[
 \begin{gathered}
 d_r(T)=\sup_{t\leq T}\sup_{\theta\in\T}
       d_\T(\Psi_t^r(\theta),\Psi_t(\theta)),\\
 \omega_{\Psi^{-1},T}(\delta)
 =\sup_{t\leq T}\sup_{d_\T(x,y)\leq\delta}
       d_\T(\Psi_t^{-1}(x),\Psi_t^{-1}(y)).
 \end{gathered}
\]
The inverse family is jointly continuous. Indeed, if
$(t_n,y_n)\to(t,y)$ and $x_n=\Psi_{t_n}^{-1}(y_n)$, every subsequence
with $x_{n_k}\to x$ satisfies
\[
 \Psi_t(x)=\lim_{k\to\infty}\Psi_{t_{n_k}}(x_{n_k})=y,
 \qquad x=\Psi_t^{-1}(y).
\]
Compactness of $\T$ therefore gives
\[
 \omega_{\Psi^{-1},T}(\delta)\downarrow0
 \quad\text{almost surely as }\delta\downarrow0.
\]
Then
\[
 \sup_{t\leq T}\sup_{y\in\T}
 d_\T((\Psi_t^r)^{-1}(y),\Psi_t^{-1}(y))
 \leq\omega_{\Psi^{-1},T}(d_r(T)).
\]
For $\delta,\eta>0$,
\[
 \begin{aligned}
 &\mathbb P\left(
 \sup_{t\leq T}\sup_{y\in\T}
 d_\T((\Psi_t^r)^{-1}(y),\Psi_t^{-1}(y))>\eta
 \right)\\
 &\qquad\leq\mathbb P(d_r(T)>\delta)
 +\mathbb P\bigl(\omega_{\Psi^{-1},T}(\delta)>\eta\bigr).
 \end{aligned}
\]
Consequently,
\[
 \limsup_{r\uparrow1}\mathbb P\left(
 \sup_{t\leq T}\sup_{y\in\T}
 d_\T((\Psi_t^r)^{-1}(y),\Psi_t^{-1}(y))>\eta
 \right)
 \leq\mathbb P\bigl(\omega_{\Psi^{-1},T}(\delta)>\eta\bigr)
 \longrightarrow0\quad(\delta\downarrow0).
\]
Since $|e^{ix}-e^{iy}|\leq d_\T(x,y)$, the forward and inverse
angular estimates together prove convergence in probability in the
metric $d$.
\end{proof}

\section{Planar motion and two-point estimates}
\label{sec:welding-support}

\subsection{The normalized welding space}

Let
\[
 \mathbb D=\{z\in\mathbb C:|z|<1\},\qquad
 \mathbb D^*=\{z:|z|>1\}\cup\{\infty\}.
\]
Boundary distances $|\zeta-\xi|$ are chord distances;
angular distances are still denoted by $d_\T$.
Consider triples $(h,f,g)$ with $h\in X$, where
$f:\overline{\mathbb D}\to\mathbb C$ and
$g:\overline{\mathbb D^*}\to\widehat{\mathbb C}$ are continuous and
injective, and conformal in the interiors. We use the exterior
normalization
\[
 g(z)=z+u(1/z),
\]
where $u$ is holomorphic in $\mathbb D$, continuous on
$\overline{\mathbb D}$, and $u(0)=0$.  The geometric state space is
then
\begin{equation}\label{eq:Jordan-space}
 Y=\{(h,f,g):h\in X,\ f=g\circ h\text{ on }\T\}.
\end{equation}
Here $f$ and $g$ are always understood with the regularity and
normalization just specified.  We give $Y$ the topology of $d$ together
with uniform convergence of $f$ and $u$; the normalization at infinity
removes the common affine freedom.

Let $\prX(h,f,g)=h$, and retain the removable welding locus
$W_{\rm rem}$ and the full-measure set $X_{\rm nf}$ from
Section~\ref{sec:stress-drift}. To fix the normalization of the lift,
let $(f_h^\circ,g_h^\circ)$ be the intrinsic pair supplied by the
normalized-frame construction in \cite{Response}, and write
\[
 g_h^\circ(z)=a_\infty(h)z+b_\infty(h)+O(1/z),
 \qquad a_\infty(h)\ne0.
\]
For $h\in X_{\rm nf}$, set
\[
 f_h=\frac{f_h^\circ-b_\infty(h)}{a_\infty(h)},\qquad
 g_h=\frac{g_h^\circ-b_\infty(h)}{a_\infty(h)}.
\]
This common affine change preserves $f_h=g_h\circ h$ on $\T$ and
gives $g_h(z)=z+O(1/z)$. Define $\iota(h)=(h,f_h,g_h)$ for $h\in X_{\rm nf}$ and extend $\iota$
measurably across $X\setminus X_{\rm nf}$, as in the Introduction.  Then
$\prX\circ\iota=\id$ on $X_{\rm nf}$.

Set $\nu^Y=\iota_*\nu$, and define $UF=F\circ\prX$ for
$F\in L^2(\nu)$. The projection identity on $X_{\rm nf}$ gives
$(\prX)_*\nu^Y=\nu$, so $U$ is well defined and injective on
$L^2$ equivalence classes. Put $\Dcal^Y=U\Dcal$ and define
\begin{equation}\label{eq:Jordan-transferred-form}
 \E^Y(UF,UG)=\E(F,G),\qquad F,G\in\Dcal.
\end{equation}
Write $\|\cdot\|_{\E^Y_1}$ for the corresponding form norm.

\begin{lemma}\label{lem:Polish-Jordan-triples}
The specified topology on $Y$ admits a complete separable compatible
metric. The lift $\iota:X\to Y$ is measurable, and
$U:L^2(\nu)\to L^2(\nu^Y)$ is unitary.
\end{lemma}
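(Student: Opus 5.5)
The plan is to realize $Y$ as a closed subset of a product of complete separable metric spaces. Consider the embedding
\[
 j:Y\longrightarrow X\times C(\overline{\mathbb D},\mathbb C)\times C(\overline{\mathbb D},\mathbb C),
 \qquad j(h,f,g)=(h,f,u),
\]
where $g(z)=z+u(1/z)$. By Lemma~\ref{lem:state-space-core}, $X$ is complete and separable under $d$. The spaces $C(\overline{\mathbb D},\mathbb C)$ with the uniform norm are separable Banach spaces, by polynomial density. By definition of the topology on $Y$, $j$ is a homeomorphism onto its image, so the product metric restricted to $j(Y)$ is compatible. What remains is to show that $j(Y)$ is a $G_\delta$ in the product; Alexandrov's theorem then supplies a complete compatible metric, and separability is inherited by subspaces.

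The main obstacle is the closedness or $G_\delta$ property, because uniform limits of injective conformal maps may degenerate. The conditions to encode are:
\begin{itemize}
\item $u$ is holomorphic in $\mathbb D$ with $u(0)=0$;
\item $f$ is holomorphic in $\mathbb D$;
\item $f=g\circ h$ on $\T$;
\item $f$ is injective on $\overline{\mathbb D}$ and $g$ is injective on $\overline{\mathbb D^*}$.
\end{itemize}
The first three are closed conditions under uniform convergence; for the third, use uniform convergence of $u$ and $f$ together with uniform convergence of $h$ and equicontinuity of the limit. Injectivity is the delicate part.

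I would write injectivity of $f$ as the countable intersection over $k$ of the open sets
\[
 \{\,|f(z)-f(w)|>0 \text{ whenever } |z-w|\geq1/k\,\},
\]
which are open by compactness of the set $\{|z-w|\geq1/k\}$. Injectivity of $g$ on $\overline{\mathbb D^*}$ is handled the same way. On compact pieces $\{1\leq|z|\leq R\}$ the same open conditions apply, while near $\infty$ injectivity is automatic from the normalization $g(z)=z+u(1/z)$ with $u$ bounded. To make this precise, note that for $|z|,|w|$ large and bounded $u$, a Cauchy estimate on $u'$ near $0$ gives $|g(z)-g(w)|\geq|z-w|/2$. This also handles the pairs with one point near $\infty$. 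Conformality in the interior follows from holomorphy plus injectivity, so $j(Y)$ is $G_\delta$ and hence Polish.

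For measurability of $\iota$, I would use that on $X_{\rm nf}$ the coefficient maps of $f_h^\circ$ and $g_h^\circ$ are measurable by \cite{Response}. Hence $a_\infty$, $b_\infty$ and the affine normalization are measurable, and $h\mapsto(h,f_h,u_h)$ is measurable into the product. Two points need care here.
\begin{itemize}
\item The boundary extensions $f_h|_{\T}$ are uniform limits of $f_h(r\,\cdot)$ along $r_k=1-2^{-k}$. Evaluations at points of a countable dense set therefore determine the Borel structure of $C(\overline{\mathbb D})$, which is generated by point evaluations since the space is separable.
\item On the complement of $X_{\rm nf}$, $\iota$ is an arbitrary measurable extension, as stipulated.
\end{itemize}

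For unitarity of $U$, the identity $\prX\circ\iota=\id$ on $X_{\rm nf}$ with $\nu(X_{\rm nf})=1$ gives $(\prX)_*\nu^Y=\nu$. Since $\prX$ is continuous, $U$ is isometric. For surjectivity, take $\Phi\in L^2(\nu^Y)$. Because $\nu^Y$ is concentrated on $\iota(X_{\rm nf})$, on which $\iota\circ\prX=\id$, we have $\Phi=\Phi\circ\iota\circ\prX=U(\Phi\circ\iota)$ $\nu^Y$-a.e., and $\Phi\circ\iota\in L^2(\nu)$ by the image-measure formula. One remaining check is measurability of $\iota(X_{\rm nf})$ in $Y$. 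This follows from Lusin--Souslin, since $\iota$ is injective on $X_{\rm nf}$ with Borel inverse $\prX$; alternatively, it suffices to work with $\nu^Y$-completions.
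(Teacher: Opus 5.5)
Your proposal is correct and follows essentially the same route as the paper. You realize $Y$ as a $G_\delta$ in the product of $X$ with two function spaces, using the compact injectivity criterion and the closed matching relation. You obtain measurability from the measurable conformal pair of \cite{Response}, the affine normalization, and countably many boundary evaluations, and you get unitarity from $(\prX)_*\nu^Y=\nu$ with preimage $G\circ\iota$. The differences are cosmetic. The paper treats $g$ as a map from the compact $\overline{\mathbb D^*}$ into $\widehat{\mathbb C}$ with the spherical metric, so your Cauchy estimate near $\infty$ is unnecessary; intersecting over all annuli $\{1\le|z|\le R\}$ would also suffice. Surjectivity of $U$ follows directly from the change of variables $\nu^Y=\iota_*\nu$, with no need to check measurability of $\iota(X_{\rm nf})$.
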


\begin{proof}
For a compact metric space $(S,d_S)$ and a metric space $(M,d_M)$,
the continuous injective maps from $S$ to $M$ form the set
\[
 \bigcap_{m\geq1}\bigcup_{\ell\geq1}
 \left\{\phi\in C(S,M):
   \inf_{d_S(x,y)\geq1/m}d_M(\phi(x),\phi(y))>\frac1\ell\right\}.
\]
Every set in braces is open in the uniform topology, so this set
is a $G_\delta$. Use $(h,f,u)$ as coordinates in the product of $X$
and two disk algebras, subject to $u(0)=0$. This ambient space admits
a complete separable metric. The assignment $u\mapsto g$ is continuous
for uniform spherical convergence on $\overline{\mathbb D^*}$.
Applying the displayed criterion to $f$ and $g$ therefore makes both
injectivity conditions $G_\delta$ conditions in these coordinates.
To check the matching relation, put
\[
 \omega_g(\delta)=
 \sup\{|g(z)-g(w)|:z,w\in\T,\ |z-w|\leq\delta\}.
\]
If $(h_j,f_j,g_j)\to(h,f,g)$ and $f_j=g_j\circ h_j$ on $\T$, then
\[
 \|f-g\circ h\|_{C(\T)}
 \leq\|f-f_j\|_{C(\T)}+\|g_j-g\|_{C(\T)}
                  +\omega_g(\|h_j-h\|_\infty)\longrightarrow0.
\]
Thus the matching relation is closed, and $Y$ is a $G_\delta$ subset
of the ambient space. It therefore admits a complete separable
compatible metric. Injective holomorphic maps are conformal, and the
argument principle identifies their images as the two Jordan domains.

Measurability of $\iota$ follows from the measurable conformal pair in
\cite[Section~3.1]{Response}, the measurable affine normalization,
and radial boundary limits: countably many evaluations generate the
Borel sigma-algebra of the disk algebra.
Removability makes the normalized pair unique. For $F\in L^2(\nu)$,
\[
 \|UF\|_{L^2(\nu^Y)}^2
 =\int_{X_{\rm nf}}|F(\prX(\iota(h)))|^2\,d\nu(h)=\|F\|_2^2.
\]
For $G\in L^2(\nu^Y)$, choose a measurable representative, define
$F=G\circ\iota$ on $X_{\rm nf}$, and extend $F$ measurably outside $X_{\rm nf}$.
Then $UF=G$ almost everywhere.
Thus $U$ is onto and unitary.
\end{proof}

\subsection{The regularized planar motion}

The drift also has a bounded infinitesimal Beltrami extension. This is
the estimate needed for planar welding, beyond the circle modulus.
Let $E_v$ denote the linearized Douady--Earle extension of a real
circle field $v$, with boundary values
$E_v(e^{i\theta})=ie^{i\theta}v(\theta)$. For a field with Fourier expansion
\[
 v(\theta)=\sum_{n\geq2}(x_n\cos n\theta+y_n\sin n\theta),
\]
its infinitesimal Beltrami coefficient is
\begin{equation}\label{eq:linear-DE-Beltrami}
 \bar\partial E_v(z)
 =\frac{i}{4}(1-|z|^2)^2
       \sum_{n\geq2}(n^3-n)(x_n+i y_n)\bar z^{n-2}.
\end{equation}
The normalization is that of \cite[Section~2 and (6.1)]{AMT}.
For $h\in X_{\rm nf}$, use $f_h^\circ$ and its regular stress
$Q_{f_h^\circ}$ from \eqref{eq:regular-stress}.

\begin{lemma}\label{lem:drift-Beltrami}
For $h\in X_{\rm nf}$ and $z\in\mathbb D$,
\begin{equation}\label{eq:drift-Beltrami-stress}
 \bar\partial E_{B(h,\cdot)}(z)
 =-\frac12(1-|z|^2)^2\overline{Q_{f_h^\circ}(z)}.
\end{equation}
The drift satisfies
\[
 \sup_{h\in X}\|\bar\partial E_{B(h,\cdot)}\|_\infty
 \leq C_\kappa,
\]
and, uniformly in its Poisson regularizations,
\[
 \sup_{\substack{h\in X\\0<r<1}}
 \|\bar\partial E_{P_rB(h,\cdot)}\|_\infty\leq C_\kappa.
\]
\end{lemma}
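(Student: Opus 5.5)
The plan is to compute $\bar\partial E_v$ for $v=B(h,\cdot)$ directly from the Fourier coefficients of the drift, which are identified with the stress coefficients in \eqref{eq:stress-coefficients}. Then resum the series in \eqref{eq:linear-DE-Beltrami} into a conjugated $Q_{f_h^\circ}$.

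For $h\in E_0$ (a full-measure subset of $X_{\rm nf}$), expanding $B(h,\theta)=\sum_{n,\tau}\rho_{n,\tau}(h)e_{n,\tau}(\theta)$ gives
\[
 x_n=\alpha_n\rho_{n,c}=\alpha_n^2\operatorname{Im}t_n,\qquad y_n=\alpha_n\rho_{n,s}=\alpha_n^2\operatorname{Re}t_n .
\]
The coefficient $(n^3-n)\alpha_n^2=2$ cancels the weight, so
\[
 (n^3-n)(x_n+iy_n)=2(\operatorname{Im}t_n+i\operatorname{Re}t_n)=2i\,\overline{t_n}.
\]
Hence
\[
 \bar\partial E_B(z)=\frac i4(1-|z|^2)^2\sum_{n\ge2}2i\,\overline{t_n}\,\bar z^{n-2}
 =-\frac12(1-|z|^2)^2\overline{Q_{f_h^\circ}(z)}
\]
by \eqref{eq:analytic-drift-primitive}. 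This is consistent with the version chosen in Proposition~\ref{prop:uniform-drift}, namely $B=2\operatorname{Im}A^*_{f_h^\circ}$. Convergence is not an issue inside $\mathbb D$: the coefficients satisfy $|t_n|\le C_\kappa n^{3}$ by Cauchy estimates from \eqref{eq:uniform-Bers-stress}, and $\bar z^{n-2}$ decays geometrically for $|z|<1$.

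To reach all $h\in X_{\rm nf}$, not only those in $E_0$, I would define $E_v$ coefficientwise via \eqref{eq:linear-DE-Beltrami}. Since the chosen version of $B$ has Fourier coefficients given exactly by the stress formula for every $h\in X_{\rm nf}$ (from the Abel identification \eqref{eq:drift-Abel-identification}, which uses only the holomorphic primitive), the identity holds on all of $X_{\rm nf}$. For $h\notin X_{\rm nf}$ the version is $B=0$, so the bound is trivial there.

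The uniform bound then follows at once from \eqref{eq:uniform-Bers-stress}:
\[
 |\bar\partial E_B(z)|\le\tfrac12(1-|z|^2)^2|Q_f(z)|\le C_\kappa/2 .
\]

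For the Poisson regularizations, $P_r$ multiplies the $n$th coefficients by $r^n$. The same computation therefore gives
\[
 \bar\partial E_{P_rB}(z)=-\tfrac12(1-|z|^2)^2\,r^2\,\overline{Q_f(rz)} .
\]
Since $1-|z|^2\le 1-r^2|z|^2$, this is bounded by
\[
 \tfrac12(1-r^2|z|^2)^2|Q_f(rz)|\le C_\kappa ,
\]
uniformly in $r$ and $h$.

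The main obstacle I expect is the bookkeeping of conventions. This means matching the sign and conjugation of \eqref{eq:stress-coefficients} with the normalization in \eqref{eq:linear-DE-Beltrami}, and making sure the drift version's Fourier coefficients really are $\alpha_n\rho_{n,\tau}$ pointwise rather than only $\nu$-a.e. Everything else is a direct resummation.
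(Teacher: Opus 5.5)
Your proposal is correct and follows the paper's proof. You read off $(n^3-n)(x_n+iy_n)=2i\overline{t_n}$ from the chosen version $B=2\operatorname{Im}A^*_{f_h^\circ}$ (valid for every $h\in X_{\rm nf}$, so the detour through $E_0$ is unnecessary), resum to $-\tfrac12(1-|z|^2)^2\overline{Q_{f_h^\circ}(z)}$, bound $\tfrac12 r^2(1-|z|^2)^2|Q_f(rz)|$ via \eqref{eq:uniform-Bers-stress} (the paper uses $r(1-|z|^2)\leq 1-r^2|z|^2$, your cruder $r^2\leq1$ and $1-|z|^2\leq1-r^2|z|^2$ work equally well), and use $B=0$ off $X_{\rm nf}$.
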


\begin{proof}
Write $t_n=t_n(f_h^\circ)$ for the stress coefficients from
\eqref{eq:analytic-drift-primitive}. The Fourier coefficients of
$B(h,\cdot)$ are
\[
 x_n=\frac{2\operatorname{Im}t_n}{n^3-n},\qquad
 y_n=\frac{2\operatorname{Re}t_n}{n^3-n}.
\]
Hence
\[
 (n^3-n)(x_n+iy_n)=2i\overline{t_n},
\]
and substitution into \eqref{eq:linear-DE-Beltrami} proves the identity.
Writing $f=f_h^\circ$, Poisson regularization gives
\[
 \left|\bar\partial E_{P_rB(h,\cdot)}(z)\right|
 =\frac12r^2(1-|z|^2)^2|Q_f(rz)|
 \leq C_\kappa
       \left(\frac{r(1-|z|^2)}{1-r^2|z|^2}\right)^2
 \leq C_\kappa.
\]
Here the first inequality is \eqref{eq:uniform-Bers-stress}; the second
uses $r(1-|z|^2)\leq1-r^2|z|^2$. The same estimate at $r=1$
bounds the unregularized extension. On $X\setminus X_{\rm nf}$, the chosen
drift is zero.
\end{proof}

For a continuous circle function $v$, define its Zygmund norm by
\[
 \|v\|_{\Lambda_1}:=\|v\|_\infty+
 \sup_{0<|s|\leq1}|s|^{-1}
 \|v(\cdot+s)+v(\cdot-s)-2v\|_\infty.
\]
Work with the stationary response diffusion and the filtration
$(\mathcal G_t)$ of the common-noise construction. Let $(f_0,g_0)$
be the normalized conformal pair associated with $X_0$, and put
$h_0=X_0=g_0^{-1}\circ f_0$. For the adapted field $b_t$ in
\eqref{eq:regularized-circle-flow},
\[
 \sup_t\|b_t\|_{\Lambda_1}\leq C_\kappa,
\]
and, by Proposition~\ref{prop:uniform-drift} and
Lemma~\ref{lem:drift-Beltrami},
\[
 \sup_t\|\bar\partial E_{b_t}\|_\infty\leq C_\kappa.
\]

The It\^o and Stratonovich circle drifts agree because, at every
frequency,
\begin{equation}\label{eq:circle-Ito-correction}
 e_{n,c}e_{n,c}'+e_{n,s}e_{n,s}'=0.
\end{equation}
Set
\[
 h_t^r=h_0\circ(\Psi_t^r)^{-1},
\]
and denote its normalized conformal pair by $(f_t^r,g_t^r)$. For fixed
$r<1$ the circle flow is smooth
and hence quasisymmetric. The right-composition stability
\cite[Lemma~3.1]{Response} gives
\[
 h_0\in{W_{\rm rem}}
 \quad\Longrightarrow\quad
 h_t^r\in{W_{\rm rem}}
 \qquad(t\geq0).
\]
Thus the pair is uniquely defined. Extending the source
diffeomorphism to the disk and solving the normalized Beltrami equation
gives continuous paths for fixed $r$.

Let
\[
 \T_{\mathbb Q}=\{e^{2\pi iq}:q\in\mathbb Q\},
 \qquad
 \gamma_0=g_0(\T).
\]
The countable set $\gamma_{0,\mathbb Q}=g_0(\T_{\mathbb Q})$
is dense in $\gamma_0$. For $x=g_0(\zeta)$, set
\begin{equation}\label{eq:planar-boundary-motion}
 Z_t^r(x)=g_t^r(\zeta),\qquad |\zeta|=1.
\end{equation}
This parametrizes the deformed curve by its initial physical boundary.
For a continuous complex local martingale $N$ starting from zero and
a bounded stopping time $\sigma$, write
\[
 \|N\|_{\mathcal H^2[0,\sigma]}
 :=\bigl(\mathbb E\langle N,\overline N\rangle_\sigma\bigr)^{1/2}.
\]

For a normalized removable pair $(f,g)$, write $h=g^{-1}\circ f$ on
$\T$. If $a\notin g(\mathbb D^*)$, then
\[
 \frac1{g(1/z)-a}=z+az^2+O(z^3)
\]
is normalized univalent on $\mathbb D$. Hence $|a|\leq2$, and the
bounded complementary component of every normalized pair lies in
$2\overline{\mathbb D}$. Fix $A>4e$ once and for all; every boundary separation satisfies
$0\leq\delta\leq4$, hence $\log(A/\delta)\geq1$ for $\delta>0$.
The functions $\delta\log(A/\delta)$ and
$\delta^2\log(A/\delta)$ are assigned their continuous value $0$
at $\delta=0$.

For a smooth real circle field $v$, set
\[
 \mu_v=\left(\frac{f'}{\overline{f'}}\bar\partial E_v\right)
        \circ f^{-1}\mathbf 1_{f(\mathbb D)}.
\]
For compactly supported $\mu\in L^p(\mathbb C)$ with $p>2$,
denote its Cauchy transform by
\[
 \operatorname{Cauchy}\mu(z)
 =\frac1\pi\int_{\mathbb C}\frac{\mu(w)}{z-w}\,dA(w),
\]
where $dA$ is planar area.
For a smooth real circle field $v$, $\partial_vg$ denotes the
$t=0$ derivative of the normalized exterior map associated with
$h\circ\psi_{-t}^v$; these source-chart derivatives use the opposite
flow sign to \eqref{eq:actual-right-derivative}.
For real $\varphi\in H^4(\T)$ with $\|\varphi'\|_\infty<1/2$, set
\[
 \psi_\varphi(e^{i\theta})=e^{i(\theta+\varphi(\theta))},\qquad
 h_\varphi=h\circ\psi_\varphi^{-1}.
\]
Right-composition stability \cite[Lemma~3.1]{Response} preserves
removability, so
$(f_\varphi,g_\varphi)$ denotes the uniquely normalized conformal pair
of $h_\varphi$. We write $d^j$ for Fr\'echet differentials in the
$\varphi$ coordinate.

\begin{lemma}\label{lem:boundary-evaluation-chart}
There exist $\varepsilon,C>0$, independent of the normalized removable
pair, such that $\varphi\mapsto g_\varphi|_{\T}$ is a $C^2$ map from
the real $H^4(\T)$ ball of radius $\varepsilon$ about zero into $C(\T)$.
For $j=1,2$, real directions $\eta_1,\ldots,\eta_j\in H^4(\T)$,
and $\|\varphi\|_{H^4}<\varepsilon$,
\[
 \bigl\|d^j(g_\varphi|_{\T})[\eta_1,\ldots,\eta_j]\bigr\|_{C(\T)}
 \leq C\prod_{k=1}^j\|\eta_k\|_{H^4}.
\]
At the base pair, for every smooth real circle field $v$ and
$\zeta\in\T$,
\[
 \partial_vg(\zeta)=\operatorname{Cauchy}\mu_v(g(\zeta)).
\]
\end{lemma}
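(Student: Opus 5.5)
The plan is to realize $\varphi\mapsto(f_\varphi,g_\varphi)$ as the solution of a normalized Beltrami problem with a coefficient depending smoothly on $\varphi$. Then the Ahlfors--Bers parameter dependence gives $C^2$ dependence, and differentiating at $\varphi=0$ gives the Cauchy-transform formula.

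\textbf{Step 1: extend the source diffeomorphism.} For $\|\varphi\|_{H^4}<\varepsilon$, Sobolev embedding puts $\varphi\in C^3(\T)$ with $\|\varphi\|_{C^3}\leq C\|\varphi\|_{H^4}$. I extend $\psi_\varphi$ to a quasiconformal diffeomorphism $\Psi_\varphi$ of $\overline{\mathbb D}$, using a fixed linear extension operator, for instance the harmonic extension of $\varphi$ cut off near the origin, written in polar form $re^{i\theta}\mapsto re^{i(\theta+\chi(r)P_r\varphi(\theta))}$. Its Beltrami coefficient $\mu^\varphi$ is then a $C^\infty$ (indeed analytic) function of $\varphi$ into $L^\infty(\mathbb D)$. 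It satisfies $\|\mu^\varphi\|_\infty\leq C\|\varphi\|_{H^4}<1/2$ and has linearization $d\mu^0[\eta]=\bar\partial E_\eta$ modulo an infinitesimally trivial term. The cleanest choice is the linear Douady--Earle field itself, as in \eqref{eq:linear-DE-Beltrami}.

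\textbf{Step 2: build the pair and get $C^2$ bounds.} Push $\mu^\varphi$ forward to the plane by $f$:
\[
 \nu^\varphi=\Bigl(\tfrac{f'}{\overline{f'}}\mu^\varphi\Bigr)\circ f^{-1}\mathbf 1_{f(\mathbb D)}.
\]
Let $G_\varphi$ be the normalized solution of $\bar\partial G=\nu^\varphi\partial G$ with $G(z)=z+O(1/z)$ at infinity. Then $g_\varphi=G_\varphi\circ g$ on $\mathbb D^*$ and $f_\varphi=G_\varphi\circ f\circ\Psi_\varphi^{-1}$. This pair welds $h_\varphi$, and removability, via \cite[Lemma~3.1]{Response}, makes it the unique normalized pair. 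The support of $\nu^\varphi$ lies in $f(\mathbb D)\subset2\overline{\mathbb D}$, which is uniform over all normalized pairs. The Ahlfors--Bers theorem, through the Neumann series $\partial G-1=\sum_k(T\nu)^k T\nu$ with $T$ the Beurling transform bounded on $L^p$ for some $p>2$ close to $2$ fixed by $\|\nu\|_\infty<1/2$, gives holomorphic dependence of $G_\nu-\id$ on $\nu$ in the $L^\infty$ ball. The map $\operatorname{Cauchy}$ is bounded from $L^p(2\overline{\mathbb D})$ into uniformly Hölder functions, so the dependence is holomorphic into $C(\widehat{\mathbb C})$, with constants depending only on $p$ and the support radius $2$. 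Composing with the smooth map $\varphi\mapsto\nu^\varphi$, whose derivatives are bounded because $|f'/\overline{f'}|=1$, and restricting to $g(\T)$ yields the $C^2$ statement and the uniform bounds on $d^j$.

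\textbf{Step 3: identify the derivative.} The first variation of $G_\nu$ at $\nu=0$ is $\operatorname{Cauchy}\dot\nu$, by the first term of the Neumann series together with the normalization at infinity. The direction $v$ with the flow $\psi_{-t}^v$ corresponds to $\varphi=tv+O(t^2)$ and $\dot\mu=\bar\partial E_v$. Evaluating at $g(\zeta)$ then gives $\partial_vg(\zeta)=\operatorname{Cauchy}\mu_v(g(\zeta))$.

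\textbf{Main obstacle.} The hard part is getting $C^2$ smoothness jointly with uniformity over all removable pairs. Differentiability of the composite $G_\varphi\circ g$ on the boundary needs $G$ to be differentiable in $\nu$ into $C(\widehat{\mathbb C})$, not merely pointwise. I would handle this by quoting the holomorphic-dependence theorem with explicit $L^p$ constants. The only geometry of the pair that enters is $f(\mathbb D)\subset2\overline{\mathbb D}$ and $|f'/\overline{f'}|=1$, which is why the constants are pair-independent.
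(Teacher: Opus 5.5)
\textbf{The gap is in Step 3.} Your Steps 1 and 2 reproduce the paper's argument. That means a polar cutoff extension of $\psi_\varphi$, the Beltrami coefficient transported by $f$ and supported in $2\overline{\mathbb D}$, the $L^p$ Beurling resolvent with $p>2$, the formulas $g_\varphi=W\circ g$ and $f_\varphi=W\circ f\circ\Psi_\varphi^{-1}$, and removability for uniqueness. Using $P_r\varphi$ instead of $\varphi(\theta)$ inside the cutoff is harmless.

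For your extension, the linearized coefficient in direction $v$ is $\bar\partial V_v$ with $V_v(z)=iz\chi(|z|)(P_{|z|}v)(\theta)$, not $\bar\partial E_v$. The chain rule therefore gives $\partial_vg(\zeta)=\operatorname{Cauchy}\widetilde\mu_v(g(\zeta))$, where $\widetilde\mu_v$ is the transport of $\bar\partial V_v$. Your claim that $\widetilde\mu_v-\mu_v$ is ``infinitesimally trivial'' means its Cauchy transform vanishes on $g(\T)$. That is exactly the last assertion of the lemma, and you give no argument for it. It is not automatic, because the curve is rough and $f'$ is typically unbounded near $\T$. Switching to the Douady--Earle extension of $\psi_\varphi$ itself would remove the discrepancy. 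But then Step 2 would need $C^2$ dependence of that extension's Beltrami coefficient on $\varphi\in H^4$ in $L^\infty$, which you have not supplied.

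\textbf{How the paper closes it.} The paper works at the finite level. It runs the same Beltrami construction with the Douady--Earle extension $\widehat\Lambda_t$ of the source flow, whose transported coefficient is $t\mu_v+o(t)$ in $L^\infty$. Removability then identifies $W_{\widehat\lambda_t}\circ g$ with the normalized exterior map of $h\circ\psi_{-t}^v$. Finally it differentiates $W_\mu=z+\operatorname{Cauchy}a_\mu$, using $a_\mu=\mu+O(\|\mu\|_\infty^2)$ in $L^p$.

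\textbf{A direct proof of your claim.} Alternatively, you can prove your claim at the infinitesimal level.
\begin{itemize}
\item For smooth $v$, the field $U=V_v-E_v$ is $C^1$ on $\overline{\mathbb D}$ and vanishes on $\T$, so $|U(z)|\leq C(1-|z|)$.
\item Its push-forward $W=(f'U)\circ f^{-1}$ on $\Omega=f(\mathbb D)$ satisfies $|W|\leq C\operatorname{dist}(\cdot,\partial\Omega)$ by Koebe's theorem.
\item $W$ has bounded gradient. Indeed, $\bar\partial W$ is the transport of $\bar\partial U$, and $\partial W=(f''U/f'+\partial U)\circ f^{-1}$ with $|f''/f'|\leq6/(1-|z|^2)$.
\item Hence $W\mathbf 1_\Omega$ is Lipschitz with compact support, and $\bar\partial(W\mathbf 1_\Omega)=\widetilde\mu_v-\mu_v$.
\item The Cauchy--Pompeiu formula gives $\operatorname{Cauchy}(\widetilde\mu_v-\mu_v)=W\mathbf 1_\Omega$, which vanishes on $g(\T)=\partial\Omega$.
\end{itemize}
This route needs no removability in the identification step.
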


\begin{proof}
Choose a smooth function $\chi_{\rm rad}$ that is zero on $[0,1/2]$ and one on
$[3/4,1]$, and define
\[
 \Lambda_\varphi(re^{i\theta})
 =r e^{i(\theta+\chi_{\rm rad}(r)\varphi(\theta))},
\]
for $\varphi$ in a neighborhood of zero in $H^4(\T)$. If
$\|\varphi'\|_\infty<1/2$, then $\Lambda_\varphi$ is a disk
diffeomorphism. Choose $\varepsilon>0$ so that
$\|\varphi\|_{H^4}<\varepsilon$ implies this inequality, and put
\[
 \lambda_\varphi=
 \left(\frac{f'}{\overline{f'}}
 \frac{\bar\partial\Lambda_\varphi}
      {\partial\Lambda_\varphi}\right)
 \circ f^{-1}\mathbf 1_{f(\mathbb D)}.
\]
The embedding $H^4(\T)\hookrightarrow C^3(\T)$ and differentiation of
the displayed quotient give, for $j=1,2$,
\[
 \bigl\|d^j\lambda_\varphi[\eta_1,\ldots,\eta_j]\bigr\|_\infty
 \leq C\prod_{k=1}^j\|\eta_k\|_{H^4}.
\]
After decreasing $\varepsilon$ if necessary, we also have
\[
 \sup_{\|\varphi\|_{H^4}<\varepsilon}
 \|\lambda_\varphi\|_\infty\leq q<1.
\]
The bounded component of every normalized pair lies in
$2\overline{\mathbb D}$ by the fixed-domain estimate above. Hence
$\lambda_\varphi$ and its first two differentials are supported in
this fixed disk. Consequently, for every $p<\infty$,
\[
 \|\lambda_\varphi\|_{L^p}\leq C_p.
\]
The same fixed-support argument gives, for $j=1,2$,
\[
 \bigl\|d^j\lambda_\varphi[\eta_1,\ldots,\eta_j]\bigr\|_{L^p}
 \leq C_p\prod_{k=1}^j\|\eta_k\|_{H^4}.
\]
Thus the map $\varphi\mapsto\lambda_\varphi$ is $C^2$ from the
$H^4$-ball above into both $L^\infty(\mathbb C)$ and $L^p(\mathbb C)$.
The Beurling transform $\mathcal B=\partial\operatorname{Cauchy}$ is an
$L^2$ isometry and its $L^p$ norm is continuous at $p=2$. Choose
$p>2$ so that $q\|\mathcal B\|_{L^p\to L^p}<1$, and put
\[
 a_\mu=(I-\mu\mathcal B)^{-1}\mu.
\]
The corresponding principal solution is
\[
 W_\mu(z)=z+\operatorname{Cauchy}a_\mu(z).
\]
The maps $W_{\lambda_\varphi}\circ f$ and $\Lambda_\varphi$ have the
same Beltrami coefficient. Hence the normalized conformal pair
welding $h\circ\psi_\varphi^{-1}$ is
\[
 f_\varphi=W_{\lambda_\varphi}\circ f\circ\Lambda_\varphi^{-1},
 \qquad g_\varphi=W_{\lambda_\varphi}\circ g.
\]
For the resolvent differentials, let $\mu_1,\mu_2$ be bounded
variations of the Beltrami coefficient. They and $\mu$ are supported
in $2\overline{\mathbb D}$, and $\|\mu\|_\infty\leq q$.
Differentiating $(I-\mu\mathcal B)a_\mu=\mu$ gives
\begin{align*}
 da_\mu[\mu_1]
 &=(I-\mu\mathcal B)^{-1}\{\mu_1(1+\mathcal B a_\mu)\},
 \\
 \shortintertext{and, for the second differential,}
 d^2a_\mu[\mu_1,\mu_2]
 &=(I-\mu\mathcal B)^{-1}
   \{\mu_1\mathcal B\,da_\mu[\mu_2]
     +\mu_2\mathcal B\,da_\mu[\mu_1]\}.
\end{align*}
Consequently,
\begin{align*}
 \|da_\mu[\mu_1]\|_{L^p(\mathbb C)}
 &\leq C_{p,q}\|\mu_1\|_\infty,
 \\
 \shortintertext{and}
 \|d^2a_\mu[\mu_1,\mu_2]\|_{L^p(\mathbb C)}
 &\leq C_{p,q}\|\mu_1\|_\infty\|\mu_2\|_\infty.
\end{align*}
The identity
$a_\mu=\mu+\mu\mathcal B a_\mu$ and its first two differentials show
that $a_{\lambda_\varphi}$ and these differentials are supported in
$2\overline{\mathbb D}$. For $p'=p/(p-1)<2$ and every $a$ supported in this disk,
H\"older's inequality gives
\[
 \sup_{|z|\leq2}|\operatorname{Cauchy}a(z)|
 \leq \frac{\|a\|_{L^p(\mathbb C)}}{\pi}
   \sup_{|z|\leq2}\left(\int_{|w|\leq2}|z-w|^{-p'}\,dA(w)\right)^{1/p'}
 \leq C_p\|a\|_{L^p(\mathbb C)}.
\]
For the composition $\varphi\mapsto a_{\lambda_\varphi}$, the chain rule is
\begin{align*}
 d(a_{\lambda_\varphi})[\eta]
 &=da_{\lambda_\varphi}[d\lambda_\varphi[\eta]],
 \\
 \shortintertext{and}
 d^2(a_{\lambda_\varphi})[\eta_1,\eta_2]
 &=d^2a_{\lambda_\varphi}
       [d\lambda_\varphi[\eta_1],d\lambda_\varphi[\eta_2]]
   +da_{\lambda_\varphi}[d^2\lambda_\varphi[\eta_1,\eta_2]].
\end{align*}
Since $g(\T)\subset2\overline{\mathbb D}$ and
$g_\varphi=W_{\lambda_\varphi}\circ g$, these estimates give
\[
 \bigl\|d^j(g_\varphi|_{\T})[\eta_1,\ldots,\eta_j]\bigr\|_{C(\T)}
 \leq C\prod_{k=1}^j\|\eta_k\|_{H^4},
 \qquad \|\varphi\|_{H^4}<\varepsilon,\quad j=1,2.
\]
The constant depends only on $p,q$ and the fixed disks, and is therefore
uniform over the normalized removable pairs.
Conformal removability gives uniqueness in the chosen normalization.
Let $\widehat\Lambda_t$ be the Douady--Earle extension of the source
flow of $v$, and let $\widehat\lambda_t$ be its Beltrami coefficient
after transport by $f$. Formula~\eqref{eq:linear-DE-Beltrami} gives
\[
 \left\|\frac{\widehat\lambda_t}{t}-\mu_v\right\|_{L^\infty(\mathbb C)}
 \longrightarrow0,
 \qquad t\to0.
\]
Removability identifies this deformation with the preceding normalized
pair. At $\mu=0$, the formula for $W_\mu$ gives
\[
 \left.\frac{d}{dt}\right|_{t=0}W_{t\mu_v}(z)
 =\operatorname{Cauchy}\mu_v(z).
\]
Evaluating at $z=g(\zeta)$ therefore gives
\[
 \partial_vg(\zeta)
 =\operatorname{Cauchy}\mu_v(g(\zeta)).
\]
\end{proof}

Put $\gamma_t^r=Z_t^r(\gamma_0)$ and write
$\partial_{n,\tau}=\partial_{e_{n,\tau}}$, with $n\geq2$ and
$\tau\in\{c,s\}$. For the next lemma fix $0<r<1$; for labels
$\zeta,\xi\in\T$, write
$x=g_0(\zeta)$ and $y=g_0(\xi)$.  In the Cauchy variation formula below, $\mu_{P_rb_t}$ is formed
with the current interior map $f_t^r$, not with the initial pair.
Define the planar drift by
\[
 V_t^r(z)=\operatorname{Cauchy}\mu_{P_rb_t}(z).
\]

\begin{lemma}\label{lem:fixed-r-planar-characteristics}
For any $\mathcal G_0$-measurable labels $\zeta,\xi$, the corresponding
boundary motions admit continuous complex local martingales
$M^r(x),M^r(y)$, starting from zero, such that
\begin{align}
 dZ_t^r(x)
 &=dM_t^r(x)+V_t^r(Z_t^r(x))\,dt
 \notag\\
 \shortintertext{and}
 d\langle M^r(x),M^r(y)\rangle_t&=0.
 \label{eq:planar-semimartingale}
\end{align}
The identities hold almost surely for all $t\geq0$.
\end{lemma}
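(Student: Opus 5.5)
Since $Z_t^r(x)=g_t^r(\zeta)$ with $g_t^r$ the normalized exterior map of $h_t^r=h_0\circ(\Psi_t^r)^{-1}$, and $\Psi^r$ is a smooth flow driven by the smooth SDE \eqref{eq:regularized-circle-flow}, the plan is to apply an It\^o formula to the composite map $t\mapsto g_t^r(\zeta)$ using the $C^2$ chart of Lemma~\ref{lem:boundary-evaluation-chart}. Fix $t_0$ and write, for small $s\ge0$, $\Psi_{t_0+s}^r=\Phi_s\circ\Psi_{t_0}^r$, so that
\[
 h_{t_0+s}^r=h_{t_0}^r\circ\Phi_s^{-1}.
\]
Writing $\Phi_s(e^{i\theta})=e^{i(\theta+\varphi_s(\theta))}$, the increment $\varphi_s$ is an $H^4(\T)$-valued continuous semimartingale. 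Its martingale part is $\sqrt2\sum r^n e_{n,\tau}\,dW^{n,\tau}$ composed with the flow, and its drift is $P_rb$. Equation \eqref{eq:regularized-smooth-coefficients} gives summable $H^4$ norms, so $\varphi_s$ stays in the $\varepsilon$-ball up to a stopping time. Lemma~\ref{lem:boundary-evaluation-chart}, applied at the base pair $(f_{t_0}^r,g_{t_0}^r)$ with constants uniform over normalized removable pairs, then makes $\varphi\mapsto g_\varphi(\zeta)$ a $C^2$ functional with uniformly bounded first and second differentials. This justifies a Hilbert-space It\^o formula on each such short interval. Patching these intervals along a localizing sequence of stopping times gives a global semimartingale decomposition.

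The first-order term is where the structure appears. At the base point the differential in direction $v$ is $\partial_vg(\zeta)=\operatorname{Cauchy}\mu_v(g(\zeta))$, with $\mu_v$ built from the current $f_{t_0}^r$. The sign convention ($h\circ\psi_{-t}^v$) matches $h\circ\Phi_s^{-1}$. The drift contribution is therefore $\operatorname{Cauchy}\mu_{P_rb_t}(Z_t^r(x))=V_t^r(Z_t^r(x))$, using linearity of $v\mapsto\mu_v$ and bounded convergence of the Fourier series in $L^p$. The martingale part is
\[
 dM_t^r(x)=\sqrt2\sum_{n,\tau}r^n\,\partial_{n,\tau}g_t^r(\zeta)\,dW_t^{n,\tau}.
\]

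Next I would dispose of the second-order It\^o correction. It equals $\tfrac12\sum r^{2n}d^2g[\cdot,\cdot]$ plus the first differential applied to the second-order flow correction. For the circle flow itself, the latter vanishes by \eqref{eq:circle-Ito-correction}, since the It\^o and Stratonovich drifts agree. For the planar map, I expect the correction term $\sum_{n,\tau}r^{2n}d^2g[e_{n,\tau},e_{n,\tau}]$ to cancel by the same frequency pairing: combining cosine and sine at each frequency gives $e^{in\theta}$ and $e^{-in\theta}$ components. The Cauchy/Beurling second variation of a holomorphic deformation is bilinear and complex-linear in $\mu$, and $\mu_{e_{n,c}}^2+\mu_{e_{n,s}}^2=0$ by \eqref{eq:linear-DE-Beltrami}, because the coefficients satisfy $(x_n+iy_n)^2=1+i^2=0$ for the pair. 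Rotating to $\mu_{e_{n,c}}\pm i\mu_{e_{n,s}}$ should make the contraction vanish.

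The bracket identity comes from the same complex-linearity. We have
\[
 d\langle M^r(x),M^r(y)\rangle=2\sum r^{2n}\partial_{n,c}g(\zeta)\partial_{n,c}g(\xi)+\partial_{n,s}g(\zeta)\partial_{n,s}g(\xi),
\]
and at each $n$ this equals $\operatorname{Cauchy}$ applied bilinearly to $\mu_{n,c}\otimes\mu_{n,c}+\mu_{n,s}\otimes\mu_{n,s}$. From \eqref{eq:linear-DE-Beltrami}, $\mu_{e_{n,s}}=i\mu_{e_{n,c}}$ pointwise, so the sum is $\mu_{n,c}\otimes\mu_{n,c}(1+i^2)=0$; this is the holomorphic (conformal) nature of the noise. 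The zero bilinear bracket holds for all pairs of labels, including $x=y$. For $\mathcal G_0$-measurable labels, I would follow the approach of Lemma~\ref{lem:all-label-noise}: prove the identity for finite-valued labels by predictable partitions, then pass to limits using joint continuity in $\zeta$ of the smooth-flow quantities.

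The main obstacle is the uniform applicability of the $C^2$ chart along the path together with the cancellation of the second-order term. The pointwise identity $\mu_{e_{n,s}}=i\mu_{e_{n,c}}$ is exact only for the linearized Douady–Earle extension at the base point. The $H^4$ second differential in Lemma~\ref{lem:boundary-evaluation-chart}, however, uses the extension $\Lambda_\varphi$, which is only real-differentiable. I would therefore first verify that $d^2g$ at $\varphi=0$, restricted to these directions, depends only on the Beltrami variations and the resolvent formula for $d^2a_\mu$ (which is complex bilinear). If $\Lambda$-dependent terms survive, I would instead add them to the drift and check, using the circle identity \eqref{eq:circle-Ito-correction}, that they vanish.
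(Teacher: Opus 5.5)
Your skeleton matches the paper's proof. It uses It\^o's formula in the $H^4$ chart of Lemma~\ref{lem:boundary-evaluation-chart}, the drift $\partial_{P_rb_t}g=\operatorname{Cauchy}\mu_{P_rb_t}$ formed with the current $f_t^r$, the martingale as the stochastic series, the zero bilinear bracket from $\mu_{e_{n,s}}=i\mu_{e_{n,c}}$, and random labels via finite $\mathcal G_0$-partitions.

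The gap is the step you flag and leave open: the vanishing of the It\^o correction $\sum_{n,\tau}r^{2n}d^2F[e_{n,\tau},e_{n,\tau}]$. (The coefficient is $r^{2n}$, not $\tfrac12 r^{2n}$, because of the $\sqrt2$.) Your mechanism cannot deliver it in the chart. There $d^2F(0)[\eta,\eta]=\operatorname{Cauchy}\bigl(2\lambda_1\mathcal B\lambda_1+\lambda_2\bigr)(g(\zeta))$, with $\lambda_1=d\lambda_\varphi[\eta]$ and $\lambda_2=d^2\lambda_\varphi[\eta,\eta]$ fixed by the particular extension $\Lambda_\varphi$. Complex linearity controls neither term.
\begin{itemize}
\item The first-order coefficient of $\Lambda_\varphi$ is not the linearized Douady--Earle one. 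The sine-mode coefficient minus $i$ times the cosine-mode one is proportional to $r\chi_{\rm rad}'(r)-n\chi_{\rm rad}(r)$, which equals $-n$ for $3/4\leq r\leq1$.
\item The quadratic part of $\bar\partial\Lambda_\varphi/\partial\Lambda_\varphi$, summed over $\tau$, is $\tfrac{\alpha_n^2}{4}\bigl(r^2\chi_{\rm rad}'(r)^2+n^2\chi_{\rm rad}(r)^2\bigr)z/\bar z$ before transport by $f$, which does not vanish.
\end{itemize}
Only the total $\sum_\tau d^2F[e_{n,\tau},e_{n,\tau}]$ is extension-independent. Your fallback does not close this: \eqref{eq:circle-Ito-correction} removes only the acceleration terms $dF[e_{n,\tau}'e_{n,\tau}]$, not these second-order Beltrami terms.

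The missing idea is to avoid computing any second variation.
\begin{itemize}
\item Convert the chart Hessian into orbit second derivatives. The integral curve of $e_{n,\tau}$ has acceleration $e_{n,\tau}'e_{n,\tau}$, so $\partial_{n,\tau}^2g=d^2F[e_{n,\tau},e_{n,\tau}]+dF[e_{n,\tau}'e_{n,\tau}]$. After summing over $\tau$ with \eqref{eq:circle-Ito-correction}, the It\^o correction becomes $\sum_n r^{2n}(\partial_{n,c}^2+\partial_{n,s}^2)g$.
\item The relation $\partial_{n,s}g=i\partial_{n,c}g$ is a first-variation identity; only the extension-independent Cauchy variation formula enters. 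It therefore holds at \emph{every} removable pair, hence along the orbits, which stay removable under smooth right composition.
\item Differentiating this identity along the two orbits gives $(\partial_{n,c}^2+\partial_{n,s}^2)g=i[\partial_{n,s},\partial_{n,c}]g$.
\item Since $[e_{n,c},e_{n,s}]=n\alpha_n^2\partial_\theta$ generates source rotations, which leave the infinity-normalized exterior map unchanged, the correction vanishes.
\end{itemize}
This is the paper's argument, and with it the rest of your plan goes through.

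A smaller point: on each re-centred patch your differentials are taken at $\varphi_s\neq0$, not at the base point. To read them as derivatives at the current pair, use the exact identity $\psi_{\varphi_s+\epsilon\eta}=\psi_{\epsilon\,\eta\circ\Phi_s^{-1}}\circ\psi_{\varphi_s}$, which is linear in $\epsilon$.
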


\begin{proof}
First take deterministic labels $\zeta,\xi$. Use the degree-one lift
of $\Psi_t^r$. Its periodic displacement
$\Psi_t^r-\id$ is $H^4(\T)$-valued; in this affine chart the
regularized flow satisfies
\begin{equation}\label{eq:H4-regularized-flow}
 d\Psi_t^r=\sqrt2\sum_{n\geq2,\tau}r^n
       e_{n,\tau}\circ\Psi_t^r\,dW_t^{n,\tau}
       +(P_rb_t)\circ\Psi_t^r\,dt.
\end{equation}
For $T<\infty$ and $m\geq1$, put
\[
 \sigma_m=\inf\left\{t\leq T:
 \|\Psi_t^r-\id\|_{C^5}
 +\|(\Psi_t^r)^{-1}-\id\|_{C^5}\geq m\right\}\wedge T.
\]
On $[0,\sigma_m]$, Sobolev composition estimates give
\begin{equation}\label{eq:H4-Hilbert-Schmidt}
 \sum_{n\geq2,\tau}r^{2n}
   \|e_{n,\tau}\circ\Psi_t^r\|_{H^4}^2
 \leq C_{m,r}\sum_{n\geq2}r^{2n}n^5<\infty.
\end{equation}
Poisson smoothing also gives
\begin{equation}\label{eq:H4-drift-bound}
 \|(P_rb_t)\circ\Psi_t^r\|_{H^4}\leq C_{m,r}.
\end{equation}
Thus \eqref{eq:H4-regularized-flow} is an $H^4$ semimartingale up to
 $\sigma_m$. The stopped path has compact image in $H^4$ and is covered
by finitely many source charts from
Lemma~\ref{lem:boundary-evaluation-chart}. On such a chart, for a source diffeomorphism $\phi$, set
\[
 F_\zeta(\phi)=g_{h_0\circ\phi^{-1}}(\zeta).
\]
The normalized image domains lie in a fixed disk. The estimates in
Lemma~\ref{lem:boundary-evaluation-chart} and the stopping condition give
\[
 \sup_{t\leq\sigma_m}\|d^jF_\zeta(\Psi_t^r)\|_{(H^4)^j\to\mathbb C}
 \leq C_m,\qquad j=1,2.
\]
The integral curve generated by $e_{n,\tau}\circ\phi$ has acceleration
$(e_{n,\tau}'e_{n,\tau})\circ\phi$. Hence
\[
 \partial_{n,\tau}^2g(\zeta)
 =d^2F_\zeta(\phi)
    [e_{n,\tau}\circ\phi,e_{n,\tau}\circ\phi]
   +dF_\zeta(\phi)
       [(e_{n,\tau}'e_{n,\tau})\circ\phi].
\]
Summing over $\tau\in\{c,s\}$ and using
\eqref{eq:circle-Ito-correction} gives
\begin{equation}\label{eq:Hilbert-Hessian-response-trace}
 \sum_{\tau\in\{c,s\}}
 d^2F_\zeta(\phi)
    [e_{n,\tau}\circ\phi,e_{n,\tau}\circ\phi]
 =\sum_{\tau\in\{c,s\}}\partial_{n,\tau}^2g(\zeta).
\end{equation}
Hilbert-space It\^o's formula now gives
\begin{equation}\label{eq:fixed-r-planar-Ito}
 \begin{aligned}
 dg_t^r(\zeta)
 &=\sqrt2\sum_{n\geq2,\tau}r^n
       \partial_{n,\tau}g_t^r(\zeta)\,dW_t^{n,\tau}\\
 &\quad+\left(\partial_{P_rb_t}g_t^r(\zeta)
       +\sum_{n\geq2,\tau}r^{2n}
          \partial_{n,\tau}^2g_t^r(\zeta)\right)dt.
 \end{aligned}
\end{equation}
The stochastic series converges in $\mathcal H^2$ on
$[0,\sigma_m]$, because
\begin{equation}\label{eq:fixed-r-planar-noise-tail}
 \mathbb E\sup_{t\leq \sigma_m}
 \left|\sqrt2\sum_{n>N,\tau}r^n
       \int_0^t\partial_{n,\tau}g_s^r(\zeta)\,dW_s^{n,\tau}\right|^2
 \leq C_{m,r}T\sum_{n>N}r^{2n}n^5
 \longrightarrow0.
\end{equation}
The trace is absolutely convergent:
\begin{equation}\label{eq:fixed-r-planar-trace-bound}
 \sum_{n\geq2,\tau}r^{2n}
       |\partial_{n,\tau}^2g_t^r(\zeta)|
 \leq C_m\sum_{n\geq2,\tau}r^{2n}
       \|e_{n,\tau}\|_{H^4}^2<\infty.
\end{equation}

At every current conformal pair,
\[
 \mu_{e_{n,s}}=i\mu_{e_{n,c}}.
\]
The Cauchy variation formula therefore also gives
\[
 \partial_{n,s}g=i\partial_{n,c}g.
\]
Consequently,
\begin{equation}\label{eq:planar-trace-cancellation}
 (\partial_{n,c}^2+\partial_{n,s}^2)g
 =i[\partial_{n,s},\partial_{n,c}]g=0.
\end{equation}
The same relation also gives
\begin{equation}\label{eq:planar-bracket-cancellation}
 \partial_{n,c}g(\zeta)\partial_{n,c}g(\xi)
 +\partial_{n,s}g(\zeta)\partial_{n,s}g(\xi)=0.
\end{equation}
Indeed, $[e_{n,c},e_{n,s}]=n\alpha_n^2\partial_\theta$, and source
rotation leaves the normalized exterior map unchanged. By
\eqref{eq:fixed-r-planar-trace-bound}, the first cancellation may be
summed in \eqref{eq:fixed-r-planar-Ito}; by
\eqref{eq:fixed-r-planar-noise-tail}, the second may be summed in the
bilinear complex bracket. Lemma~\ref{lem:boundary-evaluation-chart}
then gives
\[
 \partial_{P_rb_t}g_t^r(\zeta)
 =\operatorname{Cauchy}\mu_{P_rb_t}(g_t^r(\zeta)).
\]
Thus the martingale is
\begin{align}
 M_t^r(x)&=\sqrt2\sum_{n\geq2,\tau}r^n
   \int_0^t\partial_{n,\tau}g_s^r(\zeta)\,dW_s^{n,\tau}\notag\\
 \shortintertext{and the semimartingale identity reads}
 M_t^r(x)&=Z_t^r(x)-x-
   \int_0^tV_s^r(Z_s^r(x))\,ds.
 \label{eq:fixed-r-planar-martingale}
\end{align}
Hence \eqref{eq:planar-semimartingale} holds on $[0,\sigma_m]$. The smooth flow and its inverse have bounded
$C^5$ norms on compact time intervals, so $\sigma_m\uparrow T$ almost
surely. Since $T$ is arbitrary, the identities hold for all $t\geq0$.

Now let $\zeta$ be $\mathcal G_0$-measurable. Choose finite
$\mathcal G_0$-measurable partitions $(A_{k,j})_{j\leq q_k}$ and
points $\zeta_{k,j}\in\T_{\mathbb Q}$, and define
\[
 \zeta_k=\zeta_{k,j}\quad\text{on }A_{k,j}.
\]
By construction,
\[
 d_\T(\zeta_k,\zeta)\leq k^{-1}.
\]
Put $x_k=g_0(\zeta_k)$ and $x=g_0(\zeta)$. Since
$A_{k,j}\in\mathcal G_0$, the finite-label processes satisfy
\[
 Z^r(x_k)=\sum_{j=1}^{q_k}\mathbf 1_{A_{k,j}}
              Z^r(g_0(\zeta_{k,j})).
\]
With the same partition,
\[
 M^r(x_k)=\sum_{j=1}^{q_k}\mathbf 1_{A_{k,j}}
              M^r(g_0(\zeta_{k,j})).
\]
Define $M^r(x)$ by the stochastic series in
\eqref{eq:fixed-r-planar-martingale} with $\zeta$ in place of a
deterministic label. The integrands are predictable as pointwise limits
of the finite-label predictable integrands, and
\eqref{eq:H4-Hilbert-Schmidt} gives stopped $\mathcal H^2$ convergence.
On $[0,\sigma_m]$, continuity of the chart derivatives and
\eqref{eq:H4-Hilbert-Schmidt} imply
\begin{equation}\label{eq:random-label-H2-limit}
 \|M^r(x_k)-M^r(x)\|_{\mathcal H^2[0,\sigma_m]}
 \longrightarrow0.
\end{equation}
In the next display, $\|\cdot\|_{\mathcal H^2}$ abbreviates the
stopped norm on $[0,\sigma_m]$. For two such approximations $x_k,y_k$,
Kunita--Watanabe gives
\begin{align*}
 &\mathbb E\sup_{t\leq \sigma_m}
 \left|\langle M^r(x_k),M^r(y_k)\rangle_t
       -\langle M^r(x),M^r(y)\rangle_t\right|\\
 &\quad\leq
 \|M^r(x_k)-M^r(x)\|_{\mathcal H^2}
       \|M^r(y_k)\|_{\mathcal H^2}\\
 &\quad+
 \|M^r(x)\|_{\mathcal H^2}
       \|M^r(y_k)-M^r(y)\|_{\mathcal H^2}
 \longrightarrow0.
\end{align*}
Joint continuity of the boundary flow also gives
\[
 \sup_{t\leq \sigma_m}|Z_t^r(x_k)-Z_t^r(x)|\longrightarrow0
 \qquad\text{almost surely}.
\]
By the definition of $\mu_v$ and
Lemma~\ref{lem:drift-Beltrami},
\[
 \|\mu_{P_rb_s}\|_\infty\leq C_\kappa.
\]
The fixed-domain estimate also gives
\[
 \operatorname{supp}\mu_{P_rb_s}\subset2\overline{\mathbb D}.
\]
Hence the elementary Cauchy-transform modulus yields
\[
 |V_s^r(z)-V_s^r(w)|
 \leq C|z-w|\log\!\frac{A}{|z-w|}
\]
with the continuous convention at $z=w$.  Along the convergent boundary
paths the right-hand side tends to zero pointwise and is bounded by a
deterministic constant. Dominated convergence therefore gives
\[
 \sup_{t\leq \sigma_m}
 \left|\int_0^t
   \{V_s^r(Z_s^r(x_k))-V_s^r(Z_s^r(x))\}\,ds\right|
 \longrightarrow0.
\]
Together with \eqref{eq:random-label-H2-limit}, this passes
\eqref{eq:fixed-r-planar-martingale} to the random label. The bracket
limit above then preserves the zero complex bilinear bracket. The same
argument applies to $\xi$, and finally $m\uparrow\infty$ removes the
stopping.
\end{proof}

\subsection{Uniform planar characteristics}

In the next two lemmas, $M^r$ and $V^r$ are the martingale and drift
from Lemma~\ref{lem:fixed-r-planar-characteristics}. Random boundary
points of $\gamma_0$ are understood to be $\mathcal G_0$-measurable.
Constants denoted by $C$ may depend on $\kappa$ but are uniform in $r$
and the initial normalized pair. For current boundary points $z,w$,
write $\delta=|z-w|$, with the continuous convention at $\delta=0$
fixed above.

\begin{lemma}\label{lem:planar-drift-bounds}
With a constant $C$ as above, the drift satisfies, almost surely,
for all $t\geq0$ and $z,w\in\gamma_t^r$,
\begin{align}
 |V_t^r(z)|&\leq C
 \notag\\
 \shortintertext{and, for the increment,}
 |V_t^r(z)-V_t^r(w)|
 &\leq C\delta\log(A/\delta).
 \label{eq:planar-drift-bounds}
\end{align}
\end{lemma}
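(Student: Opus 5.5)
The plan is to work directly with the definition $V_t^r=\operatorname{Cauchy}\mu_{P_rb_t}$. Two facts, both already used in the proof of Lemma~\ref{lem:fixed-r-planar-characteristics}, do most of the work.

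First, Lemma~\ref{lem:drift-Beltrami} gives the uniform bound
\[
 \sup_{h\in X,\,0<r<1}\|\bar\partial E_{P_rB(h,\cdot)}\|_\infty\leq C_\kappa .
\]
Since $|f'/\overline{f'}|=1$, this yields $\|\mu_{P_rb_t}\|_\infty\leq C_\kappa$. The bound holds for every $t$, with $\mu$ formed with the current interior map $f_t^r$. The current pair is normalized and removable, and $b_t=B(X_t,\cdot)$ is a drift of the type covered by that lemma, so the bound is uniform in $r$ and in the initial pair.

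Second, the fixed-domain estimate places the bounded component $f_t^r(\mathbb D)$ inside $2\overline{\mathbb D}$. Hence $\operatorname{supp}\mu_{P_rb_t}\subset2\overline{\mathbb D}$, and every current boundary point satisfies $|z|\leq2$.

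With $\mu$ bounded by $C_\kappa$ and supported in $2\overline{\mathbb D}$, the sup bound is immediate. For $|z|\leq2$,
\[
 |V_t^r(z)|\leq\frac{C_\kappa}{\pi}\int_{|w|\leq2}\frac{dA(w)}{|z-w|}\leq\frac{C_\kappa}{\pi}\int_{|u|\leq4}\frac{dA(u)}{|u|}=8C_\kappa .
\]

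For the increment, I would prove the classical log-Lipschitz modulus of the Cauchy transform of a bounded, compactly supported density. Write
\[
 V(z)-V(w)=\frac{z'-z}{\pi}\int\frac{\mu(u)}{(z-u)(w-u)}\,dA(u),
\]
with the sign absorbed, and split the integration region into the disk $|u-z|\leq2\delta$ and its complement inside $2\overline{\mathbb D}$.

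On the inner disk, do not use the product form. Bound each Cauchy kernel separately: $\int_{|u-z|\leq2\delta}|z-u|^{-1}\,dA\lesssim\delta$, and similarly for $w$, since $|u-w|\leq3\delta$ there. This piece contributes $O(\delta)$.

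On the complement, $|w-u|\geq|z-u|/2$, so the integrand is at most $2|z-u|^{-2}$. Integrating over $2\delta\leq|u-z|\leq4$ gives $2\pi\log(2/\delta)$. This piece contributes at most $C\delta\log(A/\delta)$, because $A>4e$ and $\delta\leq4$ give $\log(A/\delta)\geq1$ and $\log(2/\delta)\leq\log(A/\delta)$.

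Combining the two pieces gives \eqref{eq:planar-drift-bounds}. The case $z=w$ holds by the continuous convention.

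Finally, the bounds must hold almost surely for all $t$ and all $z,w\in\gamma_t^r$ simultaneously. This is automatic, because the estimates are deterministic for every normalized pair and every admissible $\mu$. The only care needed is that $\mu_{P_rb_t}$ be defined pathwise from the continuous current pair $(f_t^r,g_t^r)$.

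The step I expect to be the main obstacle is the uniformity in $r$ of the Beltrami bound. One must check that Lemma~\ref{lem:drift-Beltrami} applies with the current interior map $f_t^r$ rather than $f_{X_t}^\circ$. It does: $\mu_v$ only rotates $\bar\partial E_v$ pointwise and transports it by $f_t^r$, and the sup norm of $\bar\partial E_v$ depends only on $v=P_rb_t$, not on which conformal map is used. So the lemma's bound, uniform over $h\in X$ and $0<r<1$, transfers directly.
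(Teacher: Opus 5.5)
Your proposal is correct and follows essentially the same route as the paper: the uniform Beltrami bound from Lemma~\ref{lem:drift-Beltrami} (whose sup norm is unchanged by transport through the current map $f_t^r$), support in $2\overline{\mathbb D}$, and then the sup and log-Lipschitz estimates for the Cauchy transform of a bounded, compactly supported density, obtained by a near/far splitting at scale $2\delta$. The differences are cosmetic. You use a single $2\delta$-disk about $z$ and treat all $\delta\le4$ at once, whereas the paper uses the union of the two $2\delta$-disks for $\delta\le1$ and handles $1<\delta\le4$ by the single-point bound together with $\delta\log(A/\delta)\ge1$.
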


\begin{proof}
As observed before Lemma~\ref{lem:boundary-evaluation-chart}, the
closed bounded component of every current normalized pair lies in
$2\overline{\mathbb D}$. Lemma~\ref{lem:drift-Beltrami} bounds the
transported drift density uniformly, including after Poisson smoothing.
For a bounded density $\mu$ supported in this disk and
$z\in2\overline{\mathbb D}$, the Cauchy-transform estimate is
\[
 |\operatorname{Cauchy}\mu(z)|\leq C\|\mu\|_\infty.
\]
For two points $z,w$, we also have
\[
 |\operatorname{Cauchy}\mu(z)-\operatorname{Cauchy}\mu(w)|
 \leq C\|\mu\|_\infty |z-w|\log(A/|z-w|).
\]
Indeed, if $\delta=|z-w|\leq1$, the union of the two
$2\delta$-disks contributes at most $C\|\mu\|_\infty\delta$.
On its complement,
\begin{align*}
 \left|\frac1{z-\zeta}-\frac1{w-\zeta}\right|
 &\leq\frac{C\delta}{|z-\zeta|^2},\\
 \shortintertext{so polar integration gives}
 \int_{2\delta}^{8}\frac{\delta}{s^2}s\,ds
 &\leq C\delta\log(8/\delta).
\end{align*}
If $1<\delta\leq4$, the single-point bound and
$\delta\log(A/\delta)\geq1$ give the same increment estimate after
increasing $C$. This proves both drift bounds.
\end{proof}

For $0<r<1$ and $\mathcal G_0$-measurable initial points
$x,y\in\gamma_0$, write $\Delta M^r$ for the martingale difference
and $\delta_t$ for the boundary separation:
\[
 \Delta M_t^r=M_t^r(x)-M_t^r(y),\qquad
 \delta_t=|Z_t^r(x)-Z_t^r(y)|.
\]

\begin{lemma}\label{lem:planar-noise-bounds}
The brackets below are absolutely continuous in time. There is a
constant $C$, depending only on $\kappa$, such that their densities
satisfy, for $dt\otimes d\mathbb P$-almost every $(t,\omega)$,
\begin{align}
 \frac{d}{dt}\langle M^r(x),\overline{M^r(x)}\rangle_t
 &\leq C,
 \notag\\
 \shortintertext{and}
 \frac{d}{dt}\langle\Delta M^r,\overline{\Delta M^r}\rangle_t
 &\leq C\delta_t^2\log(A/\delta_t).
 \label{eq:planar-noise-bounds}
\end{align}
\end{lemma}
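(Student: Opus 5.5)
We start from the explicit martingale in \eqref{eq:fixed-r-planar-martingale}, which gives the Hermitian bracket density
\[
 \frac{d}{dt}\langle M^r(x),\overline{M^r(y)}\rangle_t
 =2\sum_{n\geq2,\tau}r^{2n}\,
   \partial_{n,\tau}g_t^r(\zeta)\,\overline{\partial_{n,\tau}g_t^r(\xi)}.
\]
Absolute continuity follows at once, first on $[0,\sigma_m]$ and then after removing the stopping. By Lemma~\ref{lem:boundary-evaluation-chart}, each derivative is a Cauchy transform evaluated at a current boundary point: $\partial_{n,\tau}g_t^r(\zeta)=\operatorname{Cauchy}\mu_{n,\tau}(z)$ with $z=Z_t^r(x)$, where $\mu_{n,\tau}=\mu_{e_{n,\tau}}$ is formed with the current interior map $f_t^r$. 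The relation $\mu_{n,s}=i\mu_{n,c}$ lets us pair the two modes at each frequency. Writing $\Delta_n(z,w)=\operatorname{Cauchy}\mu_{n,c}(z)-\operatorname{Cauchy}\mu_{n,c}(w)$, the density of $\langle\Delta M^r,\overline{\Delta M^r}\rangle$ becomes $4\sum_n r^{2n}|\Delta_n(z,w)|^2$. The diagonal density has the same form with $\Delta_n$ replaced by $\operatorname{Cauchy}\mu_{n,c}(z)$.

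The key step is to resum over $n$ \emph{before} taking absolute values, so that we never estimate mode by mode. Mode-by-mode estimates fail, because $\|\bar\partial E_{e_{n,c}}\|_\infty\sim n^{3/2}$ is not square summable. Instead, use \eqref{eq:linear-DE-Beltrami}: $\bar\partial E_{e_{n,c}}(u)=\tfrac i4(1-|u|^2)^2(n^3-n)\alpha_n\bar u^{n-2}$, so $(n^3-n)\alpha_n=\sqrt{2(n^3-n)}$. The sum
\[
 \sum_n r^{2n}\,\overline{c_n}\,\operatorname{Cauchy}\mu_{n,c}(z)
\]
is then the Cauchy transform of a Beltrami coefficient whose pullback to $\mathbb D$ is $\tfrac i4(1-|u|^2)^2\sum_n r^{2n}\sqrt{2(n^3-n)}\,\overline{c_n}\,\bar u^{n-2}$. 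We apply Cauchy--Schwarz (duality in $\ell^2$). Taking $c_n=\operatorname{Cauchy}\mu_{n,c}(z)$ gives
\[
 S(z):=\sum_n r^{2n}|\operatorname{Cauchy}\mu_{n,c}(z)|^2
 =\Bigl|\operatorname{Cauchy}\Bigl(\sum_n r^{2n}\overline{c_n}\mu_{n,c}\Bigr)(z)\Bigr|.
\]
The pullback density is bounded by $(1-|u|^2)^2\bigl(\sum_n n^3|c_n|^2 r^{2n}\bigr)^{1/2}\bigl(\sum_n r^{2n}|u|^{2n-4}\bigr)^{1/2}$, which is too crude. The better route is to recognise the kernel: $\sum_n r^{2n}(n^3-n)\bar u^{n-2}u'^{\,n-2}$ is, up to constants, the Bergman-type kernel $(1-\bar u u')^{-4}$ (its $r$-regularisation). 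This suggests writing
\[
 S(z)=\frac{1}{\pi^2}\iint_{f(\mathbb D)^2}
 \frac{\mathcal K(w,w')}{(z-w)\,\overline{(z-w')}}\,dA(w)\,dA(w').
\]
Here $\mathcal K$ is the transport by $f_t^r$ of $\tfrac1{8}(1-|u|^2)^2(1-|u'|^2)^2\sum_n r^{2n}(n^3-n)(\bar u u')^{n-2}$, with the Jacobian factors $f'/\overline{f'}$. The transported kernel $\mathcal K$ is the reproducing-type kernel of a Bergman projection on $f(\mathbb D)$. We expect it to satisfy $\sup_w\int|\mathcal K(w,w')|\,dA(w')\leq C$ uniformly in $r$ and in the univalent map, by Koebe distortion and the standard estimate $\int_{\mathbb D}(1-|u'|^2)^2|1-\bar uu'|^{-4}dA(u')\leq C$ (a Forelli--Rudin bound at the critical exponent, which is finite here).

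With this kernel form, the diagonal bound becomes a bilinear Schur-type estimate. The Cauchy kernel is locally integrable and the domain lies in $2\overline{\mathbb D}$, so $S(z)\leq C$ by the same computation as in Lemma~\ref{lem:planar-drift-bounds}. For the difference, replace $1/(z-w)$ by $1/(z-w)-1/(\tilde z-w)$, where $\tilde z=Z^r_t(y)$. The split is the one used in Lemma~\ref{lem:planar-drift-bounds}. Near $z$ and $\tilde z$, in the two $2\delta$-disks, the kernel difference is integrable with mass $O(\delta)$. Away from them, it is bounded by $C\delta|z-w|^{-2}$. Combining these two pieces in the bilinear form, with the Schur bound on $\mathcal K$, should give $C\delta^2\log(A/\delta)$. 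The product of two $\delta|z-w|^{-1}$-type factors integrates to $\delta^2\log$, matching the critical kernel bound for $K(0)-K$ in \eqref{eq:noise-Osgood}. As a consistency check, the boundary analogue there has exactly this form.

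The main obstacle is the off-diagonal kernel estimate. The boundary points $z,\tilde z$ lie on $\partial f(\mathbb D)$, where the Bergman kernel is singular. A plain Schur test would only give $\delta^2\log^2$, or even $\delta$. To get the sharp $\delta^2\log(A/\delta)$ I would first exploit the analytic structure: $\sum_n r^{2n}\overline{c_n}\mu_{n,c}$ is $\bar\partial$ of a quasiconformal vector field. After subtracting its boundary-value field, the Cauchy transform is the exterior holomorphic vector field $\partial g$ on $\mathbb D^*$, as in Lemma~\ref{lem:boundary-evaluation-chart}. The increment can then be controlled by the Zygmund modulus of Lemma~\ref{lem:stress-primitive} applied through the Beltrami bound of Lemma~\ref{lem:drift-Beltrami}. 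If the kernel route leaves a spare logarithm, this holomorphic reduction is what I would fall back on.
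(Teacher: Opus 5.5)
There is a genuine gap, and it sits in the increment bound, which is the substance of the lemma. Your setup is right and matches the paper's starting point. Pairing $\mu_{n,s}=i\mu_{n,c}$ and resumming expresses the density as $\pi^{-2}\iint_{\Omega^2}k_{z,w}(u)\overline{k_{z,w}(v)}\,(\text{phases})\,C_r(f^{-1}u,f^{-1}v)\,dA\,dA$, with $C_r(u,v)=3r^4(1-|u|^2)^2(1-|v|^2)^2(1-r^2\bar uv)^{-4}$. The single-point bound then follows, provided you use the \emph{pointwise} bound on $C_r$ together with $\int_\Omega|z-u|^{-1}\,dA(u)\leq C$, rather than a Schur bound. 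For the difference, however, every route you list yields at best $C\delta^2\log^2(A/\delta)$:
\begin{itemize}
\item The pointwise kernel bound gives $(\int_\Omega|k_{z,w}|\,dA)^2$. Here $\int_\Omega|k_{z,w}|\,dA$ is only bounded by $C\delta\log(A/\delta)$, picking up $O(\delta)$ from each dyadic scale between $\delta$ and $1$.
\item An $L^2$ Schur test is unusable, since $k_{z,w}$ behaves like $|z-u|^{-1}$ near the boundary point $z$ and is in general not square integrable on $\Omega$.
\item In your duality step, even the optimal Cauchy--Schwarz split gives only $\|\mu\|_\infty\leq CS^{1/2}$ for the dual Beltrami coefficient. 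Hence $S\leq C\delta^2\log^2(A/\delta)$. Any fallback that, like Lemmas~\ref{lem:stress-primitive} and~\ref{lem:planar-drift-bounds}, uses only the sup norm of a single Beltrami coefficient cannot beat the $\delta\log$ modulus.
\end{itemize}
(As a side point, $\int_{\mathbb D}(1-|u'|^2)^2|1-\bar uu'|^{-4}\,dA(u')$ is at the critical Forelli--Rudin exponent and diverges like $\log\frac1{1-|u|^2}$. Only the prefactor $(1-|u|^2)^2$ saves the row sum.) The lost logarithm is not cosmetic. Lemma~\ref{lem:planar-two-point-tail} needs $\mathcal Q_t\leq C\delta_t^2\log(A/\delta_t)$ so that $u_t=\sqrt{\log(A/\delta_t)}$ has bounded bracket. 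With $\log^2$, the power-law bi-H\"older separation estimates no longer follow.

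The missing idea is to decouple scales through the off-diagonal decay of the kernel in hyperbolic distance. The paper proceeds in three steps.
\begin{itemize}
\item It proves $|C_r(u,v)|\leq 48e^{-2d_H(u,v)}$ uniformly in $r$, using $|1-r^2a|\geq r|1-a|$ and the pseudo-hyperbolic identity at $r=1$.
\item Since $z,w\in\partial\Omega$, Koebe's estimate gives $\lambda_\Omega(u)\geq(4\operatorname{dist}(u,\{z,w\}))^{-1}$. Hence $2d_H(f^{-1}u,f^{-1}v)\geq\frac12d_*(u,v)$, where $d_*$ is the quasihyperbolic distance of $\mathbb C\setminus\{z,w\}$. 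Moreover $d_*(u,v)\geq|\log a(u)-\log a(v)|$ with $a=\operatorname{dist}(\cdot,\{z,w\})$.
\item After the similarity sending the pair to $\pm\frac12$, which leaves the double integral invariant, split into $B(0,2)$ and dyadic annuli $S_j$, $1\leq j\leq J\approx\log_2(8/\delta)$. Each annulus contributes $O(1)$ to $\int q$, but annuli $j$ and $k$ interact only with weight $C2^{-|j-k|/2}$.
\end{itemize}
The double sum is therefore $O(J)$ rather than $O(J^2)$, and restoring the prefactor $\delta^2$ gives exactly $C\delta^2\log(A/\delta)$. Your proposal does not contain this mechanism, or any substitute for it.
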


\begin{proof}
The stochastic representation \eqref{eq:fixed-r-planar-martingale}
gives absolute continuity of both brackets. We estimate their densities.
Let $d_H$ be the hyperbolic distance in $\mathbb D$, normalized by
\[
 d_H(0,s)=\log\frac{1+s}{1-s},\qquad 0\leq s<1.
\]
This is the convention in \cite[Theorem~6.1]{AMT}.
Equation~\eqref{eq:linear-DE-Beltrami} gives
\begin{align*}
 \bar\partial E_{e_{n,c}}(u)
 &=\frac{i\alpha_n}{4}(n^3-n)(1-|u|^2)^2\bar u^{n-2},
 \\
 \shortintertext{and}
 \bar\partial E_{e_{n,s}}(u)
 &=-\frac{\alpha_n}{4}(n^3-n)(1-|u|^2)^2\bar u^{n-2}.
\end{align*}
Since $\alpha_n^2=2/(n^3-n)$,
\[
 \sum_{\tau\in\{c,s\}}
 \bar\partial E_{e_{n,\tau}}(u)
 \overline{\bar\partial E_{e_{n,\tau}}(v)}
 =\frac14(n^3-n)(1-|u|^2)^2(1-|v|^2)^2
                  (\bar u v)^{n-2}.
\]
We also use
\[
 \sum_{n\geq2}(n^3-n)x^{n-2}=\frac6{(1-x)^4},
 \qquad |x|<1.
\]
Define the regularized covariance kernel by
\[
 \begin{aligned}
 C_r(u,v)
 &=2\sum_{n\geq2}\sum_{\tau\in\{c,s\}}r^{2n}
   \bar\partial E_{e_{n,\tau}}(u)
   \overline{\bar\partial E_{e_{n,\tau}}(v)}\\
 &=\frac12(1-|u|^2)^2(1-|v|^2)^2
   \sum_{n\geq2}(n^3-n)r^{2n}(\bar u v)^{n-2}\\
 &=\frac{3r^4(1-|u|^2)^2(1-|v|^2)^2}
          {(1-r^2\bar u v)^4}.
 \end{aligned}
\]
In particular,
\[
 |C_r(u,v)|\leq48e^{-2d_H(u,v)}.
\]
This is four times the covariance in \cite[(6.4)]{AMT}.
For $|a|\leq1$,
\[
 |1-r^2a|^2-r^2|1-a|^2
 =(1-r^2)(1-r^2|a|^2)\geq0.
\]
Consequently, $|C_r(u,v)|\leq|C_1(u,v)|$. At $r=1$, invariance of
$|(u-v)/(1-\bar u v)|$ under disk automorphisms and the radial
normalization of $d_H$ give the displayed hyperbolic bound,
uniformly in $r$.
At the current time set $f=f_t^r$ and $\Omega=f(\mathbb D)$.
On $\{x\ne y\}$, set $z=Z_t^r(x)$, $w=Z_t^r(y)$, and
$\delta=\delta_t>0$. Define the kernel difference by
\[
 k_{z,w}(u)=\frac1{z-u}-\frac1{w-u}
            =\frac{w-z}{(z-u)(w-u)},
\]
and define
\[
 I_\Omega(z,w)=\int_{\Omega^2}
 \frac{e^{-2d_H(f^{-1}(u),f^{-1}(v))}}
 {|z-u|\,|w-u|\,|z-v|\,|w-v|}\,dA(u)\,dA(v).
\]
Put $\xi_u=f^{-1}(u)$ and $\xi_v=f^{-1}(v)$.
Termwise integration is justified by the diagonal covariance bound:
Cauchy--Schwarz gives
\[
 2\sum_{n\geq2,\tau}r^{2n}
 |\bar\partial E_{e_{n,\tau}}(\xi_u)|
 |\bar\partial E_{e_{n,\tau}}(\xi_v)|
 \leq\sqrt{C_r(\xi_u,\xi_u)C_r(\xi_v,\xi_v)}\leq3.
\]
Since $k_{z,w}\in L^1(\Omega)$, the product
$3|k_{z,w}(u)k_{z,w}(v)|$ is integrable on $\Omega^2$.
The Cauchy variation formula therefore gives
\[
 \begin{aligned}
 \frac{d}{dt}\langle\Delta M^r,\overline{\Delta M^r}\rangle_t
 &=\frac1{\pi^2}\int_{\Omega^2}
 k_{z,w}(u)\overline{k_{z,w}(v)}
 \frac{f'(\xi_u)}{\overline{f'(\xi_u)}}
 \frac{\overline{f'(\xi_v)}}{f'(\xi_v)}\\
 &\hspace{16mm}\cdot
 C_r(\xi_u,\xi_v)\,dA(u)\,dA(v).
 \end{aligned}
\]
The phase factors have modulus one, and hence
\[
 \frac{d}{dt}\langle\Delta M^r,\overline{\Delta M^r}\rangle_t
 \leq\frac{48}{\pi^2}\delta^2 I_\Omega(z,w).
\]

Let $\lambda_\Omega$ be the Poincar\'e density of $\Omega$, and let
$d_*$ be the quasihyperbolic distance in
$\mathbb C\setminus\{z,w\}$. Koebe's estimate gives
\begin{align*}
 \lambda_\Omega(u)
 &\geq\frac1{4\operatorname{dist}(u,\partial\Omega)}
 \geq\frac1{4\operatorname{dist}(u,\{z,w\})},
 \\
 \shortintertext{and hence}
 2d_H(f^{-1}(u),f^{-1}(v))
 &\geq\frac12d_*(u,v).
\end{align*}
Apply the similarity
\[
 \zeta\longmapsto\frac{\zeta-(z+w)/2}{w-z}.
\]
The marked points become $-1/2$ and $1/2$. The four denominator
factors and the two area elements contribute the same factor
$\delta^4$, so $I_\Omega(z,w)$ is unchanged by this simultaneous
change of variables. The normalized domain lies in $B(0,R)$ with
$R=8/\delta\geq2$. Below, $u,v$ denote normalized coordinates and
$d_*$ the transported quasihyperbolic distance; $\delta$ retains
its original value $|z-w|$. Put
\[
 a(u)=\operatorname{dist}(u,\{-1/2,1/2\}),\qquad
 q(u)=\frac1{|u-1/2|\,|u+1/2|}.
\]
Since $|\nabla\log a|\leq a^{-1}$ almost everywhere,
\[
 d_*(u,v)\geq|\log a(u)-\log a(v)|.
\]
Let $S_0=B(0,2)$ and, for $j\geq1$, let
\[
 S_j=\{2^j\leq|u|<2^{j+1}\}.
\]
Then
\begin{align*}
 \int_{S_j}q(u)\,dA(u)&\leq C,
 \\
 \shortintertext{and}
 \sup_{u\in S_j,\,v\in S_k}e^{-d_*(u,v)/2}
 &\leq C2^{-|j-k|/2}.
\end{align*}
With $J=\lceil\log_2R\rceil\geq1$, the original integral satisfies
\[
 I_\Omega(z,w)
 \leq C\sum_{j,k=0}^J2^{-|j-k|/2}
 \leq C(J+1)
 \leq C\log(A/\delta).
\]
This proves the increment bound on $\{x\ne y\}$. On $\{x=y\}$,
$\Delta M^r=0$, so the bound holds there as well. For a single point,
set $z=Z_t^r(x)$. The same covariance formula with the kernel
$(z-u)^{-1}$ gives
\[
 \frac{d}{dt}\langle M^r(x),\overline{M^r(x)}\rangle_t
 \leq\frac{48}{\pi^2}
       \left(\int_\Omega\frac{dA(u)}{|z-u|}\right)^2
 \leq C,
\]
because $\Omega\subset2\mathbb D$ and $z\in2\overline{\mathbb D}$.
Thus both estimates in \eqref{eq:planar-noise-bounds} hold.
\end{proof}

\subsection{Two-point separation estimates}

Fix $\eta\in(0,1)$ and retain the distance process $\delta_t$ defined
above; in particular, $\delta_0=|x-y|$. The initial points $x,y\in\gamma_0$
are $\mathcal G_0$-measurable. All conditional probabilities below are
taken given $\mathcal G_0$, and their inequalities hold almost surely.
All constants are deterministic and independent of $r$ and the
initial normalized pair.

\begin{samepage}
For the power-tail estimate, the spatial exponents
$\lambda_-,\lambda_+$ and probability exponent $q_T$ must satisfy
\[
 1-\eta<\lambda_-<1<\lambda_+<1+\eta,
\]
and
\[
 q_T>\frac{4(1+\eta)}{\lambda_-}.
\]
\end{samepage}

\begin{lemma}\label{lem:planar-two-point-tail}
For every sufficiently small $T\in(0,1]$, there are exponents in the
preceding ranges and constants $c_0,C_0>0$ such that, uniformly in
$0<r<1$, every pair with $\delta_0>0$ almost surely satisfies
\begin{equation}\label{eq:planar-two-point-power-tail}
 \mathbb P\!\left(
 \exists t\in[0,T]:\delta_t\notin
 [c_0\delta_0^{\lambda_+},C_0\delta_0^{\lambda_-}]
 \,\middle|\,\mathcal G_0\right)
 \leq C_0\delta_0^{q_T}.
\end{equation}
\end{lemma}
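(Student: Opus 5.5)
We reduce the statement to a one-dimensional comparison for the logarithm of the separation. From Lemma~\ref{lem:fixed-r-planar-characteristics}, the difference $D_t=Z_t^r(x)-Z_t^r(y)$ is a continuous complex semimartingale:
\[
 dD_t=d\Delta M_t^r+\{V_t^r(Z_t^r(x))-V_t^r(Z_t^r(y))\}\,dt .
\]
Its martingale part has vanishing bilinear bracket, $d\langle\Delta M^r,\Delta M^r\rangle_t=0$, because each of the three bilinear brackets vanishes. This is the decisive structural input. On $\{\delta_t>0\}$ the function $\log D$ is holomorphic, so It\^o's formula produces no second-order correction:
\[
 d\log D_t=\frac{d\Delta M_t^r}{D_t}+\frac{V_t^r(Z_t^r(x))-V_t^r(Z_t^r(y))}{D_t}\,dt .
\]
Taking real parts gives $\log\delta_t=\log\delta_0+N_t+A_t$. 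Lemma~\ref{lem:planar-drift-bounds} gives
\[
 |dA_t/dt|\leq C\log(A/\delta_t).
\]
Lemma~\ref{lem:planar-noise-bounds} gives
\[
 d\langle N\rangle_t\leq\delta_t^{-2}\,d\langle\Delta M^r,\overline{\Delta M^r}\rangle_t\leq C\log(A/\delta_t)\,dt .
\]
Injectivity of the smooth flow $\Psi^r$ and of $g_t^r$ keeps $\delta_t>0$ for all $t$, so the formula holds globally.

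Put $L_t=\log(A/\delta_t)\geq1$. Then $dL_t=-dN_t-dA_t$, where the drift is bounded by $CL_t$ and the bracket density by $CL_t$. Pass to $\ell_t=\log L_t$. It\^o's formula gives
\[
 d\ell_t=-\frac{dN_t+dA_t}{L_t}-\frac{d\langle N\rangle_t}{2L_t^2}.
\]
This has drift bounded by $C+C/L_t\leq 2C$ and martingale part $\widetilde N$ with $d\langle\widetilde N\rangle_t\leq C/L_t\leq C$. Hence
\[
 |\ell_t-\ell_0|\leq 2CT+\sup_{s\leq T}|\widetilde N_s|,
\]
and conditionally on $\mathcal G_0$, Dambis--Dubins--Schwarz (or the exponential martingale inequality with bracket at most $CT$) gives
\[
 \mathbb P\Bigl(\sup_{s\leq T}|\widetilde N_s|>u\,\Big|\,\mathcal G_0\Bigr)\leq 2\exp\bigl(-u^2/(2CT)\bigr).
\]

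The separation lies inside $[c_0\delta_0^{\lambda_+},C_0\delta_0^{\lambda_-}]$ as soon as $L_t/L_0$ stays in a window of the form $[\lambda_-',\lambda_+']$, up to the additive constant $\log A$ absorbed into $c_0,C_0$. That is, it suffices that $|\ell_t-\ell_0|\leq\log(1+\eta')$ for a small $\eta'<\eta$.

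The main obstacle is that this Gaussian bound gives only a constant failure probability $\exp(-c/T)$, not the power $\delta_0^{q_T}$. To get the power, we exploit the improved bracket $d\langle\widetilde N\rangle\leq C/L_t$. On the event that $L$ has not yet left $[\tfrac12L_0,2L_0]$, the bracket of $\widetilde N$ up to time $T$ is at most $2CT/L_0$. Stopping at the exit time, the exponential inequality bounds the exit probability by
\[
 2\exp\bigl(-c\,u^2L_0/T\bigr)=2(A/\delta_0)^{-cu^2/T}.
\]
The drift is handled with $u=\log(1+\eta')/2$ and $T$ small enough that $2CT<u$. Since the exponent $cu^2/T$ tends to $\infty$ as $T\downarrow0$, small $T$ forces $cu^2/T>q_T$. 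We choose $\lambda_\pm$ from $\eta'$ first, then $q_T$, then $T$, and finally $C_0$ so that the bound is trivial for $\delta_0$ bounded below. This also explains the phrase ``sufficiently small $T$'' in the statement.

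All constants come from Lemmas~\ref{lem:planar-drift-bounds} and~\ref{lem:planar-noise-bounds}, so they are uniform in $r$ and in the initial pair. For $\mathcal G_0$-measurable points the same computation applies verbatim, since those lemmas already allow random labels.
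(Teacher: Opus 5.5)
Your proof is correct. It follows the paper's overall strategy but uses a different one-dimensional variable.

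The paper runs the same It\^o computation for $\log|\Delta_t|$, relying on the vanishing bilinear bracket and Lemmas~\ref{lem:planar-drift-bounds} and~\ref{lem:planar-noise-bounds}. It then passes to $u_t=\sqrt{\log(A/\delta_t)}$. For this variable the martingale bracket density is bounded by a constant and the drift by $Cu_t$. The integrating factors $e^{\mp Ct}$ therefore give the one-sided comparisons $e^{-Ct}u_t\leq u_0+\int_0^te^{-Cs}\,dN_s$ and $e^{Ct}u_t\geq u_0+\int_0^te^{Cs}\,dN_s$ with no localization at all. The power of $\delta_0$ comes from the deviation threshold $\varepsilon u_0$ growing like $\sqrt{\log(A/\delta_0)}$ against an $O(T)$ bracket. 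This yields $4\exp(-c\varepsilon^2u_0^2/(Te^{2CT}))=4A^{-q_T}\delta_0^{q_T}$, with $T$-dependent exponents $\lambda_\pm=e^{\pm2CT}(1\pm\varepsilon)^2$.

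You work instead with $\ell_t=\log\log(A/\delta_t)$. There the deviation threshold is a constant. The power comes from the bracket density $C/L_t$ being of order $1/L_0$ while $L$ stays in its window, which is why you need the stopping argument.

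Your route gives exponents $\lambda_+=1+\eta'$ and $\lambda_-=(1+\eta')^{-1}$ that do not depend on $T$; only $q_T$ does. The paper's unlocalized inequality gives something else for free: the upper-tail bound $\mathbb P(\sup_{t\leq T}u_t\geq R\mid\mathcal G_0)\leq2\exp\{-c(e^{-CT}R-u_0)_+^2/(Te^{2CT})\}$ at an arbitrary level $R$. This is exactly what Lemma~\ref{lem:planar-coarse-pair-tail} uses.

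In your variable, started from a bounded $L_0$, the crude bracket bound $C$ yields only $\exp\{-c(\log\log(1/\rho))^2/T\}$ for that coarse-pair estimate, which is not a power of $\rho$. To recover a power there you would have to restart your stopped argument at the hitting time of an intermediate level, for example half the target value of $L$.
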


\begin{proof}
Fix $0<r<1$ and a pair with $\delta_0>0$ almost surely. First work
on an arbitrary interval $[0,T]$ with $0<T\leq1$, and put
\[
 \Delta_t=Z_t^r(x)-Z_t^r(y),\qquad |\Delta_t|=\delta_t.
\]
Let $\mathcal Q_t$ be the predictable bracket density from
Lemma~\ref{lem:planar-noise-bounds}, so that
\[
 d\langle\Delta M^r,\overline{\Delta M^r}\rangle_t=\mathcal Q_t\,dt.
\]
For fixed $r<1$, both $g_0|_{\T}$ and $g_t^r|_{\T}$ are injective;
therefore $x\ne y$ implies
\[
 \Delta_t\ne0\qquad(0\leq t\leq T).
\]
Also $\delta_t\leq4$ by the exterior normalization. Thus, with $A>4e$,
\[
 u_t=\sqrt{\log(A/\delta_t)}\geq1
 \qquad(0\leq t\leq T).
\]
Write square brackets for quadratic variation. The zero complex
bilinear bracket gives
\begin{align*}
 d\log|\Delta_t|
 &=\operatorname{Re}\frac{d\Delta M_t^r}{\Delta_t}
   +\operatorname{Re}\frac{V_t^r(Z_t^r(x))-V_t^r(Z_t^r(y))}{\Delta_t}\,dt,
 \\
 \shortintertext{and}
 d[\log|\Delta|]_t
 &=\frac{\mathcal Q_t}{2|\Delta_t|^2}\,dt.
\end{align*}
It\^o's formula gives the local martingale $N$, with $N_0=0$, through
\[
 dN_t=-\frac1{2u_t}\operatorname{Re}
                  \frac{d\Delta M_t^r}{\Delta_t},
\]
with bracket
\[
 d\langle N\rangle_t=\frac{\mathcal Q_t}{8u_t^2|\Delta_t|^2}\,dt.
\]
The drift coefficient of $u$ is
\[
 \beta_t=-\frac1{2u_t}\operatorname{Re}
          \frac{V_t^r(Z_t^r(x))-V_t^r(Z_t^r(y))}{\Delta_t}
          -\frac{\mathcal Q_t}{16u_t^3|\Delta_t|^2}.
\]
Equations~\eqref{eq:planar-drift-bounds} and
\eqref{eq:planar-noise-bounds} give, for almost every $t$,
\[
 \frac{\mathcal Q_t}{|\Delta_t|^2}\leq Cu_t^2,
 \qquad
 \frac{|V_t^r(Z_t^r(x))-V_t^r(Z_t^r(y))|}{|\Delta_t|}
 \leq Cu_t^2.
\]
Since $u_t\geq1$, the preceding formulas give
\begin{equation}\label{eq:planar-log-SDE}
 du_t=dN_t+\beta_t\,dt.
\end{equation}
Moreover,
\begin{align*}
 d\langle N\rangle_t&\leq C\,dt,
 \\
 \shortintertext{and}
 |\beta_t|&\leq C u_t.
\end{align*}
The estimates hold for every $r<1$. In particular,
\begin{align*}
 e^{-Ct}u_t
 &\leq u_0+\int_0^t e^{-Cs}\,dN_s,
 \\
 \shortintertext{and}
 e^{Ct}u_t
 &\geq u_0+\int_0^t e^{Cs}\,dN_s.
\end{align*}
The brackets of these stochastic integrals are at most
$CTe^{2CT}$ on $[0,T]$. Their exponential maximal inequalities,
conditionally on $\mathcal G_0$, give, for $0<\varepsilon<1$,
\begin{equation}\label{eq:planar-two-point-tail}
 \mathbb P\left(\exists t\leq T:
 u_t\notin
 [e^{-CT}(1-\varepsilon)u_0,e^{CT}(1+\varepsilon)u_0]
 \mid\mathcal G_0\right)
 \leq4\exp\!\left(-\frac{c\varepsilon^2u_0^2}{Te^{2CT}}\right).
\end{equation}
Set
\begin{align*}
 \lambda_-&=e^{-2CT}(1-\varepsilon)^2,
 \qquad \lambda_+=e^{2CT}(1+\varepsilon)^2,\\
 \shortintertext{and}
 q_T&=\frac{c\varepsilon^2}{Te^{2CT}},
\end{align*}
where $c$ is the constant in \eqref{eq:planar-two-point-tail}. Since
\[
 \delta_t=Ae^{-u_t^2},
 \qquad
 u_0^2=\log(A/\delta_0),
\]
the two exit conditions are exactly
\begin{align*}
 u_t>\sqrt{\lambda_+}\,u_0
 &\quad\Longleftrightarrow\quad
 \delta_t<A^{1-\lambda_+}\delta_0^{\lambda_+},
 \\
 \shortintertext{and}
 u_t<\sqrt{\lambda_-}\,u_0
 &\quad\Longleftrightarrow\quad
 \delta_t>A^{1-\lambda_-}\delta_0^{\lambda_-}.
\end{align*}
Moreover,
\[
 4\exp(-q_Tu_0^2)=4A^{-q_T}\delta_0^{q_T}.
\]
Consequently, the power-tail estimate holds with
$c_0=A^{1-\lambda_+}$ and any deterministic
$C_0\geq\max\{A^{1-\lambda_-},4A^{-q_T},1\}$.

Choose $\varepsilon>0$ so that
\[
 (1-\varepsilon)^2>1-\eta,\qquad
 (1+\varepsilon)^2<1+\eta.
\]
As $T\downarrow0$,
\[
 \lambda_-\longrightarrow(1-\varepsilon)^2,\qquad
 \lambda_+\longrightarrow(1+\varepsilon)^2,\qquad q_T\longrightarrow\infty.
\]
Thus, for every sufficiently small $T\in(0,1]$,
\[
 1-\eta<\lambda_-<1<\lambda_+<1+\eta,
\]
and
\begin{equation}\label{eq:planar-exponent-choice}
 q_T>\frac{4(1+\eta)}{\lambda_-}.
\end{equation}
With these choices, the preceding calculation proves
\eqref{eq:planar-two-point-power-tail}.
\end{proof}

We next control pairs whose initial separation is bounded below.
The time and spatial exponents will be chosen together with those in
Lemma~\ref{lem:planar-two-point-tail}.

\begin{samepage}
\begin{lemma}\label{lem:planar-coarse-pair-tail}
There exist a time $T=T_\eta$ as in Lemma~\ref{lem:planar-two-point-tail}
and an exponent $p_T>0$, depending only on $\kappa$ and $\eta$, such
that, for every $d_0>0$,
\begin{equation}\label{eq:planar-coarse-pair-tail}
 \mathbb P\!\left(
   \inf_{t\leq T}\delta_t<\rho^{\lambda_-/2}
   \,\middle|\,\mathcal G_0\right)
 \leq C_{d_0}\rho^{p_T+4},\qquad 0<\rho<\rho_0,
\end{equation}
uniformly in $0<r<1$ and in pairs with $\delta_0\geq d_0/2$ almost
surely. Here $C_{d_0}<\infty$ and $\rho_0\in(0,1)$ depend only on
$\kappa,\eta,d_0$.
\end{lemma}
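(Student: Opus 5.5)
The plan is to reuse the logarithmic coordinate $u_t=\sqrt{\log(A/\delta_t)}$ and the semimartingale decomposition \eqref{eq:planar-log-SDE} from the proof of Lemma~\ref{lem:planar-two-point-tail}. That proof gives, for every $r<1$, the bounds $d\langle N\rangle_t\leq C\,dt$ and $|\beta_t|\leq Cu_t$, with $C$ depending only on $\kappa$. It also gives the comparison inequality
\[
 e^{-Ct}u_t\leq u_0+\int_0^te^{-Cs}\,dN_s .
\]
The event $\{\inf_{t\leq T}\delta_t<\rho^{\lambda_-/2}\}$ is the event that $u_t$ exceeds the level
\[
 U_\rho=\sqrt{\log A+\tfrac{\lambda_-}{2}\log(1/\rho)}
\]
at some time in $[0,T]$. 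Since $\delta_0\geq d_0/2$, the initial value is bounded: $u_0\leq u_*:=\sqrt{\log(2A/d_0)}$. The proposal is to bound the probability of reaching $U_\rho$ from a bounded start by a Gaussian tail in $U_\rho$. Such a tail is $\exp(-cU_\rho^2)$, which is a power of $\rho$.

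First, fix $T=T_\eta$ from Lemma~\ref{lem:planar-two-point-tail}, with $\lambda_-$ and $q_T$ in the stated ranges. From the comparison inequality, on $[0,T]$,
\[
 u_t\leq e^{CT}\Bigl(u_*+\sup_{s\leq T}\Bigl|\int_0^se^{-C\ell}\,dN_\ell\Bigr|\Bigr).
\]
The stochastic integral has bracket at most $CT$ on $[0,T]$. The conditional exponential martingale (Bernstein) inequality, given $\mathcal G_0$, then yields
\[
 \mathbb P\Bigl(\sup_{t\leq T}u_t\geq U\,\Big|\,\mathcal G_0\Bigr)\leq2\exp\Bigl(-\frac{(e^{-CT}U-u_*)_+^2}{2CT}\Bigr).
\]
Substituting $U=U_\rho$, for $\rho<\rho_0(d_0)$ we have $e^{-CT}U_\rho\geq 2u_*$. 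The right-hand side is then at most $2\exp(-e^{-2CT}U_\rho^2/(8CT))$, which is at most $C_{d_0}\rho^{\lambda_-e^{-2CT}/(16CT)}$.

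Second, the exponents must close, and this is the main obstacle. We need $\lambda_-e^{-2CT}/(16CT)\geq p_T+4$ for some $p_T>0$. As $T\downarrow0$ the left side tends to infinity, while $\lambda_-$ stays above $1-\eta$. So we shrink $T$ further, still within the admissible range of Lemma~\ref{lem:planar-two-point-tail} (which only requires $T$ sufficiently small), and define $T_\eta$ as this common choice. Concretely, set
\[
 p_T=\frac{\lambda_-e^{-2CT}}{16CT}-4>0 .
\]
The subtlety is that the exponents of Lemma~\ref{lem:planar-two-point-tail} themselves depend on $T$. Their constraints are stable as $T$ decreases, since $\lambda_\pm$ converge to fixed limits and $q_T$ increases. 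A single small $T$ therefore satisfies both lemmas, and the constants depend only on $\kappa$ and $\eta$.

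Finally, uniformity holds because every constant above came from Lemmas~\ref{lem:planar-drift-bounds} and \ref{lem:planar-noise-bounds}, which are uniform in $r$ and in the initial normalized pair. The only dependence on $d_0$ enters through $u_*$, hence through $C_{d_0}$ and $\rho_0$. The inequality holds almost surely, conditionally on $\mathcal G_0$, because $u_0\leq u_*$ holds almost surely by the hypothesis $\delta_0\geq d_0/2$.
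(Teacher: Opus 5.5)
Your proposal is correct and follows essentially the same route as the paper. Both arguments use the integrating-factor comparison $e^{-Ct}u_t\leq u_0+\int_0^te^{-Cs}\,dN_s$ and the conditional exponential maximal inequality at the level $\sqrt{\log(A\rho^{-\lambda_-/2})}$. Both bound $u_0\leq\sqrt{\log(2A/d_0)}$, absorb it once $\rho$ is small, and shrink $T$ until the resulting exponent exceeds $4$. The differences are cosmetic: you make the Bernstein constant explicit and use the bracket bound $CT$ for the $e^{-Cs}$ integral, whereas the paper carries the common bound $CTe^{2CT}$.
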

\end{samepage}

\begin{proof}
Use $u_t=\sqrt{\log(A/\delta_t)}$ from the preceding proof. For
$0<T\leq1$, its integrating-factor estimate and the exponential
maximal inequality give, for every deterministic level $R>0$,
\begin{equation}\label{eq:planar-u-upper-tail}
 \mathbb P\left(\sup_{t\leq T}u_t\geq R\,\middle|\,\mathcal G_0\right)
 \leq2\exp\left\{-\frac{c(e^{-CT}R-u_0)_+^2}
                           {Te^{2CT}}\right\}.
\end{equation}
Here $s_+=\max\{s,0\}$, and $c,C>0$ are independent of $T\leq1$,
$r$, and the initial pair. Set
\[
 c_p=\frac{ce^{-2C}}8.
\]
Choose $T=T_\eta$ sufficiently small that the preceding lemma holds
and
\[
 \frac{c_p\lambda_-}{Te^{2CT}}>4.
\]
This is possible because $\lambda_->1-\eta>0$. Define
\[
 p_T=\frac{c_p\lambda_-}{Te^{2CT}}-4>0.
\]
This choice of $T$ and $p_T$ is independent of $d_0$.

When pairs with $|x-y|\geq d_0/2$ exist, $d_0\leq8$ and
$u_0\leq\sqrt{\log(2A/d_0)}$. Put
\[
 R_\rho=\sqrt{\log(A\rho^{-\lambda_-/2})}.
\]
For all sufficiently small $\rho$, uniformly over these initial pairs,
$e^{-CT}R_\rho\geq2\sqrt{\log(2A/d_0)}$. Since $T\leq1$,
\begin{align*}
 (e^{-CT}R_\rho-u_0)^2
 &\geq\tfrac14e^{-2CT}R_\rho^2\\
 &\geq\frac{e^{-2C}\lambda_-}{8}\log\frac1\rho.
\end{align*}
Since $\delta_t=Ae^{-u_t^2}$, the event
$\{\inf_{t\leq T}\delta_t<\rho^{\lambda_-/2}\}$ is contained in
$\{\sup_{t\leq T}u_t\geq R_\rho\}$.
Applying \eqref{eq:planar-u-upper-tail} at $R_\rho$ therefore gives
\[
 \mathbb P\!\left(
   \inf_{t\leq T}\delta_t<\rho^{\lambda_-/2}
   \,\middle|\,\mathcal G_0\right)
 \leq C_{d_0}
 \exp\left\{-\frac{c_p\lambda_-}{Te^{2CT}}
                         \log\frac1\rho\right\}
 =C_{d_0}\rho^{p_T+4}.
\]
This is \eqref{eq:planar-coarse-pair-tail}.
\end{proof}

\section{Continuous Jordan realization}
\label{sec:Jordan-realization}

\subsection{Boundary tightness}

Retain the stationary response probability space and the regularized
planar motions from Section~\ref{sec:welding-support}; no H\"older
regularity of the initial curve is assumed here. Fix $\eta\in(0,1)$
and the common choices $T=T_\eta$, $\lambda_-$, $\lambda_+$, $q_T$,
and $p_T$ from Lemmas~\ref{lem:planar-two-point-tail} and
\ref{lem:planar-coarse-pair-tail}. Constants denoted by $c,C$ in this
subsection depend only on $\kappa$ and $\eta$. Conditional probabilities
are taken given $\mathcal G_0$; their $L^\infty$ norms refer to the
underlying probability measure.

Enumerate $\gamma_{0,\mathbb Q}$ using a fixed enumeration of
$\T_{\mathbb Q}$. For $m\geq1$, retain its first point, and then
successively retain the first point at distance greater than $2^{-m}$
from all retained points. This is a $\mathcal G_0$-measurable selection.
The disjoint disks of radius $2^{-m-1}$ about the retained points lie
in $B(0,2+2^{-m-1})$, so the procedure stops after finitely many steps.
Write $\mathcal N_m$ for the resulting set. The packing estimate and
density of $\gamma_{0,\mathbb Q}$ give
\begin{equation}\label{eq:planar-net-size}
 \#\mathcal N_m\leq C2^{2m},\qquad
 \sup_{x\in\gamma_0}\operatorname{dist}(x,\mathcal N_m)\leq2^{-m}.
\end{equation}

\begin{lemma}\label{lem:planar-upper-holder}
For each $0<r<1$, there is a random variable $H_r^+\geq1$, finite
almost surely, for which the bound
\begin{equation}\label{eq:planar-upper-random-constant}
 \sup_{t\leq T}|Z_t^r(x)-Z_t^r(y)|
 \leq H_r^+|x-y|^{\lambda_-}
\end{equation}
holds almost surely for all $x,y\in\gamma_0$ simultaneously.
Moreover,
\[
 \lim_{R\to\infty}\sup_{0<r<1}
 \left\|\mathbb P(H_r^+>R\mid\mathcal G_0)\right\|_\infty=0.
\]
\end{lemma}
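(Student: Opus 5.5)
The plan is a chaining argument over the nets $\mathcal N_m$ of \eqref{eq:planar-net-size}, driven by the two-point power tail of Lemma~\ref{lem:planar-two-point-tail} and the coarse estimate of Lemma~\ref{lem:planar-coarse-pair-tail}. The upper bound in \eqref{eq:planar-two-point-power-tail} controls only the upper exit $\delta_t>C_0\delta_0^{\lambda_-}$, and this is all we need. For $m\geq1$ consider the set $\mathcal P_m$ of pairs $(x,y)\in\mathcal N_m\times\mathcal N_{m+1}$, together with pairs inside $\mathcal N_m$, whose separation is at most $3\cdot2^{-m}$. Each such pair is a $\mathcal G_0$-measurable pair of labels, so Lemma~\ref{lem:planar-two-point-tail} applies to it conditionally on $\mathcal G_0$.

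\textbf{Bad event at level $m$.} Define $E_m$ as the event that some pair $(x,y)\in\mathcal P_m$ with $\delta_0=|x-y|>0$ has
\[
 \sup_{t\leq T}\delta_t>C_0\delta_0^{\lambda_-}.
\]
Each net point has $O(1)$ partners in $\mathcal P_m$, because both nets are $2^{-m-2}$-separated. Hence $\#\mathcal P_m\leq C2^{2m}$. The separation of a pair with $\delta_0>0$ in this family is at least $2^{-m-2}$, and at most $3\cdot2^{-m}$, so each individual probability is at most $C_0(3\cdot2^{-m})^{q_T}$. Therefore
\[
 \mathbb P(E_m\mid\mathcal G_0)\leq C2^{2m-mq_T}.
\]
Since $q_T>4$, this is summable, and the bound is uniform in $r$ and in the initial pair. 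Put $m_0=\inf\{m:E_k\text{ fails for all }k\geq m\}$. Then $\mathbb P(m_0>M\mid\mathcal G_0)\leq C2^{-M(q_T-2)}$, uniformly in $r$.

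\textbf{Chaining.} For $x\in\gamma_{0,\mathbb Q}$, choose nearest net points $\pi_m(x)\in\mathcal N_m$. Consecutive points satisfy $|\pi_m(x)-\pi_{m+1}(x)|\leq2^{-m}+2^{-m-1}$, so each pair $(\pi_m(x),\pi_{m+1}(x))$ lies in $\mathcal P_m$. Because $x$ is eventually a retained point of some net, the chain terminates at $x$. Now take $x,y\in\gamma_{0,\mathbb Q}$ with $2^{-m-1}<|x-y|\leq2^{-m}$, where $m\geq m_0$. Summing geometric increments on $\{k\geq m_0\}$ gives
\[
 \sup_{t\leq T}|Z_t^r(x)-Z_t^r(y)|\leq2\sum_{k\geq m}C_0(3\cdot2^{-k})^{\lambda_-}+C_0(3\cdot2^{-m})^{\lambda_-}\leq C|x-y|^{\lambda_-}.
 \]
The last term accounts for the link $\pi_m(x),\pi_m(y)$, which forms a pair in $\mathcal N_m$ at distance at most $3\cdot2^{-m}$.

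Pairs with $|x-y|>2^{-m_0}$ are handled crudely: $|Z_t^r|\leq2$ by the fixed-disk estimate, so the difference is at most $4\leq4\cdot2^{m_0\lambda_-}|x-y|^{\lambda_-}$. We therefore set
\[
 H_r^+=C+4\cdot2^{m_0\lambda_-}.
\]
This gives \eqref{eq:planar-upper-random-constant} on the dense set $\gamma_{0,\mathbb Q}$. For fixed $r$, the map $Z_t^r=g_t^r\circ g_0^{-1}$ is jointly continuous on $[0,T]\times\gamma_0$, so the bound extends to all $x,y\in\gamma_0$ simultaneously. The tail $\mathbb P(H_r^+>R\mid\mathcal G_0)\leq\mathbb P(m_0>c\log R\mid\mathcal G_0)$ tends to zero uniformly in $r$ and in $\omega$, which yields the $L^\infty$ statement.

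\textbf{Main obstacle.} The delicate step is the cardinality-versus-tail balance and the measurability of the random nets. The count $\#\mathcal P_m\leq C2^{2m}$ requires the packing bound for both nets, and the exponent condition $q_T>4(1+\eta)/\lambda_-$ from \eqref{eq:planar-exponent-choice} comfortably covers it. The nets are selected $\mathcal G_0$-measurably from the fixed enumeration, so each candidate pair is a finite $\mathcal G_0$-measurable family of labels. Applying the conditional estimate to a random finite family needs a union bound over deterministic indices (the $j$th and $k$th retained points) rather than over random points. I will therefore index pairs by positions in the enumeration of $\T_{\mathbb Q}$ and sum conditional probabilities. This is legitimate because Lemma~\ref{lem:planar-two-point-tail} holds for $\mathcal G_0$-measurable labels with bounds holding almost surely.
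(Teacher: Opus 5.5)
Your proposal is correct and is essentially the paper's proof: it uses the same pairs from $\mathcal N_m^2\cup(\mathcal N_m\times\mathcal N_{m+1})$, the conditional union bound from \eqref{eq:planar-two-point-power-tail}, nearest-point chaining, the crude bound $4$ for well-separated pairs, and a uniform tail for the first good level $m_0$. Two of your choices differ only cosmetically. Your count $\#\mathcal P_m\leq C2^{2m}$ is sharper than the paper's $C2^{4m}$, which is why the paper needs $q_T>4$. You also end the chain at rational points by eventual net membership, where the paper uses uniform continuity of the fixed-$r$ flow. One stated claim is false in general: cross-level pairs need not be at least $2^{-m-2}$ apart, since points of $\mathcal N_m$ and $\mathcal N_{m+1}$ can be arbitrarily close. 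You never use that lower bound, so the argument is unaffected.
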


\begin{proof}
Let $\mathcal P_m^+$ consist of the distinct pairs in
$\mathcal N_m^2\cup(\mathcal N_m\times\mathcal N_{m+1})$ whose
initial distance is at most $4\,2^{-m}$, and set
\[
 E_{m,+}^r=
 \bigcup_{(x,y)\in\mathcal P_m^+}
 \left\{\sup_{t\leq T}|Z_t^r(x)-Z_t^r(y)|
       >C_0|x-y|^{\lambda_-}\right\}.
\]
Since $\#\mathcal P_m^+\leq C2^{4m}$,
equations~\eqref{eq:planar-two-point-power-tail} and
\eqref{eq:planar-net-size} give
\begin{equation}\label{eq:planar-upper-net-failure}
 \mathbb P(E_{m,+}^r\mid\mathcal G_0)
 \leq C_0\sum_{(x,y)\in\mathcal P_m^+}|x-y|^{q_T}
 \leq C2^{-(q_T-4)m}.
\end{equation}
Hence
\[
 \sup_{0<r<1}\left\|\mathbb P\left(
    \bigcup_{m\geq m_0}E_{m,+}^r\,\middle|\,\mathcal G_0\right)
 \right\|_\infty
 \leq C\sum_{m\geq m_0}2^{-(q_T-4)m}\longrightarrow0.
\]
Fix $m_0\geq1$ and work outside $\bigcup_{m\geq m_0}E_{m,+}^r$.
For $x\in\gamma_0$, choose a nearest point $x_m\in\mathcal N_m$ at
each level $m\geq m_0$. Since
$|x_m-x_{m+1}|\leq2^{-m}+2^{-m-1}$, every distinct successive pair
belongs to $\mathcal P_m^+$. For $m'>m\geq m_0$,
\[
 \sup_{t\leq T}|Z_t^r(x_{m'})-Z_t^r(x_m)|
 \leq\sum_{\ell=m}^{m'-1}\sup_{t\leq T}
             |Z_t^r(x_{\ell+1})-Z_t^r(x_\ell)|
 \leq C\sum_{\ell=m}^{m'-1}2^{-\lambda_-\ell}.
\]
Uniform continuity of the fixed-$r$ boundary flow gives
\[
 \sup_{t\leq T}|Z_t^r(x_{m'})-Z_t^r(x)|\longrightarrow0
 \qquad(m'\to\infty).
\]
Letting $m'\to\infty$ therefore yields
\[
 \sup_{t\leq T}|Z_t^r(x)-Z_t^r(x_m)|
 \leq\frac{C2^{-\lambda_-m}}{1-2^{-\lambda_-}}.
\]
For $2^{-m-1}<|x-y|\leq2^{-m}$ with $m\geq m_0$, the nearest
points satisfy $|x_m-y_m|\leq3\,2^{-m}$. Thus the middle pair also
belongs to $\mathcal P_m^+$ unless its two points coincide, and
\[
 \begin{aligned}
 \sup_{t\leq T}|Z_t^r(x)-Z_t^r(y)|
 &\leq\sup_{t\leq T}|Z_t^r(x)-Z_t^r(x_m)|
       +\sup_{t\leq T}|Z_t^r(x_m)-Z_t^r(y_m)|\\
 &\quad+\sup_{t\leq T}|Z_t^r(y_m)-Z_t^r(y)|\\
 &\leq C2^{-\lambda_-m}
 \leq C2^{\lambda_-}|x-y|^{\lambda_-}.
 \end{aligned}
\]
Define the measurable extended random variable
\[
 H_r^+
 =1\vee
 \sup_{\substack{t\in\mathbb Q\cap[0,T]\\
                   x,y\in\gamma_{0,\mathbb Q},\ x\ne y}}
 \frac{|Z_t^r(x)-Z_t^r(y)|}{|x-y|^{\lambda_-}}
 \in[1,\infty].
\]
Continuity in $(t,x,y)$ makes this equal to the supremum over
$[0,T]\times\gamma_0^2$ off the diagonal. On
$\bigcap_{m\geq m_0}(E_{m,+}^r)^c$, the preceding chaining estimate
controls all pairs with $|x-y|\leq2^{-m_0}$. For the remaining pairs,
the normalized boundary lies in $2\overline{\mathbb D}$, so
\[
 \sup_{t\leq T}|Z_t^r(x)-Z_t^r(y)|\leq4.
\]
Consequently,
\[
 H_r^+\leq C2^{\lambda_-m_0}
 \quad\text{on }\bigcap_{m\geq m_0}(E_{m,+}^r)^c,
\]
after increasing $C$ if necessary.
Therefore
\[
 \sup_{0<r<1}\left\|\mathbb P\left(
 H_r^+>C2^{\lambda_-m_0}
 \,\middle|\,\mathcal G_0\right)\right\|_\infty
 \leq C\sum_{m\geq m_0}2^{-(q_T-4)m}\longrightarrow0.
\]
This proves both almost-sure finiteness and the asserted uniform tail
tightness.
\end{proof}

For the lower bound, choose $\lambda_*\in(\lambda_+,1+\eta)$ and set
\[
 \ell_m=\left\lceil\frac{\lambda_*m}{\lambda_-}\right\rceil+1.
\]
These finer nets will make the approximation error smaller than the
lower separation scale.

\begin{lemma}\label{lem:planar-lower-holder}
For each $0<r<1$, there is a random variable $H_r^-\geq1$, finite
almost surely, for which the bound
\begin{equation}\label{eq:planar-lower-random-constant}
 \inf_{t\leq T}|Z_t^r(x)-Z_t^r(y)|
 \geq (H_r^-)^{-1}|x-y|^{\lambda_+}
\end{equation}
holds almost surely for all $x,y\in\gamma_0$ simultaneously.
Moreover,
\[
 \lim_{R\to\infty}\sup_{0<r<1}
 \left\|\mathbb P(H_r^->R\mid\mathcal G_0)\right\|_\infty=0.
\]
\end{lemma}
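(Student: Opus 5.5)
The plan mirrors the chaining argument of Lemma~\ref{lem:planar-upper-holder}. The new difficulty is that lower bounds do not chain additively. We therefore work with two nets at once: a coarse net $\mathcal N_m$ at the separation scale $2^{-m}$, and a finer net $\mathcal N_{\ell_m}$ that approximates arbitrary boundary points.

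\emph{Net events.} For $m\geq m_0$, let $\mathcal P_m^-$ consist of the distinct pairs in $\mathcal N_{\ell_m}^2$ whose initial distance lies in $[2^{-m-2},2^{-m+1}]$; by \eqref{eq:planar-net-size}, $\#\mathcal P_m^-\leq C2^{4\ell_m}$. Define
\[
 E_{m,-}^r=\bigcup_{(x',y')\in\mathcal P_m^-}
 \Bigl\{\inf_{t\leq T}|Z_t^r(x')-Z_t^r(y')|<c_0|x'-y'|^{\lambda_+}\Bigr\}.
\]
By \eqref{eq:planar-two-point-power-tail},
\[
 \mathbb P(E_{m,-}^r\mid\mathcal G_0)\leq C2^{4\ell_m}2^{-q_Tm}\leq C2^{-(q_T-4\lambda_*/\lambda_--8)m}.
\]
Since $q_T>4(1+\eta)/\lambda_-$ and $\lambda_*<1+\eta$, we have $4\lambda_*/\lambda_-<q_T$. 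The remaining constant offset is absorbed by shrinking $T$, because $q_T\to\infty$ while $\lambda_\pm$ stay in range. This is the point where the exponent condition \eqref{eq:planar-exponent-choice} is used, and it is where I expect the bookkeeping to be tightest. If the margin proves insufficient, I would replace the full product $\mathcal N_{\ell_m}^2$ by pairs that are fine-net neighbours of coarse-net pairs, which reduces the count to $C2^{2\ell_m}\cdot C2^{2(\ell_m-m)}$. The resulting bound is summable in $m$ uniformly in $r$, so
\[
 \sup_r\bigl\|\mathbb P\bigl(\bigcup_{m\ge m_0}(E_{m,-}^r\cup E_{m,+}^r)\mid\mathcal G_0\bigr)\bigr\|_\infty\to0\quad\text{as }m_0\to\infty.
\]

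\emph{Approximation step.} Work outside $\bigcup_{m\geq m_0}(E_{m,-}^r\cup E_{m,+}^r)$. Given $x,y\in\gamma_0$ with $2^{-m-1}<|x-y|\leq2^{-m}$, pick nearest points $x',y'\in\mathcal N_{\ell_m}$. Then $|x-x'|,|y-y'|\leq2^{-\ell_m}\leq2^{-m-2}$, so $(x',y')\in\mathcal P_m^-$, or the two points coincide, which is excluded for large $m_0$. The upper chaining from the proof of Lemma~\ref{lem:planar-upper-holder}, run from level $\ell_m$ on the complement of the $E^r_{\cdot,+}$ events, gives
\[
 \sup_{t\leq T}|Z_t^r(x)-Z_t^r(x')|\leq C2^{-\lambda_-\ell_m}\leq C2^{-\lambda_*m}2^{-\lambda_-}.
\]
The same bound holds for $y$. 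The net event gives the lower bound
\[
 \inf_t|Z_t^r(x')-Z_t^r(y')|\geq c_02^{-(m+2)\lambda_+}.
\]
Since $\lambda_*>\lambda_+$, the approximation errors are at most $\tfrac14c_02^{-(m+2)\lambda_+}$ once $m\geq m_0$ is large, with $m_0$ deterministic and depending only on $\kappa,\eta$. The triangle inequality then yields
\[
 \inf_{t\leq T}|Z_t^r(x)-Z_t^r(y)|\geq\tfrac12c_02^{-(m+2)\lambda_+}\geq c|x-y|^{\lambda_+}.
\]

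\emph{Coarse pairs.} Pairs with $|x-y|>2^{-m_0}$ cannot be chained. For these we use Lemma~\ref{lem:planar-coarse-pair-tail} with $d_0=2^{-m_0}$. Apply it to the finitely many pairs in $\mathcal N_{\ell_{m_0}}^2$ at distance at least $2^{-m_0-1}$, at a small level $\rho$. With failure probability at most $C_{d_0}\#\mathcal N_{\ell_{m_0}}^2\rho^{p_T+4}$, every such pair stays at distance at least $\rho^{\lambda_-/2}$. If this threshold exceeds the approximation error $C2^{-\lambda_-\ell_{m_0}}$ by a factor of two, the same triangle argument bounds all coarse pairs below by $\tfrac12\rho^{\lambda_-/2}$. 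If not, we first enlarge the fine-net level and then choose $\rho$ so that $\rho^{\lambda_-/2}$ exceeds twice the chaining error; the failure probability still tends to zero as $\rho\to0$ for fixed $m_0$.

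\emph{Conclusion.} Define
\[
 H_r^-=1\vee\sup|x-y|^{\lambda_+}\big/\inf_{t\le T}|Z^r_t(x)-Z^r_t(y)|,
\]
taken over rational times and points of $\gamma_{0,\mathbb Q}$ and extended by continuity. Injectivity of the fixed-$r$ boundary flow makes the denominators positive, so $H_r^-$ is finite almost surely. On the complement of the events above, $H_r^-\leq C(m_0,\rho)$. Letting $m_0\to\infty$ and then $\rho\to0$ gives the uniform conditional tail bound. The main obstacle is the exponent count in the net step: the fine net must be fine enough that approximation errors beat $2^{-\lambda_+m}$, yet coarse enough that the union bound still sums.
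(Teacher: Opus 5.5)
Your architecture is the paper's. You use windowed pair events on the fine nets $\mathcal N_{\ell_m}$ and a union bound from \eqref{eq:planar-two-point-power-tail}, summed via $q_T>4(1+\eta)/\lambda_-$. You then transfer to arbitrary points by upper H\"older control, treat separated pairs with Lemma~\ref{lem:planar-coarse-pair-tail}, and finish with a parameter choice for the uniform tail. The one real variation is the transfer step. The paper works on $\{H_r^+\le R\}$, so its $m_0$ depends on $R$. You instead use the deterministic chaining constant valid on $\bigcap_{m\ge m_0}(E_{m,+}^r)^c$ from the proof of Lemma~\ref{lem:planar-upper-holder}. That is legitimate, and it makes $m_0$ depend only on $\kappa,\eta$.

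The coarse-pair step fails as you first write it. With the level $\ell_{m_0}$ fixed, the chaining error $C2^{-\lambda_-\ell_{m_0}}$ is fixed while the threshold $\rho^{\lambda_-/2}$ tends to $0$. So in the small-$\rho$ regime you need, the threshold lies \emph{below} the error. Your fallback, ``first enlarge the level, then choose $\rho$'', only produces a lower bound on $\rho$. The net scale must be tied to $\rho$: use $\mathcal N_L$ with $2^{-L}\asymp\rho$, which gives $L\ge m_0+2$ automatically for small $\rho$. Then the pair count is at most $C\rho^{-4}$, and the union bound with \eqref{eq:planar-coarse-pair-tail} gives failure probability at most $C_{d_0}\rho^{p_T}\to0$. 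At the same time the chaining error $C\rho^{\lambda_-}$ is below $\tfrac14\rho^{\lambda_-/2}$ for small $\rho$. This is why the coarse lemma carries the exponent $p_T+4$, and it is exactly the paper's $\rho$-net $\mathcal N(\rho)$. With this coupling, your claim that the coarse failure probability vanishes for fixed $m_0$ becomes correct.

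There are also three smaller slips.
\begin{itemize}
\item Since $4\ell_m\le4\lambda_*m/\lambda_-+8$, the offset is the constant factor $2^8$, not $2^{8m}$. The condition $q_T>4\lambda_*/\lambda_-$, guaranteed by \eqref{eq:planar-exponent-choice}, already gives summability. Shrinking $T$ is unnecessary and would change the fixed $T_\eta$ of the statement.
\item Placing $(x',y')$ in the window needs $2^{-\ell_m}\le2^{-m-3}$. The bound $2^{-\ell_m}\le2^{-m-2}$ only gives $|x'-y'|>0$. The stronger bound holds for large $m$ because $\ell_m-m\to\infty$.
\item Positive denominators do not make the supremum defining $H_r^-$ finite. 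Injectivity only ensures that the supremum over rational times and points equals the full supremum. Finiteness almost surely comes from the tail bound you prove afterwards.
\end{itemize}
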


\begin{proof}
Let $\mathcal P_m^-$ consist of the pairs in $\mathcal N_{\ell_m}^2$
whose initial distances lie in $[2^{-m-2},2^{-m+2}]$. Define
\[
 E_{m,-}^r=
 \bigcup_{(x,y)\in\mathcal P_m^-}
 \left\{\inf_{t\leq T}|Z_t^r(x)-Z_t^r(y)|
       <c_0|x-y|^{\lambda_+}\right\}.
\]
Since $\#\mathcal P_m^-\leq C2^{4\ell_m}$,
\begin{equation}\label{eq:planar-lower-net-failure}
 \mathbb P(E_{m,-}^r\mid\mathcal G_0)
 \leq C2^{4\ell_m-q_Tm}
 \leq C2^{-(q_T-4\lambda_*/\lambda_-)m}.
\end{equation}
Take $2^{-m-1}<|x-y|\leq2^{-m}$ and choose
$x',y'\in\mathcal N_{\ell_m}$ with
\[
 |x-x'|\leq2^{-\ell_m},\qquad |y-y'|\leq2^{-\ell_m}.
\]
Since $\lambda_*/\lambda_->1$, for all sufficiently large $m$,
$2^{-\ell_m}\leq2^{-m-4}$ and hence
\[
 2^{-m-2}\leq|x'-y'|\leq2^{-m+2}.
\]
Thus $(x',y')\in\mathcal P_m^-$. On
$\{H_r^+\leq R\}\setminus E_{m,-}^r$,
\begin{align*}
 \inf_{t\leq T}|Z_t^r(x)-Z_t^r(y)|
 &\geq c2^{-\lambda_+m}-2CR2^{-\lambda_-\ell_m}\\
 &\geq c2^{-\lambda_+m}-2CR2^{-\lambda_-}2^{-\lambda_*m}\\
 &\geq\frac c2\,2^{-\lambda_+m}
\end{align*}
for all sufficiently large $m$, because
\[
 \frac{2CR2^{-\lambda_-\ell_m}}{c2^{-\lambda_+m}}
 \leq\frac{2CR}{c}\,2^{-\lambda_-}
             2^{-(\lambda_*-\lambda_+)m}\longrightarrow0.
\]
Moreover,
\[
 q_T-\frac{4\lambda_*}{\lambda_-}
 >q_T-\frac{4(1+\eta)}{\lambda_-}>0,
\]
so the failure probabilities in \eqref{eq:planar-lower-net-failure}
are summable.

For the remaining scales fix $d_0>0$, let
$\mathcal N(\rho)$ be the $\rho$-net obtained by the same measurable
selection used above, and define
\[
 E_{\rho,d_0}^r=
 \bigcup_{\substack{x,y\in\mathcal N(\rho)\\|x-y|\geq d_0/2}}
 \left\{\inf_{t\leq T}|Z_t^r(x)-Z_t^r(y)|
                 <\rho^{\lambda_-/2}\right\}.
\]
The packing bound gives
$\#(\mathcal N(\rho)^2)\leq C\rho^{-4}$. Applying
\eqref{eq:planar-coarse-pair-tail} to these pairs and taking their
union gives
\begin{equation}\label{eq:planar-coarse-net-failure}
 \mathbb P(E_{\rho,d_0}^r\mid\mathcal G_0)
 \leq C_{d_0}\rho^{p_T}\longrightarrow0
 \qquad(\rho\downarrow0),
\end{equation}
uniformly in $0<r<1$.
Define the measurable extended random variable
\[
 H_r^-
 =1\vee
 \sup_{\substack{t\in\mathbb Q\cap[0,T]\\
                   x,y\in\gamma_{0,\mathbb Q},\ x\ne y}}
 \frac{|x-y|^{\lambda_+}}{|Z_t^r(x)-Z_t^r(y)|}
 \in[1,\infty].
\]
By continuity, this equals the supremum over all times and all
distinct pairs. To prove its uniform tail bound, fix $\varepsilon_0>0$.
First choose $R\geq1$ so that
\[
 \sup_{0<r<1}
 \left\|\mathbb P(H_r^+>R\mid\mathcal G_0)\right\|_\infty
 <\frac{\varepsilon_0}{3}.
\]
Next choose $m_0$ so large that the fine-net lower bound holds for
every $m\geq m_0$ on $\{H_r^+\leq R\}\setminus E_{m,-}^r$ and
\[
 \sup_{0<r<1}
 \left\|\mathbb P\left(
   \bigcup_{m\geq m_0}E_{m,-}^r
   \,\middle|\,\mathcal G_0\right)\right\|_\infty
 <\frac{\varepsilon_0}{3}.
\]
Set $d_0=2^{-m_0-3}$. With $m_0$ fixed, choose
$0<\rho<d_0/8$, below the threshold in
Lemma~\ref{lem:planar-coarse-pair-tail}, such that
\begin{samepage}
\begin{align*}
 2CR\rho^{\lambda_-}&\leq\tfrac12\rho^{\lambda_-/2},\\*
 \shortintertext{and}
 \sup_{0<r<1}
 \left\|\mathbb P(E_{\rho,d_0}^r\mid\mathcal G_0)\right\|_\infty
 &<\frac{\varepsilon_0}{3}.
\end{align*}
\end{samepage}
If $|x-y|\geq d_0$ and $x_\rho,y_\rho\in\mathcal N(\rho)$ are
nearest net points, then $\rho<d_0/8$ gives
\[
 |x_\rho-y_\rho|\geq |x-y|-2\rho>d_0/2,
\]
so the pair $(x_\rho,y_\rho)$ is among those defining
$E_{\rho,d_0}^r$. On $\{H_r^+\leq R\}\setminus E_{\rho,d_0}^r$,
\[
 \begin{aligned}
 \inf_{t\leq T}|Z_t^r(x)-Z_t^r(y)|
 &\geq\inf_{t\leq T}|Z_t^r(x_\rho)-Z_t^r(y_\rho)|
       -R|x-x_\rho|^{\lambda_-}-R|y-y_\rho|^{\lambda_-}\\
 &\geq\rho^{\lambda_-/2}-2R\rho^{\lambda_-}
 \geq\tfrac12\rho^{\lambda_-/2},
 \end{aligned}
\]
where the preceding choice of $\rho$ applies after taking $C\geq1$.
Combining this with the fine-net estimate, outside the three failure
events we obtain, for a deterministic $c_*>0$,
\[
 \inf_{t\leq T}|Z_t^r(x)-Z_t^r(y)|
 \geq
 \begin{cases}
  c_*|x-y|^{\lambda_+},&0<|x-y|<d_0,\\
  \frac12\rho^{\lambda_-/2},&|x-y|\geq d_0.
 \end{cases}
\]
Since every boundary separation is at most $4$,
\[
 H_r^-\leq
 C_{\varepsilon_0}:=
 \max\{1,c_*^{-1},\,2\,4^{\lambda_+}\rho^{-\lambda_-/2}\}
\]
on that event. Equivalently,
\[
 \{H_r^->C_{\varepsilon_0}\}
 \subset\{H_r^+>R\}\cup\bigcup_{m\geq m_0}E_{m,-}^r
                         \cup E_{\rho,d_0}^r.
\]
The conditional union bound and the three parameter choices give
\[
 \sup_{0<r<1}
 \left\|\mathbb P(H_r^->C_{\varepsilon_0}\mid\mathcal G_0)\right\|_\infty
 <\frac{\varepsilon_0}{3}+\frac{\varepsilon_0}{3}
                        +\frac{\varepsilon_0}{3}=\varepsilon_0.
\]
This proves the lemma.
\end{proof}

Combining Lemmas~\ref{lem:planar-upper-holder} and
\ref{lem:planar-lower-holder}, keep the same $T=T_\eta$ and set
\[
 C_\eta=\max\{1,4^{\lambda_--(1-\eta)},
                 4^{(1+\eta)-\lambda_+}\}.
\]
With $H_r=C_\eta(H_r^+\vee H_r^-)$, for each $0<r<1$ the following
bounds hold almost surely, simultaneously for every $t\in[0,T_\eta]$ and
$x,y\in\gamma_0$,
\begin{equation}\label{eq:planar-biHolder}
 H_r^{-1}|x-y|^{1+\eta}
 \leq |Z_t^r(x)-Z_t^r(y)|
 \leq H_r|x-y|^{1-\eta}.
\end{equation}
Moreover,
\begin{equation}\label{eq:planar-Holder-tightness}
 \lim_{R\to\infty}\sup_{0<r<1}
 \left\|\mathbb P(H_r>R\mid\mathcal G_0)\right\|_\infty=0.
\end{equation}

Regard $g^r:(t,\zeta)\mapsto g_t^r(\zeta)$ as a random element of
$C([0,T_\eta]\times\T;\mathbb C)$, equipped with the uniform norm.

\begin{corollary}\label{cor:planar-tightness}
The laws of $g^r$, $0<r<1$, are tight in this space. For every
subsequential weak limit $g$, almost surely,
\[
 g_t(\zeta)\ne g_t(\xi)
 \qquad(0\leq t\leq T_\eta,\ \zeta,\xi\in\T,\ \zeta\ne\xi).
\]
\end{corollary}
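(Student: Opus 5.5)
Tightness follows from Arzel\`a--Ascoli once we have equicontinuity in space and in time, with high probability and uniformly in $r$; injectivity of limits comes from the lower bound in \eqref{eq:planar-biHolder}. The plan is to work in the label coordinate $\zeta\in\T$, using $x=g_0(\zeta)$ and $Z_t^r(x)=g_t^r(\zeta)$.

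\emph{Spatial modulus.} Since $g_0$ is a fixed (random, $\mathcal G_0$-measurable) homeomorphism of $\T$ onto $\gamma_0$, it has a modulus of continuity $\omega_{g_0}$. The upper bound in \eqref{eq:planar-biHolder} gives
\[
 \sup_{t\leq T_\eta}|g_t^r(\zeta)-g_t^r(\xi)|\leq H_r\,\omega_{g_0}(|\zeta-\xi|)^{1-\eta}.
\]
By \eqref{eq:planar-Holder-tightness}, the event $\{H_r\leq R\}$ has probability close to one, uniformly in $r$. So given $\varepsilon$, we choose $R$ and then a deterministic modulus $\omega_\varepsilon$ with $\mathbb P(\omega_{g_0}\leq\omega_\varepsilon)\geq1-\varepsilon$; this is possible because the law of $g_0|_\T$ is a single tight measure on $C(\T)$. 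Together these give uniform spatial equicontinuity outside a set of probability at most $2\varepsilon$. Uniform boundedness is immediate, since every curve lies in $2\overline{\mathbb D}$.

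\emph{Time modulus.} For a fixed label we have $Z^r=x+M^r+\int V^r\,dt$. Here $|V^r|\leq C$ by Lemma~\ref{lem:planar-drift-bounds}, and $d\langle M^r,\overline{M^r}\rangle\leq C\,dt$ by Lemma~\ref{lem:planar-noise-bounds}. BDG with exponent $p$ then gives
\[
 \mathbb E\,|Z_t^r(x)-Z_s^r(x)|^{p}\leq C_p|t-s|^{p/2},
\]
uniformly in $r$ and in the $\mathcal G_0$-measurable label. To obtain time equicontinuity simultaneously for all labels, I would apply Kolmogorov chaining in $t$ to a finite $\delta$-net of labels. Between net points I would use the spatial modulus from the previous step, which holds uniformly in $t$. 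Concretely, the oscillation over $|t-s|\leq h$ is at most
\[
 \max_{\text{net}}\operatorname{osc}_h+2\sup_{t}\sup_{|\zeta-\xi|\leq\delta}|g_t^r(\zeta)-g_t^r(\xi)|,
\]
and both terms are small with high probability once we take $\delta$ small and then $h$ small. Combining the two moduli yields tightness in $C([0,T_\eta]\times\T)$.

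\emph{Injectivity of limits.} This is the main obstacle, because the lower bound must survive weak convergence. On $\{H_r\leq R\}\cap\{\omega_{g_0}^{-1}\text{ controlled}\}$, the relevant constraint is a two-sided inequality with deterministic inverse modulus $\omega'$ bounding $|x-y|$ below in terms of $|\zeta-\xi|$ (again by tightness of the single law of $g_0^{-1}$). On that event,
\[
 |g_t^r(\zeta)-g_t^r(\xi)|\geq R^{-1}\omega'(|\zeta-\xi|)^{1+\eta}
\]
for all $t,\zeta,\xi$. This defines a closed subset $F_R$ of $C([0,T_\eta]\times\T)$, and by \eqref{eq:planar-Holder-tightness} its probability is at least $1-\varepsilon_R$ uniformly in $r$, with $\varepsilon_R\to0$. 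By the portmanteau theorem, any subsequential limit satisfies $\mathbb P(g\in F_R)\geq1-\varepsilon_R$. Taking the union over $R$ shows that $g\in\bigcup_R F_R$ almost surely, and every element of $\bigcup_R F_R$ is injective in $\zeta$ for each $t$.
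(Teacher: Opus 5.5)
Your proposal is correct. The tightness half follows the paper's proof: spatial equicontinuity from the upper bound in \eqref{eq:planar-biHolder} together with \eqref{eq:planar-Holder-tightness} and the modulus of $g_0$, and the time modulus from conditional BDG and dyadic chaining at finitely many deterministic net labels, glued by the spatial modulus.

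You diverge from the paper at injectivity. The paper passes to a further joint weak limit of $(g^{r},g_0,H_r)$ and takes a Skorokhod representation. It then lets $|g^{r_j}_t(\zeta)-g^{r_j}_t(\xi)|\geq H_{r_j}^{-1}|g_0^{(j)}(\zeta)-g_0^{(j)}(\xi)|^{1+\eta}$ pass to the limit, obtaining $|g_t(\zeta)-g_t(\xi)|\geq H^{-1}|g_0(\zeta)-g_0(\xi)|^{1+\eta}$ for a copy of $g_0$ with the original injective law. You instead make the random factors deterministic on high-probability events and apply the portmanteau theorem to a closed subset of $C([0,T_\eta]\times\T)$.

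Your route needs no coupling and is more elementary. However, it only yields injectivity of the limit. The paper's version keeps a pathwise comparison with $g_0$, and that is the form re-derived in Proposition~\ref{prop:continuous-geometric-lift} and used in Corollary~\ref{cor:geometric-lift-estimates}.

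Two bookkeeping points should be fixed. First, choose the deterministic inverse modulus together with $R$, say with failure probability at most $1/R$. Then $\sup_r\mathbb P(g^r\notin F_R)\leq\sup_r\mathbb P(H_r>R)+1/R\to0$, rather than a bound depending only on $R$ through $H_r$.

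Second, the existence of that modulus does not come from ``tightness of the law of $g_0^{-1}$'', since $g_0^{-1}$ lives on a random curve. It comes from the almost sure positivity of the measurable quantity $\inf_{|\zeta-\xi|\geq1/k}|g_0(\zeta)-g_0(\xi)|$ for each $k$. Pick $c_k>0$ with failure probability at most $2^{-k}/R$, and set the modulus equal to $c_k$ on $[1/k,1/(k-1))$.
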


\begin{proof}
Set $T=T_\eta$. The drift bound, the single-point bracket bound, and
the conditional Burkholder--Davis--Gundy inequality give, for every
$q\geq2$ and every $\mathcal G_0$-measurable $x\in\gamma_0$,
\[
 \mathbb E\!\left[
 |Z_t^r(x)-Z_s^r(x)|^q\,\middle|\,\mathcal G_0\right]
       \leq C_q\bigl(|t-s|^{q/2}+|t-s|^q\bigr).
\]
Choose $q>2$ and $0<\gamma_{\rm t}<1/2-1/q$, and put
$t_{m,k}=kT2^{-m}$ for $0\leq k\leq2^m$. The increment bound gives
\[
 \sum_{m\geq0}2^{\gamma_{\rm t} qm}
 \mathbb E\!\left[
  \max_{k<2^m}|Z^r_{t_{m,k+1}}(x)-Z^r_{t_{m,k}}(x)|^q
                 \,\middle|\,\mathcal G_0\right]
 \leq C_{q,T}\sum_{m\geq0}2^{-(q/2-1-\gamma_{\rm t} q)m}<\infty.
\]
For $z\in C([0,T];\mathbb C)$, write
\[
 [z]_{C^{\gamma_{\rm t}}([0,T])}
 =\sup_{0\leq s<t\leq T}\frac{|z(t)-z(s)|}{|t-s|^{\gamma_{\rm t}}}.
\]
Dyadic chaining gives
\begin{align*}
 [z]_{C^{\gamma_{\rm t}}([0,T])}
 &\leq C_{\gamma_{\rm t},T}\sup_{m\geq0}2^{\gamma_{\rm t} m}
       \max_{k<2^m}|z(t_{m,k+1})-z(t_{m,k})|,\\
 \shortintertext{and hence}
 [z]_{C^{\gamma_{\rm t}}([0,T])}^q
 &\leq C_{q,\gamma_{\rm t},T}\sum_{m\geq0}2^{\gamma_{\rm t} qm}
       \max_{k<2^m}|z(t_{m,k+1})-z(t_{m,k})|^q.
\end{align*}
Apply this inequality to $z=Z^r(x)$ and take the conditional expectation.
Thus, for every $\mathcal G_0$-measurable $x$,
\begin{equation}\label{eq:planar-time-Holder-moment}
 \sup_{0<r<1}
 \left\|\mathbb E\!\left[
  [Z^r(x)]_{C^{\gamma_{\rm t}}([0,T])}^q\,\middle|\,\mathcal G_0\right]
 \right\|_\infty\leq C_{q,\gamma_{\rm t},T}.
\end{equation}
Let $\omega_{g_0}(\delta)$ be the spatial modulus of $g_0$ and set
\[
 \omega_{\rm time}(g^r,\rho)
 =\sup_{\substack{s,t\leq T\\|t-s|\leq\rho}}
       \sup_{\zeta\in\T}|g_t^r(\zeta)-g_s^r(\zeta)|.
\]
Take a finite $\delta$-net $\zeta_1,\ldots,\zeta_J$ in $\T$.
For $R_0\geq1$ and $v_0>0$, on
$\{H_r\leq R_0,\ \omega_{g_0}(\delta)\leq v_0\}$,
\[
 \omega_{\rm time}(g^r,\rho)
 \leq2R_0v_0^{1-\eta}
   +\max_{j\leq J}\sup_{|t-s|\leq\rho}
                   |g_t^r(\zeta_j)-g_s^r(\zeta_j)|.
\]
For $a>0$ and every $j\leq J$, \eqref{eq:planar-time-Holder-moment}
and Markov's inequality give
\[
 \sup_{0<r<1}\mathbb P\left(
   \sup_{|t-s|\leq\rho}|g_t^r(\zeta_j)-g_s^r(\zeta_j)|>a/2\right)
 \leq C_{q,\gamma_{\rm t},T}(a/2)^{-q}\rho^{\gamma_{\rm t} q}.
\]
Consequently, whenever $2R_0v_0^{1-\eta}\leq a/2$,
\[
 \begin{aligned}
 \sup_{0<r<1}\mathbb P(\omega_{\rm time}(g^r,\rho)>a)
 &\leq\sup_{0<r<1}\mathbb P(H_r>R_0)
       +\mathbb P(\omega_{g_0}(\delta)>v_0)\\
 &\quad+\sum_{j=1}^J\sup_{0<r<1}\mathbb P\left(
   \sup_{|t-s|\leq\rho}|g_t^r(\zeta_j)-g_s^r(\zeta_j)|>a/2\right)\\
 &\leq\sup_{0<r<1}\mathbb P(H_r>R_0)
       +\mathbb P(\omega_{g_0}(\delta)>v_0)
       +JC_{q,\gamma_{\rm t},T}(a/2)^{-q}\rho^{\gamma_{\rm t} q}.
 \end{aligned}
\]
Given $a,\varepsilon_0>0$, first choose $R_0\geq1$ so that
\[
 \sup_{0<r<1}\mathbb P(H_r>R_0)<\frac{\varepsilon_0}{3}.
\]
Set $v_0=(a/(4R_0))^{1/(1-\eta)}$ and choose $\delta>0$ so that
\[
 \mathbb P(\omega_{g_0}(\delta)>v_0)<\frac{\varepsilon_0}{3}.
\]
With $J=J(\delta)$ fixed, choose $\rho>0$ such that
\begin{equation}\label{eq:time-tightness-choices}
 JC_{q,\gamma_{\rm t},T}(a/2)^{-q}\rho^{\gamma_{\rm t} q}<\frac{\varepsilon_0}{3}.
\end{equation}
The combined bound is then less than $\varepsilon_0$, proving
\[
 \lim_{\rho\downarrow0}\sup_{0<r<1}
 \mathbb P(\omega_{\rm time}(g^r,\rho)>a)=0.
\]
The spatial estimate gives
\[
 \sup_{t\leq T}|g_t^r(\zeta)-g_t^r(\xi)|
       \leq H_r\,\omega_{g_0}(|\zeta-\xi|)^{1-\eta}.
\]
For $a>0$ and $R_0\geq1$, this gives
\[
 \begin{aligned}
 &\sup_{0<r<1}\mathbb P\left(
  \sup_{\substack{t\leq T,\ \zeta,\xi\in\T\\|\zeta-\xi|\leq\delta}}
       |g_t^r(\zeta)-g_t^r(\xi)|>a\right)\\
 &\quad\leq\sup_{0<r<1}\mathbb P(H_r>R_0)
       +\mathbb P\left(\omega_{g_0}(\delta)>
                  (a/R_0)^{1/(1-\eta)}\right).
 \end{aligned}
\]
First let $\delta\downarrow0$, then $R_0\to\infty$. The spatial
failure probability tends to zero, uniformly in $r$. Together with
the time estimate and the normalization bound
\[
 \sup_{0<r<1}\sup_{0\leq t\leq T}\sup_{\zeta\in\T}
 |g_t^r(\zeta)|\leq2,
\]
this proves tightness by Arzel\`a--Ascoli.

For any convergent subsequence, pass to a further joint limit with
$(g_0,H_r)$ and use a Skorokhod representation. The copied initial
maps converge uniformly to $g_0$, whose law is the original injective
boundary-map law, while $H_r\to H<\infty$. Hence the lower bound
passes to the uniform limit:
\[
 |g_t(\zeta)-g_t(\xi)|
 \geq H^{-1}|g_0(\zeta)-g_0(\xi)|^{1+\eta}>0
 \quad(\zeta\ne\xi,\ 0\leq t\leq T),
\]
which proves boundary injectivity at every time.
\end{proof}

\subsection{The continuous geometric lift}

Set $T_*=T_{1/2}$ using the preceding construction. For a circle
function $g$ and
$0<\beta\leq1$, write
\[
 [g]_\beta=\sup_{\zeta\ne\xi\in\T}
                 \frac{|g(\zeta)-g(\xi)|}{|\zeta-\xi|^\beta}.
\]

\begin{proposition}
\label{prop:continuous-geometric-lift}
The stationary response path law admits a continuous $Y$-valued
lift $Y_t=(X_t,f_t,g_t)$. For each fixed $t\geq0$,
\begin{align*}
 Y_t&=\iota(X_t)\quad\text{almost surely},\\
 \shortintertext{and consequently}
 \operatorname{Law}(Y_t)&=\nu^Y.
\end{align*}
\end{proposition}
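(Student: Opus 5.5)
The plan is to build the lift on short time blocks of length $T_*=T_{1/2}$ by passing to the limit $r\uparrow1$ in the regularized welding maps. The uniqueness of the limit will come from removability of the limiting curve at each fixed time. The blocks are then concatenated using stationarity and the Markov property.

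\textbf{Step 1: convergence of the circle data.} On $[0,T_*]$ consider the triples $(h_t^r,f_t^r,g_t^r)$ with $h_t^r=h_0\circ(\Psi_t^r)^{-1}$. Lemma~\ref{lem:Poisson-stability} gives $\sup_{t\le T_*}d(\Psi^r_t,\Psi_t)\to0$ in probability. Hence $h^r_t\to h_0\circ\Psi_t^{-1}=X_t$ uniformly in $t$, in the metric $d$, in probability. Corollary~\ref{cor:planar-tightness} gives tightness of $g^r$ in $C([0,T_*]\times\T)$. I would take a subsequence $r_k\uparrow1$ along which $(\Psi^{r_k},g^{r_k},X_0,g_0)$ converges jointly in law, and realize the convergence almost surely by Skorokhod.

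\textbf{Step 2: extending the limit to a welding pair.} For the limit $g_t|_\T$, Corollary~\ref{cor:planar-tightness} shows that it is injective for every $t$, so $\gamma_t=g_t(\T)$ is a Jordan curve. I would define $g_t$ and $f_t$ as the normalized Riemann maps of the two complementary components. Uniform convergence of the boundary parametrizations of Jordan curves gives, via Carath\'eodory kernel convergence and the uniform bi-H\"older bound \eqref{eq:planar-biHolder} (which yields equicontinuity up to the boundary), uniform convergence of $f^{r}_t$ and of $u^r_t$ on the closed disks. The limit then satisfies $f_t=g_t\circ X_t$ on $\T$. Applying the same argument along $t_n\to t$ gives continuity in $t$, so $Y_\cdot\in C([0,T_*],Y)$.

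\textbf{Step 3: identification $Y_t=\iota(X_t)$.} This is the step I expect to be the main obstacle, since it is the one that removes the dependence on the subsequence. For fixed $t$, stationarity gives $X_t\sim\nu$, so $X_t\in X_{\rm nf}\subset W_{\rm rem}$ almost surely. The curve welding $X_t$ is removable, so its normalized welding pair is unique. Since $(f_t,g_t)$ is such a pair, $Y_t=\iota(X_t)$ almost surely, and therefore $\operatorname{Law}(Y_t)=\iota_*\nu=\nu^Y$. Uniqueness at a countable dense set of times, combined with continuity, shows that the limit process is a measurable functional of $X_\cdot$ on a full-measure event and does not depend on the subsequence. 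The delicate point is that continuity of $t\mapsto Y_t$ must be established before identifying it with $\iota(X_t)$ on a countable dense set of times, since $\iota$ itself is not continuous. I would handle this by working on rational times and invoking the continuity from Step~2.

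\textbf{Step 4: concatenation.} Repeat the construction on $[kT_*,(k+1)T_*]$, using $X_{kT_*}$ and the pair $\iota(X_{kT_*})$ as initial data. This is legitimate because the law is stationary and the block constants in Lemmas~\ref{lem:planar-two-point-tail}--\ref{lem:planar-lower-holder} are uniform in the initial pair. On the endpoint times the consecutive blocks agree almost surely, by Step~3 applied at $t=kT_*$. Gluing the blocks over $k\in\mathbb N$ gives the continuous lift on $[0,\infty)$.
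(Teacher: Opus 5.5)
Your proposal is correct. Steps 1--3 follow the paper's proof: a Skorokhod realization of the regularized pairs, boundary injectivity of the limit, and the fixed-time identification $Y_t=\iota(X_t)$ from uniqueness of the removable welding.

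The real difference is how you remove the dependence on the subsequence and glue the blocks.

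The paper never uses that the lift is determined by the response path. It disintegrates the block law $Q_\ell^Y$ on $C(I_\ell,Y)$ over $Q_\ell^X$ and concatenates the blocks by a conditional Ionescu--Tulcea product. The fixed-time identity is used only at the block endpoints.

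You observe instead that, on one full-measure event, $Y_q=\iota(X_q)$ for every rational $q$. Path continuity then forces $Y_t=\lim_{q\to t}\iota(X_q)$. Hence $Y=\mathcal J(X)$ for a measurable map $\mathcal J$, defined on the measurable set of paths $\omega$ along which $q\mapsto\iota(\omega(q))$ is uniformly continuous for a complete metric on $Y$. This buys you three things:
\begin{itemize}
\item it shows that the paper's kernels $\Pi_\ell(\omega,\cdot)$ are almost surely Dirac masses at $\mathcal J(\omega)$;
\item it gives the lift on the original probability space, independently of the subsequence;
\item it makes Step 4 trivial: apply $\mathcal J$ to each shifted block, which lies in the good set almost surely by stationarity, so you never need to re-run the regularization from time $kT_*$.
\end{itemize}
The paper's route needs less (only the endpoint identities), but here it is heavier than necessary.

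One correction to Step 2. You cannot define $f_t$ as a normalized Riemann map of the bounded component. It is fixed by the welding relation $f_t=g_t\circ X_t$, not by an interior normalization. Carath\'eodory kernel convergence also says nothing about the $f_t^r$, which share no common interior normalization. As in the paper, apply the maximum principle to the uniformly convergent boundary data $u_t^r$ and $f_t^r|_{\T}=g_t^r\circ h_t^r\to g_t\circ X_t$. This gives uniform convergence on the closed disks, and the same estimate between two times gives continuity in $Y$. Univalence then follows from Hurwitz's theorem and the argument principle. The bi-H\"older bound is needed only through the lower estimate behind injectivity.
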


\begin{proof}
Set $T=T_*$. Regard $h^r=(h_t^r)_{0\leq t\leq T}$ and
$X=(X_t)_{0\leq t\leq T}$ as random elements of $C([0,T],X)$,
and $g^r$ as the corresponding boundary function on $[0,T]\times\T$.
In \eqref{eq:regularized-circle-flow} take the Brownian motions
and the adapted environment of Proposition~\ref{prop:single-noise}.
The initial curve is the measurable removable welding of $X_0$.
The relative-flow identity gives
\[
 h_t^r=X_0\circ(\Psi_t^r)^{-1},\qquad
 X_t=X_0\circ\Psi_t^{-1}.
\]
In the following estimate, $\Psi_t^r$ and $\Psi_t$ denote circle maps,
and $\omega_{X_0}$ is the chord-distance modulus of $X_0$.
Since $(h_t^r)^{-1}=\Psi_t^r\circ X_0^{-1}$, the metric $d$ satisfies
\begin{align*}
 \sup_{t\leq T}d(h_t^r,X_t)
 &\leq\omega_{X_0}\!\left(
       \sup_{t\leq T}\|(\Psi_t^r)^{-1}-\Psi_t^{-1}\|_\infty\right)\\
 &\quad+\sup_{t\leq T}\|\Psi_t^r-\Psi_t\|_\infty
 \longrightarrow0\quad\text{in probability}.
\end{align*}
Here Lemma~\ref{lem:Poisson-stability} controls both suprema, and
$\omega_{X_0}(a)\to0$ almost surely as $a\downarrow0$.
Thus $h^r\to X$ in probability in $C([0,T],X)$.
For every sequence $r_j\uparrow1$,
Corollary~\ref{cor:planar-tightness} gives a subsequence, still denoted
$r_j$, such that
\[
 (h^{r_j},g^{r_j},g_0,H_{r_j})
 \Rightarrow(X,g,g_0,H)
\]
in
$C([0,T],X)\times C([0,T]\times\T)\times C(\T)\times[1,\infty)$.
The limiting first coordinate has the stationary response path law.
On one Skorokhod representation, denote the copied initial boundary
in the $j$th approximation by $g_0^{(j)}$. Then
\begin{align*}
 \sup_{t\leq T}d(h_t^{r_j},X_t)
 +\sup_{t\leq T}\|g_t^{r_j}-g_t\|_{C(\T)}&\longrightarrow0,\\
 \shortintertext{and}
 \|g_0^{(j)}-g_0\|_{C(\T)}+|H_{r_j}-H|&\longrightarrow0
 \quad\text{almost surely}.
\end{align*}
For $w\in\overline{\mathbb D}$, extend
$g_t^{r_j}(1/w)-1/w$ continuously to $w=0$ by the value $0$.
The maximum principle gives, for $j,k\geq1$,
\[
 \sup_{t\leq T}\sup_{|w|\leq1}
 \left|\left(g_t^{r_j}(1/w)-\frac1w\right)
       -\left(g_t^{r_k}(1/w)-\frac1w\right)\right|
 \leq\sup_{t\leq T}\|g_t^{r_j}-g_t^{r_k}\|_{C(\T)}
 \longrightarrow0.
\]
The uniform limit is holomorphic in $\mathbb D$ and vanishes at zero;
hence the limiting exterior map has the required normalization
$g_t(z)=z+O(z^{-1})$ at infinity.
Each $g_t^{r_j}$ is univalent on $\mathbb D^*$. The locally uniform
limit theorem for univalent functions and the normalization
$g_t(z)=z+O(1/z)$ show that $g_t$ is univalent on $\mathbb D^*$.
The lower estimate passes through the same uniform limit:
\[
 \begin{aligned}
 |g_t(\zeta)-g_t(\xi)|
 &=\lim_{j\to\infty}|g_t^{r_j}(\zeta)-g_t^{r_j}(\xi)|\\
 &\geq\lim_{j\to\infty}H_{r_j}^{-1}
             |g_0^{(j)}(\zeta)-g_0^{(j)}(\xi)|^{3/2}\\
 &=H^{-1}|g_0(\zeta)-g_0(\xi)|^{3/2}>0
 \qquad(\zeta\ne\xi).
 \end{aligned}
\]
Here $g_0$ has the original injective boundary-map law. The estimate
holds simultaneously for all $t\leq T$ and all distinct boundary
points. The argument principle therefore makes $g_t$ injective on
$\overline{\mathbb D^*}$.
Let $\omega_{g,T}$ be the uniform spatial modulus of
$(t,\zeta)\mapsto g_t(\zeta)$ on $[0,T]\times\T$. The boundary
matching identities give
\begin{align*}
 &\sup_{t\leq T}\|f_t^{r_j}-g_t\circ X_t\|_{C(\T)}\\
 &\quad\leq\sup_{t\leq T}\|g_t^{r_j}-g_t\|_{C(\T)}
       +\omega_{g,T}\!\left(\sup_{t\leq T}
                            \|h_t^{r_j}-X_t\|_\infty\right)
       \longrightarrow0.
\end{align*}
The maximum principle now makes $(t,z)\mapsto f_t^{r_j}(z)$ Cauchy
uniformly on $[0,T]\times\overline{\mathbb D}$. Its limit $f_t$ is
holomorphic and satisfies $f_t=g_t\circ X_t$ on $\T$. Each $f_t$ is
nonconstant because its boundary values form a homeomorphism onto a
nondegenerate Jordan curve. The nonconstant
version of the univalent limit theorem gives univalence in $\mathbb D$.
Moreover, the argument principle applied to the boundary Jordan curve
shows that $f_t(\mathbb D)$ is its bounded complementary component.
Hence $f_t(\mathbb D)$ is disjoint from $f_t(\T)$, and the boundary
homeomorphism makes $f_t$ injective on $\overline{\mathbb D}$.
For $s,t\in[0,T]$, the maximum principle also gives
\begin{align*}
 \|f_t-f_s\|_{C(\overline{\mathbb D})}
 &\leq\|g_t-g_s\|_{C(\T)}
       +\omega_{g,T}(\|X_t-X_s\|_\infty),\\
 \shortintertext{and, with the continuous value at $w=0$,}
 \sup_{|w|\leq1}
 \left|\left(g_t(1/w)-\frac1w\right)
       -\left(g_s(1/w)-\frac1w\right)\right|
 &\leq\|g_t-g_s\|_{C(\T)}.
\end{align*}
Both right-hand sides tend to zero as $s\to t$. These are the two
disk-algebra coordinates in the topology of $Y$, so the lift is
continuous in $Y$.

At each deterministic time, $X_t$ is $\nu$-distributed and its
removable welding is unique. Consequently, for each fixed $t$,
\[
 Y_t=\iota(X_t)\quad\text{almost surely}.
\]
It follows that
\[
 \operatorname{Law}(Y_t)=\iota_*\nu=\nu^Y.
\]

For $\ell\geq0$, shift the local lift law to
$I_\ell=[\ell T,(\ell+1)T]$ and denote it by $Q_\ell^Y$ on
$C(I_\ell,Y)$. By stationarity of the response diffusion, its projected
law is $Q_\ell^X=\operatorname{Law}(X|_{I_\ell})$. These path spaces
admit complete separable metrics. Disintegration therefore gives a
measurable probability kernel $\Pi_\ell(\omega,dy)$, with
$\omega\in C(I_\ell,X)$ and $y\in C(I_\ell,Y)$, such that
\[
 Q_\ell^Y(dy)=\int\Pi_\ell(\omega,dy)\,Q_\ell^X(d\omega).
\]
For $Q_\ell^X$-almost every $\omega$,
\begin{equation}\label{eq:block-kernel-support}
 \Pi_\ell(\omega,\{y:\prX\circ y=\omega\})=1.
\end{equation}
The fixed-time identity at both endpoints also gives, for
$\Pi_\ell(\omega,\cdot)$-almost every $y$,
\begin{align*}
 y(\ell T)&=\iota(\omega(\ell T)),\\
 \shortintertext{and}
 y((\ell+1)T)&=\iota(\omega((\ell+1)T)).
\end{align*}
Conditional on a full response path $X$, the measurable kernels define
the countable product probability
\[
 \bigotimes_{\ell=0}^{\infty}\Pi_\ell(X|_{I_\ell},dy_\ell)
 \quad\text{on }\prod_{\ell\geq0}C(I_\ell,Y)
\]
by the Ionescu--Tulcea construction. The endpoint identities give,
on a common event of probability one,
\[
 y_\ell((\ell+1)T)=\iota(X_{(\ell+1)T})
                 =y_{\ell+1}((\ell+1)T)
 \qquad(\ell\geq0).
\]
Thus $Y_t=y_\ell(t)$ for $t\in I_\ell$ defines a continuous path and
$\prX(Y_t)=X_t$ for every $t\geq0$. Integrate the conditional product
against the response path law. For every measurable
$A\subset C(I_\ell,Y)$, the resulting block marginal is $Q_\ell^Y$, since
\[
 \mathbb E\bigl[\Pi_\ell(X|_{I_\ell},A)\bigr]
 =\int\Pi_\ell(\omega,A)\,Q_\ell^X(d\omega)=Q_\ell^Y(A).
\]
Thus the deterministic-time marginals and the estimates on each block
are preserved by concatenation.
\end{proof}

Use the lift and its joint probability law from
Proposition~\ref{prop:continuous-geometric-lift}.

\begin{corollary}\label{cor:geometric-lift-estimates}
For every $\zeta\in\T$ and $0\leq t\leq T_*$,
\begin{equation}\label{eq:exterior-small-time-energy}
 \mathbb E|g_t(\zeta)-g_0(\zeta)|^2\leq C(t+t^2),
\end{equation}
where $C$ depends only on $\kappa$. On one event of probability one,
for every $0<\beta\leq1$,
\begin{equation}\label{eq:exterior-Holder-propagation}
 [g_0]_\beta<\infty
 \quad\Longrightarrow\quad
 \sup_{0\leq t\leq T_*}[g_t]_{\beta/2}<\infty.
\end{equation}
\end{corollary}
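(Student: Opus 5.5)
Both claims will be obtained on the regularized level, uniformly in $r$, and then transferred to the lift $g_t$ through the subsequential uniform limit constructed in Proposition~\ref{prop:continuous-geometric-lift}.

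\textbf{The energy bound.} Fix $\zeta\in\T$ and set $x=g_0(\zeta)$. By Lemma~\ref{lem:fixed-r-planar-characteristics},
\[
 Z_t^r(x)-x=M_t^r(x)+\int_0^tV_s^r(Z_s^r(x))\,ds .
\]
For the martingale part, Lemma~\ref{lem:planar-noise-bounds} bounds the bracket density by $C$, so $\mathbb E|M_t^r(x)|^2\leq Ct$. For the drift part, Lemma~\ref{lem:planar-drift-bounds} bounds the integrand by $C$, so the integral is at most $Ct$ in modulus. Together these give
\[
 \mathbb E|g_t^r(\zeta)-g_0(\zeta)|^2\leq C(t+t^2),
\]
uniformly in $r$. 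Along the Skorokhod subsequence, $g_t^{r_j}(\zeta)\to g_t(\zeta)$ and $g_0^{(j)}(\zeta)\to g_0(\zeta)$ almost surely. Fatou's lemma then passes the bound to the limit, which is \eqref{eq:exterior-small-time-energy}. The block concatenation preserves the law on $[0,T_*]$, so the bound holds for the constructed lift.

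\textbf{Hölder propagation.} Use $\eta=1/2$ (so $T_*=T_{1/2}$) and the bi-Hölder estimate \eqref{eq:planar-biHolder}. In the physical parametrization $x=g_0(\zeta)$, $y=g_0(\xi)$, it gives
\[
 |g_t^r(\zeta)-g_t^r(\xi)|\leq H_r|g_0(\zeta)-g_0(\xi)|^{1/2}
 \leq H_r[g_0]_\beta^{1/2}|\zeta-\xi|^{\beta/2}.
\]
This holds simultaneously for all $t\leq T_*$ and all $\zeta,\xi$. On the Skorokhod space, $H_{r_j}\to H<\infty$, $g_0^{(j)}\to g_0$, and $g^{r_j}\to g$ uniformly, all almost surely. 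Passing to the pointwise limit gives
\[
 |g_t(\zeta)-g_t(\xi)|\leq H|g_0(\zeta)-g_0(\xi)|^{1/2}
\]
for all $t,\zeta,\xi$ at once. Hence $\sup_{t\leq T_*}[g_t]_{\beta/2}\leq H[g_0]_\beta^{1/2}$.

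The single event is the one on which this limiting inequality holds with $H<\infty$. It does not depend on $\beta$, so the implication holds for every $\beta\in(0,1]$ simultaneously, as claimed.

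\textbf{Main obstacle.} The delicate point is that the Skorokhod representation and the disintegration/concatenation in Proposition~\ref{prop:continuous-geometric-lift} change the probability space. We must verify that the limit variable $H$ is carried jointly with the lift, or at least that the limiting inequality, which is a closed condition on $(g_0,g,H)$, transfers to the concatenated path law.

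I would handle this as follows. Note that the set
\[
 \bigl\{(g_0,g):\ \exists H<\infty,\ |g_t(\zeta)-g_t(\xi)|\leq H|g_0(\zeta)-g_0(\xi)|^{1/2}\ \forall t,\zeta,\xi\bigr\}
\]
is a measurable (indeed $F_\sigma$) subset of path space. It has full measure under the limiting block law $Q_0^Y$. The kernel $\Pi_0$ of Proposition~\ref{prop:continuous-geometric-lift} reproduces $Q_0^Y$ on the first block, so the property has probability one for the lift on $[0,T_*]$.
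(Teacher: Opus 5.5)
Your proposal is correct and follows the paper's proof. The energy bound comes from the $r$-uniform bracket and drift bounds followed by Fatou on the Skorokhod representation. The H\"older propagation is the $\eta=1/2$ upper bound in \eqref{eq:planar-biHolder}, passed to the uniform limit with $H=\lim H_{r_j}<\infty$.

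Your extra argument transfers the closed ($F_\sigma$) path-space condition through the block kernel $\Pi_0$. This makes explicit a step the paper only asserts, namely that ``the estimates on each block are preserved by concatenation''.
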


\begin{proof}
For the approximations in the preceding proof,
\[
 \begin{aligned}
 \mathbb E|g_t^r(\zeta)-g_0(\zeta)|^2
 &\leq2\mathbb E|M_t^r(g_0(\zeta))|^2\\
 &\quad+2t\mathbb E\int_0^t
       |V_s^r(Z_s^r(g_0(\zeta)))|^2\,ds
 \leq2Ct+2C^2t^2.
 \end{aligned}
\]
Uniform boundary convergence on the Skorokhod representation and
Fatou's lemma give \eqref{eq:exterior-small-time-energy}.
The upper bound in \eqref{eq:planar-biHolder} with $\eta=1/2$ passes
to the same limit. Hence, for a finite random $H$,
\[
 \begin{aligned}
 |g_t(\zeta)-g_t(\xi)|
 &\leq H|g_0(\zeta)-g_0(\xi)|^{1/2}\\
 &\leq H[g_0]_\beta^{1/2}|\zeta-\xi|^{\beta/2},
 \qquad 0\leq t\leq T_*.
 \end{aligned}
\]
This is \eqref{eq:exterior-Holder-propagation}.
\end{proof}

\subsection{From geometric tightness to a compact form nest}

Probabilities here refer to the lift just constructed; its first
coordinate has the stationary response law. For an open or compact
set $A\subset X$, let $u_A^{\rm eq}$ be its $1$-equilibrium potential and put
\[
 \sigma_A=\inf\{t\geq0:X_t\in A\},\qquad e^{-\infty}=0.
\]
This convention includes the initial point.

\begin{lemma}\label{lem:stationary-capacity}
For every open $O\subset X$,
\begin{equation}\label{eq:stationary-capacity-identity}
 \CapOne(O)=\int u_O^{\rm eq}\,d\nu
        =\mathbb E e^{-\sigma_O}.
\end{equation}
Consequently, for every $T>0$,
\[
 \CapOne(O)\leq\mathbb P(\sigma_O<T)+e^{-T}.
\]
For every compact $K\subset X$,
\begin{equation}\label{eq:stationary-capacity-compact}
 \CapOne(K)=\int_X\mathbb E_h[e^{-\sigma_K}]\,d\nu(h).
\end{equation}
In particular,
\[
 \mathbb P(\sigma_K<\infty)=0
 \quad\Longrightarrow\quad \CapOne(K)=0.
\]
\end{lemma}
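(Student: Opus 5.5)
The plan is to use the standard Dirichlet-form identification of the $1$-equilibrium potential with the hitting expectation, \cite[Section~2]{AMR} and \cite[Lemma~2.2.10, Theorem~4.3.3, Lemma~4.2.1]{FOT}, transported to the quasi-regular setting via the compact regular realization of Proposition~\ref{prop:compactification}. For open $O$, the equilibrium potential $u_O^{\rm eq}$ is the unique element of $\Dcal$ with $u_O^{\rm eq}\geq1$ a.e.\ on $O$ minimizing $\|\cdot\|_{\E_1}$. It satisfies $\E_1(u_O^{\rm eq},v)\geq0$ for all $v\in\Dcal$ with $v\geq0$ on $O$, and $\E_1(u_O^{\rm eq},u_O^{\rm eq})=\CapOne(O)$. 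Its quasi-continuous version is
\[
 \widetilde u_O^{\rm eq}(h)=\mathbb E_h e^{-\sigma_O}
\]
for quasi-every $h$. This is the $1$-order hitting-probability representation, which holds for any properly associated right process of a quasi-regular form.

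To get the first identity I will test against $\mathbf 1$. Since $\mathbf 1\in\Dcal$, $D\mathbf 1=0$ and $\mathbf 1\geq0$, we have
\[
 \int u_O^{\rm eq}\,d\nu=\E_1(u_O^{\rm eq},\mathbf 1).
\]
I then need $\E_1(u_O^{\rm eq},\mathbf 1)=\E_1(u_O^{\rm eq},u_O^{\rm eq})$. Because $u_O^{\rm eq}$ is the $1$-reduced function of $\mathbf 1$ on $O$, the variational characterization gives $\E_1(u_O^{\rm eq},\mathbf 1-u_O^{\rm eq})=0$. Indeed, $\mathbf 1-u_O^{\rm eq}$ vanishes q.e.\ on $O$, and $u_O^{\rm eq}$ is $\E_1$-orthogonal to all $v\in\Dcal$ with $\widetilde v=0$ q.e.\ on $O$ \cite[Theorem~4.3.3]{FOT}. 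Hence $\CapOne(O)=\int u_O^{\rm eq}d\nu$. Integrating the hitting representation against $\nu$ and using $\mathbb P=\int\mathbb P_h\,\nu(dh)$ gives $\mathbb Ee^{-\sigma_O}$; the exceptional set is $\nu$-null. The inequality follows from splitting on whether $\sigma_O<T$:
\[
 \mathbb Ee^{-\sigma_O}\leq\mathbb P(\sigma_O<T)+e^{-T}.
\]

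For compact $K$ I will approximate from outside by the open sets $O_n=\{h:d(h,K)<1/n\}$. By definition of the outer capacity, $\CapOne(K)=\inf_n\CapOne(O_n)$. Capacity is a Choquet capacity in the quasi-regular setting, so $\CapOne(K)=\lim_n\CapOne(O_n)$ for the decreasing sequence. On the process side, path continuity gives $\sigma_{O_n}\uparrow\sigma_K$ $\mathbb P_h$-a.s.\ for quasi-every $h$. Indeed, if $\sigma_{O_n}\leq s$ for all $n$, then there are times $t_n\leq s+1/n$ with $d(X_{t_n},K)\leq1/n$, and a limit point $t$ gives $X_t\in K$ by closedness. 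Monotone (bounded) convergence then yields
\[
 \int\mathbb E_he^{-\sigma_{O_n}}\,d\nu\longrightarrow\int\mathbb E_he^{-\sigma_K}\,d\nu.
\]
Combining this with the open-set identity gives \eqref{eq:stationary-capacity-compact}. The final implication is immediate: $\mathbb P(\sigma_K<\infty)=0$ forces $e^{-\sigma_K}=0$ a.s.

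The main obstacle is the rigorous identification of $\widetilde u_O^{\rm eq}$ with $\mathbb E_he^{-\sigma_O}$ for the quasi-regular (non-locally-compact) form. I will handle it by transporting to $\widehat X$ through the quasi-homeomorphism of Proposition~\ref{prop:compactification}. On $\widehat X$ the result of \cite[Lemma~4.2.1]{FOT} applies directly. Hitting times and capacities of sets in $X$ correspond to those of their images, up to the exceptional added boundary $\widehat X\setminus\Xi(X)$. Open sets of $X$ map to sets that are only quasi-open in $\widehat X$, but that suffices for the quasi-open version of the same identity.
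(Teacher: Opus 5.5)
Your argument is correct. For open sets it is the paper's argument: the orthogonality $\E_1(u_O^{\rm eq},\mathbf 1-u_O^{\rm eq})=0$, the hitting representation transferred from the compact regular realization, integration against $\nu$, and the split at $T$.

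For compact $K$ you take a different route.
\begin{itemize}
\item The paper runs the same variational argument directly on $K$. It uses the equilibrium potential with the constraint $\widetilde u\geq1$ quasi-everywhere on $K$, notes that the perturbations $u_K^{\rm eq}+s(\mathbf 1-u_K^{\rm eq})$ are admissible, and invokes the hitting representation for that non-open set.
\item You stay with open sets. You approximate $K$ by $O_n=\{d(\cdot,K)<1/n\}$ and pass to the limit using $\sigma_{O_n}\uparrow\sigma_K$. You derive this correctly from sample-path continuity, available for quasi-every starting point by Theorem~\ref{thm:main}, together with closedness of $K$.
\end{itemize}
Your route needs only the open-set identity and works directly from the paper's definition of $\CapOne$ on arbitrary sets as the outer extension, at the cost of a pathwise argument. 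The paper's route treats open and compact sets uniformly and needs no path continuity, but relies on the quasi-everywhere variational characterization of capacity and of hitting probabilities for non-open sets.

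One justification should be corrected. The identity $\CapOne(K)=\inf_n\CapOne(O_n)$ does not follow from Choquet continuity: the $O_n$ are open, and their closures need not be compact because $X$ is not locally compact. It follows from compactness of $K$ together with monotonicity of $\CapOne$. Every open $O\supset K$ contains some $O_n$, because $d(\cdot,X\setminus O)$ is continuous and positive on $K$ and so has a positive minimum there.
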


\begin{proof}
For either an open or a compact set $A$, the potential $u_A^{\rm eq}$ equals $1$
quasi-everywhere on $A$. The perturbations $u_A^{\rm eq}+s(\mathbf 1-u_A^{\rm eq})$ are
admissible in the equilibrium variational problem for every real $s$.
Consequently,
\[
 0=\left.\frac12\frac{d}{ds}\right|_{s=0}
      \|u_A^{\rm eq}+s(\mathbf 1-u_A^{\rm eq})\|_{\E_1}^2
   =\E_1(u_A^{\rm eq},\mathbf 1-u_A^{\rm eq}).
\]
Since $\E(u_A^{\rm eq},\mathbf 1)=0$, it follows that
\[
 \CapOne(A)=\E_1(u_A^{\rm eq},u_A^{\rm eq})=\E_1(u_A^{\rm eq},\mathbf 1)=\int_X u_A^{\rm eq}\,d\nu.
\]
The equilibrium-potential representation
\cite[Lemma~4.2.1 and Theorem~4.2.5]{FOT}, transferred from the compact
regular realization of Proposition~\ref{prop:compactification}, gives
\[
 u_A^{\rm eq}(h)=\mathbb E_h[e^{-\sigma_A}]
       \quad\text{for quasi-every }h.
\]
Integrating against $\nu$ proves both capacity identities. For an
open set $O$, splitting at $T$ gives
\[
 \CapOne(O)=\mathbb E[e^{-\sigma_O}]
 \leq\mathbb P(\sigma_O<T)
       +e^{-T}\mathbb P(\sigma_O\geq T)
 \leq\mathbb P(\sigma_O<T)+e^{-T}.
\]
For compact $K$, if $\mathbb P(\sigma_K<\infty)=0$, then
$\mathbb E e^{-\sigma_K}=0$, so the compact-set identity gives
$\CapOne(K)=0$.
\end{proof}

For closed $K\subset X$ and $C\subset Y$, write
\begin{align*}
 \Dcal_K&=\{F\in\Dcal:F=0\ \nu\text{-a.e. on }X\setminus K\},\\
 \shortintertext{and}
 \Dcal^Y_C&=\{G\in\Dcal^Y:G=0\ \nu^Y\text{-a.e. on }Y\setminus C\}.
\end{align*}

\begin{lemma}\label{lem:Jordan-compact-nest}
There is an increasing compact $\E^Y$-nest $(C_m)$ in $Y$ such that
\begin{equation}\label{eq:projected-compact-capacity-main}
 \CapOne(X\setminus\prX(C_m))\longrightarrow0.
\end{equation}
\end{lemma}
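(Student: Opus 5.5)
Following the plan of Section~\ref{sec:nest}, I would build $C_m$ from sets on which the lifted curve has quantitative geometry: a uniform modulus for $f$, $g$ and their inverses on the boundary, together with membership of $h$ in the compact set $K_m$ of Theorem~\ref{thm:main}. Concretely, for a decreasing modulus sequence I would define closed sets
\[
 C_m^0=\bigl\{(h,f,g)\in Y:\ h\in K_m,\ |g(\zeta)-g(\xi)|\leq \omega_m(|\zeta-\xi|),\ |g(\zeta)-g(\xi)|\geq \omega_m^-(|\zeta-\xi|)\ (\zeta,\xi\in\T)\bigr\},
\]
with $\omega_m,\omega_m^-$ continuous increasing moduli vanishing only at zero, and set $C_m=\bigcup_{\ell\leq m}C_\ell^0$. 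Compactness follows by Arzel\`a--Ascoli. The upper modulus gives equicontinuity of $g|_\T$, and $|g|\leq2$ bounds it. The maximum principle transfers uniform boundary convergence to $u$ and, through $f=g\circ h$ with $h$ ranging over the compact $K_m$, to $f$. The lower modulus keeps limits injective on $\T$, so the argument principle yields injectivity of the limit pair as in the proof of Proposition~\ref{prop:continuous-geometric-lift}.

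To choose the moduli I would use that $\iota_*\nu=\nu^Y$ is a Borel probability on the Polish space $Y$ (Lemma~\ref{lem:Polish-Jordan-triples}). By Ulam tightness, for $\nu$-a.e.\ $h$ the curve $g_h$ has some modulus and inverse modulus, so $\nu(\prX(C_m^0))\to1$ is arranged easily. That statement is static, however, and the nest property requires a capacity bound. For this I would invoke Lemma~\ref{lem:stationary-capacity}:
\[
 \CapOne(X\setminus\prX(C_m))\leq\mathbb P\bigl(\exists t<T: X_t\notin\prX(C_m)\bigr)+e^{-T},
\]
which holds provided $X\setminus\prX(C_m)$ is open. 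This is true because $\prX(C_m)$ is compact, as the continuous image of a compact set. Since $\prX(Y_t)=X_t$ with $Y_t$ the continuous lift, it suffices to bound $\mathbb P(\exists t<T:Y_t\notin C_m)$. On the first block $[0,T_*]$ the bi-H\"older propagation \eqref{eq:planar-biHolder} passes to the limit $g_t$ with random constant $H$ of tight law, uniform in the initial pair. It gives $|g_t(\zeta)-g_t(\xi)|$ between $H^{-1}|g_0(\zeta)-g_0(\xi)|^{3/2}$ and $H|g_0(\zeta)-g_0(\xi)|^{1/2}$. Combined with the static tightness of $g_0$ under $\nu^Y$, this gives an explicit modulus pair holding for all $t\leq T_*$ off an event of small probability. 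I would iterate over blocks $I_\ell$ by stationarity, using that each block marginal is the same law $Q_\ell^Y$, and take a union bound over $\ell\leq T/T_*$. The $K_m$ part is handled by Theorem~\ref{thm:main}, since $\CapOne(X\setminus K_m)\to0$ and capacity is subadditive. I would choose $T=m$ and the moduli depending on $m$ so that the total is at most $2^{-m}+e^{-m}$.

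Next I need the nest property for $(C_m)$ in the transported form. I would pull back: $U\Dcal_{\prX(C_m)}\subset\Dcal^Y_{C_m'}$ modulo $\nu^Y$-null sets, where $C_m'=C_m\cap\iota(X_{\rm nf})$ and $\iota\circ\prX=\mathrm{id}$ on the image. Any $G\in\Dcal$ vanishing off $\prX(C_m)$ lifts to $UG$, which vanishes $\nu^Y$-a.e.\ off $C_m$. The reason is that $\nu^Y$ is carried by $\iota(X_{\rm nf})$, and a point $\iota(h)$ with $h\in\prX(C_m)$ lies in $C_m$, because removable weldings have unique normalized pairs; this needs $C_m$ chosen inside the removable-type image. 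Density then follows from $\CapOne(X\setminus\prX(C_m))\to0$. Quasi-regularity gives that $\prX(C_m)$ is a nest for $\E$ exactly when the capacities of the complements vanish in the limit (\cite[Section~2]{AMR}, or \cite{FOT} on the compact realization of Proposition~\ref{prop:compactification}). The isometry $U$ then transports the density to $\Dcal^Y$.

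I expect the main obstacle to be the time-uniform modulus along the path. The estimates of Section~\ref{sec:Jordan-realization} are given conditionally on $\mathcal G_0$ for a single block, and concatenating blocks via the disintegration kernels must preserve both the lower and the upper moduli. Each block restarts from $\iota(X_{\ell T})$, whose modulus is random, so the moduli compound block by block. I would handle this by choosing, at each stage, the static modulus from tightness of $\nu^Y$, and by applying the bi-H\"older bound on every block separately from its own initial curve rather than compounding exponents. Stationarity gives the same law at each block start, so a union bound over finitely many blocks suffices.
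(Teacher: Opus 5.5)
Your argument is correct, but it builds the compact sets by a different and considerably heavier route than the paper.

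\textbf{The paper's route.} The paper never describes $C_m$ geometrically. Since $Y$ is Polish and the lift of Proposition~\ref{prop:continuous-geometric-lift} has continuous paths, the law of $(Y_t)_{0\le t\le m}$ is a single Borel probability on the Polish space $C([0,m],Y)$. Ulam tightness gives a compact $A_m$ with $\mathbb P(Y|_{[0,m]}\notin A_m)\le 2^{-m}$. The sets $C_m=\bigcup_{j\le m}\{\omega(t):\omega\in A_j,\ 0\le t\le j\}$ are compact by continuity of evaluation. The inclusion $\{\sigma_{X\setminus\prX(C_m)}<m\}\subset\{Y|_{[0,m]}\notin A_m\}$ and Lemma~\ref{lem:stationary-capacity} then give the bound $2^{-m}+e^{-m}$.

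\textbf{What this buys.} The difficulty you single out as the main obstacle, time-uniform moduli and their behaviour across concatenated blocks, does not arise there. Continuity of the path on $[0,m]$ already makes its image compact. So neither \eqref{eq:planar-biHolder}, nor the block-by-block union bound, nor the nest $K_m$ of Theorem~\ref{thm:main} is needed.

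\textbf{What your route buys.} You get an explicit description of the nest: uniform upper and lower moduli of $g|_{\T}$ together with $h\in K_m$. The price is the Arzel\`a--Ascoli check in $Y$, which you carry out correctly via the maximum and argument principles. You also pay with the block bookkeeping, which works because every block has law $Q^Y_\ell$ and starts from a $\nu^Y$-distributed state. If you keep the $K_m$ condition, bound its contribution by $\mathbb P(\sigma_{X\setminus K_m}<m)\le e^m\CapOne(X\setminus K_m)\le (e/4)^m/4$. The alternative is to check that the projection of your moduli set is closed before invoking subadditivity, since Lemma~\ref{lem:stationary-capacity} applies only to open sets.

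The last steps coincide with the paper's: the nest property from vanishing capacities (which the paper proves by hand with $F(1-u^{\rm eq}_m)$, $F\in\Ccyl$) and the transfer by $U$.

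\textbf{A correction.} You do not need to choose $C_m$ ``inside the removable-type image'', and you should not. That is not a closed condition in $Y$, so it would spoil compactness. Take $h\in X_{\rm nf}$ and any $(h,f,g)\in C_m$. The maps $g\circ g_h^{-1}$ and $f\circ f_h^{-1}$ agree on $g_h(\T)$. They glue to a homeomorphism of $\widehat{\mathbb C}$ that is conformal off the removable curve $g_h(\T)$. It is therefore M\"obius, and the expansion $z+O(z^{-1})$ at infinity forces it to be the identity. Hence $(f,g)=(f_h,g_h)$, so $\mathbf 1_{C_m}(\iota(h))=\mathbf 1_{\prX(C_m)}(h)$ for $\nu$-a.e.\ $h$ with no restriction on $C_m$. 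This uniqueness among all Jordan weldings is exactly the fact the paper uses.
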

\begin{proof}
By Proposition~\ref{prop:continuous-geometric-lift}, the law of
$(Y_t)_{0\leq t\leq m}$ is a probability on $C([0,m],Y)$. Choose a
compact set $A_m\subset C([0,m],Y)$ such that
\[
 \mathbb P\bigl((Y_t)_{0\leq t\leq m}\notin A_m\bigr)\leq2^{-m},
\]
and put
\[
 C_m=\bigcup_{j=1}^m
       \{\omega(t):\omega\in A_j,\ 0\leq t\leq j\}.
\]
Continuity of evaluation makes $C_m$ compact, and $(C_m)$ is
increasing. Since $\prX(Y_t)=X_t$ for all $t$, we have
\[
 \begin{aligned}
 \{\sigma_{X\setminus\prX(C_m)}<m\}
 &\subset\{Y_t\notin C_m\text{ for some }0\leq t\leq m\}\\
 &\subset\{Y|_{[0,m]}\notin A_m\}.
 \end{aligned}
\]
The last event has probability at most $2^{-m}$, and $\prX(C_m)$ is
compact. Lemma~\ref{lem:stationary-capacity} therefore gives
\begin{equation}\label{eq:projected-compact-capacity}
 \CapOne(X\setminus \prX(C_m))\leq2^{-m}+e^{-m}\longrightarrow0.
\end{equation}
For $u_m^{\rm eq}=u_{X\setminus \prX(C_m)}^{\rm eq}$ and $F\in\Ccyl$, we have
$0\leq u_m^{\rm eq}\leq1$.  The product rule and
\eqref{eq:cylinder-energy-bound} therefore give
\[
 \begin{aligned}
 \|Fu_m^{\rm eq}\|_{\E_1}^2
 &\leq
 \bigl(\|F\|_\infty^2+2\|\Gamma(F)\|_\infty\bigr)\|u_m^{\rm eq}\|_2^2
   +2\|F\|_\infty^2\E(u_m^{\rm eq},u_m^{\rm eq})\\
 &\leq C_F\|u_m^{\rm eq}\|_{\E_1}^2
  =C_F\CapOne(X\setminus \prX(C_m))\longrightarrow0.
 \end{aligned}
\]
For $F\in\Ccyl$, we have
\begin{align*}
 F(1-u_m^{\rm eq})&\in\Dcal_{\prX(C_m)},\\
 \shortintertext{and}
 \|F-F(1-u_m^{\rm eq})\|_{\E_1}&=\|Fu_m^{\rm eq}\|_{\E_1}\longrightarrow0.
\end{align*}
The form density of $\Ccyl$ therefore gives
\[
 \overline{\bigcup_m\Dcal_{\prX(C_m)}}^{\|\cdot\|_{\E_1}}=\Dcal.
\]

For almost every $h$, its removable welding is unique among
\emph{all} Jordan weldings in the infinity normalization.
Consequently
\[
 \mathbf 1_{C_m}(\iota(h))=\mathbf 1_{\prX(C_m)}(h)
          \quad\text{for }\nu\text{-almost every }h.
\]
Consequently,
\[
 U\Dcal_{\prX(C_m)}\subset\Dcal^{Y}_{C_m}
 \qquad(m\geq1).
\]
Since $U$ preserves the form norm,
\[
 \overline{\bigcup_mU\Dcal_{\prX(C_m)}}^{\|\cdot\|_{\E_1^Y}}
 =U\Dcal=\Dcal^Y.
\]
The support inclusion above proves that $(C_m)$ is a compact
$\E^Y$-nest.
\end{proof}

Retain the countable form-dense point-separating family
$(F_j)_{j\geq1}\subset\Ccyl$ fixed in
Section~\ref{sec:compactification}. Fix a dense sequence
$(\zeta_j)_{j\geq1}$ in $\T$, and define on $Y$
\[
 G_{j,1}(h,f,g)=\arctan(\operatorname{Re}g(\zeta_j)),\qquad
 G_{j,2}(h,f,g)=\arctan(\operatorname{Im}g(\zeta_j)).
\]
For $\ell\in\{1,2\}$, use the representative
$H_{j,\ell}=G_{j,\ell}\circ\iota$ on $X_{\rm nf}$, extended by zero off $X_{\rm nf}$.
It represents $U^{-1}G_{j,\ell}$ in $L^2(\nu)$.

\begin{lemma}\label{lem:Jordan-separating-coordinates}
The functions $UF_j$ and $G_{j,\ell}$ belong to
$\Dcal^Y\cap C_b(Y)$ and together separate the points of $Y$.
Moreover,
\[
 \overline{\{UF_j:j\geq1\}}^{\|\cdot\|_{\E_1^Y}}=\Dcal^Y.
\]
\end{lemma}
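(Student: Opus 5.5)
The plan has four steps: the $UF_j$, then the $G_{j,\ell}$, then point separation, and finally form density.

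\textbf{The functions $UF_j$.} Since $U$ is the unitary map of Lemma~\ref{lem:Polish-Jordan-triples} and $\E^Y$ is defined by transport, $UF_j\in\Dcal^Y$ is immediate. Continuity of $UF_j=F_j\circ\prX$ on $Y$ follows from $F_j\in C_b(X)$ (Lemma~\ref{lem:state-space-core}). It also uses that $\prX$ is continuous, since the topology of $Y$ contains that of $d$. Form density of $\{UF_j\}$ holds because $U$ is an isometry of form norms from $(\Dcal,\|\cdot\|_{\E_1})$ onto $(\Dcal^Y,\|\cdot\|_{\E^Y_1})$ and $(F_j)$ is form dense in $\Dcal$.

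\textbf{The functions $G_{j,\ell}$.} Continuity and boundedness on $Y$ are clear. Convergence in $Y$ includes uniform convergence of $u$, hence of $g$ on $\T$, so $g(\zeta_j)$ is continuous, and $\arctan$ is bounded. The real content is $H_{j,\ell}=G_{j,\ell}\circ\iota\in\Dcal$.

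I would obtain this from the boundary-evaluation chart, Lemma~\ref{lem:boundary-evaluation-chart}. It shows that, along any smooth source flow, $t\mapsto g_{h\circ\psi_{-t}^v}(\zeta_j)$ is $C^1$ with derivative $\operatorname{Cauchy}\mu_v(g(\zeta_j))$. The derivative is controlled by $\|\mu_v\|_\infty$ and the fixed support in $2\overline{\mathbb D}$. The covariance computation of Lemma~\ref{lem:planar-noise-bounds} (the single-point bound, at $r=1$) gives a deterministic bound
\[
\sum_{n,\tau}|\partial_{n,\tau}g(\zeta_j)|^2\leq C .
\]
So the candidate gradient $V_{j,\ell}=\sum_{n,\tau}(\partial_{n,\tau}H_{j,\ell})e_{n,\tau}$, taken with the sign convention of \eqref{eq:actual-right-derivative}, is bounded in $L^2(\nu;\HWP)$.

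To place $H_{j,\ell}$ in the closed domain, I would approximate it by functions already in $\Dcal$ and use closedness of $D$. The most economical choice is the regularized flows of Section~\ref{sec:boundary-dynamics}. Alternatively, one can use an integration-by-parts characterization: $F\in\Dcal$ if, for all $v$, $\int F\,D^*(\varphi v)\,d\nu$ is represented by a bounded gradient. Concretely, the IBP identity $\int D_v\varphi\,F\,d\nu=-\int \varphi\,\partial_v F\,d\nu+\int\rho_v\varphi F\,d\nu$ for $\varphi\in\Ccyl$ would come from quasi-invariance of $\nu$ under smooth flows, via \cite{Response}. This identifies $H_{j,\ell}\in\Dom(D^{**})=\Dcal$ with $DH_{j,\ell}=V_{j,\ell}$.

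\textbf{Main obstacle.} The step I expect to be hardest is exactly this membership $H_{j,\ell}\in\Dcal$. Its difficulty is that $g_h(\zeta_j)$ is defined only $\nu$-a.e., through removability, rather than as a cylinder function. I would try first to realize $g(\zeta_j)$ as an $L^2(\nu)$-limit of BV cylinder functionals with uniformly bounded gradients, or to invoke the closed-gradient characterization from \cite[Theorem~1.1]{Response} directly.

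\textbf{Point separation.} Let $(h,f,g)\ne(h',f',g')$ in $Y$. If $h\ne h'$, some $F_j$ separates them by Lemma~\ref{lem:state-space-core}. If $h=h'$ but $g\ne g'$ on $\T$, continuity and density of $(\zeta_j)$ give some $g(\zeta_j)\ne g'(\zeta_j)$. Injectivity of $\arctan$ then lets $G_{j,1}$ or $G_{j,2}$ separate them. If $g=g'$ on $\T$, then $u=u'$ by the maximum principle, so $g=g'$. Hence $f=g\circ h=f'$ on $\T$, and $f=f'$ as disk-algebra functions, so the two triples agree after all.
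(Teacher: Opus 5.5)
Your treatment of $UF_j$, of the form density of $\{UF_j\}$, of the continuity of $G_{j,\ell}$, and of point separation is correct and matches the paper. The gap is the step you flag yourself: you never prove $H_{j,\ell}=G_{j,\ell}\circ\iota\in\Dcal$, and neither route you sketch reaches it.

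The integration-by-parts route tests $H_{j,\ell}$ only against fields $\varphi v$ with $\varphi\in\Ccyl$ and $v$ finite. An identity $\int H_{j,\ell}\,D^*(\varphi v)\,d\nu=\int(V_{j,\ell},\varphi v)_{\WP}\,d\nu$ for these fields places $H_{j,\ell}$ in the domain of the adjoint of $D^*|_{\operatorname{span}\{\varphi v\}}$. That operator extends $D=D^{**}$. It coincides with $D$ only if the fields $\varphi v$ form a core for $D^*$, which is a Markov-uniqueness statement not available here. So you would land in a possibly larger weak domain, not in $\Dcal$. The identity itself would also need flow-differentiability of $h\mapsto g_h(\zeta_j)$ for $\nu$-a.e.\ $h$, together with the likelihood formula, and you do not supply these. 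The cylinder-approximation route is not constructed. The regularized flows are not elements of $\Dcal$, so ``approximate by the regularized flows and use closedness of $D$'' has no concrete content.

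The missing idea is to let the dynamics already constructed certify membership in the form domain. By Proposition~\ref{prop:continuous-geometric-lift}, $Y_s=\iota(X_s)$ almost surely at $s=0,t$. Hence $H_{j,\ell}(X_s)=G_{j,\ell}(Y_s)$ is $\arctan$ of a coordinate of $g_s(\zeta_j)$. Stationarity, the $1$-Lipschitz property of $\arctan$, and \eqref{eq:exterior-small-time-energy} give
\[
 \frac{\langle H_{j,\ell}-T_tH_{j,\ell},H_{j,\ell}\rangle_2}{t}
 =\frac1{2t}\mathbb E|H_{j,\ell}(X_t)-H_{j,\ell}(X_0)|^2
 \leq\frac1{2t}\mathbb E|g_t(\zeta_j)-g_0(\zeta_j)|^2\leq C(1+t).
\]
Monotone convergence of $(1-e^{-t\lambda})/t\uparrow\lambda$ against the spectral measure of $H_{j,\ell}$ for $-L$ then gives $H_{j,\ell}\in\Dom((-L)^{1/2})=\Dcal$ with finite energy. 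Since $\Dcal$ is exactly the form domain of the generator of $(T_t)$, this avoids any core question.

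The regularized flows do enter, but only through the construction of the lift and the bound \eqref{eq:exterior-small-time-energy}. Your deterministic bound on $\sum_{n,\tau}|\partial_{n,\tau}g(\zeta_j)|^2$ is the infinitesimal counterpart of the single-point bracket estimate behind that bound. It has to be routed through the process and the semigroup, not through a weak-gradient identity.
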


\begin{proof}
The functions $G_{j,\ell}$ are bounded and continuous on $Y$.
At each of the deterministic times $s=0,t$, the lift identity gives
$G_{j,\ell}(Y_s)=H_{j,\ell}(X_s)$ almost surely. Stationarity and
symmetry of the response process, the $1$-Lipschitz property of
$\arctan$, and \eqref{eq:exterior-small-time-energy} therefore give,
for $0<t\leq T_*$,
\[
 \begin{aligned}
 \frac{\langle H_{j,\ell}-T_tH_{j,\ell},H_{j,\ell}\rangle_2}{t}
 &=\frac1{2t}\mathbb E
       |H_{j,\ell}(X_t)-H_{j,\ell}(X_0)|^2\\
 &\leq\frac1{2t}\mathbb E
       |g_t(\zeta_j)-g_0(\zeta_j)|^2
 \leq C(1+t).
 \end{aligned}
\]
Let $\mu_{j,\ell}^{\rm sp}$ be the spectral measure of $H_{j,\ell}$ for
$-L$. Since $(1-e^{-t\lambda})/t\uparrow\lambda$ as $t\downarrow0$,
\[
 \begin{aligned}
 \int_{[0,\infty)}\lambda\,d\mu_{j,\ell}^{\rm sp}(\lambda)
 &=\lim_{t\downarrow0}\int_{[0,\infty)}
          \frac{1-e^{-t\lambda}}{t}\,d\mu_{j,\ell}^{\rm sp}(\lambda)\\
 &=\lim_{t\downarrow0}
   \frac{\langle H_{j,\ell}-T_tH_{j,\ell},H_{j,\ell}\rangle_2}{t}
 \leq C.
 \end{aligned}
\]
Thus $H_{j,\ell}\in\Dcal$ and
$\E(H_{j,\ell},H_{j,\ell})=\int\lambda\,d\mu_{j,\ell}^{\rm sp}(\lambda)<\infty$.
Unitarity of $U$ and form density of $(F_j)$ prove the asserted
density of $(UF_j)$. For $y_i=(h_i,f_i,g_i)$, $i=1,2$,
\begin{align*}
 (UF_j)(y_1)=(UF_j)(y_2)\quad(j\geq1)
 &\ \Longrightarrow\ h_1=h_2,\\
 \shortintertext{while}
 G_{j,\ell}(y_1)=G_{j,\ell}(y_2)
 \quad(j\geq1,\ \ell\in\{1,2\})
 &\ \Longrightarrow\ g_1|_\T=g_2|_\T.
\end{align*}
For the second implication use injectivity of $\arctan$ and density
of $(\zeta_j)$. Exterior normalization and the maximum principle
give $g_1=g_2$. The matching relation and the maximum principle then
give $f_1=f_2$, proving separation.
\end{proof}

\subsection{The transported form and its projection}

Lemma~\ref{lem:Polish-Jordan-triples} provides a complete separable
metric topology on $Y$. Together with the compact form nest from
Lemma~\ref{lem:Jordan-compact-nest} and the countable continuous
form-dense separating family from
Lemma~\ref{lem:Jordan-separating-coordinates}, the three
quasi-regularity conditions hold for $(\E^Y,\Dcal^Y)$.

Let
\[
 T_t^Y=UT_tU^{-1}\qquad(t\geq0),
 \qquad L^Y=ULU^{-1},
\]
where $L^Y$ is the generator of the transported
$L^2(\nu^Y)$-semigroup. Write $\operatorname{Cap}_1^Y$ for the
corresponding $1$-capacity.

\begin{lemma}\label{lem:Jordan-form-properties}
The transported form $(\E^Y,\Dcal^Y)$ is strongly local. Its generator
satisfies
\[
 \mathbf 1\in\Dom(L^Y),\qquad L^Y\mathbf 1=0,
\]
and hence $T_t^Y\mathbf 1=\mathbf 1$ for every $t\geq0$.
\end{lemma}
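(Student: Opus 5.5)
The plan is to transport every property through the unitary map $U$ of Lemma~\ref{lem:Polish-Jordan-triples}, using the identity $\mathbf 1_{C}\circ\iota=\mathbf 1_{\prX(C)}$ only where supports are involved. Conservativeness is immediate. Since $U\mathbf 1=\mathbf 1\circ\prX=\mathbf 1$ and $\mathbf 1\in\Dom(L)$ with $L\mathbf 1=0$ by \eqref{eq:conservative-generator}, we get
\[
 \mathbf 1=U\mathbf 1\in U\Dom(L)=\Dom(L^Y),\qquad L^Y\mathbf 1=UL\mathbf 1=0 .
\]
Then $T_t^Y\mathbf 1=UT_t\mathbf 1=\mathbf 1$. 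Equivalently, $\E^Y(\mathbf 1,UG)=\E(\mathbf 1,G)=0$ for all $G\in\Dcal$, which identifies $L^Y\mathbf 1=0$ through the form representation \eqref{eq:gen-definition} transported by $U$.

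For strong locality, take $\widetilde F=UF$ and $\widetilde G=UG$ in $\Dcal^Y$, an open $V\subset Y$ and a constant $c$. Assume $\widetilde F=c$ $\nu^Y$-a.e. on $V$ and $\widetilde G=0$ $\nu^Y$-a.e. on $Y\setminus V$. Pulling back by $\iota$ gives $F=c$ $\nu$-a.e. on $\iota^{-1}(V)$ and $G=0$ $\nu$-a.e. off $\iota^{-1}(V)$. The difficulty is that $\iota^{-1}(V)$ is only Borel, not open in $X$, so Lemma~\ref{lem:strong-locality} does not apply in its stated form. The plan is to use instead the level-set identity \eqref{eq:levelset-locality}, which requires no topology. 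It yields $\mathbf 1_{\{F=c\}}DF=0$ and $\mathbf 1_{\{G=0\}}DG=0$ $\nu$-a.e. Set $B=\iota^{-1}(V)$. Then $\{F=c\}\supset B$ and $\{G=0\}\supset X\setminus B$ up to null sets. Hence $DF=0$ a.e. on $B$ and $DG=0$ a.e. on $X\setminus B$, and splitting the integral as in the proof of Lemma~\ref{lem:strong-locality} gives
\[
 \E^Y(\widetilde F,\widetilde G)=\E(F,G)=\int_B(DF,DG)_{\WP}\,d\nu+\int_{X\setminus B}(DF,DG)_{\WP}\,d\nu=0 .
\]
This argument also covers the compact-support variant of strong locality used in \cite{AMR}, because openness of $V$ is never used.

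The main point requiring care is that $U$ and its inverse act on $\nu$-equivalence classes through the measurable lift $\iota$. One must check that $\nu^Y$-a.e. statements on $V$ pull back to $\nu$-a.e. statements on $B$. This holds because $\nu^Y=\iota_*\nu$ and $U^{-1}H=H\circ\iota$ a.e. by Lemma~\ref{lem:Polish-Jordan-triples}. With that verified, the argument is purely measure-theoretic, and no planar estimate is needed.
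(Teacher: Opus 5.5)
Your proof is correct and follows essentially the paper's route: both arguments avoid the openness hypothesis of Lemma~\ref{lem:strong-locality} by using the topology-free level-set identity \eqref{eq:levelset-locality}, and both obtain $L^Y\mathbf 1=0$ and $T_t^Y\mathbf 1=\mathbf 1$ directly from $U\mathbf 1=\mathbf 1$. The only cosmetic difference is that you pull the open set back to the Borel set $\iota^{-1}(V)\subset X$ and split the integral there, whereas the paper pushes the identity forward as $\mathbf 1_{\{UF=c\}}\bigl((DF)\circ\prX\bigr)=0$ and splits $\E^Y$ over $O$ and $Y\setminus O$ on $Y$ itself.
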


\begin{proof}
The closed product and chain rules transfer under $U$. In particular,
let $F,G\in\Dcal$ and put
\[
 F^Y=UF,\qquad G^Y=UG.
\]
For every real $c$, $F^Y=F\circ\prX$ and
\eqref{eq:levelset-locality} give
\[
 \mathbf 1_{\{F^Y=c\}}\bigl((DF)\circ\prX\bigr)
 =\bigl(\mathbf 1_{\{F=c\}}DF\bigr)\circ\prX=0
 \quad\nu^Y\text{-almost everywhere}.
\]
Suppose that $O\subset Y$ is open and, for some $c\in\mathbb R$,
\[
 F^Y=c\quad\nu^Y\text{-a.e. on }O,
 \qquad
 G^Y=0\quad\nu^Y\text{-a.e. on }Y\setminus O.
\]
The same identity, first for $F$ and then for $G$ at level $0$, yields
\[
 \mathbf 1_O\bigl((DF)\circ\prX\bigr)=0,
 \qquad
 \mathbf 1_{Y\setminus O}\bigl((DG)\circ\prX\bigr)=0,
\]
and therefore
\[
 \E^Y(F^Y,G^Y)
 =\int_O(DF,DG)_{\WP}\circ\prX\,d\nu^Y
  +\int_{Y\setminus O}(DF,DG)_{\WP}\circ\prX\,d\nu^Y=0.
\]
Thus the form is strongly local. Since $U\mathbf 1=\mathbf 1$ and
$\E^Y(\mathbf 1,F^Y)=0$ for every $F^Y\in\Dcal^Y$,
\[
 \mathbf 1\in\Dom(L^Y),\qquad L^Y\mathbf 1=0.
\]
Consequently $T_t^Y\mathbf 1=\mathbf 1$, so the form is conservative.
\end{proof}

\begin{lemma}\label{lem:projection-quasi-homeomorphism}
There is a compact $\E^Y$-nest $(C'_m)$ with $C'_m\subset C_m$
such that $(\prX(C'_m))$ is an $\E$-nest and
\[
 \prX:C'_m\longrightarrow\prX(C'_m)
 \quad\text{is a homeomorphism for every }m\geq1.
\]
Thus $\prX$ is a quasi-homeomorphism.
\end{lemma}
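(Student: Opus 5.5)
We build $C'_m$ by intersecting the nest $(C_m)$ of Lemma~\ref{lem:Jordan-compact-nest} with a closed set on which the fibres of $\prX$ are single points. The natural candidate is
\[
 \Sigma=\{(h,f,g)\in Y:\ (h,f,g)\text{ is the unique normalized welding triple over }h\}.
\]
Since $\Sigma$ need not be closed, we work with compact subsets of it instead.

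Concretely, we first choose compact sets $\Sigma_m\subset X_{\rm nf}$ with $\nu(X\setminus\Sigma_m)\le 2^{-m}$ on which the measurable lift $\iota$ is continuous (Lusin), and put
\[
 C'_m=C_m\cap\iota(\Sigma_m)\quad\text{(made increasing by finite unions).}
\]
On $C'_m$ the map $\prX$ is a continuous bijection onto $\prX(C'_m)$ with inverse $\iota|_{\prX(C'_m)}$, because $\prX\circ\iota=\id$. Being a continuous bijection from a compact space onto a Hausdorff space, $\prX|_{C'_m}$ is a homeomorphism. Continuity of $\iota$ on $\Sigma_m$ also makes $\iota(\Sigma_m)$ compact, so $C'_m$ is compact.

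Removing a $\nu$-small set does not by itself give a nest; we need capacity control. So we choose $\Sigma_m$ not by Lusin alone but by the same mechanism as in Lemma~\ref{lem:Jordan-compact-nest}. Since $Y_t=\iota(X_t)$ at each deterministic time, we aim for path compacts on which the whole path stays in a set where $\iota$ is continuous. For this, replace $\iota$ on $X_{\rm nf}$ by the map $h\mapsto$ the unique triple over $h$ in $C_m$, when it is unique. The key claim is then the following: $\prX$ is injective on $C_m\cap\prX^{-1}(W_{\rm rem})$, and the set of non-removable weldings, where fibres can branch, is avoided by the lifted paths except on small probability.

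For $0<\kappa<4$ one might hope to use Theorem~\ref{thm:removability-main}, but that theorem comes later and the statement covers $\kappa=4$ as well. We therefore argue directly. Let $B_m\subset C_m$ be the closed set of points where some other point of $C_m$ has the same projection. Its projection $\prX(B_m)$ is compact, because it is the projection of the closed set $\{(y,y')\in C_m^2:\prX y=\prX y',\ y\ne y'\}$ after an exhaustion by $d_Y(y,y')\ge1/k$. We then show $\mathbb P(\sigma_{\prX(B_{m,k})}<\infty)=0$: at rational times $\nu$-a.e.\ $X_t$ has a unique lift, but hitting a compact at non-deterministic times is the delicate point. Assuming this, Lemma~\ref{lem:stationary-capacity} gives $\CapOne(\prX(B_{m,k}))=0$.

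We then set $C'_m=C_m\setminus\prX^{-1}(O_{m})$, where $O_m$ is an open neighbourhood of $\bigcup_k\prX(B_{m,k})$ with $\CapOne(O_m)\le2^{-m}$. The nest property of $(\prX(C'_m))$ follows as in the proof of Lemma~\ref{lem:Jordan-compact-nest}, using the equilibrium potentials of $(X\setminus\prX(C_m))\cup O_m$ and the cylinder product estimate. The nest property of $(C'_m)$ follows by applying $U$. The main obstacle is the zero-hitting claim for the branching set $\prX(B_{m,k})$. As a first attempt, I would exploit that the lift constructed in Proposition~\ref{prop:continuous-geometric-lift} is, on each block, a limit of the uniquely defined regularized triples, so that two distinct continuous lifts through the same boundary path cannot both arise.
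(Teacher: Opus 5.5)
Your proof has a genuine gap at the step you call delicate and then assume: that $\CapOne(\prX(B_{m,k}))=0$, i.e.\ that the stationary $X$-diffusion never hits the set of weldings having two distinct lifts in $C_m$.

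\textbf{Why that step fails.} Every such $h$ lies in $X\setminus W_{\rm rem}$, so you are asking for polarity of a $\nu$-null set of non-removable weldings. Nothing available at this point provides it.
\begin{itemize}
\item At $\kappa=4$ the paper proves nothing of this kind.
\item For $\kappa<4$, the bound $\CapOne(X\setminus W_{\rm rem})=0$ of Theorem~\ref{thm:removability-main} is derived later from the Jordan realization. That realization rests on this very lemma, so invoking it would be circular.
\item Your suggested mechanism does not reach the claim either. Uniqueness of the lift along one boundary path says nothing about $B_m$, whose pairs $y\ne y'$ may come from different paths in the compacts $A_j$, or from different times. A single continuous lift can pass through $h\in\prX(B_{m,k})$ sitting at one of the two lifts, with no contradiction to uniqueness along that path.
\end{itemize}
Your construction deletes whole fibres over bad base points in $X$. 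That demands a capacity estimate in $X$ which is strictly stronger than what the lemma needs. Moreover, the available comparison runs only in the direction $\operatorname{Cap}_1^Y(\prX^{-1}A)\le\CapOne(A)$, so exceptional sets in $Y$ do not push down to exceptional sets in $X$.

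\textbf{The missing idea.} Delete the bad \emph{lifts} in $Y$, not the bad base points in $X$, and measure them with capacity on $Y$.
\begin{itemize}
\item The boundary evaluations $G_{j,\ell}$ are continuous on $Y$. By Lemma~\ref{lem:Jordan-separating-coordinates} they represent $UH_{j,\ell}$ with $H_{j,\ell}\in\Dcal$.
\item The capacity comparison above (unitarity of $U$ on open sets, then outer approximation) shows that $\widetilde H_{j,\ell}\circ\prX$ is quasi-continuous on $Y$ whenever $\widetilde H_{j,\ell}$ is a quasi-continuous version of $H_{j,\ell}$.
\item Two quasi-continuous versions of the same domain function $UH_{j,\ell}$ agree outside a set $E_{\rm qc}$ with $\operatorname{Cap}_1^Y(E_{\rm qc})=0$.
\item Set $C'_m=C_m\cap\widetilde C_m$, where $(\widetilde C_m)$ is a compact $\E^Y$-nest avoiding $E_{\rm qc}$. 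Any two points of $C'_m$ over the same $h$ share all values $UF_j=F_j(h)$ and $G_{j,\ell}=\widetilde H_{j,\ell}(h)$. Point separation then forces them to coincide.
\item Compactness turns the resulting continuous bijection into a homeomorphism.
\item The nest property of $(\prX(C'_m))$ needs no capacity estimate in $X$. It follows from the support inclusion $U^{-1}\Dcal^Y_{C'_m}\subset\Dcal_{\prX(C'_m)}$ together with unitarity.
\end{itemize}

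Your rejection of the Lusin attempt is correct, since $\nu$-small sets need not be capacity-small. Your compactness and nest-transfer bookkeeping is also fine. Without the quasi-continuity argument, however, the proposal has no valid injectivity step.
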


\begin{proof}
For every open $O\subset X$, the unitary map $U$ preserves the
constraint in the defining variational problem, so
\[
 \begin{aligned}
 \operatorname{Cap}_1^Y(\prX^{-1}O)
 &=\inf_{\substack{F\in\Dcal\\F\geq1\ \nu\text{-a.e. on }O}}
                        \|UF\|_{\E^Y_1}^2\\
 &=\CapOne(O).
 \end{aligned}
\]
Since $\prX$ is continuous, outer approximation gives
\[
 \operatorname{Cap}_1^Y(\prX^{-1}A)\leq\CapOne(A)
                       \qquad(A\subset X).
\]
Let $\widetilde H$ be quasi-continuous on $X$. For every
$\varepsilon>0$, choose an open $O\subset X$ such that
$\CapOne(O)<\varepsilon$ and $\widetilde H|_{X\setminus O}$ is
continuous. Then
\[
 \operatorname{Cap}_1^Y(\prX^{-1}O)
 \leq\CapOne(O)<\varepsilon.
\]
Moreover, $\widetilde H\circ\prX$ is continuous on
$Y\setminus\prX^{-1}O$. Thus pullback by $\prX$ preserves
quasi-continuity.

Let $\widetilde H_{j,\ell}$ be a quasi-continuous version of
$H_{j,\ell}$ on $X$. Then $\widetilde H_{j,\ell}\circ\prX$ and
$G_{j,\ell}$ are quasi-continuous versions of the same domain function
$UH_{j,\ell}$. Define
\[
 E_{\rm qc}
 =\bigcup_{\substack{j\geq1\\\ell\in\{1,2\}}}
   \{y\in Y:G_{j,\ell}(y)\ne\widetilde H_{j,\ell}(\prX(y))\}.
\]
Since two quasi-continuous versions of the same form-domain function
agree outside a set of zero capacity,
\[
 \operatorname{Cap}_1^Y(E_{\rm qc})=0.
\]
Choose a compact $\E^Y$-nest $(\widetilde C_m)$ contained in
$Y\setminus E_{\rm qc}$ and set
\[
 C'_m=C_m\cap\widetilde C_m\qquad(m\geq1).
\]
Then $(C'_m)$ is a compact $\E^Y$-nest and $C'_m\cap E_{\rm qc}=\varnothing$.

For $y,y'\in\bigcup_mC'_m$ with $\prX(y)=\prX(y')=h$,
\begin{align*}
 (UF_j)(y)&=F_j(h)=(UF_j)(y'),\\
 \shortintertext{and}
 G_{j,\ell}(y)&=\widetilde H_{j,\ell}(h)=G_{j,\ell}(y')
 \qquad(j\geq1,\ \ell\in\{1,2\}).
\end{align*}
The separation proved in Lemma~\ref{lem:Jordan-separating-coordinates}
gives $y=y'$. Each continuous bijection
$\prX:C'_m\to\prX(C'_m)$ is therefore a homeomorphism by compactness.
The support subspaces satisfy
\[
 U^{-1}\Dcal^Y_{C'_m}\subset\Dcal_{\prX(C'_m)}.
\]
Unitarity and the nest property give
\[
 \overline{\bigcup_mU^{-1}\Dcal^Y_{C'_m}}^{\|\cdot\|_{\E_1}}
 =U^{-1}\Dcal^Y=\Dcal.
\]
It follows from the support inclusion that $(\prX(C'_m))$ is a compact
$\E$-nest. These two nests prove the quasi-homeomorphism assertion.
\end{proof}

\subsection{Final proof of Theorem~\ref{thm:full-main}}
\label{sec:proof-construction}

The quasi-regular form established above is strongly local and
conservative by Lemma~\ref{lem:Jordan-form-properties}. The form--process
correspondence \cite[Section~2]{AMR} therefore gives a properly
associated diffusion $Y^{\mathrm{form}}$, with semigroup
\[
 T_t^Y=UT_tU^{-1}.
\]
Outside a properly exceptional set, its paths are continuous in $Y$
and its lifetime is infinite. The associated process is unique up to
the usual equivalence outside properly exceptional sets.

Unitarity of $U$, symmetry of $T_t$, and $T_t^Y\mathbf 1=\mathbf 1$ give, for
$F,G\in L^2(\nu^Y)$,
\begin{align*}
 \langle T_t^YF,G\rangle_{L^2(\nu^Y)}
 &=\langle T_tU^{-1}F,U^{-1}G\rangle_{L^2(\nu)}\\
 &=\langle F,T_t^YG\rangle_{L^2(\nu^Y)},\\
 \shortintertext{and consequently}
 \int_YT_t^YF\,d\nu^Y
 &=\langle F,T_t^Y\mathbf 1\rangle_{L^2(\nu^Y)}
 =\int_YF\,d\nu^Y.
\end{align*}
Thus $\nu^Y$ is invariant and the diffusion started from $\nu^Y$ is
stationary. These identities also prove
\eqref{eq:intro-Jordan-reversibility}.

Let $Y^{\mathrm{geom}}$ be the continuous process constructed in
Proposition~\ref{prop:continuous-geometric-lift}. We first identify its
law on $C([0,\infty),Y)$ with that of $Y^{\mathrm{form}}$ started from
$\nu^Y$. It suffices to treat $0=t_0<t_1<\cdots<t_m$:
for a family starting at a positive time, insert the constant test
function $\mathbf 1$ at time zero. Take bounded measurable representatives
$\varphi_0^Y,\ldots,\varphi_m^Y$ on $Y$. Let
$\varphi_j=\varphi_j^Y\circ\iota$ on $X_{\rm nf}$, extended by zero off $X_{\rm nf}$;
this is a bounded representative of $U^{-1}\varphi_j^Y$.
The construction of $Y^{\mathrm{geom}}$ gives
$Y_t^{\rm geom}=\iota(X_t)$ almost surely at each deterministic time.
Taking the finite intersection over $t_0,\ldots,t_m$ gives
\[
 \mathbb E\prod_{j=0}^m \varphi_j^Y(Y_{t_j}^{\rm geom})
 =\mathbb E\prod_{j=0}^m \varphi_j(X_{t_j}).
\]
Because $U$ is composition with $\prX$, for bounded $F,G$ it preserves products:
\[
 U(FG)=(UF)(UG),\qquad U\mathbf 1=\mathbf 1.
\]
Writing $s_j=t_j-t_{j-1}$, the Markov property of $X$ and the identity
$T_t^Y=UT_tU^{-1}$ give
\begin{align*}
 \mathbb E\prod_{j=0}^m \varphi_j(X_{t_j})
 &=\int_X \varphi_0\,
 T_{s_1}\!\left(\varphi_1T_{s_2}\!\left(\cdots
 \varphi_{m-1}T_{s_m}\varphi_m\right)\right)d\nu
 \\
 \shortintertext{and hence}
 &=\int_Y \varphi_0^Y\,
 T_{s_1}^Y\!\left(\varphi_1^YT_{s_2}^Y\!\left(\cdots
 \varphi_{m-1}^YT_{s_m}^Y\varphi_m^Y\right)\right)d\nu^Y.
\end{align*}
The last expression is the corresponding product moment of
$Y^{\mathrm{form}}$ started from $\nu^Y$. Thus $Y^{\mathrm{geom}}$ and $Y^{\mathrm{form}}$ have the
same finite-dimensional distributions. Both laws are supported on
$C([0,\infty),Y)$, whose Borel sigma-algebra is generated by the
coordinate maps at rational times, so their path laws agree.

It remains to identify the projection for arbitrary admissible starting
points. Let $P_t^X$ and $P_t^Y$ be properly associated transition
kernels, and choose a properly exceptional set $N_X$ for the
$X$-diffusion. The capacity comparison in
Lemma~\ref{lem:projection-quasi-homeomorphism} gives
\[
 \operatorname{Cap}_1^Y(\prX^{-1}N_X)
 \leq\CapOne(N_X)=0.
\]
We include $\prX^{-1}N_X$ in the exceptional set removed below, so that
every remaining $y$ projects to an admissible starting point in $X$.
For $j\geq1$ and rational $q>0$, the semigroup identity
gives
\[
 T_q^Y(UF_j)=U(T_qF_j)\qquad\nu^Y\text{-almost everywhere}.
\]
Proper association and preservation of quasi-continuity under
pullback show that
\begin{align*}
 y&\longmapsto\int_YF_j(\prX(y'))\,P_q^Y(y,dy'),\\
 \shortintertext{and}
 y&\longmapsto\int_XF_j(x')\,P_q^X(\prX(y),dx')
\end{align*}
are quasi-continuous versions of the same $L^2(\nu^Y)$ function.
They therefore agree outside a set of zero $\operatorname{Cap}_1^Y$.
Take the countable union over $j\geq1$ and rational $q>0$, include
$\prX^{-1}N_X$ and the exceptional set of the associated diffusion,
and enclose the union in one properly exceptional set $N_Y$.
For every $y\in Y\setminus N_Y$, the resulting identities are
\[
 \int_YF_j(\prX(y'))\,P_q^Y(y,dy')
 =\int_XF_j(x')\,P_q^X(\prX(y),dx')
 \qquad(j\geq1,\ q\in\mathbb Q,\ q>0).
\]
Because the algebra enumerated by $(F_j)$ determines probability
measures on $X$, these equalities identify the pushed-forward transition
kernel at every rational time. Fix $t\geq0$ and choose rational
$q_n\downarrow t$. For each $j\geq1$, continuity of both diffusions and
bounded convergence yield
\begin{align*}
 \int_YF_j(\prX(y'))\,P_t^Y(y,dy')
 &=\lim_{n\to\infty}
   \int_YF_j(\prX(y'))\,P_{q_n}^Y(y,dy')\\
 \shortintertext{and hence}
 &=\lim_{n\to\infty}
   \int_XF_j(x')\,P_{q_n}^X(\prX(y),dx')\\
 &=\int_XF_j(x')\,P_t^X(\prX(y),dx').
\end{align*}
A second monotone-class argument therefore gives
\begin{equation}\label{eq:projection-kernel-identity}
 (\prX)_*P_t^Y(y,\cdot)=P_t^X(\prX(y),\cdot),
 \qquad t\geq0,
\end{equation}
for every $y\in Y\setminus N_Y$.

Write $\mathbb E_y^Y$ for expectation for $Y^{\rm form}$ started at
$y$, and retain $\mathbb E_h$ for the response diffusion started at $h$.
For a bounded measurable function $a$ on $X$, use the kernel notation
\[
 (P_s^Xa)(h)=\int_X a(h')\,P_s^X(h,dh'),
\]
and analogously on $Y$. Equation~\eqref{eq:projection-kernel-identity}
then says
\[
 P_s^Y(a\circ\prX)(y)=(P_s^Xa)(\prX(y))
 \qquad(y\notin N_Y,\ s\geq0).
\]
Because $N_Y$ is properly exceptional,
\[
 P_s^Y(y,Y\setminus N_Y)=1\qquad(y\notin N_Y,\ s\geq0),
\]
so this identity remains valid at every intermediate state in the
Markov iteration. For $0\leq t_1<\cdots<t_m$ and bounded measurable
$\varphi_1,\ldots,\varphi_m$ on $X$, define the backward test functions by
\[
 a_m=\varphi_m,\qquad
 a_j=\varphi_j\,P_{t_{j+1}-t_j}^Xa_{j+1}
 \quad(j=m-1,\ldots,1).
\]
At each step, on $Y\setminus N_Y$,
\[
 (\varphi_j\circ\prX)P_{t_{j+1}-t_j}^Y(a_{j+1}\circ\prX)
 =(\varphi_j P_{t_{j+1}-t_j}^Xa_{j+1})\circ\prX
 =a_j\circ\prX.
\]
Backward conditioning therefore gives
\begin{align*}
 \mathbb E_y^Y\prod_{j=1}^m\varphi_j(\prX(Y_{t_j}^{\rm form}))
 &=P_{t_1}^Y(a_1\circ\prX)(y)\\
 &=P_{t_1}^Xa_1(\prX(y))\\
 &=\mathbb E_{\prX(y)}\prod_{j=1}^m\varphi_j(X_{t_j}).
\end{align*}
These product moments determine the finite-dimensional distributions.
The two path laws are supported on continuous paths, so
\[
 \bigl(\prX(Y_t^{\rm form})\bigr)_{t\geq0}
 \stackrel{\mathrm{law}}{=}
 (X_t)_{t\geq0}
 \quad\text{started from }\prX(y),
\]
which proves \eqref{eq:intro-Jordan-projection}.

Taking $Y_t=Y_t^{\rm form}$ gives the continuous Jordan lift,
its projection law for every admissible starting point, and the
invariance and reversibility identities proved at the beginning.
This completes the proof of Theorem~\ref{thm:full-main}.

\section{Subcritical removability}
\label{sec:removable-support}

\subsection{Static H\"older regularity}

Fix $0<\kappa<4$. We combine a static H\"older estimate with
\eqref{eq:exterior-Holder-propagation} to prove
Theorem~\ref{thm:removability-main}. The boundary seminorm $[g]_\beta$
is the one used in Section~\ref{sec:Jordan-realization}.

Let $\mu_z^1$ and $\mu^0$ be the rooted and unrooted loop measures of
\cite{Zhan}, and write $\mu_{\rm root}=\mu_0^1$. For a sampled bounded
Jordan loop $\gamma$, let $m_\gamma$ be its natural-length measure and put
$L_\gamma=m_\gamma(\gamma)$. We use the full-measure set on which
$0<L_\gamma<\infty$, parametrize rooted loops by natural length, and
extend the parametrization periodically with period $L_\gamma$.
Let $q_\kappa>0$ be the deterministic exponent in
\cite[Theorem~5.2]{RS}, and set
\[
 \beta_\kappa=\min(q_\kappa,1/2).
\]

\begin{lemma}\label{lem:rooted-loop-Holder-charts}
For $\mu_{\rm root}$-almost every $\gamma$, each $x\in\gamma$ and
either component $\Omega$ of $\widehat{\mathbb C}\setminus\gamma$
admit a half-disc $Q\subset\mathbb H$, with $\operatorname{diam}Q\leq1$,
and a conformal map $F:Q\to\Omega\cap\mathbb C$ whose open straight
edge maps homeomorphically onto an open arc of $\gamma$ containing $x$.
The map extends continuously and injectively to $\overline Q$, and
\[
 |F(z)-F(w)|\leq C_Q|z-w|^{\beta_\kappa},
 \qquad z,w\in\overline Q,
\]
for some $C_Q<\infty$.
\end{lemma}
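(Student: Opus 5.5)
The lemma is a local statement about the rooted SLE loop measure, and I will reduce it to the chordal Hölder estimate of \cite[Theorem~5.2]{RS} by local absolute continuity.

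\textbf{Step 1: reduction to chordal SLE.} By the construction of Zhan's rooted loop $\mu_{\rm root}=\mu_0^1$ in \cite{Zhan}, conditionally on the loop up to a stopping time the remainder is a chordal SLE$_\kappa$ (two-sided radial/chordal decomposition). Consequently, for every open arc of $\gamma$ away from the root, the two complementary components near that arc are locally absolutely continuous with respect to the two complementary components of a chordal SLE$_\kappa$ curve in $\mathbb H$, after a conformal change of coordinates that is smooth up to the relevant boundary pieces. To cover the root itself, I use reversibility and root-invariance of the loop measure \cite{Zhan}: the law of $(\gamma,x)$ with $x$ chosen by $m_\gamma$ is again $\mu_{\rm root}$ up to translation. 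So any given point, in particular the root, can be moved to an interior position of some arc. Finitely many such arcs cover the compact loop $\gamma$.

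\textbf{Step 2: Hölder charts for chordal SLE.} For chordal SLE$_\kappa$ with $\kappa<4$, the uniformizing maps of both sides, from $\mathbb H$ onto the left and right components, are almost surely $q_\kappa$-Hölder on compact subsets of $\overline{\mathbb H}$ by \cite[Theorem~5.2]{RS}. Here $q_\kappa$ is the deterministic exponent, applied to the hull map and to its two-sided analogue via welding/reversibility. Given a point on the curve corresponding to $t_0\in\mathbb R$, take a small half-disc $Q$ centered at $t_0$ with diameter at most $1$ and set $F$ equal to the uniformizing map restricted to $Q$. The straight edge of $Q$ maps homeomorphically onto an open arc containing $x$, and $F$ extends continuously and injectively to $\overline Q$ because the curve is simple for $\kappa\le4$. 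Since $|z-w|\le1$ on $Q$, the Hölder bound with exponent $q_\kappa$ implies the bound with exponent $\beta_\kappa\le q_\kappa$ after adjusting $C_Q$.

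\textbf{Step 3: transfer and measurability.} For a map into the component of the loop, compose the chordal chart with the smooth local coordinate change from Step 1. Then restrict to a smaller half-disc whose image avoids the pole at infinity, so that the target lies in $\Omega\cap\mathbb C$ even when $\Omega$ is the unbounded component. Absolute continuity transfers the almost-sure property from countably many chordal configurations to $\mu_{\rm root}$. The quantifier over all $x$ is handled by compactness: the almost-sure event is "the whole curve is covered by charts with finite constants," and this is a countable intersection of events.

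The main obstacle is Step 1. One must justify the local absolute continuity between loop components and chordal components uniformly enough to cover every point simultaneously, including the root and the unbounded side. I would first try Zhan's description of $\mu_z^1$ as a chordal SLE conditioned to return, with Radon–Nikodym derivatives that are locally bounded away from the endpoints, combined with resampling of the root.
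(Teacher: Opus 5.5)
Your skeleton matches the paper's. You condition on $\gamma[0,\tau]$ so that the future is chordal SLE$_\kappa$ in $\Omega_\tau=\widehat{\mathbb C}\setminus\gamma[0,\tau]$ from $\gamma(\tau)$ to $\gamma(0)$. You then invoke \cite[Theorem~5.2]{RS}, take a countable union of null events over rational $\tau$, and treat the root by re-rooting invariance. The gap is the analytic input in Step~2. \cite[Theorem~5.2]{RS} is a statement about the shifted inverse Loewner map $\widehat f_t:\mathbb H\to\mathbb H\setminus K_t$ at a fixed time $t$. It does not assert H\"older continuity of the Riemann maps of $\mathbb H$ onto the left and right components of $\mathbb H\setminus\eta[0,\infty)$ for the full chordal curve, and the appeal to ``welding/reversibility'' does not produce such a two-sided statement. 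Your Steps~2 and~3 rely on exactly this unavailable property. It is why, after composing with a conformal map $\phi_\tau:\Omega_\tau\to\mathbb H$ sending $\gamma(\tau)\mapsto0$ and $\gamma(0)\mapsto\infty$, you only shrink $Q$ to avoid the pole $\phi_\tau(\infty)$: a side map of the whole curve would automatically land in one component of the loop complement.

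The cited theorem suffices once you localize, which is what the paper does. For rational $0<\tau<a<b<L_\gamma$, let $s_\tau$ be the change from natural time to half-plane capacity time of $\phi_\tau(\gamma[\tau,L_\gamma))$. Because $s_\tau\to\infty$ as the curve approaches its target, there is an integer $n>s_\tau(b)$, and then $u_n=s_\tau^{-1}(n)\in(b,L_\gamma)$. Take $F=\phi_\tau^{-1}\circ\widehat f_{\tau,n}$ on a small half-disc $Q$ at one of the two prime ends above a point of $\phi_\tau(\gamma[a,b])$, where $\widehat f_{\tau,n}$ is the shifted inverse Loewner map at capacity time $n$. This map is H\"older by \cite[Theorem~5.2]{RS}, and only countably many times $n$ are needed. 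Also $(\phi_\tau^{-1})'$ is bounded once $\widehat f_{\tau,n}(Q)$ sits in a disc away from $\phi_\tau(\infty)$.

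The step your proposal lacks is a second shrinking. $F(Q)$ automatically avoids $\gamma[0,u_n]$, but it must also be made to avoid the untraced tail $\gamma([u_n,L_\gamma])$. This is possible because simplicity of the loop gives $\operatorname{dist}(\gamma([a,b]),\gamma([u_n,L_\gamma]))>0$. Only then does the connected image lie in a single component $\Omega$ of the full loop complement, and the choice of prime end selects which one.

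With this replacement the rest of your plan works, and the ``main obstacle'' you identify disappears. No Radon--Nikodym bounds or uniformity are needed: the conformal Markov property \cite{Zhan} gives the conditional law of the future exactly on the finite-mass set $\{L_\gamma>\tau\}$, and each point gets its own constant $C_Q$. For the root, re-rooting at an $m_\gamma$-sampled point produces a length-weighted, hence mutually absolutely continuous, copy of $\mu_{\rm root}$. That works; the paper uses deterministic natural-time shifts of the root instead.
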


\begin{proof}
Fix a deterministic $\tau>0$. On $\{\tau<L_\gamma\}$, put
\[
 \mathcal A_\tau=\sigma(\gamma(s):0\leq s\leq\tau),
 \qquad
 \Omega_\tau=\widehat{\mathbb C}\setminus\gamma[0,\tau].
\]
By \cite[Theorem~4.1(vi)]{Zhan},
\[
 \mu_{\rm root}\{L_\gamma>\tau\}<\infty.
\]
For bounded nonnegative $\mathcal A_\tau$-measurable $A$ and bounded
nonnegative future functionals $G$, the conformal Markov property
\cite[Theorem~4.1(ii)]{Zhan} gives
\begin{align}
 &\int_{\{\tau<L_\gamma\}}A\,
       G\bigl(\gamma[\tau,L_\gamma]\bigr)\,d\mu_{\rm root}
 \notag\\
 &\quad=\int_{\{\tau<L_\gamma\}}A\,
   \mathbb E_{\Omega_\tau;\gamma(\tau),\gamma(0)}^{\operatorname{SLE}_\kappa}
       [G]\,d\mu_{\rm root}.
 \label{eq:rooted-loop-disintegration}
\end{align}
Choose, measurably from the past, the conformal map
\[
 \phi_\tau:(\Omega_\tau;\gamma(\tau),\gamma(0))
       \longrightarrow(\mathbb H;0,\infty)
\]
with $|\phi_\tau(\infty)|=1$. Denote the capacity-parametrized future
curve by $\eta_\tau$. Its continuous strictly increasing change from
natural time to capacity time satisfies
\[
 s_\tau:[\tau,L_\gamma)\longrightarrow[0,\infty),\qquad
 s_\tau(\tau)=0,\qquad \lim_{u\uparrow L_\gamma}s_\tau(u)=\infty.
\]
For integers $n\geq1$, let
\[
 \widehat f_{\tau,n}:\mathbb H\longrightarrow
             \mathbb H\setminus\eta_\tau[0,n]
\]
be the shifted inverse Loewner map. Theorem~5.2 of \cite{RS} and its
continuous boundary extension give, conditionally on the past,
\[
 \sup_{\substack{z,w\in\overline{\mathbb H}\cap\overline{B(0,m)}\\z\ne w}}
 \frac{|\widehat f_{\tau,n}(z)-\widehat f_{\tau,n}(w)|}
      {|z-w|^{q_\kappa}}<\infty
 \qquad(n,m\geq1)
\]
almost surely. Define $E_{\tau,n,m}$ to be the event that $L_\gamma>\tau$
and this bound fails. With the past held fixed, its conditional future
probability is zero. Integrating in \eqref{eq:rooted-loop-disintegration}
gives $\mu_{\rm root}(E_{\tau,n,m})=0$. Set
\[
 E^c=\bigcup_{\substack{\tau\in\mathbb Q,\ \tau>0\\n,m\geq1}}
             E_{\tau,n,m},
 \qquad E=(E^c)^c.
\]
Then
\[
 \mu_{\rm root}(E^c)
 \leq\sum_{\substack{\tau\in\mathbb Q,\ \tau>0\\n,m\geq1}}
           \mu_{\rm root}(E_{\tau,n,m})=0.
\]
Take $\gamma\in E$ and rational $0<\tau<a<b<L_\gamma$. For an integer
$n>s_\tau(b)$, the inverse time change gives
\[
 u_n=s_\tau^{-1}(n),\qquad b<u_n<L_\gamma.
\]
Simplicity of the loop gives
\[
 \operatorname{dist}\!\left(
 \gamma([a,b]),\gamma([0,\tau])\cup\gamma([u_n,L_\gamma])
 \right)>0.
\]
At either prime end above a point of
$\eta_\tau[s_\tau(a),s_\tau(b)]$, choose a half-disc $Q$ on which
$\widehat f_{\tau,n}$ extends injectively to $\overline Q$. Shrink $Q$
so that $\operatorname{diam}Q\leq1$ and its image lies in a disk $Q'$
with
\[
 \widehat f_{\tau,n}(\overline Q)\subset Q',
 \qquad
 \overline{Q'}\subset\mathbb H\setminus\{\phi_\tau(\infty)\}.
\]
The positive separation above allows a further shrinking so that
$\phi_\tau^{-1}(\widehat f_{\tau,n}(Q))$ avoids the earlier and terminal
arcs. This connected image lies in the chosen component of the full
loop complement. The exclusion of $\phi_\tau(\infty)$ also gives
\[
 \sup_{u\in Q'}|(\phi_\tau^{-1})'(u)|<\infty.
\]
For $F=\phi_\tau^{-1}\circ\widehat f_{\tau,n}$ and $z,w\in\overline Q$,
convexity of $Q'$ and the inverse-map estimate imply
\begin{align*}
 |F(z)-F(w)|
 &\leq\sup_{u\in Q'}|(\phi_\tau^{-1})'(u)|\,
       |\widehat f_{\tau,n}(z)-\widehat f_{\tau,n}(w)|\\
 &\leq C_Q|z-w|^{q_\kappa}
 \leq C_Q|z-w|^{\beta_\kappa}.
\end{align*}
The last step uses $|z-w|\leq1$ and $\beta_\kappa\leq q_\kappa$.
Compactness gives finitely many charts on each side of each subarc,
and
\[
 \bigcup_{\substack{\tau,a,b\in\mathbb Q\\0<\tau<a<b<L_\gamma}}
       \gamma((a,b))=\gamma((0,L_\gamma))
\]
covers the loop away from its root.

For deterministic $s>0$, let
\[
 (\mathcal T_s\gamma)(t)=\gamma(s+t)-\gamma(s).
\]
Theorem~4.1(iii) of \cite{Zhan} gives
$(\mathcal T_s)_*\mu_{\rm root}=\mu_{\rm root}$. Hence
\[
 E_*=E\cap\bigcap_{m\geq1}\mathcal T_{1/m}^{-1}E,
 \qquad \mu_{\rm root}(E_*^c)=0.
\]
For $\gamma\in E_*$, choose $s=1/m<L_\gamma$. Then
\[
 0<L_\gamma-s<L_\gamma,
 \qquad
 (\mathcal T_s\gamma)(L_\gamma-s)=\gamma(0)-\gamma(s).
\]
A nonroot chart of $\mathcal T_s\gamma$, translated by $\gamma(s)$,
is the required chart at the original root, with the same exponent.
\end{proof}

For a bounded Jordan loop $\gamma$, let $g_\gamma$ be its exterior
uniformization with
\[
 g_\gamma(z)=a_\gamma z+b_\gamma+O(z^{-1}),\qquad a_\gamma>0.
\]
This fixes $g_\gamma$ uniquely. The normalized-map construction and
radial boundary limits give measurable evaluations $g_\gamma(\zeta)$,
$\zeta\in\T$.

\begin{lemma}\label{lem:subcritical-static-Holder}
The exterior boundary map satisfies
\[
 [g]_{\beta_\kappa}<\infty
 \qquad\text{for }\nu^Y\text{-almost every }(h,f,g).
\]
\end{lemma}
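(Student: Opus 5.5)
The plan is to reduce the statement to a static property of the SLE loop, and then use the local H\"older charts of Lemma~\ref{lem:rooted-loop-Holder-charts} to control the exterior uniformization up to the boundary.

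First, identify the law of the curve. Under $\nu^Y$ the triple is $\iota(h)$ with $h\sim\nu$, so $g=g_h$ is the normalized exterior map of the welding curve. By the construction of the rotationally marked SLE welding law in \cite{Response}, this curve is the SLE$_\kappa$ loop, up to an affine normalization: a sample of $\mu^0$ or $\mu_{\rm root}$ restricted to a finite-mass set of loops and renormalized, and then moved by translation and scaling. The seminorm $[g]_{\beta_\kappa}$ is finite iff $[g_\gamma]_{\beta_\kappa}<\infty$ for the curve $\gamma=g(\T)$, since $g=(g_\gamma-b_\gamma)/a_\gamma$ and $a_\gamma>0$. The rotation mark does not affect the seminorm. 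So it suffices to show $[g_\gamma]_{\beta_\kappa}<\infty$ for $\mu_{\rm root}$-almost every $\gamma$. Here one uses that $\mu^0$ and $\mu_{\rm root}$ have the same null sets of loop shapes: the rooting measure is natural length, which has positive finite mass almost surely.

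Second, fix such a $\gamma$ in the full-measure set of Lemma~\ref{lem:rooted-loop-Holder-charts}, and let $\Omega$ be the unbounded component. For each $x\in\gamma$, take the exterior-side chart $F_x:Q_x\to\Omega$. Shrink to a smaller half-disc $Q'_x\Subset Q_x\cup(\text{edge})$ whose edge image is an open arc $\alpha_x\ni x$. By compactness of $\gamma$, finitely many such arcs cover $\gamma$. Consider $\Phi_x=g_\gamma^{-1}\circ F_x$, a conformal map from $Q_x$ into $\mathbb D^*$ that sends the straight edge into $\T$. By Schwarz reflection across the edge and across $\T$, $\Phi_x$ extends conformally to a neighbourhood of the closed shrunken edge, with nonvanishing derivative there. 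Hence $\Phi_x^{-1}$ is bi-Lipschitz from a neighbourhood of the arc $g_\gamma^{-1}(\alpha_x)\subset\T$, intersected with $\overline{\mathbb D^*}$, onto its image. Then on that arc
\[
 g_\gamma=F_x\circ\Phi_x^{-1},\qquad |g_\gamma(\zeta)-g_\gamma(\xi)|\le C_{Q_x}\,\|(\Phi_x^{-1})'\|_\infty^{\beta_\kappa}\,|\zeta-\xi|^{\beta_\kappa}.
\]

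Third, globalize with a Lebesgue number. Finitely many open arcs $g_\gamma^{-1}(\alpha_x)$ cover $\T$, so they have a Lebesgue number $\ell>0$. Pairs with $|\zeta-\xi|<\ell$ lie in a common arc and obey the local bound. Pairs with $|\zeta-\xi|\ge\ell$ satisfy $|g_\gamma(\zeta)-g_\gamma(\xi)|\le\operatorname{diam}\gamma\le\operatorname{diam}\gamma\,\ell^{-\beta_\kappa}|\zeta-\xi|^{\beta_\kappa}$. Together these give $[g_\gamma]_{\beta_\kappa}<\infty$.

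The main obstacle is the first step: transferring the null set from $\mu_{\rm root}$ to $\nu^Y$. This requires the precise relation in \cite{Response} between the rotationally marked welding probability and Zhan's loop measures, namely absolute continuity of the unrooted normalized loop law with respect to the root-forgotten $\mu_{\rm root}$, modulo conformal normalization. I would first try to show this by disintegrating $\mu_{\rm root}$ over the root position along $m_\gamma$, since this is equivalent in null sets to $\mu^0$. The reflection step is routine but needs care: the shrunken edge must avoid the corners of $Q_x$, so that the boundary correspondence is analytic there.
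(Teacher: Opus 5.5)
Your proposal is correct and follows the paper's proof. The common ingredients are Hölder charts from Lemma~\ref{lem:rooted-loop-Holder-charts} composed with a Schwarz-reflected transition map of bounded derivative, a Lebesgue-number globalization, and a null-set transfer. That transfer uses affine invariance of the bad set, Zhan's natural-length rooting identity $\mu^0(d\gamma)\,m_\gamma(dz)=\mu_z^1(d\gamma)\,dA(z)$ with $L_\gamma>0$, and the scale/exterior-rotation disintegration of \cite{Response}.

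The differences are cosmetic. You reflect $g_\gamma^{-1}\circ F_x$ on the chart side and invert it, whereas the paper reflects $\phi_j^{-1}\circ g_\gamma$ directly on an exterior collar. The paper also records measurability of $\{[g_\gamma]_{\beta_\kappa}=\infty\}$ through rational boundary points.
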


\begin{proof}
Take a loop in the full-measure set of
Lemma~\ref{lem:rooted-loop-Holder-charts}. Choose exterior local charts
$\phi_j:Q_j\to\mathbb C$ and open arcs $I_j,I_j^+\subset\T$ such that
\[
 \overline I_j\subset I_j^+,\qquad
 \T=\bigcup_{j=1}^J I_j.
\]
Choose the arcs of angular length at most $\pi$. Each transition map
\[
 R_j=\phi_j^{-1}\circ g_\gamma
\]
is defined on an exterior collar of $I_j^+$ and has real boundary
values there. Schwarz reflection extends it across $I_j^+$, so
\[
 \sup_{\zeta\in\overline I_j}|R_j'(\zeta)|<\infty.
\]
Integration along the shorter circular arc gives
\[
 |R_j(\zeta)-R_j(\xi)|
 \leq\frac\pi2\sup_{\eta\in\overline I_j}|R_j'(\eta)|\,|\zeta-\xi|
 \qquad(\zeta,\xi\in I_j).
\]
Using the local H\"older estimate and taking the maximum over the
finitely many charts, we obtain a finite $C_\gamma$ such that
\begin{align*}
 |g_\gamma(\zeta)-g_\gamma(\xi)|
 &=|\phi_j(R_j(\zeta))-\phi_j(R_j(\xi))|\\
 &\leq C_{Q_j}|R_j(\zeta)-R_j(\xi)|^{\beta_\kappa}
 \leq C_\gamma|\zeta-\xi|^{\beta_\kappa}
 \qquad(\zeta,\xi\in I_j).
\end{align*}
Let $d_*>0$ be a Lebesgue number of the finite cover. For distinct
$\zeta,\xi\in\T$,
\[
 \frac{|g_\gamma(\zeta)-g_\gamma(\xi)|}{|\zeta-\xi|^{\beta_\kappa}}
 \leq
 \begin{cases}
 C_\gamma,&|\zeta-\xi|<d_*,\\
 d_*^{-\beta_\kappa}\operatorname{diam}g_\gamma(\T),
        &|\zeta-\xi|\geq d_*.
 \end{cases}
\]
Thus $[g_\gamma]_{\beta_\kappa}<\infty$.

Define $E_{\rm bad}=\{\gamma:[g_\gamma]_{\beta_\kappa}=\infty\}$.
Continuity gives the countable representation
\[
 [g_\gamma]_{\beta_\kappa}
 =\sup_{\substack{\zeta,\xi\in\T_{\mathbb Q}\\\zeta\ne\xi}}
       \frac{|g_\gamma(\zeta)-g_\gamma(\xi)|}{|\zeta-\xi|^{\beta_\kappa}},
\]
so $E_{\rm bad}$ is measurable. For $a=|a|e^{i\theta}\ne0$ and
$b\in\mathbb C$, uniqueness of the positive-leading-coefficient
normalization gives
\begin{align*}
 g_{a\gamma+b}(z)&=a\,g_\gamma(e^{-i\theta}z)+b,\\
 \shortintertext{and consequently}
 [g_{a\gamma+b}]_{\beta_\kappa}&=|a|[g_\gamma]_{\beta_\kappa}.
\end{align*}
Thus $E_{\rm bad}$ is invariant under affine changes and does not
depend on the root. Translation covariance now yields
\[
 \mu_z^1(E_{\rm bad})=\mu_0^1(E_{\rm bad})=0
 \qquad(z\in\mathbb C).
\]
After forgetting the root in the loop coordinate,
\cite[Theorem~4.2(i), (4.6)]{Zhan} gives
\[
 \mu^0(d\gamma)\,m_\gamma(dz)=\mu_z^1(d\gamma)\,dA(z).
\]
Tonelli's theorem gives
\[
 0=\int_{\mathbb C}\mu_z^1(E_{\rm bad})\,dA(z)
  =\int_{E_{\rm bad}}L_\gamma\,\mu^0(d\gamma).
\]
For every integer $m\geq1$,
\[
 \mu^0(E_{\rm bad}\cap\{L_\gamma\geq1/m\})
 \leq m\int_{E_{\rm bad}}L_\gamma\,\mu^0(d\gamma)=0.
\]
Since $L_\gamma>0$ almost everywhere, taking the union over $m$ gives
$\mu^0(E_{\rm bad})=0$.

For loops surrounding $0$, write $\Omega_\gamma$ for the bounded
component.
The scale and independent exterior-rotation disintegrations
\cite[Appendix~B.2.3, (B.47)--(B.48)]{Response} transfer this null set
to the normalized welding law. Explicitly, an exterior rotation by
$\vartheta$ and the normalization defining $Y$ replace $g_\gamma$ by
\[
 g(z)=\frac{g_\gamma(e^{i\vartheta}z)-b_\gamma}
            {a_\gamma e^{i\vartheta}},
 \qquad
 [g]_{\beta_\kappa}=a_\gamma^{-1}[g_\gamma]_{\beta_\kappa}.
\]
Affine invariance of $E_{\rm bad}$ and the scale interval $[0,1]$
therefore give, for a normalization constant $c_\kappa>0$,
\begin{align*}
 0&=\mu^0\!\left(E_{\rm bad}\cap
       \{0\in\Omega_\gamma,\ 0\leq\log\operatorname{crad}_{\Omega_\gamma}(0)\leq1\}\right)\\
  &=c_\kappa\,\nu^Y\{(h,f,g):[g]_{\beta_\kappa}=\infty\}.
\end{align*}
This proves the assertion with the fixed exponent $\beta_\kappa$.
\end{proof}

\subsection{The H\"older locus and removability}

For $0<\alpha\leq1$ and $M\geq1$, put
\[
 Y_{\alpha,M}=\{(h,f,g)\in Y:[g]_\alpha\leq M\}.
\]
Define
\[
 Y_H=\bigcup_{\substack{\alpha\in\mathbb Q\cap(0,1]\\M\in\mathbb N}}
                 Y_{\alpha,M},
\]
and let $W_H=\prX(Y_H)$.

\begin{lemma}\label{lem:Holder-locus}
The projected H\"older locus satisfies
\[
 W_H\in\mathcal B(X)
 \qquad\text{and}\qquad
 W_H\subset W_{\rm rem}.
\]
\end{lemma}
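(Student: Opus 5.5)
The lemma has two parts: Borel measurability of $W_H$, and the inclusion $W_H\subset W_{\rm rem}$. I would prove them separately.

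\emph{Measurability.} Each $Y_{\alpha,M}$ is closed in $Y$. Indeed, $[g]_\alpha\leq M$ is equivalent to the countably many conditions $|g(\zeta)-g(\xi)|\leq M|\zeta-\xi|^\alpha$ for $\zeta\ne\xi$ in $\T_{\mathbb Q}$, and each condition is preserved under the uniform convergence of $u$, hence of $g|_\T$, that is built into the topology of $Y$. So $Y_H$ is an $F_\sigma$, in particular Borel, in the Polish space $Y$ of Lemma~\ref{lem:Polish-Jordan-triples}. The projection $\prX$ is continuous.

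To get a Borel image, I would show that $\prX$ is injective on $Y_H$. This follows from the removability part below: if $(h,f,g)\in Y_H$, then $h\in W_{\rm rem}$, and the normalized welding pair of a removable curve is unique. The Lusin--Souslin theorem then makes $\prX(Y_H)=W_H$ Borel.

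If one prefers to avoid injectivity, a fallback is available: $W_H$ is analytic, hence universally measurable, and one can work with the capacity outer measure. But the Lusin--Souslin route gives exactly $\mathcal B(X)$, which is what the statement asks for.

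\emph{Removability.} Fix $(h,f,g)\in Y_{\alpha,M}$, and let $\gamma=g(\T)$ be the corresponding Jordan curve. The exterior domain $\Omega^*=g(\mathbb D^*)$ has a conformal map $g$ that is $\alpha$-Hölder on $\overline{\mathbb D^*}$. This holds first on $\T$ by assumption, and it extends to the closed exterior disk by the standard Hardy--Littlewood/maximum-principle argument. By this extension I mean that $u$ is in the disk algebra and a Hölder modulus of the boundary values passes to the closed disk.

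A Jordan domain whose Riemann map is Hölder continuous up to the boundary is a Hölder domain. By Jones--Smirnov (\cite{JS}), the boundary of a Hölder domain is conformally removable, and Jones--Smirnov require the Hölder condition on only one complementary side. Hence $\gamma$ is removable.

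It remains to check that $\gamma$ being removable means that $h\in W_{\rm rem}$. Since $f=g\circ h$ on $\T$ and $f,g$ uniformize the two complementary components of $\gamma$, the map $h$ is the conformal welding of the removable Jordan curve $\gamma$, which is the definition of $W_{\rm rem}$.

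\emph{Main obstacle.} The delicate step is matching hypotheses precisely. The seminorm $[g]_\alpha$ controls only the boundary values on $\T$, so I must transfer the bound to a Hölder estimate for the conformal map on the closed exterior domain, with the point at infinity handled through the chart $1/z$ and the normalization $g(z)=z+u(1/z)$. I would do this by applying the disk-algebra maximum principle to increments of $u$; alternatively, I would quote the classical fact that a Hölder boundary map of a conformal map implies a Hölder extension. After that, I would invoke \cite{JS} in its one-sided form. The injectivity needed for measurability then reduces to uniqueness of normalized removable weldings, which is already used in the proof of Lemma~\ref{lem:Polish-Jordan-triples}.
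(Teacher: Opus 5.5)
Your proposal is correct and matches the paper's proof essentially step for step. It uses closedness of each $Y_{\alpha,M}$ through countably many boundary inequalities, and injectivity of $\prX$ on $Y_H$ from uniqueness of the normalized removable welding, followed by Lusin--Souslin. Removability comes from extending the boundary H\"older bound inward and applying the one-sided Corollary~2 of \cite{JS}.

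The only point to make explicit is the one you flagged yourself. To obtain a \emph{bounded} H\"older domain you must invert the target as well as the source. The paper does this with $\widetilde g(w)=(g(1/w)-z_*)^{-1}$ for $z_*$ in the bounded component, which gives boundary constant $r_*^{-2}M$; Poisson-kernel estimates then give exponent $\alpha/2$, and M\"obius invariance transfers removability back to $g(\T)$.
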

\begin{proof}
Take $(h,f,g)\in Y_{\alpha,M}$. Choose $z_*$ in the bounded
component of $\mathbb C\setminus g(\T)$ and put
\[
 r_*=\operatorname{dist}(z_*,g(\T))>0.
\]
Define the conformal map on $\mathbb D$, continuous on
$\overline{\mathbb D}$, by
\[
 \widetilde g(w)=
 \begin{cases}
  \bigl(g(1/w)-z_*\bigr)^{-1},&0<|w|\leq1,\\
  0,&w=0
 \end{cases}.
\]
It satisfies
\[
 \widetilde g'(0)=1,\qquad
 \|\widetilde g\|_{C(\overline{\mathbb D})}\leq r_*^{-1}.
\]
On the boundary,
\[
 |\widetilde g(\zeta)-\widetilde g(\xi)|
 \leq r_*^{-2}M|\zeta-\xi|^\alpha
 \qquad(\zeta,\xi\in\T).
\]
Set $\alpha'=\alpha/2$ and
$C_*=C_\alpha(r_*^{-1}+r_*^{-2}M)$, increasing
$C_\alpha$ as needed below. For the circular Poisson kernel
\[
 p_r(t)=\frac{1-r^2}{1-2r\cos t+r^2},
\]
the Poisson formula and kernel cancellation give
\begin{align*}
 \widetilde g(re^{i\theta})-\widetilde g(e^{i\theta})
 &=\frac1{2\pi}\int_{-\pi}^{\pi}p_r(t)
       \bigl(\widetilde g(e^{i(\theta-t)})-\widetilde g(e^{i\theta})\bigr)\,dt,\\
 \shortintertext{and, by holomorphicity,}
 e^{i\theta}\widetilde g'(re^{i\theta})
 &=\frac1{2\pi}\int_{-\pi}^{\pi}\partial_rp_r(t)
       \bigl(\widetilde g(e^{i(\theta-t)})-\widetilde g(e^{i\theta})\bigr)\,dt.
\end{align*}
Here $\int_{-\pi}^{\pi}p_r=2\pi$ and
$\int_{-\pi}^{\pi}\partial_rp_r=0$. For $1/2\leq r<1$, put $a=1-r$.
The kernel bounds are
\[
 p_r(t)\leq\frac{Ca}{a^2+t^2},
 \qquad
 |\partial_rp_r(t)|\leq\frac{C}{a^2+t^2}
 \qquad(|t|\leq\pi).
\]
Since $0<\alpha'<1$, the bound
$|e^{it}-1|^\alpha\leq2^{\alpha-\alpha'}|t|^{\alpha'}$
and the substitution $t=as$ give
\begin{align*}
 \int_{-\pi}^{\pi}p_r(t)|e^{it}-1|^\alpha\,dt
 &\leq C_\alpha a^{\alpha'}
       \int_0^\infty\frac{s^{\alpha'}}{1+s^2}\,ds
 \leq C_\alpha a^{\alpha'},\\
 \shortintertext{and}
 \int_{-\pi}^{\pi}|\partial_rp_r(t)|\,|e^{it}-1|^\alpha\,dt
 &\leq C_\alpha a^{\alpha'-1}
       \int_0^\infty\frac{s^{\alpha'}}{1+s^2}\,ds
 \leq C_\alpha a^{\alpha'-1}.
\end{align*}
The common integral is finite because
\[
 \int_0^\infty\frac{s^{\alpha'}}{1+s^2}\,ds
 \leq\int_0^1s^{\alpha'}\,ds+\int_1^\infty s^{\alpha'-2}\,ds
 =\frac1{1+\alpha'}+\frac1{1-\alpha'}.
\]
For $r\leq1/2$, boundedness and Cauchy's derivative estimate give
the same bounds after increasing $C_*$. Consequently,
\begin{align}
 |\widetilde g(re^{i\theta})-\widetilde g(e^{i\theta})|
 &\leq C_*(1-r)^{\alpha'}
 \notag\\
 \shortintertext{and}
 |\widetilde g'(re^{i\theta})|
 &\leq C_*(1-r)^{\alpha'-1}.
 \label{eq:Holder-domain-radial-gradient}
\end{align}
For distinct $z,w\in\mathbb D$, set
\[
 \delta=|z-w|,\qquad \rho_0=\min(1-|z|,1-|w|).
\]
If $\delta\leq\rho_0/2$, the segment between the points remains at
distance at least $\rho_0$ from $\T$, and
\begin{align*}
 |\widetilde g(z)-\widetilde g(w)|
 &=\left|\int_0^1\widetilde g'(z+s(w-z))(w-z)\,ds\right|\\
 &\leq C_*\delta\rho_0^{\alpha'-1}
 \leq C_*\delta^{\alpha'}.
\end{align*}
If $\rho_0/2<\delta<1/4$, then both points have modulus greater than
$1/4$. Put $z^*=z/|z|$ and $w^*=w/|w|$. We have
\[
 1-|z|\leq3\delta,\qquad 1-|w|\leq3\delta,
 \qquad |z^*-w^*|\leq8\delta.
\]
The radial bound and boundary estimate yield
\begin{align*}
 |\widetilde g(z)-\widetilde g(w)|
 &\leq |\widetilde g(z)-\widetilde g(z^*)|
       +|\widetilde g(z^*)-\widetilde g(w^*)|
       +|\widetilde g(w^*)-\widetilde g(w)|\\
 &\leq 2C_*(3\delta)^{\alpha'}+r_*^{-2}M(8\delta)^\alpha
 \leq C_\alpha C_*\delta^{\alpha'}.
\end{align*}
For $\delta\geq1/4$,
\[
 |\widetilde g(z)-\widetilde g(w)|
 \leq2\|\widetilde g\|_\infty
 \leq2\,4^{\alpha'}\|\widetilde g\|_\infty\delta^{\alpha'}.
\]
Continuity extends these bounds to $\overline{\mathbb D}$, proving
\[
 \sup_{\substack{z,w\in\overline{\mathbb D}\\z\ne w}}
 \frac{|\widetilde g(z)-\widetilde g(w)|}{|z-w|^{\alpha'}}<\infty.
\]
Thus $\widetilde g(\mathbb D)$ is a bounded H\"older domain.
Corollary~2 of \cite{JS} makes its boundary quasiconformally removable,
equivalently conformally removable in the plane
\cite[Definition~1 and the following paragraph]{JS}.
M\"obius invariance then gives removability of $g(\T)$.

For fixed $\alpha,M$, continuity of boundary evaluations gives
\[
 Y_{\alpha,M}
 =\bigcap_{\substack{\zeta,\xi\in\T_{\mathbb Q}\\\zeta\ne\xi}}
   \{(h,f,g)\in Y:
        |g(\zeta)-g(\xi)|\leq M|\zeta-\xi|^\alpha\}.
\]
Each set in the intersection is closed. Hence $Y_H$ is a countable
union of closed subsets of $Y$. Uniqueness of the normalized welding
of a removable curve makes $\prX|_{Y_H}$ injective. The complete
separable realizations of $X,Y$ and the Lusin--Souslin theorem give
\[
 W_H=\prX(Y_H)\in\mathcal B(X),\qquad W_H\subset W_{\rm rem}.
\]
\end{proof}

\subsection{Final proof of Theorem~\ref{thm:removability-main}}
\label{sec:proof-removability}

Put $T=T_*$ and choose a single rational $\beta\in(0,\beta_\kappa)$.
Start the diffusion of Theorem~\ref{thm:full-main} from $\nu^Y$, and
write $X_t=\prX(Y_t)$. Its projected path law is the stationary
response law. Stationarity and Lemma~\ref{lem:subcritical-static-Holder}
give, for every integer $\ell\geq0$,
\[
 [g_{\ell T}]_\beta
 \leq2^{\beta_\kappa-\beta}[g_{\ell T}]_{\beta_\kappa}<\infty
 \qquad\text{almost surely}.
\]
For $I_\ell=[\ell T,(\ell+1)T]$, define the extended random variable
\[
 H_\ell^{\rm Hol}=\sup_{t\in I_\ell}[g_t]_{\beta/2}\in[0,\infty].
\]
Joint continuity gives its measurable countable representation
\[
 H_\ell^{\rm Hol}
 =\sup_{q\in\mathbb Q\cap I_\ell}
   \ \sup_{\substack{\zeta,\xi\in\T_{\mathbb Q}\\\zeta\ne\xi}}
       \frac{|g_q(\zeta)-g_q(\xi)|}{|\zeta-\xi|^{\beta/2}}.
\]
Stationarity of the path law gives, in $C([0,T],Y)$,
\[
 \operatorname{Law}\bigl((Y_{\ell T+t})_{0\leq t\leq T}\bigr)
 =\operatorname{Law}\bigl((Y_t)_{0\leq t\leq T}\bigr).
\]
Thus Corollary~\ref{cor:geometric-lift-estimates} and the initial
seminorm bound imply $\mathbb P(H_\ell^{\rm Hol}=\infty)=0$ for every $\ell$.
The countable union bound yields
\[
 \mathbb P\left(\bigcup_{\ell\geq0}\{H_\ell^{\rm Hol}=\infty\}\right)
 \leq\sum_{\ell\geq0}\mathbb P(H_\ell^{\rm Hol}=\infty)=0.
\]
On the complementary event, put $N_\ell=1\vee\lceil H_\ell^{\rm Hol}\rceil$.
Since $\beta/2\in\mathbb Q\cap(0,1]$, the definition of $Y_H$ gives
\begin{align*}
 Y_t&\in Y_{\beta/2,N_\ell}\subset Y_H\qquad(t\in I_\ell),\\
 \shortintertext{and hence}
 X_t&\in W_H\qquad(t\geq0).
\end{align*}

For every compact $K\subset X\setminus W_H$, the preceding event
excludes a hit at any time. Thus
\begin{align*}
 \mathbb P(\sigma_K<\infty)
 &\leq\mathbb P\left(\bigcup_{\ell\geq0}\{H_\ell^{\rm Hol}=\infty\}\right)=0,\\
 \shortintertext{and \eqref{eq:stationary-capacity-compact} gives}
 \CapOne(K)&=\mathbb E e^{-\sigma_K}=0.
\end{align*}
The $1$-capacity is a tight Choquet capacity on $X$. Since
$W_H\in\mathcal B(X)$ by Lemma~\ref{lem:Holder-locus}, capacitability
\cite[Theorem~2.1.1 and Corollary~A.1.1]{FOT} yields
\[
 \CapOne(X\setminus W_H)
 =\sup_{\substack{K\subset X\setminus W_H\\K\ \mathrm{compact}}}
       \CapOne(K)=0.
\]
Finally,
\[
 \CapOne(X\setminus W_{\rm rem})
 \leq\CapOne(X\setminus W_H)=0.
\]
This proves \eqref{eq:subcritical-removable-capacity} and completes
the proof of Theorem~\ref{thm:removability-main}.

\begin{remark}
The continuous Jordan realization remains valid at $\kappa=4$, but
the proof above does not establish
$\CapOne(X\setminus W_{\rm rem})=0$ there. Static conformal removability
holds for SLE$_4$ \cite{KMS}; the positive H\"older exponent required
by the present propagation argument is unavailable at the critical
endpoint \cite{KMSreg}. The space $Y$ still records the actual
conformal pair without requiring its uniqueness at every time.
\end{remark}

\section{Irreducibility and equilibrium}
\label{sec:ergodicity}

For Theorem~\ref{thm:ergodicity}, the key step is to show that every
$F\in\Dcal$ with $DF=0$ equals a real constant $\nu$-almost everywhere.
Horizontal integration by parts determines coefficient pairings, with
inverse powers of the exterior derivative retaining the relative rotation.

\subsection{Intrinsic coordinates and coefficient responses}

Fix $0<\kappa\leq4$. Use the intrinsic pair
$(f_h^\circ,g_h^\circ)$ from Section~\ref{sec:welding-support}, with
\[
 f_h^\circ(0)=0,\qquad (f_h^\circ)'(0)=1,\qquad
 g_h^\circ(\infty)=\infty,
\]
and $h=(g_h^\circ)^{-1}\circ f_h^\circ$ on $\T$.
Write $f^\circ,g^\circ$ when $h$ is fixed, and set
\[
 f^\circ(z)=z+\sum_{n\geq1}a_nz^{n+1}.
\]
Retain $a_\infty=(g^\circ)'(\infty)$ and write
\begin{equation}\label{eq:intrinsic-exterior-mark}
 a_\infty=e^k\chi,\qquad k=\log|a_\infty|,\quad\chi\in S^1.
\end{equation}
The area formula and the exterior area theorem give
\[
 \pi\leq\operatorname{Area}(f^\circ(\mathbb D))
       \leq\pi|a_\infty|^2,
 \qquad k\geq0.
\]
The intrinsic pairs on $W_{\rm rem}$ agree with the measurable choice
on $X_{\rm nf}$. Their coefficient functions therefore have measurable
versions on $X$. Smooth right composition preserves $W_{\rm rem}$
\cite[Lemma~3.1]{Response}; all orbit derivatives below use these
intrinsic pairs.

Let $\nu^\#$ be the law of the normalized interior map $f^\circ$.
The marked welding parametrization is a measurable identification
modulo null sets, and, with $\chi=e^{i\alpha}$ and $\alpha\in\T$,
\begin{equation}\label{eq:intrinsic-mark-product}
 d\nu=d\nu^\#(f^\circ)\,\frac{d\alpha}{2\pi}.
\end{equation}
This is \cite[Definition~1.1]{BJwelding}, in the normalization of
\cite[Appendix~B.2.3, (B.48)--(B.49)]{Response}. In particular,
$k=k(f^\circ)$ is measurable and finite almost everywhere.

Let $\mathcal P=\mathbb C[a_1,a_2,\ldots]$, with weighted degree
$\deg a_n=n$, and let $\mathcal P_N$ be its homogeneous part of degree
$N$. Put $\mathcal P_N=\{0\}$ for $N<0$.
The full class of univalent functions on $\mathbb D$ normalized by
$f^\circ(0)=0$ and $(f^\circ)'(0)=1$ is compact in the topology of
locally uniform convergence. Coefficients are continuous on this
class, and $|a_n|\leq n+1$ makes every polynomial bounded.

Retain $c_{\mathrm L}$ and the Schwarzian $\mathcal S$.
For an integer $m$, write $[z^m]$ for Laurent coefficient extraction
at zero. For $n\geq1$, define
\[
 b_n=[z^{n-2}]\left(\frac{((f^\circ)'(z))^2}{f^\circ(z)^2}
                         -\frac1{z^2}\right),\qquad
 s_n=[z^{n-2}]\mathcal S f^\circ(z).
\]
Then $b_1=2a_1$, $s_1=0$, and the stress coefficients from
Section~\ref{sec:stress-drift} satisfy
\begin{equation}\label{eq:coefficient-stress-splitting}
 t_n=b_n+\frac{c_{\mathrm L}}{12}s_n\qquad(n\geq2).
\end{equation}
The coefficient derivations on $\mathcal P$ are
\begin{equation}\label{eq:coefficient-positive-derivation}
 \ell_n=\partial_{a_n}+\sum_{m\geq1}(m+1)a_m\partial_{a_{m+n}},
 \qquad n\geq1.
\end{equation}
Fix $0<r<1$. For $n\geq1$, the negative derivation $\ell_{-n}$
is determined, for $z$ sufficiently close to zero, by
\begin{equation}\label{eq:coefficient-negative-derivation}
 \sum_{m\geq1}(\ell_{-n}a_m)z^{m+1}
 =\frac{f^\circ(z)^2}{2\pi i}\oint_{|w|=r}
 \frac{((f^\circ)'(w))^2w^{1-n}}
 {f^\circ(w)^2(f^\circ(w)-f^\circ(z))}\,dw.
\end{equation}
Each coefficient on the right is a polynomial in finitely many $a_m$.

For a function differentiable along a source flow, use the actual
right derivative in \eqref{eq:actual-right-derivative}. Its complex
combinations are
\[
 D_n^\pm=D_{\cos(n\cdot)}\pm iD_{\sin(n\cdot)}
       =\alpha_n^{-1}(D_{n,c}\pm iD_{n,s}),\qquad n\geq2,
\]
with $\alpha_n$ from \eqref{eq:onb}. The coefficient response and score
identities \cite[Lemmas~4.2--4.3 and Corollary~5.2]{Response} give
\begin{align}
 D_n^+p&=i\ell_np,\qquad D_n^-p=i\ell_{-n}p
       \quad(p\in\mathcal P),\notag\\
 \shortintertext{and}
 \alpha_n^{-1}(\rho_{n,c}+i\rho_{n,s})
       &=i\overline{t_n},\qquad n\geq2.
 \label{eq:coefficient-horizontal-response}
\end{align}

\begin{lemma}\label{lem:exterior-coefficient-response}
For $n\geq2$,
\begin{equation}\label{eq:exterior-leading-response}
 D_n^+a_\infty=0,\qquad D_n^-a_\infty=-ib_na_\infty.
\end{equation}
For every $j\in\mathbb Z_{\geq0}$ and $p\in\mathcal P$,
\begin{align}
 D_n^+(a_\infty^{-j}p)&=ia_\infty^{-j}\ell_np,\notag\\
 \shortintertext{and}
 D_n^-(a_\infty^{-j}p)
     &=ia_\infty^{-j}(\ell_{-n}p+jb_np).
 \label{eq:inverse-power-response}
\end{align}
These functions and their first horizontal responses are bounded.
They are continuously differentiable along every finite horizontal
source flow on $W_{\rm rem}$.
\end{lemma}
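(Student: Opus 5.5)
The plan is to compute the response of $a_\infty$ by differentiating the welding identity $f^\circ=g^\circ\circ h$ along a source flow $h\mapsto h\circ\psi_t^v$. Equation \eqref{eq:inverse-power-response} then follows from the Leibniz rule, combined with \eqref{eq:coefficient-horizontal-response}. Under a right-composition flow the new pair is $f_t=f^\circ\circ\Phi_t$ and $g_t=g^\circ\circ G_t$. Here $\Phi_t$ is the interior reparametrization and $G_t$ the exterior conformal reparametrization, both normalized so that $f_t(0)=0$, $f_t'(0)=1$ and $G_t(\infty)=\infty$. At first order the variation is one holomorphic vector field $w$ on $f^\circ(\mathbb D)$ and another on the exterior domain. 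Their difference on the curve is the pushed-forward circle field. Removability then says this decomposition is the unique conformal splitting, which is exactly the Cauchy-transform formula of Lemma~\ref{lem:boundary-evaluation-chart}.

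First I would express the first-order change of the exterior map. Write $\partial_v g^\circ=(g^\circ)'\cdot V_{\rm ext}$, where $V_{\rm ext}$ is the holomorphic field on $\mathbb D^*$ obtained by projecting the circle field transported by $h$. Its behaviour at $\infty$ gives $\partial_v\log a_\infty$ as the coefficient of $z$ in $V_{\rm ext}(z)/z$. For $v=\cos n\theta\pm i\sin n\theta=e^{\pm in\theta}$, the source field $ie^{i\theta}v$ is $iz^{1\pm n}$ on $\T$. Using $f^\circ=g^\circ\circ h$, the combined interior/exterior field solves a Riemann--Hilbert splitting. For the positive modes the source field $iz^{n+1}$ extends holomorphically into $\mathbb D$ and vanishes to order $\geq3$ at $0$. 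So it is absorbed entirely by the interior map: this reproduces $D_n^+p=i\ell_np$, the exterior map is unchanged, and hence $D_n^+a_\infty=0$. For the negative modes $iz^{1-n}$, I would push the field forward by $f^\circ$ to get $i z^{1-n}/(f^\circ)'(z)\cdot (f^\circ)'(z)$ in the image chart. I would then split it by the Cauchy integral over the curve, as in \eqref{eq:coefficient-negative-derivation}. The exterior part near $\infty$ has leading coefficient given by the residue
\[
 \frac{1}{2\pi i}\oint_{|w|=r}\frac{((f^\circ)'(w))^2w^{1-n}}{f^\circ(w)^2}\,dw=[w^{n-2}]\frac{((f^\circ)')^2}{(f^\circ)^2}=b_n\quad(n\geq2).
\]
This identifies $D_n^-a_\infty=-ib_na_\infty$, with the sign fixed by comparing against the known formula for $\ell_{-n}$ (the $z\to\infty$ expansion of the kernel in \eqref{eq:coefficient-negative-derivation}). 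Dividing the logarithmic derivative gives $D_n^\pm a_\infty^{-j}=-ja_\infty^{-j}D_n^\pm\log a_\infty$, and the product rule yields \eqref{eq:inverse-power-response}.

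Boundedness comes next. Since $k\geq0$, we have $|a_\infty^{-j}|\leq1$. Polynomials in the $a_m$ are bounded by $|a_m|\leq m+1$, and so are $\ell_{\pm n}p$ and $b_n$; for the last two I would use Cauchy estimates on $|w|=r$ together with Koebe distortion, which is uniform over the compact normalized class. Continuous differentiability along finite horizontal flows follows because the pair depends smoothly on $\varphi$ (Lemma~\ref{lem:boundary-evaluation-chart}, with its $C^2$ chart). Coefficients and $a_\infty$ are continuous linear functionals of the pair: for $a_\infty$ I would use the Laurent coefficient of $g$ recovered by a contour integral outside $2\overline{\mathbb D}$. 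Right-composition stability \cite[Lemma~3.1]{Response} keeps the orbit in $W_{\rm rem}$, so the intrinsic pair is well defined along the whole orbit.

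The main obstacle is the sign and normalization bookkeeping in the negative mode: matching the actual-right convention, the interior normalization that removes the affine freedom, and the residue defining $b_n$. I would first verify it at $n=2$ on the identity pair, $f^\circ=z$, where $b_2=0$ and $a_\infty=1$. I would also check consistency with $b_1=2a_1$ and with the commutation relation $[\ell_{-n},\ell_n]$ acting on $a_\infty$.
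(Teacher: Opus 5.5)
Your proposal follows essentially the paper's proof: a normalized Cauchy-integral splitting of the first-variation jump $i(f^\circ)'(w)w^{1\mp n}$, computed on the interior contour $|w|=r$ (which is needed in any case, since the rough curve is not rectifiable). In it the positive modes are absorbed entirely by the interior map, the negative-mode residue gives $b_n$, and the chain rule together with $|a_\infty^{-j}|\leq1$ and boundedness of coefficient polynomials finishes the argument. The sign can be derived directly instead of fixed by comparison: write $C$ for the Cauchy integral of the jump. Its raw exterior branch is $O(1/\xi)$ at infinity, so the entire linear term at $\infty$ comes from subtracting $C(0)+C'(0)\xi$ to make the interior field vanish to second order at $0$, which gives $D_n^-a_\infty/a_\infty=-C'(0)=-ib_n$. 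Two small slips: the quantity you want is the coefficient of $z$ in $V_{\rm ext}(z)$ itself, not in $V_{\rm ext}(z)/z$; and the intrinsic $a_\infty$ is not a linear functional of the pair given by Lemma~\ref{lem:boundary-evaluation-chart}, though it is a smooth function of it.
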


\begin{proof}
Fix $0<r<1$ and give $\gamma_r=f^\circ(r\T)$ its positive orientation.
For the negative complex response, the normalized first-variation
splitting of \cite[Appendix~A.3]{Response} has jump
\[
 u(f^\circ(w))=i(f^\circ)'(w)w^{1-n}\qquad(|w|=r).
\]
Define its Cauchy integral off $\gamma_r$ by
\[
 C_u(\xi)=\frac1{2\pi i}\oint_{\gamma_r}
                   \frac{u(\eta)}{\eta-\xi}\,d\eta.
\]
Its interior and exterior branches have difference $u$ across the
contour. Subtract $C_u(0)+C_u'(0)\xi$ from both branches, where the
values at zero are taken in the interior. The resulting interior
field vanishes to second order at zero, and the exterior field is
$O(\xi)$ at infinity. These conditions uniquely determine the
normalized splitting: the difference of two such splittings is a
holomorphic vector field on the sphere, with zero value and derivative
at zero and at most linear growth at infinity. Thus
\[
 D_n^-g^\circ(z)
 =C_u(g^\circ(z))-C_u(0)-C_u'(0)g^\circ(z).
\]
For either real component of the deformation, the exterior coefficient
is extracted on any fixed circle $|z|=R>1$ by
\[
 a_\infty(t)=\frac1{2\pi i}\oint_{|z|=R}g_t^\circ(z)z^{-2}\,dz.
\]
The Ahlfors--Bers parameter derivative is locally uniform on this
circle, so it passes through the integral. Since
$C_u(\xi)=O(\xi^{-1})$ at infinity,
\begin{align*}
 \frac{D_n^-a_\infty}{a_\infty}
 &=-C_u'(0)\\
 &=-\frac{i}{2\pi i}\oint_{|w|=r}
       \frac{((f^\circ)'(w))^2w^{1-n}}{f^\circ(w)^2}\,dw
 =-ib_n.
\end{align*}
For the positive response, the jump is
$i(f^\circ)'(w)w^{n+1}$. It extends holomorphically to the interior
image and vanishes to second order at zero, so its normalized
exterior field is zero. This proves
\eqref{eq:exterior-leading-response}.

The chain rule gives \eqref{eq:inverse-power-response}. The same
parameter construction gives continuous differentiability along the
real source flows. Since $|a_\infty^{-j}|\leq1$ and all coefficient
polynomials are bounded, the two displayed responses are bounded.
Sums and differences give the bounds in each finite real horizontal
direction. Products and complex conjugates retain these properties.
\end{proof}

\subsection{Horizontal flows and integration by parts}

For a real finite horizontal field $v$, set
\[
 \Phi_t^vh=h\circ\psi_t^v,\qquad U_t^vH=H\circ\Phi_t^v,
\]
and let $K_v$ be the closed generator of $(U_t^v)_{t\in\mathbb R}$
on $L^2(\nu)$. The finite likelihood formula
\cite[Lemma~6.1 and Proposition~6.2]{Response} gives a strongly continuous
group of bounded operators, with $K_vH=D_vH$ on $\Ccyl$ and
\[
 \|U_t^v\|_{2\to2}^2\leq\exp(|t|\,\|\rho_v\|_\infty).
\]
Writing $R_t^v$ for its Radon--Nikodym density, the same formula gives
\begin{equation}\label{eq:horizontal-source-likelihood}
 R_t^v:=\frac{d(\Phi_t^v)_*\nu}{d\nu}
 =\exp\left(\int_{-t}^0\rho_v\circ\Phi_s^v\,ds\right).
\end{equation}
The bounded response of $\rho_v$ implies
\begin{equation}\label{eq:horizontal-source-density-limit}
 \left\|\frac{R_t^v-1}{t}-\rho_v\right\|_\infty\longrightarrow0
 \qquad(t\to0).
\end{equation}

Let $F\in\Dcal\cap L^\infty(\nu)$ be real, with $DF=0$.
In the next lemma, $H$ is any finite linear combination of products
of the functions $a_\infty^{-j}p$, $j\in\mathbb Z_{\geq0}$,
$p\in\mathcal P$, and their complex conjugates.

\begin{lemma}\label{lem:zero-energy-coefficient-ibp}
For every finite real horizontal field $v$,
\begin{equation}\label{eq:zero-energy-coefficient-ibp}
 \int_X F D_vH\,d\nu=\int_X F\rho_vH\,d\nu.
\end{equation}
\end{lemma}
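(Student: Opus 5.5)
The plan is to derive \eqref{eq:zero-energy-coefficient-ibp} from the horizontal flow group $(U_t^v)$ and its likelihood \eqref{eq:horizontal-source-likelihood}. We compare two computations of the derivative at $t=0$ of $t\mapsto\int_X F\,(H\circ\Phi_t^v)\,d\nu$.

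The first computation moves the flow onto $F$. By the change of variables defining $R_t^v$,
\[
 \int_X F\,(H\circ\Phi_t^v)\,d\nu=\int_X (F\circ\Phi_{-t}^v)\,H\,R_{-t}^v\,d\nu ,
\]
up to the sign convention fixed by \eqref{eq:horizontal-source-likelihood}. Since $F\in\Dcal$ with $DF=0$, I will show $U_t^vF=F$ for all $t$. The generator $K_v$ extends $D_v$ on $\Ccyl$, and $\Ccyl$ is a form core by Lemma~\ref{lem:state-space-core}. So for $F_j\in\Ccyl$ with $F_j\to F$ in $\|\cdot\|_{\E_1}$, we have
\[
 U_t^vF_j-F_j=\int_0^tU_s^vD_vF_j\,ds .
\]
The operators $U_s^v$ are bounded on $L^2(\nu)$ uniformly for $|s|\le|t|$, and $D_vF_j\to D_vF=0$ in $L^2$. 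Passing to the limit gives $U_t^vF=F$. The right side then reduces to $\int_X F H R_{-t}^v\,d\nu$. Because $F$ and $H$ are bounded, \eqref{eq:horizontal-source-density-limit} lets me differentiate it at $t=0$, giving $\int_X F\rho_vH\,d\nu$ with the sign matching \eqref{eq:zero-energy-coefficient-ibp}. I will check this sign against the convention $D^*v=\rho_v$.

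The second computation differentiates the left side directly. I need
\[
 (H\circ\Phi_t^v-H)/t\to D_vH \quad\text{in }L^1(\nu)
\]
and in fact boundedly almost everywhere, so that pairing with the bounded $F$ gives $\int_X F\,D_vH\,d\nu$. Lemma~\ref{lem:exterior-coefficient-response} supplies what is needed. Along every finite horizontal source flow on $W_{\rm rem}$, each $a_\infty^{-j}p$ and its conjugate is continuously differentiable, with a bounded first response. The flow preserves $W_{\rm rem}$ by \cite[Lemma~3.1]{Response}, and $\nu(W_{\rm rem})=1$. The mean value theorem along orbits then gives
\[
 \bigl|(H\circ\Phi_t^v-H)/t\bigr|\le\sup|D_vH| .
\]
The difference quotient also converges pointwise to the orbit derivative, which equals $D_vH$ computed by the product rule from \eqref{eq:inverse-power-response}. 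Dominated convergence finishes this side. Products and finite sums are handled by the ordinary Leibniz rule along orbits.

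I expect the main obstacle to be identifying the orbit derivative of $H$ with the symbol $D_vH$ in the statement. If $D_vH$ is meant as a closed derivative, we must show $H\in\Dcal$, or at least that $K_vH$ equals the orbit derivative. I would handle this through the group: the bounded difference quotients converge almost everywhere, and the $U^v$-invariance of $\nu$ up to the bounded density $R_t^v$ upgrades this to $L^2$ convergence. That places $H$ in $\Dom(K_v)$ with $K_vH$ equal to the orbit derivative. A secondary point is measurability of the flowed intrinsic pairs off $X_{\rm nf}$. This is harmless because every quantity involved is only integrated against $\nu$ over a flow-invariant full-measure set.
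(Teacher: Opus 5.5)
Your proposal is correct and follows the paper's proof. It deduces the invariance $U_t^vF=F$ from $K_vF=0$, where you pass to the limit in the integrated identity on $\Ccyl$ instead of invoking closedness of $K_v$, which comes to the same thing. It then changes variables and differentiates at $t=0$, using the bounded orbit difference quotients from Lemma~\ref{lem:exterior-coefficient-response} on one side and \eqref{eq:horizontal-source-density-limit} on the other.

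Two small corrections. First, since $R_t^v=d(\Phi_t^v)_*\nu/d\nu$, the change of variables is $\int_X F\,(H\circ\Phi_t^v)\,d\nu=\int_X (F\circ\Phi_{-t}^v)\,H\,R_t^v\,d\nu$ with $R_t^v$, not $R_{-t}^v$; the latter would give the opposite sign. Second, $D_vH$ in the statement is by the paper's convention the actual-right orbit derivative, so your extra step placing $H$ in $\Dom(K_v)$ is not needed.
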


\begin{proof}
Choose $F^{(m)}\in\Ccyl$ with
\[
 \|F^{(m)}-F\|_2+
 \|DF^{(m)}-DF\|_{L^2(\nu;\HWP)}\longrightarrow0.
\]
Then
\[
 K_vF^{(m)}=(DF^{(m)},v)_{\WP}
       \longrightarrow(DF,v)_{\WP}=0\quad\text{in }L^2(\nu).
\]
Closedness of $K_v$ gives $F\in\Dom(K_v)$ and $K_vF=0$. Hence
\begin{equation}\label{eq:zero-energy-horizontal-invariance}
 U_t^vF-F=\int_0^t U_s^vK_vF\,ds=0
 \quad\text{in }L^2(\nu),\qquad t\in\mathbb R.
\end{equation}
For each fixed $t$, change of variables gives
\begin{align*}
 \int_X F(H\circ\Phi_t^v)\,d\nu
 &=\int_X(F\circ\Phi_{-t}^v)H R_t^v\,d\nu\\
 &=\int_X FH R_t^v\,d\nu.
\end{align*}
Lemma~\ref{lem:exterior-coefficient-response} gives
\[
 \frac{H\circ\Phi_t^v-H}{t}
 =\int_0^1(D_vH)\circ\Phi_{st}^v\,ds,
 \qquad
 \left|\frac{H\circ\Phi_t^v-H}{t}\right|\leq C_{H,v}.
\]
Subtract the identity at $t=0$, divide by $t$, and use dominated
convergence on the left and
\eqref{eq:horizontal-source-density-limit} on the right. This proves
\eqref{eq:zero-energy-coefficient-ibp}.
\end{proof}

\subsection{Polynomial representations and pairings}

Write $\operatorname{Id}_{\mathcal P}$ for the identity operator on $\mathcal P$. For
$j\in\mathbb Z_{\geq0}$, set $L_n^{(j)}=\ell_n$ for $n>0$, and define
\[
 L_0^{(j)}=\sum_{m\geq1}ma_m\partial_{a_m}+(j+1)\operatorname{Id}_{\mathcal P}.
\]
For the negative modes put
\begin{equation}\label{eq:coefficient-representation}
 L_{-n}^{(j)}p
 =\ell_{-n}p+\left((j+1)b_n+\frac{c_{\mathrm L}}{12}s_n\right)p,
 \qquad n\geq1.
\end{equation}
These act on coefficient polynomials; $L$ remains the diffusion
generator. With the operator commutator $[A,B]=AB-BA$, the coefficient
formulas of \cite[Section~0.3, (0.2)--(0.8)]{AN} give
\begin{equation}\label{eq:coefficient-virasoro-relations}
 [L_m^{(j)},L_n^{(j)}]
 =(m-n)L_{m+n}^{(j)}
 +\frac{c_{\mathrm L}}{12}(m^3-m)\mathbf 1_{\{m+n=0\}}\operatorname{Id}_{\mathcal P}.
\end{equation}
Indeed, the multiplication term in that realization is the
coefficient of $z^n$ in
\[
 z^2\left((j+1)\frac{((f^\circ)'(z))^2}{f^\circ(z)^2}
               +\frac{c_{\mathrm L}}{12}\mathcal S f^\circ(z)\right),
\]
namely $(j+1)b_n+c_{\mathrm L}s_n/12$ for $n>0$, with constant term
$j+1$. For $N\geq0$, the degree convention gives
\[
 L_n^{(j)}\mathcal P_N\subset\mathcal P_{N-n}
 \qquad(n\in\mathbb Z).
\]
The constant polynomial satisfies
\[
 L_n^{(j)}1=0\quad(n>0),\qquad L_0^{(j)}1=(j+1)1.
\]
In particular,
\begin{align*}
 [L_3^{(j)},L_{-2}^{(j)}]&=5L_1^{(j)},\\
 [L_2^{(j)},L_{-3}^{(j)}]&=5L_{-1}^{(j)},\\
 \shortintertext{and}
 [L_2^{(j)},L_{-2}^{(j)}]&=4L_0^{(j)}+\frac{c_{\mathrm L}}2\operatorname{Id}_{\mathcal P}.
\end{align*}

The highest-weight vector $1$ generates $\mathcal P$.
To verify this, put $b_\kappa=\sqrt\kappa/2$, so
$c_{\mathrm L}=1+6(b_\kappa+b_\kappa^{-1})^2\geq25$.
The singular-vector criterion \cite[(1)--(5)]{Kent}, with its
parameter equal to $-b_\kappa^2$, gives, for positive integers $r,s$, the possible highest weights
\begin{equation}\label{eq:positive-weight-nonresonance}
 h_{r,s}=\frac{(b_\kappa+b_\kappa^{-1})^2-(rb_\kappa+sb_\kappa^{-1})^2}{4}\leq0,
 \qquad r,s\geq1.
\end{equation}
Thus the Verma module $V(c_{\mathrm L},j+1)$ has no singular vector.
A nonzero proper submodule would have a nonzero vector of least
positive degree, annihilated by all positive modes; hence the module
is irreducible. The homomorphism
\[
 V(c_{\mathrm L},j+1)\longrightarrow\mathcal P
\]
that sends its highest-weight vector to $1$ is nonzero and therefore
injective. At level $N$, both spaces have dimension equal to the
number of partitions of $N$, so the homomorphism is surjective at
every level. Consequently,
\begin{equation}\label{eq:coefficient-cyclicity}
 \mathcal P=\operatorname{span}
 \{L_{-n_1}^{(j)}\cdots L_{-n_r}^{(j)}1:
                  r\geq0,\ n_1,\ldots,n_r\geq1\}.
\end{equation}

Fix $j,\ell\in\mathbb Z_{\geq0}$ and a sesquilinear map
$B:\mathcal P\times\mathcal P\to\mathbb C$, linear in its first
variable. Assume that
\begin{equation}\label{eq:horizontal-coefficient-pairing}
 B(L_n^{(j)}p,q)=B(p,L_{-n}^{(\ell)}q)
 \qquad(|n|\geq2,\ p,q\in\mathcal P).
\end{equation}

\begin{lemma}\label{lem:coefficient-pairing-uniqueness}
The pairing satisfies
\[
 B(L_n^{(j)}p,q)=B(p,L_{-n}^{(\ell)}q)
 \qquad(n\in\mathbb Z,\ p,q\in\mathcal P).
\]
If $j\ne\ell$, then $B=0$. If $j=\ell$, the value $B(1,1)$ uniquely
determines $B$.
\end{lemma}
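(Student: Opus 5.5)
First extend the intertwining identity \eqref{eq:horizontal-coefficient-pairing} from $|n|\geq2$ to all $n$. The commutator relations listed after \eqref{eq:coefficient-virasoro-relations} do this directly. For $n=1$, use $5L_1^{(j)}=[L_3^{(j)},L_{-2}^{(j)}]$ and move each factor across $B$ in turn:
\[
 B(L_3^{(j)}L_{-2}^{(j)}p,q)=B(p,L_2^{(\ell)}L_{-3}^{(\ell)}q),\qquad
 B(L_{-2}^{(j)}L_3^{(j)}p,q)=B(p,L_{-3}^{(\ell)}L_2^{(\ell)}q).
\]
Subtracting gives $5B(L_1^{(j)}p,q)=B(p,[L_2^{(\ell)},L_{-3}^{(\ell)}]q)=5B(p,L_{-1}^{(\ell)}q)$. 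The same argument with $[L_2,L_{-3}]=5L_{-1}$ on the left gives the case $n=-1$.

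For $n=0$, the relation $[L_2^{(j)},L_{-2}^{(j)}]=4L_0^{(j)}+\tfrac{c_{\mathrm L}}2\operatorname{Id}$ gives
\[
 B\bigl((4L_0^{(j)}+\tfrac{c_{\mathrm L}}2)p,q\bigr)=B\bigl(p,(4L_0^{(\ell)}+\tfrac{c_{\mathrm L}}2)q\bigr).
\]
The central terms are equal on both sides because $c_{\mathrm L}$ is real, so they cancel and we get $B(L_0^{(j)}p,q)=B(p,L_0^{(\ell)}q)$. This establishes the identity for every $n\in\mathbb Z$. The sesquilinearity convention has to be handled carefully at this step: $B$ is antilinear in its second variable, and the only scalar moved across it is real.

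Next, separate by degree. Take $p\in\mathcal P_N$ and $q\in\mathcal P_M$. On $\mathcal P_N$ the operator $L_0^{(j)}$ acts as $N+j+1$, and these eigenvalues are real. The $n=0$ identity then gives $(N+j+1-M-\ell-1)B(p,q)=0$. So $B(p,q)=0$ unless $N+j=M+\ell$.

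To finish, use cyclicity \eqref{eq:coefficient-cyclicity} for the index $j$. Every $p$ is a linear combination of words $L_{-n_1}^{(j)}\cdots L_{-n_r}^{(j)}1$. Moving these factors across $B$ one at a time gives
\[
 B(p,q)=B(1,L_{n_r}^{(\ell)}\cdots L_{n_1}^{(\ell)}q),
\]
so $B$ is determined by the linear functional $q\mapsto B(1,q)$. The degree relation says $B(1,q)$ vanishes unless $\deg q=j-\ell$. Moreover $B(1,q)=B(L_0^{(j)}1,q)/(j+1)$, which leads nowhere new. Use instead that $1$ is annihilated by the positive modes: for $q$ of degree $M\geq1$, write $q=\sum L_{-n}^{(\ell)}q'$ using cyclicity for $\ell$. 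Then $B(1,L_{-n}^{(\ell)}q')=B(L_n^{(j)}1,q')=0$ for $n\geq1$.

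So $B(1,q)$ can be nonzero only for $q$ a constant, in $\mathcal P_0$. By the degree relation this forces $0+j=0+\ell$. Hence $B=0$ when $j\neq\ell$, and when $j=\ell$ the whole pairing is determined by $B(1,1)$.

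The main obstacle is bookkeeping rather than substance. One must check that each commutator manipulation transports the operators to the correct side, with indices reversed in order and sign flipped, and that the central term is real. These checks are what make the $L_0$ eigenvalue argument legitimate under the sesquilinear convention.
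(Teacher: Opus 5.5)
Your proposal is correct and follows essentially the paper's route. You extend the intertwining to $n=\pm1,0$ via $[L_3,L_{-2}]=5L_1$, $[L_2,L_{-3}]=5L_{-1}$ and $[L_2,L_{-2}]=4L_0+\tfrac{c_{\mathrm L}}{2}$, with the real central term cancelling under the sesquilinear convention. You then reduce everything to $B(1,1)$ using cyclicity in both variables together with $L_n^{(j)}1=0$ for $n>0$. The only cosmetic difference is that you extract the full weight relation $(N+j-M-\ell)B(p,q)=0$ from the $L_0$ identity, whereas the paper only records $(j-\ell)B(1,1)=0$ and tracks the remaining degrees directly in the final transfer step.
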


\begin{proof}
For $|m|,|n|\geq2$, two applications of
\eqref{eq:horizontal-coefficient-pairing} give
\begin{align*}
 B(L_m^{(j)}L_n^{(j)}p,q)
 &=B(L_n^{(j)}p,L_{-m}^{(\ell)}q)\\
 &=B(p,L_{-n}^{(\ell)}L_{-m}^{(\ell)}q).
\end{align*}
Subtracting the identity with $m,n$ exchanged yields
\[
 B([L_m^{(j)},L_n^{(j)}]p,q)
 =B(p,[L_{-n}^{(\ell)},L_{-m}^{(\ell)}]q).
\]
The pairs $(m,n)=(3,-2),(2,-3)$ give the relations for $1,-1$.
For $(m,n)=(2,-2)$, the common real central term cancels, giving
the relation for $0$. In particular,
\[
 (j-\ell)B(1,1)=0.
\]
By \eqref{eq:coefficient-cyclicity}, a polynomial of positive degree
is a sum of negative-mode descendants. For $n\geq1$ and $q_0\in\mathcal P$,
\[
 B(1,L_{-n}^{(\ell)}q_0)=B(L_n^{(j)}1,q_0)=0.
\]
For a descendant in the first variable, repeated transfer gives
\[
 B(L_{-n_1}^{(j)}\cdots L_{-n_r}^{(j)}1,q)
 =B(1,L_{n_r}^{(\ell)}\cdots L_{n_1}^{(\ell)}q).
\]
If $q\in\mathcal P_M$, the polynomial on the right lies in
$\mathcal P_{M-\sum_i n_i}$. A positive degree gives zero by the
preceding identity, degree zero gives a scalar multiple of $B(1,1)$,
and a negative degree gives zero. Cyclicity proves both conclusions.
\end{proof}

\subsection{Final proof of Theorem~\ref{thm:ergodicity}}
\label{sec:proof-ergodicity}

Let $F\in\Dcal\cap L^\infty(\nu)$ be real, with $DF=0$. Choose a
bounded measurable representative and put $c_F=\int_XF\,d\nu$.
For $j,\ell\in\mathbb Z_{\geq0}$, define
\begin{equation}\label{eq:marked-coefficient-pairing}
 B_{j,\ell}^F(p,q)=\int_X
 F a_\infty^{-j}\overline{a_\infty}^{-\ell}p\overline q\,d\nu,
 \qquad p,q\in\mathcal P.
\end{equation}
These integrals are finite. Apply
\eqref{eq:zero-energy-coefficient-ibp} in the cosine and sine directions
to $H=a_\infty^{-j}p\,\overline{a_\infty^{-\ell}q}$.
Equation~\eqref{eq:inverse-power-response} gives
\begin{align*}
 D_n^+H
 &=ia_\infty^{-j}(\ell_np)\,
                 \overline{a_\infty^{-\ell}q}\\
 &\quad-ia_\infty^{-j}p\,
          \overline{a_\infty^{-\ell}(\ell_{-n}q+\ell b_nq)}.
\end{align*}
By \eqref{eq:coefficient-horizontal-response}, its integral against
$F\,d\nu$ is $i\int_XFH\overline{t_n}\,d\nu$. Therefore, for $n\geq2$,
\begin{align}
 B_{j,\ell}^F(\ell_np,q)
 &=B_{j,\ell}^F\bigl(p,(\ell_{-n}+\ell b_n+t_n)q\bigr)\notag\\
 &=B_{j,\ell}^F(p,L_{-n}^{(\ell)}q).
 \label{eq:marked-positive-pairing}
\end{align}
Reality of $F$ gives
\[
 \overline{B_{\ell,j}^F(q,p)}=B_{j,\ell}^F(p,q).
\]
Applying \eqref{eq:marked-positive-pairing} with the indices exchanged,
\begin{align*}
 B_{j,\ell}^F(L_{-n}^{(j)}p,q)
 &=\overline{B_{\ell,j}^F(q,L_{-n}^{(j)}p)}\\
 &=\overline{B_{\ell,j}^F(\ell_nq,p)}
 =B_{j,\ell}^F(p,\ell_nq).
\end{align*}
Thus both signs in \eqref{eq:horizontal-coefficient-pairing} hold.
Lemma~\ref{lem:coefficient-pairing-uniqueness} implies
\begin{equation}\label{eq:marked-cross-pairing-zero}
 B_{j,\ell}^F(p,q)=0\qquad(j\ne\ell).
\end{equation}
For $j=\ell=0$, comparison with $F=\mathbf 1$ gives
\begin{equation}\label{eq:marked-zero-pairing}
 B_{0,0}^F(p,q)=c_FB_{0,0}^{\mathbf 1}(p,q).
\end{equation}

Use the product coordinates \eqref{eq:intrinsic-mark-product}. For
$m\in\mathbb Z$, define the Fourier coefficients of $F$ in the mark by
\[
 \widehat F_m(f^\circ)=\frac1{2\pi}\int_0^{2\pi}
        F(f^\circ,e^{i\alpha})e^{im\alpha}\,d\alpha.
\]
They are measurable and bounded by $\|F\|_\infty$.
Equation~\eqref{eq:marked-zero-pairing} gives
\[
 \int(\widehat F_0-c_F)p\overline q\,d\nu^\#=0
 \qquad(p,q\in\mathcal P).
\]
The products $p\overline q$ span a conjugation-stable, point-separating
algebra containing the constants on the full compact normalized
schlicht class. Regard $(\widehat F_0-c_F)\nu^\#$ as a finite signed
measure on that class. Stone--Weierstrass and uniqueness of finite
measures give
\[
 \widehat F_0=c_F\quad\nu^\#\text{-almost everywhere}.
\]
For $m\geq1$, use $(j,\ell)=(0,m)$ in
\eqref{eq:marked-cross-pairing-zero}. Since
$\overline{a_\infty}^{-m}=e^{-mk}e^{im\alpha}$, we obtain
\[
 \int e^{-mk(f^\circ)}\widehat F_m(f^\circ)p\overline q\,
           d\nu^\#(f^\circ)=0\qquad(p,q\in\mathcal P).
\]
The finite complex measure $e^{-mk}\widehat F_m\nu^\#$ has total
variation at most $\|F\|_\infty$. The same measure-determining argument
gives
\[
 e^{-mk}\widehat F_m=0\quad\nu^\#\text{-almost everywhere}.
\]
As $k<\infty$ almost everywhere, $e^{-mk}>0$ and $\widehat F_m=0$.
Reality gives $\widehat F_{-m}=\overline{\widehat F_m}=0$.
Take a common full-measure set for all integer frequencies. On each
remaining circle fibre, $F(f^\circ,\cdot)\in L^2(\T)$, so Fourier
completeness yields
\begin{equation}\label{eq:bounded-zero-energy-rigidity}
 F=c_F\quad\nu\text{-almost everywhere}.
\end{equation}

For an arbitrary real $F\in\Dcal$ with $DF=0$, set
$F_M=(-M)\vee(F\wedge M)$. The contraction property gives
\[
 \|DF_M\|_2^2=\E(F_M,F_M)\leq\E(F,F)=0.
\]
By \eqref{eq:bounded-zero-energy-rigidity}, each $F_M$ is constant, and
\[
 \left\|F-\int_XF\,d\nu\right\|_2
 \leq2\|F-F_M\|_2\longrightarrow0\qquad(M\to\infty).
\]
If $F\in\Dom(L)$ and $LF=0$, then
\[
 \|DF\|_2^2=\E(F,F)=-\langle LF,F\rangle_2=0.
\]
Conversely, $\mathbf 1\in\Dom(L)$ and $L\mathbf 1=0$ by conservativeness. Thus
\[
 \{F\in\Dcal:DF=0\}=\ker L
 =\{F\in L^2(\nu):F=c\ \nu\text{-a.e. for some }c\in\mathbb R\},
\]
which proves \eqref{eq:intro-zero-energy-kernel}. For a measurable
semigroup-invariant set $A\subset X$,
\[
 T_t\mathbf 1_A=\mathbf 1_A\ (t\geq0)
 \quad\Longrightarrow\quad
 \mathbf 1_A\in\ker L
 \quad\Longrightarrow\quad\nu(A)\in\{0,1\}.
\]
Hence the form is irreducible.

Work under the stationary law $\mathbb P$. For $s\geq0$, define the
shift on continuous paths by $(S_s\omega)(t)=\omega(s+t)$.
Let $A$ be an event in the canonical continuous-path sigma-algebra with
$\mathbb P(A\mathbin{\triangle}S_s^{-1}A)=0$ for every $s\geq0$.
Choose a properly exceptional set $N_{\rm erg}$ outside which the Markov
laws $\mathbb P_h$ are fixed. Put $p_A(h)=\mathbb P_h(A)$ for
$h\notin N_{\rm erg}$, and extend $p_A$ by zero on $N_{\rm erg}$.
Then $p_A$ is measurable, $0\leq p_A\leq1$, and the Markov property
and shift invariance give, for each fixed $s\geq0$,
\[
 T_sp_A(h)=\mathbb P_h(S_s^{-1}A)=p_A(h)
 \quad\text{for }\nu\text{-almost every }h.
\]
Thus $p_A\in\Dom(L)$, $Lp_A=0$, and
$p_A=\mathbb P(A)$ almost everywhere. For each fixed $t\geq0$,
\begin{align*}
 \mathbb E[\mathbf 1_A\mid\mathcal G_t]
 &=\mathbb E[\mathbf 1_{S_t^{-1}A}\mid\mathcal G_t]\\
 &=p_A(X_t)=\mathbb P(A)\quad\text{almost surely}.
\end{align*}
The continuous-path sigma-algebra is contained in
$\mathcal G_\infty=\sigma(\bigcup_{N\geq1}\mathcal G_N)$.
Martingale convergence at integer times therefore gives
\[
 \mathbf 1_A=\lim_{N\to\infty}
 \mathbb E[\mathbf 1_A\mid\mathcal G_N]
 =\mathbb P(A)\quad\text{almost surely}.
\]
Hence $\mathbb P(A)\in\{0,1\}$, proving stationary ergodicity.

Finally, let $F\in L^2(\nu)$, set $F^\circ=F-\int_XF\,d\nu$, and let
$\mu_{F^\circ}^{\rm sp}$ be its spectral measure for $-L$.
The kernel identity gives $\mu_{F^\circ}^{\rm sp}(\{0\})=0$, so
\[
 \|T_tF^\circ\|_2^2
 =\int_{(0,\infty)}e^{-2t\lambda}\,d\mu_{F^\circ}^{\rm sp}(\lambda)
 \longrightarrow0\qquad(t\to\infty)
\]
by dominated convergence. This proves \eqref{eq:ergodic-L2} and
completes the proof of Theorem~\ref{thm:ergodicity}.

\subsection{Stationary correlations and time averages}

All expectations below use the stationary law $\mathbb P$.

\begin{corollary}\label{cor:ergodic-consequences}
For $F,G\in L^2(\nu)$,
\[
 \lim_{t\to\infty}\mathbb E[F(X_0)G(X_t)]
 =\left(\int_XF\,d\nu\right)\left(\int_XG\,d\nu\right).
\]
For $F\in L^1(\nu)$,
\[
 \frac1T\int_0^T F(X_t)\,dt\longrightarrow\int_XF\,d\nu
 \quad\mathbb P\text{-almost surely as }T\to\infty.
\]
An invariant probability with a density in $L^2(\nu)$ equals $\nu$.
\end{corollary}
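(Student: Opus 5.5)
The three assertions follow from the $L^2$ convergence \eqref{eq:ergodic-L2}, the stationary ergodicity proved at the end of Section~\ref{sec:proof-ergodicity}, and the kernel identity \eqref{eq:intro-zero-energy-kernel}. I take them in order.

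\emph{Correlation decay.} By stationarity and the Markov property under $\mathbb P$,
\[
\mathbb E[F(X_0)G(X_t)]=\langle F,T_tG\rangle_{L^2(\nu)}.
\]
Write $T_tG=\int_XG\,d\nu+T_tG^\circ$ with $G^\circ=G-\int_X G\,d\nu$, using $T_t\mathbf 1=\mathbf 1$. Cauchy--Schwarz and \eqref{eq:ergodic-L2} give
\[
|\langle F,T_tG^\circ\rangle_2|\leq\|F\|_2\|T_tG^\circ\|_2\to0 .
\]
This leaves exactly the product of the means. Only a version convention is needed: $\mathbb E[G(X_t)\mid X_0]=T_tG(X_0)$ almost surely under the stationary law. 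This holds for Borel representatives because the associated process has $\nu$-symmetric transition kernels.

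\emph{Time averages.} I would apply Birkhoff's ergodic theorem for the measurable measure-preserving flow $(S_s)_{s\geq0}$ on continuous paths under $\mathbb P$. Stationarity gives invariance, and $\mathbb P$ is ergodic for the shifts by the last part of Section~\ref{sec:proof-ergodicity}. Let $F\in L^1(\nu)$ be Borel. The functional $\omega\mapsto F(\omega(0))$ lies in $L^1(\mathbb P)$ with mean $\int_X F\,d\nu$. Joint measurability of $(t,\omega)\mapsto F(\omega(t))$ follows from path continuity, and Tonelli gives
\[
\mathbb E\int_0^T|F(X_t)|\,dt=T\|F\|_1 .
\]
The continuous-time ergodic theorem then gives almost sure convergence of $T^{-1}\int_0^TF(X_t)\,dt$ to the conditional expectation on the invariant $\sigma$-field. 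Ergodicity makes that conditional expectation the constant $\int_X F\,d\nu$. This is the main technical point, though a routine one. If necessary I would reduce to the discrete skew map: apply the discrete ergodic theorem to $S_1$ and the integrable functional $\int_0^1F(X_t)\,dt$, which requires noting that an $S_1$-invariant event can be averaged over $s\in[0,1]$ to produce a flow-invariant event. Then bound the fractional remainder by $T^{-1}\int_{\lfloor T\rfloor}^{\lfloor T\rfloor+1}|F(X_t)|\,dt$. This remainder tends to zero almost surely, since the increments of a convergent Cesàro sequence of nonnegative integrable terms are $o(N)$.

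\emph{Uniqueness of $L^2$-density invariant measures.} Let $\mu=\phi\nu$ be an invariant probability with $\phi\in L^2(\nu)$, $\phi\geq0$, $\int_X\phi\,d\nu=1$. Invariance means $\int_X T_tG\,\phi\,d\nu=\int_X G\,\phi\,d\nu$ for bounded $G$, and by density for $G\in L^2$. Symmetry turns this into $\langle G,T_t\phi\rangle_2=\langle G,\phi\rangle_2$, so $T_t\phi=\phi$ for all $t$. Letting $t\to\infty$ in \eqref{eq:ergodic-L2} gives $\phi=\int_X\phi\,d\nu=1$, hence $\mu=\nu$.
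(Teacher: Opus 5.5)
Your proposal is correct and follows the paper's own argument. For correlation decay you use $\langle F,T_tG\rangle_2$, $T_t\mathbf 1=\mathbf 1$, Cauchy--Schwarz and \eqref{eq:ergodic-L2}; for time averages, the ergodic theorem for the stationary shift together with the shift-ergodicity proved in Section~\ref{sec:proof-ergodicity}; and for uniqueness, symmetry giving $T_t\phi=\phi$ followed by \eqref{eq:ergodic-L2}, all exactly as in the paper. Your extra details only make explicit what the paper compresses into a single appeal to the stationary ergodic theorem: integrability by Tonelli, and the optional reduction to $S_1$, where it is the $s$-averaged indicator of an $S_1$-invariant event that is flow-invariant and hence constant, not an event.
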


\begin{proof}
Stationarity and the Markov property give
\[
 \left|\mathbb E[F(X_0)G(X_t)]
             -\int_XF\,d\nu\int_XG\,d\nu\right|
 \leq\|F\|_2\left\|T_tG-\int_XG\,d\nu\right\|_2
 \longrightarrow0.
\]
The stationary ergodic theorem gives the time-average limit.
If $\varrho\in L^2(\nu)$ is the density of an invariant probability,
then, for every bounded measurable $G$, symmetry gives
\[
 \langle T_t\varrho,G\rangle_2
 =\langle\varrho,T_tG\rangle_2
 =\langle\varrho,G\rangle_2.
\]
Thus $T_t\varrho=\varrho$, and \eqref{eq:ergodic-L2} implies
$\varrho=\int_X\varrho\,d\nu=1$.
\end{proof}

\end{document}